\definecolor{green}{rgb}{1,0.5,0} % Redefines the color green.
\newcommand{\nrm}[1]{\Vert#1\Vert}
\newcommand{\brk}[1]{\langle#1\rangle}
\newcommand{\set}[1]{\{#1\}}
\newcommand{\dist}{\mathrm{dist}}
\newcommand{\aleq}{\lesssim}
\newcommand{\ageq}{\gtrsim}
\newcommand{\lap}{\Dlt}
\newcommand{\ud}{\mathrm{d}}
\newcommand{\rd}{\partial}
\newcommand{\nb}{\nabla}
\newcommand{\slap}{\! \not \!\! \lap}
\newcommand{\snb}{\! \not \hskip-.25em \nb}
\newcommand{\sdiv}{\not \!\!\! \mathrm{div} \,}
\newcommand{\bb}{\Big}
\newcommand{\0}{\emptyset}
\newcommand{\alp}{\alpha}
\newcommand{\gmm}{\gamma}
\newcommand{\dlt}{\delta}
\newcommand{\Dlt}{\Delta}
\newcommand{\eps}{\epsilon}
\newcommand{\lmb}{\lambda}
\newcommand{\sgm}{\sigma}
\newcommand{\omg}{\omega}
\newcommand{\bfa}{{\bf a}}
\newcommand{\bfb}{{\bf b}}
\newcommand{\bfc}{{\bf c}}
\newcommand{\bfd}{{\bf d}}
\newcommand{\bfe}{{\bf e}}
\newcommand{\bfg}{{\bf g}}
\newcommand{\bbA}{\mathbb A}
\newcommand{\bbG}{\mathbb G}
\newcommand{\bbH}{\mathbb H}
\newcommand{\bbR}{\mathbb R}
\newcommand{\bbS}{\mathbb S}
\newcommand{\calE}{\mathcal E}
\newcommand{\calH}{\mathcal H}
\newcommand{\calL}{\mathcal L}
\newcommand{\calP}{\mathcal P}
\definecolor{light-gray1}{gray}{0.90}
\definecolor{light-gray2}{gray}{0.80}
\definecolor{deepgreen}{cmyk}{1,0,1,0.5}
\newcommand{\B}{\mathcal{B}}
\newcommand{\E}{\mathcal{E}}
\newcommand{\HH}{\mathcal{H}}
\newcommand{\T}{\mathcal{T}}
\newcommand{\QQ}{\mathcal{Q}}
\newcommand{\PP}{\mathcal{P}}
\newcommand{\SC}{\mathcal{SC}}
\newcommand{\Hp}{\mathbb{H}}
\newcommand{\N}{\mathbb{N}}
\newcommand{\R}{\mathbb{R}}
\newcommand{\Sp}{\mathbb{S}}
\newcommand{\GG}{\mathbb{G}}
\newcommand{\K}{\mathbb{K}}
\newcommand{\g}{\mathbf{g}}
\newcommand{\m}{\mathbf{m}}
\newcommand{\ori}{\mathbf{0}}
\newcommand{\al}{\alpha}
\newcommand{\ga}{\gamma}
\newcommand{\de}{\delta}
\newcommand{\e}{\varepsilon}
\newcommand{\om}{\omega}
\newcommand{\la}{\lambda}
\newcommand{\s}{\sigma}
\newcommand{\De}{\Delta}
\newcommand{\p}{\partial}
\newcommand{\na}{\nabla}
\newcommand{\supp}{\operatorname{supp}}
\newcommand{\Rmnum}[1]{\expandafter\@slowromancap\romannumeral #1@}
\newcommand{\lec}{\lesssim}
\newcommand{\I}{\infty}
\newcommand{\ti}{\widetilde}
\newcommand{\ha}{\widehat}
\newcommand{\ang}[1]{\left\langle{#1}\right\rangle}
\newcommand{\abs}[1]{\left\lvert{#1}\right\rvert}
\newcommand{\ali}[1]{\begin{align}\begin{split} #1 \end{split}\end{align}}
\newcommand{\ant}[1]{\begin{align*}\begin{split} #1 \end{split}\end{align*}}
\newcommand{\EQ}[1]{\begin{equation}\begin{split} #1 \end{split}\end{equation}}
\newcommand{\pmat}[1]{\begin{pmatrix} #1 \end{pmatrix}}
\newcommand{\Del}[1]{}
\newcommand{\pt}{&}
\newcommand{\pr}{\\ &}
\newcommand{\pq}{\quad}
\newcommand{\pn}{}
\numberwithin{equation}{section}
\newtheorem{thm}{Theorem}[section]
\newtheorem{cor}[thm]{Corollary}
\newtheorem{lem}[thm]{Lemma}
\newtheorem{prop}[thm]{Proposition}
\newtheorem{claim}[thm]{Claim}
\theoremstyle{remark}
\newtheorem{rem}{Remark}
\newtheorem{defn}{Definition}
\newcommand{\mand}{{\ \ \text{and} \ \  }}
\newcommand{\mor}{{\ \ \text{or} \ \ }}
\newcommand{\mif}{{\ \ \text{if} \ \ }}
\newcommand{\mfor}{{\ \ \text{for} \ \ }}
\newcommand{\mas}{{\ \ \text{as} \ \ }}
\newcommand{\euc}{\mathrm{euc}}
\newcommand{\hyp}{\mathrm{hyp}}
\newcommand{\dvol}{\mathrm{dVol}}
\newcommand{\Id}{\textrm{Id}}
\def\glei{\mathrm{eq}}
\newcommand{\zero}{\mathbf{0}}
\newcommand{\nl}{\mathrm{nl}}
\begin{document}

\title[Profiles for energy critical waves on $\R \times \Hp^d$]{Profile decompositions for wave equations on hyperbolic space with applications}

\author{Andrew Lawrie}
\author{Sung-Jin Oh}
\author{Sohrab Shahshahani}

\begin{abstract} The goal for this paper is twofold. Our first main objective is to develop Bahouri-G\'erard type profile decompositions for waves on hyperbolic space. Recently, such profile decompositions have proved to be a versatile tool in the study of the asymptotic dynamics of solutions to nonlinear wave equations with large energy. With an eye towards further  applications, we  develop this theory in a fairly general framework, which includes the case of waves on hyperbolic space perturbed by a time-independent potential. 

Our second objective is to use the profile decomposition to address a specific nonlinear problem, namely the question of global well-posedness and scattering for the defocusing, energy critical, semi-linear wave equation on three-dimensional hyperbolic space, possibly perturbed by a repulsive time-independent potential. Using the concentration compactness/rigidity method introduced by Kenig and Merle, we prove that all finite energy initial data lead to a global evolution that scatters to linear waves as $t \to \pm \infty$. 
%COMPANION PAPER
This proof will serve as a blueprint for the arguments in a forthcoming work, where we study the asymptotic behavior of large energy equivariant wave maps on the hyperbolic plane.
\end{abstract}

\thanks{Support of the National Science Foundation, DMS-1302782 and NSF 1045119 for the first and third authors, respectively, is gratefully acknowledged. The second author is a Miller Research Fellow, and acknowledges support from the Miller Institute.}
\maketitle

\section{Introduction}
The main goal of this paper is to develop a framework for studying large energy asymptotic dynamics of solutions to  nonlinear wave equations on a $d$-dimensional real hyperbolic space $\Hp^d$. In particular, we prove  hyperbolic space analogs of the linear and nonlinear Bahouri-G\'erard  profile decompositions, \cite{BG}, which have become invaluable tools in the study of the dynamics of nonlinear waves outside of the perturbative regime.  We then give a specific nonlinear application, including a proof of global well-posedness and scattering for the energy critical, defocusing, semilinear wave equation for all finite energy data.

\subsection{Linear theory} We begin by describing the linear theory. With an eye toward further nonlinear  applications we establish a  profile decomposition relative to  a class of linear equations 
\EQ{  \label{lin eq}
u_{tt} - \Delta_{\Hp^d} u  + V u = 0 
}
where $\Delta_{\Hp^d}$ is Laplacian on $\Hp^d$, and
%Since the spectrum of $-\Delta_{\Hp^d}$ is given by 
%\ant{
%\s(- \De_{\Hp^d})= [(d-1)^2/4, \infty)
%}
%we can allow for the spectral shift $\mu$ above in the range $\mu > - \frac{(d-1)^2}{4}$ without changing the analysis. 
$V: \Hp^d \to \R$ is a time-independent potential satisfying a collection of assumptions given in the sequel, which in particular include Strichartz estimates for the left-hand side of~\eqref{lin eq}; see ~\eqref{eq:Vdecay}--\eqref{eq:Str4halfVWave:2}. The point of this formulation is that linear equations of the form~\eqref{lin eq}  arise naturally in the study of nonlinear problems, in particular after linearizing  about an asymptotically stable nontrivial stationary solution. A particular example of such a situation is given in the recent work of the authors in~\cite{LOS1}, where a  family of equations of the form~\eqref{lin eq} are obtained by linearizing the $2d$ equivariant wave map equations from $\R \times \Hp^2 \to \Hp^2$ or into $\Sp^2$, about the members of a continuous family of harmonic maps. A profile decomposition relative to the linearized equation for a stable soliton would be a natural step towards proving large data asymptotic stability, i.e., soliton resolution. 
%COMPANION PAPER
Indeed, in a forthcoming work, we will employ the tools developed in the present paper to make progress on this problem.
%Indeed, in a forthcoming work, we employ the tools developed in the present paper to make progress on this problem.

The point of a profile decomposition is to characterize a failure of compactness of a sequence of free waves with uniformly bounded energy, in particular the failure of compactness at the level of Strichartz estimates. Indeed, given a sequence of free waves $\vec v_n(t)$ in Euclidean space with bounded free energy, i.e., 
\ant{
\Box_{\R^3} v_n = 0,\quad \| \vec v_n(0)\|_{\dot{H}^1 \times L^2(\R^3)} \le C,
}
the linear result of Bahouri-G\'erard in~\cite{BG} roughly states that  one can find a sequence of free waves $\vec V^j_L$, called profiles or limiting profiles, that are independent of $n$,  and  sequences of non-compact ``symmetries", $\rho_{n, j}$,  which leave the wave equation and the free energy invariant, so that 
\EQ{ \label{bg euc0}
&\vec v_n =  \sum_{j <J} \rho_{n, j} \circ  \vec V^j_L   + \vec w_{n, L}^J, \quad  \lim_{J\to \infty} \limsup_n \| w^J_{n, L}\|_{L^5_tL^{10}_x} = 0\\
&\| \vec v_n\|_{\dot{H}^1 \times L^2}^2 =  \sum_{j <J} \|  \vec V^j_{L}\|_{\dot{H}^1 \times L^2}^2   + \|\vec w_{n, L}^J\|_{\dot{H}^1 \times L^2}^2 +o_n(1) \mas n \to \infty
}
where we note that the errors $\vec w_{n, L}^J$  vanish as  $J \to \infty$ in Strichartz norms such as $L^5_tL^{10}_x$ -- but not in the energy space. In the case of  free Euclidean waves as above, the symmetries $\rho_{n, j}$ consist of space and time translations, as well as the $\dot{H}^1 \times L^2$-scaling symmetry. Although Lorentz transforms also constitute a non-compact symmetry group acting on free waves, the uniform bound on the free energy effectively compactifies this action, and thus they do not appear in the $\rho_{n, j}$.    For $j \neq \ell$,  $\rho_{n, j}$ and $\rho_{n, \ell}$ diverge from each other as $n \to \infty$  and this divergence leads to the almost orthogonality of the free energy above. In addition, one sees that this divergence implies that if there are at least two nonzero profiles, $V^1$ and $V^2$, the sequence $\vec v_n(0)$ will fail to be relatively compact,  however this failure is characterized by~\eqref{bg euc0}, up to an error with vanishing Strichartz norm, entirely in terms of the symmetries of the equation -- in particular the limiting profiles $V^j_L$ are independent of $n$.   

In order to make sense of such a decomposition for a sequence of solutions to~\eqref{lin eq} with uniformly bounded energy, one must first understand the possible failures of compactness in Strichartz estimates in this more complicated setting.  Let us begin with a simpler equation, namely the free wave equation on $\Hp^d$. Indeed, let $\vec u_n(t) \in \HH:= H^1 \times L^2(\Hp^d)$ be a uniformly bounded sequence of solutions to $\Box_{\Hp^d} u_n = 0$, and let $S(I):= L^p_t(I;  L^{q}_x(\Hp^d))$ be an admissible Strichartz norm relative to $\HH(\Hp^d)$ (this will be made precise in Section~\ref{sec:lin}).  We seek to characterize the failure of compactness relative to the inequality 
\ant{
\| u_n \|_{S(I)} \le C \| \vec u_n(0)\|_{\HH}
}
 As in the Euclidean case, time translation as well as space translation can account for a failure of compactness -- here note that we can view $\Hp^d = SO(d, 1)  / SO(d)$ as a  symmetric space, and the spatial translations correspond to the action of $SO(d, 1)$ on $\Hp^d$, and are denoted by $h \cdot x$ where $h \in SO(d, 1)$ and $x \in \Hp^d$.  Indeed, suppose that $\vec U^1(t)$ and $\vec U^2(t)$ are two free waves and $t_{n, 1}, t_{n, 2}$  are sequences of times and $h_{n, 1}, h_{n, 2} \in \GG$ are sequences of translations. Then the sequence, 
 \ant{
 \vec u_n(t, x) =  \vec U^1( t +  t_{n, 1}, h_{n, 1} \cdot x) +  \vec U^2( t+ t_{n, 2}, h_{n, 2} \cdot x)
 }
 would fail to be relatively compact as long as 
 \ant{
  \abs{t_{n, 1} - t_{n, 2}} +  \bfd_{\Hp^d}( h_{n,1} \cdot \zero, h_{n, 2} \cdot \zero) \to \infty \mas n \to \infty
  }
where $\zero \in \Hp^d$ denotes the origin and $\bfd_{\Hp^d}(x, y)$ denotes the distance in hyperbolic space between the points $x, y \in \Hp^d$. 
 However, although time and space translations  constitute the only true symmetries on $\R \times \Hp^d$, there can also be a failure of compactness that arises from  waves that concentrate at very small scales. This key observation was made by Ionescu, Pausader, and Staffilani in their recent related work on the energy critical nonlinear Schr\"odinger equation on $\Hp^3$,~\cite{IPS}. They also developed a method for extracting profiles which arise from a small scale (or equivalently, a high frequency) concentration, which greatly motivated the approach we take in this paper.
 
An example of a small scale concentration scenario can be described in the Euclidean setting by a simple rescaling and appears naturally in the profile decomposition~\eqref{bg euc0} via the scale invariance of the free Euclidean waves, e.g., one could have 
\ant{
\rho_{n, j}  \circ V^j_L(t, x)  = \la_{n, j}^{-\frac{d-2}{2}} V^j_L(  t/ \la_{n, j}, x/ \la_{n, j})
} 
for a sequence $\la_{n, j}  \to \infty$ of positive numbers, and where we have used the $\dot{H}^1$-invariant scaling above.  In the decomposition~\eqref{bg euc0} one could  have for example
\EQ{
u_n(t, x)  = \la_{n, 1}^{-\frac{d-2}{2}} V^1(  t/ \la_{n, 1}, x/ \la_{n, 1}) + \la_{n, 2}^{-\frac{d-2}{2}} V^2(  t/ \la_{n, 2}, x/ \la_{n, 2})
}
where $\la_{n, 1}/ \la_{n, 2} \to \infty \mas n \to \infty$, which would amount to two distinct profiles that live at dramatically different scales. 

Waves on hyperbolic space  are not scale invariant. However, Ionescu, Pausader, and Staffilani made precise the intuition  that a solution  to the underlying scale invariant Euclidean equation is a very good approximation to the corresponding solution to the same equation on hyperbolic space when the initial data is highly localized. Take for example  a sequence of compactly supported initial data $(f_n, g_n)$ that concentrate to a point in hyperbolic space, say by rescaling a single compactly supported profile, i.e.,  
\ant{
(f_n, g_n)(r, \om)  =  ( \la_n^{\frac{d-2}{2}} f( \la_n r, \om), \la_n^{\frac{d}{2}} g(\la_nr , \om)), \quad \la_n \to \infty
}
where $(r, \om)$ are geodesic polar coordinates on $\Hp^d$.  The corresponding sequence of hyperbolic free waves $\vec u_n$ with this data will be well-approximated by the rescaled Euclidean evolution for a fixed and finite time, due to the finite speed of propagation and the fact that highly concentrated data do not ``see" the hyperbolic metric. In other words, the difference between $\Box_{\Hp^d}$ and $\Box_{ \R^d}$ is negligible up to a finite time for data that are highly localized in space. We make this heuristic precise in the context of linear and nonlinear equations in Sections~\ref{subsec:linApprox} and~\ref{sec:nonlinApprox}, respectively. Such an approximation theory plays a crucial role in defining the proper notion of nonlinear profiles; see Section~\ref{sec: bg nl}.
%The scaling invariance of the Euclidean evolution can thus be used to capture a high frequency concentration of the original sequence, and the corresponding profile behaves much like the  Euclidean wave $v_n(t, r, \om)  = \la_{n}^{\frac{1}{2}} v( \la_n, t, \la_n r, \om)$ with initial data $(f_n, g_n)$.

We now return to the full equation~\eqref{lin eq}, by adding a time-independent potential term $V u$. Assume that $V(x)$ decays sufficiently fast as $\ud_{\bbH^{d}}(x, \zero) \to \infty$ (we refer to \eqref{eq:Vdecay}--\eqref{eq:Str4halfVWave:2} for the full list of assumptions that $V$ is required to satisfy). The presence of a non-trivial decaying potential $V$ breaks yet another symmetry of the equation, namely the translation invariance. Nevertheless, much like the case of scaling symmetry, the non-compact action of $SO(d,1)$ on the solutions to~\eqref{lin eq} still causes a failure of compactness. A typical scenario consists of a sequence of solutions $\vec{u}_{n, L}(t)$ to~\eqref{lin eq}, whose data `travels' out to infinity by translation, i.e.,
\begin{equation*} 
\vec{u}_{n, L}(0, x) = \vec{U}(0, h_{n} \cdot x), \quad \bfd_{\bbH^{d}}(h_{n}^{-1} \cdot \zero, \zero) \to \infty
\end{equation*}
where $\vec{U} \in H^{1} \times L^{2}(\bbH^{d})$ is a fixed initial data and $\set{h_{n}} \subseteq SO(d,1)$. Thanks to the decay of $V$, the linear waves $\vec{u}_{n, L}$ are well-approximated by translates of a single solution to the underlying translation-invariant equation, namely 
\begin{equation} \label{bg hyp0}
	u_{n}(t, x) = U(t, h_{n}^{-1} \cdot x)
\end{equation}
where $\vec{U}_{L}$ is a solution to $\Box_{\bbH^{d}} U_{L}  = 0$ with initial data $\vec{U}_{L}(0) = \vec{U}(0)$. This approximation theory, for linear and nonlinear equations, is also developed in Sections~\ref{subsec:linApprox} and~\ref{sec:nonlinApprox}, respectively. 
Analogous to the case of concentrating solutions, it will provide the basis for a proper definition of nonlinear profiles in Section~\ref{sec: bg nl}.

The central idea of the profile decomposition is that as $n \to \infty$ the Strichartz norm of the sequence $\vec u_n(t)$ of solutions to~\eqref{lin eq}  is asymptotically distributed in a discrete collection of limiting profiles $\vec U^j$, which are concentrating in space, time, and frequency according to parameters, $\set{t_{n, j}} \subseteq \R$, $\set{h_{n, j}} \subseteq \GG$ and $\set{\la_{n, j}} \subseteq [1, \infty)$. Without getting into too much detail in the introduction, and without specifying the exact definition, we denote these concentrating profiles by  $\vec U_{n, L}^j(t)$ where the index $n$ captures the effect of the parameters $\{t_{n, j}, h_{n, j}, \la_{n, j}\}$ and the index $L$ is meant to signify that $\vec U_{n, L}^j$ is a linear wave, i.e., a solution to~\eqref{lin eq}. We define $\vec U^j_{n, L}(t)$ precisely in Definition~\ref{def:linprof} in Section~\ref{sec: bg}, and let it suffice to say  for now that they include natural analogues of the Euclidean concentrating profiles $ \rho_{n, j} \circ \vec V^j_L$ that appear in~\eqref{bg euc0} and the potential-free traveling profiles $\vec{U}^{j}_{L}(t, h_{n, j}^{-1} \cdot x)$ in~\eqref{bg hyp0}. 

%To avoid going into too much detail  in the introduction, we now give  a somewhat imprecise description (in the sense that some of the objects in the statement will not yet be defined) of the main linear profile decomposition. We refer the reader to Section~\ref{sec: bg}, and in particular Theorem~\ref{thm: BG} and Theorem~\ref{thm:BGgen} for  detailed definitions of the linear profiles and precise versions of the sketched theorem stated below. 

In the statement below we denote by $ E_V( \vec u)$ the conserved energy $$ E_V( \vec u):=\frac{1}{2}\int_{\Hp^3}  \left( u_t^2 + \abs{ \na u}^2  + V \abs{u}^{2} \right)\, \dvol_{\Hp^d}$$ of a solution $\vec u$ to~\eqref{lin eq} and denote by $S(I)$ any admissible Strichartz norm relative to the energy.   We prove the following theorem, which is made precise and restated in Section~\ref{sec: bg}. 

\begin{thm}[To be more precisely restated as Theorem~\ref{thm: BG}]\label{thm:mainbg}
Let $\vec u_{n, L}$ be a sequence of solutions to~\eqref{lin eq} with uniformly bounded energy. Then, up to passing to a subsequence, there exists a sequence of limiting profiles $\vec U^j$ along with concentration parameters  $\{t_{n, j}, h_{n, j}, \la_{n, j}\} \in \R \times \GG \times  [1, \infty)$ so that for errors $\vec w_{n, L}^J$ defined by 
\ant{
\vec u_{n, L}(t)  = \sum_{j <J} \vec  U^j_{n, L} (t) +  \vec w_{n, L}^J(t) 
} 
we have 
\ant{
 \limsup_{n \to \infty}\| w_{n, L}^J\|_{S(\R) } \to 0 \mas J \to \infty 
}
For each $J \in \N$ and for every $j \neq k  <J$  we have the following orthogonality of parameters, 
\ant{
&\textrm{either}  \quad \frac{\la_{n, j}}{ \la_{n, k}} + \frac{\la_{n, k}}{\la_{n, j}}  \to \infty\\
 & \textrm{or} \quad  \la_{n, j} \simeq \la_{n, k}  \mand   \la_{n, j}\abs{t_{n, j} - t_{n, k}} +  \la_{n, j}\bfd_{\Hp^d}(h_{n, j} \cdot \zero,  h_{n, k} \cdot \zero) \to \infty
  }
Moreover, we have the following Pythagorean decomposition of the free energy: 
For each fixed $J \in \N$,  
\ant{
E_V( \vec u_{n, L}) = \sum_{j<J} E_V( \vec U^j_{n, L}) + E_V(\vec w^J_{n,L}) + o_n(1) \mas n \to \infty
}
\end{thm}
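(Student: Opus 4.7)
The plan is to adapt the Bahouri-G\'erard inductive extraction scheme to the hyperbolic setting, combining small-scale Euclidean-type extraction (in the spirit of Ionescu-Pausader-Staffilani~\cite{IPS}) with bounded-scale $SO(d,1)$-translation extraction, and to identify the resulting limits via the linear approximation theorems of Section~\ref{subsec:linApprox}.

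The main technical input is a refined Strichartz estimate of the form
\begin{equation*}
    \|u\|_{S(\R)} \lesssim \|\vec{u}(0)\|_{\HH}^{1-\theta} \, \NN(\vec{u})^{\theta},
\end{equation*}
valid for solutions $\vec u$ of \eqref{lin eq}, where $\NN(\vec u)$ is a weaker quantity measuring the size of frequency-localized pieces of the evolution (a Besov-type sup over Littlewood-Paley-Helgason projections, evaluated in an $L^\infty$-style norm at the scale of localization). If $\|w^J_{n, L}\|_{S(\R)}$ fails to vanish as $n \to \infty$, this estimate produces, up to a subsequence, a scale $\la_{n, J+1} \in [1, \infty)$, a time $t_{n, J+1} \in \R$, and an element $h_{n, J+1} \in SO(d,1)$ along which $\vec w^J_{n, L}$ carries a nontrivial ``bubble''.

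I would then split into three cases. First, if $\la_{n, J+1} \to \infty$, rescale $\vec w^J_{n,L}$ about $h_{n, J+1} \cdot \zero$ by $\la_{n, J+1}$, take a weak limit in the rescaled energy space, and invoke the linear Euclidean approximation of Section~\ref{subsec:linApprox} to identify the limit as a solution of $\Box_{\R^d} V^{J+1}_{L} = 0$; the profile $\vec U^{J+1}_{n,L}$ is then defined as the inverse Euclidean-to-hyperbolic rescaling of $V^{J+1}_{L}$ about $h_{n, J+1} \cdot \zero$. Second, if $\la_{n, J+1} \simeq 1$ and $h_{n, J+1}^{-1} \cdot \zero$ stays in a compact set, extract a weak limit directly and obtain a profile solving the full equation \eqref{lin eq}. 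Third, if $\la_{n, J+1} \simeq 1$ but $\bfd_{\Hp^d}(h_{n, J+1}^{-1} \cdot \zero, \zero) \to \infty$, first translate $\vec w^J_{n, L}$ by $h_{n, J+1}^{-1}$, take a weak limit, and use the decay of $V$ together with the translation approximation of Section~\ref{subsec:linApprox} to identify the limit as a solution of $\Box_{\Hp^d} U^{J+1}_{L} = 0$; then set $\vec U^{J+1}_{n, L}$ to be the $h_{n, J+1}$-translate (and time-translate) of $\vec U^{J+1}_{L}$. Subtract $\vec U^{J+1}_{n, L}$ from $\vec w^J_{n, L}$ and iterate.

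Orthogonality of parameters for $j \ne k$ is enforced automatically by the construction: if $\la_{n, j} \simeq \la_{n, k}$ and $\la_{n,j}(|t_{n, j} - t_{n, k}| + \bfd_{\Hp^d}(h_{n, j}\cdot \zero, h_{n, k}\cdot\zero))$ remained bounded, then after the rescaling/translating used to extract $\vec U^{k}$, the contribution of the previously-extracted $\vec U^{j}_{n, L}$ would have been visible in the weak limit, contradicting the construction. The Pythagorean identity then comes out of expanding $E_V(\sum_{j<J} \vec U^j_{n, L} + \vec w^J_{n, L})$ and showing that orthogonality of parameters forces the cross terms $\langle \vec U^j_{n, L}, \vec U^k_{n, L}\rangle_{\HH}$ and $\langle \vec U^j_{n, L}, \vec w^J_{n, L}\rangle_{\HH}$ to vanish as $n \to \infty$; the potential cross term $\int V U^j_{n, L} U^k_{n, L}\,\dvol_{\Hp^d}$ is controlled using the decay of $V$ combined with the space-translation or scale orthogonality. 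The uniform bound on $E_V(\vec u_{n, L})$ forces $\sum_{j} E_V(\vec U^j_{n, L})$ to converge, which by the lower bound on bubble energies (recovered from $\NN$) implies $\NN(\vec w^J_{n, L}) \to 0$ as $J \to \infty$; the refined Strichartz then delivers $\limsup_n \|w^J_{n, L}\|_{S(\R)} \to 0$.

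The principal obstacle is twofold: first, establishing the refined Strichartz inequality in the presence of the potential $V$ and the non-flat geometry of $\Hp^d$, which requires a Littlewood-Paley calculus adapted to $-\Delta_{\Hp^d} + V$ and a careful accounting of the spectral gap; and second, properly interfacing the two distinct extraction mechanisms (small-scale Euclidean, bounded-scale hyperbolic) so that the ``bubble detection'' yielded by $\NN$ can be fed consistently into the approximation theorems of Section~\ref{subsec:linApprox}. The decay hypotheses \eqref{eq:Vdecay}--\eqref{eq:Str4halfVWave:2} on $V$ enter at both stages, ensuring the potential acts as a perturbation that is controlled in both the refined Strichartz and the Pythagorean identity.
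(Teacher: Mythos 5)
Your proposal follows essentially the same inductive concentration-compactness scheme as the paper: a refined inequality detecting ``bubbles'' via a sup over frequency-localized pieces, extraction of profiles by passing to weak limits after translating/rescaling in each of three regimes, automatic orthogonality of parameters via the extraction, and a Pythagorean expansion whose summability forces the bubble-detection quantity to vanish, which then kills the Strichartz norm of the error. Two points in your sketch, however, deviate from what the paper actually does in ways that matter.

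First, you suggest developing ``a Littlewood-Paley calculus adapted to $-\Delta_{\Hp^d}+V$.'' The paper deliberately avoids this. Its frequency projections $P_\lambda = 2\lambda^{-4}\Delta_{\Hp^d}^2 e^{\lambda^{-2}\Delta_{\Hp^d}}$ are built from the \emph{free} heat semigroup, and the refined Sobolev inequality (Lemma~\ref{lem: se}) is a purely geometric, fixed-time estimate for the free Laplacian. The potential is confined to the Strichartz assumptions \eqref{eq:Str4halfVWave:1}--\eqref{eq:Str4halfVWave:2} on the perturbed half-wave propagator $e^{\pm it D_V}$; Lemma~\ref{lem: BS} then interpolates the free refined Sobolev with one of these perturbed Strichartz bounds to obtain the spacetime smallness of the error. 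Adapting the Littlewood-Paley theory to $-\Delta_{\Hp^d}+V$ would require heat-kernel bounds for the perturbed operator and would unnecessarily couple the spectral calculus to $V$; it is also unclear that the pairing \eqref{eq:PP1}, which is the engine of the profile extraction and relies on $H^1$-duality with the free heat kernel derivative $\lambda^{-(d+2)/2}\,{}^{(1)}\calP_\lambda$, would survive in that framework.

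Second, your handling of the cross terms in the Pythagorean identity glosses over the hardest case. You say the potential cross term is controlled ``using the decay of $V$ combined with the space-translation or scale orthogonality.'' When both profiles are perturbed hyperbolic (stationary)---centered at the origin, bounded scale, and separated only by $|t_{n,j}-t_{n,\ell}|\to\infty$---neither spatial nor scale orthogonality is available, and there is no dispersive estimate for $S_V(t)$ to exploit (the paper only assumes Strichartz, not dispersion, for the perturbed propagator). The paper handles this case (Case~3 in the proof of Claim~\ref{claim: o en}) by a different mechanism: the integrated local energy decay assumption \eqref{eq:VLED} combined with a uniform Lipschitz bound forces $\|\chi_R D_V S_{V,0}(t)\vec U^j_V(0)\|_{L^2}+\|\chi_R S_{V,1}(t)\vec U^j_V(0)\|_{L^2}\to 0$ as $t\to\pm\infty$, and that is what kills the inner product after a cutoff in $r$. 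Without invoking ILED (or some replacement for it), the vanishing of this cross term does not follow from the ingredients you list.

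Neither of these is fatal to the overall strategy, but as written the sketch would stall at both points; the first by taking the much harder of two possible roads, the second by an actual gap in the argument.
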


Roughly speaking, given a sequence of linear waves $\vec{u}_{n, L} \in \HH$ with bounded energy, Theorem~\ref{thm:mainbg} identifies the failure of compactness at the level of Strichartz norms with the action of the non-compact (semi-)\footnote{To be pedantic, one must add the prefix `semi-' to indicate that the scaling action is limited to the semi-group $[1, \infty)$.}group $\bbR \times SO(d, 1) \times [1, \infty)$, where the factors correspond to time translation, spatial translation and scaling, respectively. Note that the scaling group $(0, \infty)$ is compactified to the expanding direction $\to 0$ (as in the case of the Lorentz transformations on $\bbR^{1+d}$), since the uniform boundedness of energy implies uniform boundedness of the $L^{2}$ norm by Poincar\'e's inequality on $\bbH^{d}$.

\subsection{Nonlinear results} 
The linear profile decomposition~\eqref{bg euc0} has played a central role in many recent developments concerning  the understanding of large energy dynamics  for the energy critical \emph{nonlinear} wave equation, see for example~\cite{BG, KM08, DKM1, DKM2, DKM3, DKM4, DKM6, DKM7, KS13ajm} -- there have also been many more applications, too many to name here. In particular,  one can build from~\eqref{bg euc0} a corresponding nonlinear profile decomposition. Indeed, consider in $\R^{1+3}$ the Cauchy problem for the quintic nonlinear wave equation 
\EQ{\label{nlw5}
 &\Box_{\R^{1+3}} v =  \pm \abs{v}^4 v \\
&\vec v(0) = (v_0, v_1) \in \dot{H}^1 \times L^2(\R^3)
 }
To a sequence $\vec v_n(0)$ of initial data, one can associate a corresponding sequence of nonlinear evolutions $\vec v_n(t)$. Moreover, to each linear profile $\vec V^j_L$ in the decomposition~\eqref{bg euc0} one can associate, via the local Cauchy theory for~\eqref{nlw5}, a nonlinear evolution $\vec V^j_{\nl}$ so that 
\ant{
 \| \rho_{n, j} \circ \vec V_{L}^j - \rho_{n, j} \circ V^j_{\nl}(0) \|_{\dot{H}^1 \times L^2} \to 0 \mas n \to \infty
 }
Although the superposition principle fails for nonlinear waves, the divergence of the parameters $\rho_{n, j}$ means that the nonlinear waves built from the linear profiles will interact very little and one can prove a nonlinear profile decomposition 
\EQ{
 \vec v_n(t)  =  \sum_{j <J}   \rho_{n, j} \circ\vec V^j_{\nl}(t)  + w_{n, L}^J + \gamma_{n}^J(t),  \quad  \lim_{J \to \infty} \limsup_{n \to \infty}\|\vec \ga\|_{L^{\infty}_t \dot{H}^1 \times L^2}  = 0, 
 }
see \cite[Main Theorem]{BG} for more details. 
%which has been a very versatile tool in the study of large energy dynamics. 
 %Such nonlinear profile decompositions become powerful tools in questions regarding large energy dynamics.  %\cite{BG} on the defocusing quintic wave equation in $\R^{1+3}$, and followed by the work of Kenig and Merle \cite{KM08}, and later of Duyckaerts, Kenig, and Merle~\cite{} on the focusing quintic equation. 

The linear profile decomposition of  Theorem~\ref{thm:mainbg} can be thought of as a first step in understanding large energy dynamics for solutions to semilinear nonlinear Cauchy problems of the form 
\ant{
&u_{tt} - \Delta_{\Hp^d} u    + V u = F(u) \\
&\vec u(0)= (u_0, u_1) \in \HH := H^1 \times L^2 (\Hp^d)
} 
with a nonlinearity $F(u)$ that is amenable to a small data scattering theory in the energy space~$\HH$ based on Strichartz estimates, and to a nonlinear perturbation theory for approximate solutions; see  for example Proposition~\ref{small data} and Lemma~\ref{lem: pert}. Indeed, given a small data scattering theory and a nonlinear perturbation theory, one can build an associated nonlinear profile decomposition from the linear one; see Theorem~\ref{thm:nonlinbg}.  

In this paper, we restrict to a specific nonlinear question, namely that of large data global existence and scattering for critical \emph{defocusing} equations, and we execute a version of the concentration compactness/rigidity method developed by Kenig and Merle in~\cite{KM06, KM08}, in the hyperbolic space setting. In particular,  we consider energy critical, power-type nonlinearities with the defocusing sign,  
\ant{
F(u) = - \abs{u}^p u , \, \quad p = \frac{d+2}{d-2}.
}
For simplicity, we restrict to the dimension $d = 3$. We allow the presence of a potential $V$ that is assumed to be \emph{compactly supported} and \emph{repulsive} (for the precise assumptions on $V$, we refer to the statement of Theorem \ref{main:V} below). As discussed earlier, the motivation for including $V$ in our analysis comes from consideration of large energy equivariant wave maps from the hyperbolic plane $\bbH^{2}$, where a potential arises as a result of linearizing the equation around non-trivial harmonic maps. 
%COMPANION PAPER
This subject will be taken up in a forthcoming work.
%This subject is taken up in a forthcoming work.

Our  nonlinear model problem is then the Cauchy problem for the  defocusing, energy critical, semilinear wave equation on $\R \times \Hp^3$, 
\EQ{\label{u eq d}
&u_{tt} - \De_{\Hp^3} u  + Vu = - \abs{u}^{4} u,\\
& \vec u(0) = (u_0, u_1),
}
where $V$ is smooth, compactly supported and repulsive, in the sense that \eqref{eq:Vdefocusing} below is satisfied. The conserved energy is given by 
\EQ{
\E(\vec u)(t):= \frac{1}{2}\int_{\Hp^3}  \left( u_t^2 + \abs{ \na u}^2  + V \abs{u}^{2} \right)\, \dvol_{\Hp^d} + \frac{1}{p+1} \int_{\Hp^d} \abs{u}^{p+1} \, \dvol_{\Hp^d}
}
We will consider initial data $\vec u(0) = (u_0, u_1)$ for~\eqref{u eq d} in the energy space 
\EQ{
\HH(\Hp^3): = H^1 \times L^2( \Hp^3). 
}

We prove the following theorems for \eqref{u eq d}. In the statements below, for a time interval $I$ we denote by $S(I)$ the Strichartz  norm 
 \EQ{
 S(I):= L^{5}_{t} (I; L^{10}(\bbH^{3})).
 }
Our first nonlinear theorem concerns the special case $V = 0$, which is precisely the \emph{energy-critical defocusing} semilinear wave equation on $\bbH^{3}$. 
\begin{thm}\label{main} 
Consider the equation 
\begin{equation} \label{u eq:noV}
u_{tt} - \De_{\Hp^3} u = - \abs{u}^{4} u,
\end{equation}
Let $(u_0, u_1) \in \HH(\Hp^3)$. Then there is a unique global-in-time solution $\vec u(t) \in \HH$ to~\eqref{u eq d} with $\vec u (0) = (u_0, u_1)$. Moreover, there exists a non-decreasing function $A: [0, \infty) \to [0, \infty)$ so that 
\EQ{
\| u \|_{S(\R)} \le A ( \E( \vec u(0))).
}
In particular, this means that $\vec u(t)$ scatters to a free hyperbolic wave as $t \to \pm \infty$, i.e., there exist  solutions $\vec u_L^{\pm}(t) \in \HH$ to
$
\Box_{\Hp^d} u_L = 0
$
so that
\EQ{
\| \vec u(t) - \vec u_L^{\pm}(t) \|_{\HH} \to 0 \mas t \to  \pm \infty.
}
\end{thm}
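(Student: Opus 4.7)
My strategy is the Kenig-Merle concentration compactness/rigidity paradigm, with the linear profile decomposition Theorem~\ref{thm:mainbg} and its nonlinear counterpart Theorem~\ref{thm:nonlinbg} as the main inputs. I plan to first invoke the small data scattering theory (Proposition~\ref{small data}) and the nonlinear perturbation lemma (Lemma~\ref{lem: pert}) for \eqref{u eq:noV}, which together with the Strichartz estimates for $\Box_{\bbH^{3}}$ established in Section~\ref{sec:lin} yield local well-posedness in $\HH$, scattering for data of sufficiently small energy, and stability of approximate solutions in $S(I)$. With this in hand, I would define
\[
E_{c} := \sup \bigl\{ E \ge 0 : \text{every solution } \vec u \text{ with } \E(\vec u) \le E \text{ satisfies } \|u\|_{S(\bbR)} < \infty \bigr\},
\]
and argue by contradiction, assuming $E_{c} < \infty$ and selecting a sequence $\vec u_{n}$ of solutions with $\E(\vec u_{n}) \searrow E_{c}$ whose $S$-norms blow up on their maximal intervals of existence.

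Next I would extract a critical element. Applying Theorem~\ref{thm:mainbg} to the linearized data $\vec u_{n, L}(0)$ and associating to each linear profile $\vec U^{j}_{n, L}$ a nonlinear profile $\vec U^{j}_{n, \nl}$ via the approximation theory of Section~\ref{sec:nonlinApprox}---namely the rescaled Euclidean evolution when $\la_{n, j} \to \infty$ and the translated free hyperbolic evolution when $\bfd_{\bbH^{3}}(h_{n, j} \cdot \zero, \zero) \to \infty$---Theorem~\ref{thm:nonlinbg} would produce a nonlinear profile decomposition whose error vanishes in $L^{\infty}_{t} \HH$ as $J \to \infty$. The Pythagorean orthogonality of the free energy, combined with the defocusing sign (which makes $\E$ coercive over the kinetic part), shows that if two or more profiles were nonzero then each would satisfy $\E(\vec U^{j}) < E_{c}$ and hence each $\vec U^{j}_{n, \nl}$ would scatter with bounded $S$-norm; feeding this into the perturbation lemma would give uniform control on $\|u_{n}\|_{S(\bbR)}$, a contradiction. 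So exactly one profile is nonzero, of energy precisely $E_{c}$, and after passing to a subsequence I obtain a critical element $\vec u_{\ast}(t)$ whose forward trajectory is precompact in $\HH$ modulo the action of $\bbR \times SO(3, 1) \times [1, \infty)$.

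Finally I would reduce to a genuinely hyperbolic critical element and close the argument by a virial-type estimate. If $\la_{n, 1} \to \infty$, the critical element arises from a concentrating Euclidean profile and, through the Euclidean approximation theory, yields a Kenig-Merle-type critical element for the defocusing energy critical wave equation on $\bbR^{1+3}$; but the latter equation is globally well-posed and scattering for all finite-energy data (Grillakis, Shatah-Struwe), which is a contradiction. Hence $\la_{n, 1}$ remains bounded, and after conjugating by $h_{n, 1}^{-1} \in SO(3, 1)$ I may assume $\vec u_{\ast}(t)$ has a forward trajectory precompact in $\HH$ with no symmetry quotient. The rigidity step then applies a Morawetz-type identity on $\bbH^{3}$ with a suitable radial multiplier adapted to the hyperbolic geometry, whose deformation tensor is positive and whose pairing with the defocusing nonlinearity $-|u|^{4}u$ has a favorable sign. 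This produces a finite bound on $\int_{0}^{\infty} \int_{\bbH^{3}} |u_{\ast}|^{6} \, \dvol_{\bbH^{3}} \, dt$, which, combined with precompactness of the trajectory in $\HH$, forces $u_{\ast} \equiv 0$ and contradicts $\E(\vec u_{\ast}) = E_{c} > 0$.

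The hard part will be the bookkeeping needed to extract and reduce the critical element---in particular, quantitatively transferring the known Kenig-Merle scattering norm on $\bbR^{1+3}$ back to the hyperbolic equation through the Euclidean approximation of Section~\ref{sec:nonlinApprox} in order to eliminate the concentrating profile scenario, and ensuring that the orthogonality of the parameters $(t_{n, j}, h_{n, j}, \la_{n, j})$ survives nonlinear superposition in the perturbation step. By contrast, the final Morawetz step should be conceptually cleaner than in the Euclidean setting because Poincar\'e's inequality on $\bbH^{d}$ provides intrinsic $L^{2}$-coercivity, smoothing over the boundary-term issues that complicate the analogous argument on $\bbR^{3}$.
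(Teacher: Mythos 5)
Your proposal follows essentially the same Kenig--Merle concentration compactness/rigidity route as the paper: extract a critical element via the nonlinear profile decomposition (Theorem~\ref{thm:nonlinbg}), rule out the concentrating (Euclidean) scenario by transferring the known $\bbR^{1+3}$ scattering theory through the approximation of Section~\ref{sec:nonlinApprox}, and close the argument with the Morawetz estimate of Lemma~\ref{lem:mwtz}. Your choice to induct on the conserved nonlinear energy rather than on $\sup_t\|\vec u(t)\|_{\HH}$ is a valid simplification that the paper explicitly acknowledges, so that is only a stylistic difference.

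There is one slip you should correct. You claim that, after conjugating by a fixed $h_{n,1}^{-1} \in SO(3,1)$, the critical element has a trajectory precompact in $\HH$ ``with no symmetry quotient.'' Because \eqref{u eq:noV} (with $V \equiv 0$) is translation invariant, the critical element can drift in space as $t \to \infty$; the conclusion of the concentration compactness step (Proposition~\ref{prop:ce}) is only that $K = \{\tau_{h(t)}\vec u_*(t): t \ge 0\}$ is precompact for some \emph{time-dependent} $h : [0,\infty) \to \GG$, and a single fixed conjugation cannot remove this drift. This does not break the rigidity step---the $L^6_{t,x}$ Morawetz bound is translation invariant, and the mollifier $Q_M$ used in the proof of Proposition~\ref{rig} commutes with translations---but you must carry the time-dependent $h(t)$ through that argument rather than assume it away. (This is precisely the point where the $V\not\equiv0$ case, Theorem~\ref{main:V}, genuinely differs: there the potential breaks translation invariance and the critical element really does have a literally precompact trajectory.) Relatedly, passing from the finite $L^6_{t,x}$ bound to $u_* \equiv 0$ is not immediate because the quintic nonlinearity is supercritical with respect to $L^6$; following the paper, one mollifies and interpolates through $L^8_{t,x}$ and $L^4_t L^{12}_x$ to convert the finite $L^6$ budget plus precompactness into smallness of the Duhamel term on a late-time interval, then closes by a continuity argument.
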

Using the tools developed in the earlier part of the paper, this theorem is proved in Section~\ref{sec:main:pf}.

Since the tools we develop easily allows for inclusion of a potential, we are in fact able to prove the following generalization of Theorem~\ref{main}, where $V$ is a smooth, compactly supported repulsive potential; see \eqref{eq:Vdefocusing} below for the precise assumptions on $V$.
\begin{thm}\label{main:V} 
Consider the equation 
\begin{equation} \label{u eq}
u_{tt} - \De_{\Hp^3} u  + V u= - \abs{u}^{4} u,
\end{equation}
where the time-independent potential $V$ is \emph{smooth, repulsive} and \emph{compactly supported} in the following sense:
\begin{equation} \label{eq:Vdefocusing}
	V \in C^{\infty}_{0}(\bbH^{d}), \quad V \geq 0, \quad \rd_{r} V \leq 0.
\end{equation}
where $\rd_{r}$ is the radial directional derivative in the polar coordinates $(r, \omg)$ on $\bbH^{3}$. Let $(u_0, u_1) \in \HH(\Hp^3)$. Then there is a unique global-in-time solution $\vec u(t) \in \HH$ to~\eqref{u eq d} with $\vec u (0) = (u_0, u_1)$. Moreover, there exists a non-decreasing function $A: [0, \infty) \to [0, \infty)$ so that 
\EQ{
\| u \|_{S(\R)} \le A ( \E( \vec u(0))).
}
In particular, this means that $\vec u(t)$ scatters to a free hyperbolic wave as $t \to \pm \infty$, i.e., there exist  solutions $\vec u_L^{\pm}(t) \in \HH$ to
$
\Box_{\Hp^d} u_L = 0
$
so that
\EQ{
\| \vec u(t) - \vec u_L^{\pm}(t) \|_{\HH} \to 0 \mas t \to  \pm \infty.
}
\end{thm}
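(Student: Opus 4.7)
The plan is to deploy the concentration-compactness/rigidity scheme of Kenig--Merle, using as inputs the small-data scattering result (Proposition~\ref{small data}), the perturbation lemma (Lemma~\ref{lem: pert}), and the profile decompositions of Theorem~\ref{thm:mainbg} and Theorem~\ref{thm:nonlinbg}. Define the critical energy
\EQ{
E_{c} := \sup\set{E \geq 0 : \|u\|_{S(\bbR)} < \infty \text{ for every solution of \eqref{u eq} with } \calE(\vec{u}(0)) \le E}.
}
Small data gives $E_{c} > 0$, and the goal is to prove $E_{c} = \infty$. Assume for contradiction $E_{c} < \infty$, and pick a sequence $\vec{u}_{n}$ of solutions with $\calE(\vec{u}_{n}(0)) \searrow E_{c}$ and $\|u_{n}\|_{S(I_{\max, n})} \to \infty$. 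Applying Theorem~\ref{thm:mainbg} to the data and passing to the nonlinear profile decomposition, the Pythagorean identity forces at most one profile to carry energy $E_{c}$ asymptotically: if two or more were nontrivial, each would have energy strictly below $E_{c}$, hence would scatter, and the orthogonality of the parameters $(t_{n,j}, h_{n,j}, \la_{n,j})$ combined with Lemma~\ref{lem: pert} would uniformly bound $\|u_{n}\|_{S(\bbR)}$, a contradiction. Therefore exactly one profile is nontrivial, and extracting its divergent symmetries produces a \emph{critical element} $\vec{u}_{\ast}$ of energy $E_{c}$, globally defined, failing to scatter in at least one time direction, whose trajectory is precompact in $\HH(\bbH^{3})$ modulo the action of the symmetry (semi-)group $\bbR \times SO(3,1) \times [1, \infty)$.

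The associated parameter path $(t(\tau), h(\tau), \la(\tau))$, taken along any sequence $\tau \to \sup I_{\max}$, is then classified into: (i) $\la(\tau) \to \infty$ (self-similar concentration); (ii) $\la(\tau) = O(1)$ but $\bfd_{\bbH^{3}}(h(\tau) \cdot \zero, \zero) \to \infty$ (travel to infinity); (iii) all parameters bounded (stationary compactness). In case (i), the linear and nonlinear Euclidean approximation theorems of Sections~\ref{subsec:linApprox}--\ref{sec:nonlinApprox} identify $\vec{u}_{\ast}$, suitably rescaled, with a solution of the defocusing energy-critical semilinear wave equation on $\bbR^{1+3}$ (the compactly supported $V$ is invisible at high concentration); the classical scattering result of Grillakis and Shatah--Struwe then forces $\vec{u}_{\ast} \equiv 0$. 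In case (ii) the compact support of $V$ makes the potential term negligible along the trajectory, reducing the question to rigidity for the potential-free defocusing equation on $\bbH^{3}$, which is ruled out by a Morawetz-type estimate centered along the trajectory and strengthened by the negative curvature. In case (iii) one applies the virial identity
\EQ{
\frac{d}{dt} \int_{\bbH^{3}} \p_{t} u \cdot \chi_{R}(r) \p_{r} u \, \dvol_{\bbH^{3}} + (\text{correction}),
}
with a suitable radial cutoff $\chi_{R}$; the defocusing nonlinearity, the hyperbolic geometry, and crucially the repulsivity $\p_{r} V \le 0$ all contribute with the favourable sign, so that integration in time combined with the orbital compactness forces $\vec{u}_{\ast} \equiv 0$.

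Each case produces $\vec{u}_{\ast} \equiv 0$, contradicting $\calE(\vec{u}_{\ast}) = E_{c} > 0$, so $E_{c} = \infty$; the Strichartz bound $\|u\|_{S(\bbR)} \le A(\calE(\vec{u}(0)))$ then follows by the very definition of $E_{c}$ together with a standard continuity argument, and scattering to free hyperbolic waves is a routine Duhamel/Strichartz consequence. The main obstacle is the rigidity step, and within it the execution of the virial/Morawetz argument on the hyperbolic background with potential: in case (iii) the repulsivity hypothesis \eqref{eq:Vdefocusing} has been tailored so that the potential contribution has the right sign, but one still has to verify that cutoff errors vanish in the limit using only the orbital compactness; case (ii) is the more delicate one, since one cannot reduce to the Euclidean picture and must instead exploit the negative curvature of $\bbH^{3}$ along a trajectory escaping to infinity while preserving positivity of the virial integrand.
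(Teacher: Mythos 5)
There is a genuine gap in the concentration--compactness step of your argument. You claim that if two or more profiles are nontrivial, then each has energy strictly below $E_{c}$ and hence scatters. This is justified for the stationary profiles (which solve the same equation \eqref{u eq}) and for the Euclidean profiles (by Theorem~\ref{thm: euc}), but it fails for the traveling (free hyperbolic) profiles: by Proposition~\ref{prop: h h} those are asymptotically governed by the \emph{potential-free} equation $\Box_{\bbH^{3}} U + U^{5}=0$, which is a different equation from \eqref{u eq}, so the threshold $E_{c}$ you defined for \eqref{u eq} says nothing about whether they scatter. Scattering for the potential-free equation on $\bbH^{3}$ is precisely the content of Theorem~\ref{main}, which your proposal never establishes. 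The paper resolves this by organizing the argument sequentially: Theorem~\ref{main} (the case $V=0$) is proved first in Section~\ref{sec:main:pf} via Proposition~\ref{prop:ce} and the rigidity Proposition~\ref{rig}, where the critical element is compact modulo translation; then in Section~\ref{sec:main:V:pf}, Theorem~\ref{main} is fed into the nonlinear profile decomposition to conclude that \emph{every} traveling profile scatters, regardless of its energy. Together with Theorem~\ref{thm: euc} for the concentrating profiles, this forces the critical element in the $V\ne 0$ case to be strictly stationary (precompact in $\HH$ with no translation parameter), which is what makes the rigidity step for Theorem~\ref{main:V} essentially identical to, and in fact simpler than, the $V=0$ case.

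Your case (ii) also misdescribes the rigidity mechanism. The paper's Morawetz inequality (Lemma~\ref{lem:mwtz}) uses a multiplier $\bfa_{r}\rd_{r} u$ with the weight anchored at the fixed origin of $\bbH^{3}$; it yields $\|u\|_{L^{6}_{t,x}}^{6}\aleq \calE(\vec u)$, a translation-invariant statement. The compactness of the trajectory modulo translation is then exploited not by ``re-centering the Morawetz estimate,'' but through the mollification/interpolation argument in the proof of Proposition~\ref{rig}: precompactness lets you control the tails in $L^{6}_{x}$ uniformly in $t$, and interpolation of the $L^{6}_{t,x}$ bound against an $L^{\infty}_{x}$ bound for the mollified solution gives eventual smallness in $L^{8}_{t,x}$, which suffices for scattering via the Duhamel/Strichartz argument. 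None of this requires any additional gain ``from the negative curvature along a trajectory escaping to infinity''; the only hyperbolic feature used is the boundedness of $\bfa_{r}$ defined in \eqref{eq:bfa}. Your case (iii) virial identity is, once one sorts out the signs, essentially the paper's Lemma~\ref{lem:mwtz-general}, and your observation that $\rd_{r} V \le 0$ contributes with the right sign is exactly \eqref{eq:mwtz-general:key:RHS:2}.
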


Although Theorem~\ref{main:V} is clearly a more general result, we will in fact use the special case proved in  Theorem~\ref{main}  to deduce Theorem~\ref{main:V}, again following the Kenig-Merle concentration compactness/rigidity method; see Section~\ref{sec:main:V:pf}.

\begin{rem}
We point out that Theorem~\ref{main:V} claims scattering of solutions to \eqref{u eq} to \emph{free hyperbolic waves}, instead of solutions to the perturbed equation $(\Box_{\bbH^{d}} + V) u = 0$. This statement holds because the assumptions on $V$ in Theorem~\ref{main:V} are strong enough to ensure that solutions to the perturbed equation $(\Box_{\bbH^{d}} + V) u = 0$ scatter to solutions to the free equation $\Box_{\bbH^{d}} u = 0$. We refer to the proof of Proposition~\ref{small data} for details.
\end{rem}

%\Green{
%\begin{rem}
%Note that under the assumptions of Theorem~\ref{main:V} scattering to a solution of $\Box_{\Hp^d} u_L = 0$ or $(\Box_{\Hp^d} +V)\tilde{u}_L = 0$ are one and the same. This is essentially a consequence of the integrated local energy decay estimate \eqref{eq:VLED} and the Christ-Kislelev lemma. We refer the reader to the proof of \cite[Proposition~$5.3$]{LOS1} for a more detailed proof of this fact. See also equation \eqref{Strich final} below.
%\end{rem}
%}

\begin{rem} 
Theorems~\ref{main} and~\ref{main:V} are intended to be simple examples of applications of the methods developed in this paper, 
and hence may be extended in many directions.
%are by no means sharp. 
For example, the argument used to prove Theorems~\ref{main} and~\ref{main:V} can be easily adapted to dimensions $d = 4$ and $5$, and most of it also carries over to even higher dimensions, modulo technical nuisances concerning low regularity of the nonlinearity. The assumption \eqref{eq:Vdefocusing} on $V$ can certainly be relaxed as well. 
\end{rem}

\begin{rem}\label{shift remark}
Since the spectrum of the Laplace operator on $\Hp^d$ is given by $\sigma(-\lap_{\Hp^d})=[\frac{(d-1)^2}{4},\infty)$, one can replace the linear operator in \eqref{lin eq}, and hence in Theorems~\ref{thm:mainbg},\\~\ref{main}, and~\ref{main:V}, by $\partial_{tt}-\lap_{\Hp^d}+V+\mu$ with $\mu>-\frac{(d-1)^2}{4}$ without changing the analysis. The conclusions of Theorems~\ref{thm:mainbg},~\ref{main}, and~\ref{main:V} remain valid in this more general setting with the obvious modifications. In particular a free hyperbolic wave should now be interpreted as a solution of the linear wave equation $\Box_{\Hp^d}u+\mu u=0.$
\end{rem}

% Finally, for  simplicity we restrict to the dimensions $d=3, 4$, where the critical power $p$ is integer-valued. We prove the following theorem. In the statement below, for a time interval $I$ we denote by $S(I)$ the Strichartz  norm 
% \EQ{
% S(I):= L^{\frac{d+2}{d-2}}_t(I; L^{2\frac{d+2}{d-2}}(\Hp^d))
% }
% When $d=3$ we have $S(I)= L^5_tL^{10}_x( I \times \Hp^3)$ and when $d=4$ we have $S(I) = L^3_tL^{6}_x( I \times \Hp^3)$. 

%\begin{thm}\label{main} Let $(u_0, u_1) \in \HH(\Hp^d)$ with $d=3, 4$. Then there is unique global-in-time solution $\vec u(t) \in \HH$ to~\eqref{u eq d} with $\vec u (0) = (u_0, u_1)$. Moreover, there exists a non-decreasing function $A: [0, \infty) \to [0, \infty)$ so that 
%\EQ{
%\| u \|_{S(\R)} \le A ( \E( \vec u(0))).
%}
%In particular, this means that $\vec u(t)$ scatters to a free hyperbolic wave as $t \to \pm \infty$, i.e., there exist  solutions $\vec u_L^{\pm}(t) \in \HH$ to
%$
%\Box_{\Hp^d} u_L = 0
%$
%so that
%\EQ{
%\| \vec u(t) - \vec u_L^{\pm}(t) \|_{\HH} \to 0 \mas t \to  \pm \infty.
%}
%\end{thm}

%Since the spectrum of $-\Delta_{\Hp^d}$ is given by 
%\ant{
%\s(- \De_{\Hp^d})= [(d-1)^2/4, \infty)
%}
%we can allow for the spectral shift $\mu$ in~\eqref{u eq} in the range $\mu > - \frac{(d-1)^2}{4}$ without changing the analysis. In particular Theorem~\eqref{main} extends to equations of the form 
%\EQ{\label{u mu eq}
%&u_{tt} - \De_{\Hp^d} u  + \mu u = - \abs{u}^{p-1} u,\\
%& \vec u(0) = (u_0, u_1). 
%}
\subsection{History of ideas in the paper} 
Here we given a brief and incomplete summary of the history of some of the ideas that went into this paper.  The notion of a profile decomposition such as~\eqref{bg euc0} for dispersive equations  began with the work of Bahouri, Ger\'ard~\cite{BG}, Merle, Vega~\cite{MV}, and later of Keraani~\cite{Ker}.  These works are intimately related to the celebrated concentration compactness trichotomy of P.L. Lions for measures, and in particular to the more explicit form given by G\'erard \cite{Ger98}. The first such result in the setting of dispersive equations on hyperbolic space was given for the nonlinear hyperbolic Schr\'odinger equation by Ionescu, Pausader and Staffilani,~\cite{IPS}, which  served as a starting point for the present work. 

Regarding the nonlinear portion of this paper, the energy critical wave equation in Euclidean space, $\Box_{\R^3} u =  \pm u^5$, has been extensively studied. Global regularity of  solutions corresponding to  finite energy data was proved by Struwe,~\cite{Struwe88}, for the radial defocusing equation and later by Grillakis,~\cite{Gri90}, for the nonradial, defocusing equation. Later a satisfactory description of the global dynamics in the defocusing case, in particular the question  of scattering, was resolved by Shatah, Struwe~\cite{SS93}, Bahouri, Shatah~\cite{BS}, and Bahouri G\'erard~\cite{BG}. 

For the  focusing energy critical equation, type-II blow can and does occur,  as explicitly demonstrated by Krieger, Schlag, and Tataru~\cite{KST3}, via a concentration of  energy  culminating with  the bubbling  off of the unique radial  ground state, $W$, which solves the underlying elliptic equation; see also \cite{DHKS, DK, KS12}. 

In~\cite{KM06, KM08}, Kenig and Merle began a powerful program for understanding large energy dynamics for semilinear equations   with the now ubiquitous  concentration compactness/rigidity method. The concentration compactness aspect of the method is rooted in the  profile decompositions of Bahouri and G\'erard,~\cite{BG}. In~\cite{KM08}, Kenig and Merle gave a characterization of all possible dynamics for solutions with energy  below the threshold energy of the ground state, $W$. The remarkable work of Duyckaerts, Kenig, and Merle \cite{DKM1,DKM3, DKM2, DKM4} gives a classification of possible dynamics for large energies. To be precise,  all type-II radial solutions asymptotically resolve into a sum of rescaled solitons  plus pure radiation.  The explicit  dynamics at the threshold energy of $W$ have been studied by Duyckaerts and Merle~\cite{DM} and slightly above the threshold energy by Krieger, Nakanishi, and Schlag in~\cite{KNS13ajm, KNS13DCDS, KNS14CMP}. 

Strichartz estimates for the linear wave equation on hyperbolic space were established independently by Metcalfe, Taylor~\cite{MT11, MTay12} and by Anker and Pierfelice~\cite{AP}. These authors showed that free waves experience faster dispersion on hyperbolic space due to the exponential growth of the volume of concentric spheres. The stronger dispersion led to  a wider range of admissible Strichartz estimates than in the Euclidean setting and allowed the authors to prove well posedness results for the semi-linear equations with powers below the famous Strauss exponent. 

There have also been several results on the nonlinear Schr\"odinger equation on hyperbolic space. For results in the radial setting see Banica~\cite{Ban07} and Banica, Carles, Staffilani~\cite{BCS}  and Banica, Carles, Duyckaerts~\cite{BCD}. Strichartz estimates as well as  scattering in $H^1$ for the subcritical problem in the nonradial setting were proved by Ionescu, Staffilani,~\cite{IS} and Anker, Pierfelice~\cite{AP09}.  In~\cite{IS}, Ionescu and Staffilani proved a strong hyperbolic Morawetz-type estimate for the defocusing Schr\"odinger problems. The same multiplier is used in the present work to prove a Morawetz estimates in the setting of the nonlinear wave equation. Finally, global existence and scattering for the energy critical defocusing nonlinear Schr\"odinger equation was established by Ionescu, Pausader, and Staffilani,~\cite{IPS}.  

A related equation that has garnered attention recently is the ``shifted" wave equation on $\R \times \Hp^d$. Here the word shifted refers to taking $\mu = -\frac{(d-1)^2}{4}$ in Remark~\ref{shift remark}, which eliminates the entire spectral gap enjoyed by the Laplacian on $\Hp^d$. This equation exhibits several qualitative differences than the type of equations considered in the present work.  In particular, one can conjugate the shifted D'Alembertian on hyperbolic space to obtain the Euclidean D'Alembertian with respect to the hyperbolic foliation of  the forward light cone in Minkowski space, see for example~\cite{Tat01hyp}. Dispersive estimates for the shifted equation were studied by Tataru in~\cite{Tat01hyp} and by Anker, Pierfelice and Vallarino~\cite{APV}.  Recently, scattering by way of hyperbolic Morawetz estimates was established for subcritcal defocusing semilinear shifted equations by Shen, Staffilani~\cite{ShenS14}, and for the radial energy critical shifted equation by Shen~\cite{Shen14}. 
 
Finally, we also mention the recent work of Jia, Liu, Xu~\cite{JLX} who proved asymptotic relaxation to steady states for the defocusing energy critical wave equation on $\R^{1+3}$ with a potential using the Duyckaerts, Kenig, Merle approach,  and of Hong~\cite{Hong14}, who studied the cubic focusing nonlinear Schr\"odinger equation on $\bbR^{3}$ with a real-valued short-range potential that has a small negative part using, in particular, the concentration compactness/rigidity approach. 
 %This is: http://arxiv.org/abs/1403.3944v1.
 
%profiles:   Lyons (concentration compactness),  Gerard (elliptic profiles), Bahouri, Gerard;  Merle, Vega;  Keraani. \\
%induction on energy: Bourgain (induction on energy), Kenig-Merle,\\

%critical equation: defocusing: Struwe; Grillakis; Shatah, Struwe; Bahouri, Shatah;   focusing: Kenig, Merle; Krieger, Schlag;  Krieger, Nakanishi, Schlag;  Duyckaerts Kenig, Merle.\\

%hyperbolic space: \\
%linear wave and subcritical results:  Metcalfe, Taylor; Anker, Pierfelice; \\
%shifted wave: Tataru; Anker, Pierfeleice, Vallarino; Shen, Staffilani, Shen \\

%Schr\"odinger: Banica; Ionescu, Staffilani; Ionescu, Pausader, Staffilani. 

\subsection{Outline of the paper}
We begin in Section~\ref{sec: prelim} with some preliminaries that set the stage for the rest of the paper. There we recall some basic facts regarding the structure of hyperbolic space $\Hp^d$. Furthermore, we extend the results of Ionescu, Pausader, and Stafflani, \cite{IPS}, concerning Littlewood-Paley theory and refined Sobolev embedding theorem on $\bbH^{d}$; see Lemma~\ref{lem: se}. The latter result is essential in the extraction of the linear profiles in Theorem~\ref{thm:mainbg}. We introduce an alternative approach for proving Lemma~\ref{lem: se}, which avoids the use of the delicate Helgason-Fourier transform and relies directly on the properties of the linear heat equation $(\rd_{s} - \lap_{\bbH^{d}})f = 0$, which are known to hold in a more general setting.

The remainder of the paper is divided into two parts, plus an auxiliary Section~\ref{sec:morawetz}. Part~\ref{p:lin}, which consists of Sections~\ref{sec:lin} and Section~\ref{sec: bg}, concerns the linear theory, culminating in the proofs of the linear profile decomposition, namely Theorem~\ref{thm: BG}. Part~\ref{p:nonlin}, consisting of Sections~\ref{sec: wp}--\ref{sec:main:V:pf}, is devoted to the the nonlinear theory, including the nonlinear profile decomposition, i.e., Theorem~\ref{thm:nonlinbg}, and the proofs of Theorems~\ref{main} and~\ref{main:V}. 

Section~\ref{sec:lin} contains the dispersive theory for the linear equation~\eqref{lin eq}, including the improved dispersive estimates on $\Hp^d$ of Anker and Pierfelice, \cite{AP}, and Metcalfe and Taylor,~\cite{MTay12}, along with Strichartz estimates for~\eqref{lin eq}. We also develop linear approximation theory for sequences of solutions to~\eqref{lin eq} that either escape to spatial infinity or concentrate to a point; see Section~\ref{subsec:linApprox}. The latter statement is an analogue of the result of Ionescu, Pausader, and Staffilani, \cite{IPS}, which says that solutions with data that is highly localized in $\Hp^d$ can be approximated by solutions to the underlying Euclidean scale invariant equation. 

Section~\ref{sec: bg} is devoted to the proof of the linear profile decomposition, Theorem~\ref{thm: BG}, which is a more precise version of Theorem~\ref{thm:mainbg}. 
The main technical tools are the refined Sobolev inequality, Lemma~\ref{lem: se}, the dispersive estimate for the free wave equation on $\bbH^{d}$ and the linear approximation theory developed in Section~\ref{subsec:linApprox}.
%To make the exposition as clear as possible, we begin by proving Theorem~\ref{thm: BG}, which is a linear profile decomposition  for free waves on $\R \times \Hp^d$, i.e., solutions to $\Box_{\Hp^d} u = 0$.  We then extend this result the more general situation, namely we prove  a linear profile decomposition with respect to solutions to~\eqref{lin eq} in Theorem~\ref{thm:BGgen}. 

Section~\ref{sec: wp} marks the beginning of Part~\ref{p:nonlin}. There we establish the local Cauchy theory, including the small data scattering theory and nonlinear perturbation theory, for solutions to~\eqref{u eq d}, see Proposition~\ref{small data} and Lemma~\ref{lem: pert}. This is of course a standard argument based on the Strichartz estimates from Section~\ref{sec:lin}. 

In Section~\ref{sec:nonlinApprox} we use the nonlinear perturbation theory to extend the linear approximation theory in Section~\ref{subsec:linApprox} to the nonlinear setting; see Propositions~\ref{prop: h h} and~\ref{prop: e h}. These results play a crucial role in properly defining the nonlinear profiles in the next section.

In Section~\ref{sec: bg nl} we prove a nonlinear profile decomposition for sequences of solutions to~\eqref{u eq d}, namely Theorem~\ref{thm:nonlinbg}. The key point here is that there are three types of nonlinear profiles, corresponding to the action of different factors of the non-compact (semi-)group $\bbR \times SO(d, 1) \times [1, \infty)$. They are: (i) \emph{stationary} profiles, which can be translated in time but live at a fixed location and scale; (ii) \emph{traveling} profiles, which can be translated in space and time but live at a fixed scale; and (iii)  \emph{concentrating} profiles, which can also be translated in space and time but are concentrating at small scales. These profiles are also called \emph{perturbed hyperbolic}, \emph{free hyperbolic} and \emph{Euclidean}, respectively, after the underlying equations that govern their behaviors.

In Section~\ref{sec:main:pf}, we establish Theorem~\ref{main} concerning the energy-critical defocusing semilinear wave equation on $\bbH^{3}$ without a potential.
In Section~\ref{sec: ce}, we execute the concentration compactness part of the Kenig-Merle scattering blueprint. The goal is to use the nonlinear profile decomposition of the previous section to prove the following: In the event that Theorem~\ref{main} fails, there exists a minimal non-scattering solution to~\eqref{u eq}, called the critical element, which has a pre-compact trajectory in the energy space $\HH$, up to spatial translations. Then in Section~\ref{sec: rig} we prove a rigidity result, namely any solution with a pre-compact trajectory in $\HH$ up to translations must be identically zero. This result yields a contradiction with  the existence of the the critical element and completes the proof of Theorem~\ref{main}. 

The main tool in the rigidity argument is a Morawetz-type estimate for solutions to~\eqref{u eq d} of the same ilk as the Morawetz estimates proved in~\cite{IPS}. In particular, we show that every solution to~\eqref{u eq d}, say for $d=3$ has finite $L^{6}_{t, x}$ norm. This estimate is \emph{not enough} to conclude however, since the equation is supercritical with respect to $L^{6}_{t, x}$, and thus the Morawtetz estimate alone cannot rule out a solution that, say,  concentrates at a point. However, we can combine the Morawetz-type estimate with the assumed pre-compactness of the trajectory -- in particular with the fact  that we have ruled out Euclidean profiles -- see the proof of Proposition~\ref{rig}. 

In Section~\ref{sec:main:V:pf}, we add in a nontrivial potential $V$ and prove Theorem~\ref{main:V}. The proof proceeds along the same line as in the previous section, except now we use Theorem~\ref{main} to say that all traveling profiles must be global and scattering. If Theorem~\ref{main:V} fails, we then produce a critical element which plainly has a pre-compact trajectory in $\HH$. As the assumptions on $V$ allow us to derive an analogous Morawetz-type estimate as in Theorem~\ref{main}, the same rigidity argument applies, thereby finishing the proof of Theorem~\ref{main:V} up to proofs of a few statements in the next section.

Finally, in Section~\ref{sec:morawetz}, we give a proof of the integrated local energy decay \eqref{eq:VLED} and Morawetz-type estimates under the presence of a potential $V$, which satisfies the assumptions of Theorem~\ref{main:V}. The proof depends on a multiplier of the same type as considered earlier by Ionescu, Pausader, Staffilani~\cite{IPS} and Shen, Staffilani,~\cite{ShenS14}. We also establish Strichartz estimates for the perturbed equation $(\Box_{\bbH^{d}} + V) u = 0$, which are necessary in the proof of Theorem~\ref{main:V}.

\section{Preliminaries}\label{sec: prelim}

 \subsection{Hyperbolic space, convolution, Littlewood-Paley theory} Consider $(d+1)$-dimensional Minkowski space, endowed with the metric $\m = \textrm{diag}(1,  \dots, 1, -1)$ and coordinates $(x^1, \ldots, x^d, x^0)$. Define the bi-linear form
\ant{
&[ \cdot, \cdot] :  \R^{d+1} \to  \R \\
&[x, y] = x^0y^0 - \sum_{j=1}^d x^jy^j 
}
We then define the $d$-dimensional hyperbolic space $\Hp^d$ by 
\ant{
\Hp^d := \{ x \in \R^{d+1} \mid [x, x] = 1, \, \, x^0>0\}
}
The Minkowksi metric on $\R^{1+d}$ induces  a Riemannian structure on $\Hp^d$ via the pull-back of the inclusion map. We view the point $\zero :=(0 , \ldots , 0, 1) \in \R^{d+1}$, which is the vertex of the hyperboloid,  as the origin in $\Hp^d$.

Next, define $(\GG, \circ):= (SO(d, 1), \circ)$ as the connected Lie group of $(d+1) \times (d+1)$ matrices that leave the bilinear form $[ \cdot, \cdot]$ invariant. We can characterize $\GG$ by
\ant{
A \in \GG \Longleftrightarrow A^t \m A = \m, \,\, \det(A) = 1,   \mand A_{00} >0
}
where again $\m = \textrm{diag}(1, \ldots, 1, -1)$.  Next, we view $\K= SO(d)$, as a subgroup of $\GG$ which fixes the origin $\mathbf{0}$.  Indeed $\K$ is a compact subgroup of rotations acting on the variables $(x^1, \ldots, x^d)$.  We can thus identify $\Hp^d$ with the symmetric space $\GG/ \K$.  With this set-up we define translation in $\Hp^d$ as the action of $\GG$ on $\Hp^d$ . For every $h \in  \GG$ we can define the map
\ant{
&L_h: \Hp^d \to \Hp^d\\
&L_h(x) = h \cdot x
}
This is an isometry of $\Hp^d$ and gives rise to the  isometry
\ant{
\tau_h: L^2( \Hp^d)  \to L^2(\Hp^d)\\
\tau_h(f)(x) = f( h \cdot x)
}
A function, $f: \Hp^d \to \R$ is called $\K$-invariant, or radial, if for all $k \in \K$ and for all $x \in \Hp^d$ we have
\ant{
f(k \cdot x) = f(x)
}
A function $f: \GG \to \R$ is called bi-$\K$-invariant if $f(k_1 \circ g \circ k_2) = f(g)$ for all $k_1, k_2 \in \K$. 

It will be useful to keep in mind the \emph{Cartan decomposition} of $h \in \bbG$, namely 
\EQ{ \label{eq:cartanDecomp}
  &h = k \circ a_{r}  \circ \ti{k}, \quad a_{r} \in \bbA_+, \, \, k , \ti{k} \in \K,  \\
  &a_{r}:=  \pmat{ \Id_{d-1 \times d-1} & 0 &0\\0 & \cosh r & \sinh r \\ 0 & \sinh r & \cosh r}, \quad \bbA_+:=\{a_r \,:\, r\in[0,\infty) \}.
  }

There are several convenient global coordinate systems one can consider on $\Hp^d$, one of which is geodesic polar coordinates:
\ant{
\R_+ \times \Sp^{d-1} \ni (r, \om) \mapsto ( \sinh r \cdot \om, \cosh r) \in \R^{d+1}
}
Denote this map by $\Psi : [0, \infty) \times \Sp^{d-1}  \to (\R^{d+1}, \m)$ where $\R^{d+1}$ denotes $d+1$ dimensional Minkowski space and $\m$ is the Minkowski metric.
The hyperbolic metric, $\g$, in these coordinates is given by the pull-back of the Minkowksi metric by $\Psi$, i.e., $\g= \Psi^* \m$. %In the $3$-d case, we have
%\ant{
%\g = (\g_{jk}) = \pmat{ 1 &0&0 \\ 0 & \sinh^2 r&0 \\ 0 &0& \sinh^2 r \sin^2 \te}
%}
The volume element $\mu(dx)$ on $\Hp^d$  in these coordinates is given by $ \sinh^{d-1} r dr \sigma(d \om)$ %In the $3$-d case, $ \sqrt{ \abs{ \g(r, \om)}} = \sinh^2 r \sin \te$  
and hence for $f : \Hp^d \to \R$ we have
\ant{
\int_{\Hp^d} f(x) \, \mu(dx)   =  \int_{\Sp^{d-1}} \int_0^{\infty} (f \circ \Psi)(r, \om) \, \sinh^{d-1} r  \, dr\, \sigma(d\om)
}
The Laplace-Beltrami operator is  given by
\ant{
\Delta_{\Hp^d} =  \p_r^2 + (d-1) \coth r \,  \p_r + \sinh^{-2} r \, \Delta_{\Sp^{d-1}}.
}
It is also worth noting that $\K$ invariance, or radiality, means that a function $f: \Hp^d \to \R$ depends only on the radial variable $r$,  and we will often abuse notation by writing $f(x) = f(r)$ in this case. We remark that the radial variable $r$ is in fact the hyperbolic distance from the point $x = ( \sinh r \cdot \om, \cosh r) $ to the origin $\mathbf{0} = (0, \ldots, 0, 1)$ and we write $r = \bfd_{\Hp^d}(x, \mathbf{0}) = \cosh^{-1}([x, \mathbf{0}])$. In general we have $\bfd_{\Hp^d}(x, y) = \cosh^{-1}([x, y])$.

It will often be convenient to recast the integration formulas above with a group theoretic interpretation. We note that $\GG$ is  semi-simple and hence unimodular and $\K$ is compact. We normalize the Haar measures on $\K$ and $\GG$ so that 
\EQ{
\int_{\K} 1 \, dk =1, \quad \int_{\GG} f(g \cdot \mathbf{0}) \, d g = \int_{\Hp^d} f(x) \mu (dx)
} 
%\Red{ define Iwasawa decmposition? Polar coordinate group theoretic form $ x = \Psi(r, \om) = k a_r \cdot \mathbf{0}$ where $k  \MM = \om$.  and we have $\mu(dx) =   C_2 \sinh^2 r \, dr \, dk$ where $C_2$ is volume of $2$-sphere. }

In the group theoretic formulation, we can define convolution. Let $f_1, f_2 \in C^{\infty}_0( \GG)$. Then we have 
\EQ{
f_1 \ast f_2( h)&:= \int_{\GG} f_1(g) f_2( g^{-1} \circ h) \, dg  = \int_{\GG} f_1( h \circ \ti g^{-1}) f_2( \ti g) \, d \ti g \\
&= \int_{\GG} f_1( h \circ g) f_2( g^{-1}) \, dg
}
where in the equalities above we have used  change of variables and the various invariances of the Haar measure, i.e, 
\ant{
d( g \circ h)  = d( h \circ g) = dg = d( g^{-1})
}
which are due to the unimodularity of $\GG$. 
We can refine the definition of convolution  in the case where $f, K: \Hp^d \to \R$ and $K$ is a radial ($\K$-invariant) function. In particular we can recast convolution in terms of the group action ``$\cdot$" as opposed to the group operation ``$\circ$". For $x:= h \cdot \zero$, we have 
\EQ{ \label{convo}
f \ast K( x)  &=  \int_{\GG} f( h \circ g) K( g^{-1}) d g = \int_{\GG} f ( h \circ g) K( g) \, dg\\
& = \int_{\GG} f (h \cdot (g \cdot \zero)) K( g \cdot \zero) \, dg \\
&  = \ang{  \tau_h f \mid K}_{L^2( \Hp^d)} = \ang{ f \mid \tau_{h^{-1}} K}_{L^2( \Hp^d)}
}
where above we have used the fact that if $K$ is radial, then $K(g \cdot \zero) = K( g^{-1} \cdot \zero)$.

 \subsection{Function Spaces} The $L^p( \Hp^d)$ spaces are defined as usual for $1 \le p < \infty$ with
\ant{
\|f \|_{L^p(\Hp^d)} =  \left( \int_{\Hp^d} \abs{ f(x) }^p\,  \mu(dx)\right)^{\frac{1}{p}}.
}

There are two possible approaches to defining Sobolev spaces $W^{s, p}( \Hp^d)$, one via the Riemannian structure and the other by using the spectral theory of $-\Dlt_{\bbH^{d}}$. These two approaches are in fact equivalent.

We now give a definition of $W^{s, p}(\bbH^{d})$ using spectral theory. For $ s \in \bbR$, the fractional Laplacian $(-\lap_{\bbH^{d}})^{\frac{s}{2}}$ is a well-defined operator on (say) $C^{\infty}_{0}(\bbH^{d})$ by the spectral theory of $-\lap_{\bbH^{d}}$. For $f \in C^{\infty}_{0}(\bbH^{d})$, we then set
\ant{
\|f \|_{W^{s, p}(\Hp^d)} = \|(- \Delta_{\Hp^d})^{\frac{s}{2}} f\|_{L^p(\Hp^d)}
}
and define $W^{s, p}( \Hp^d)$ to be the completion of $C^{\infty}_0( \Hp^d)$ under the above norm. For $p=2$ we write $W^{s, 2}(\Hp^d) =: H^s(\Hp^d)$.
%Note from this definition it is readily apparent that we have
%\ant{
%\|f \|_{L^2( \Hp^d)} \le C \|f\|_{H^1(\Hp^d)}
%}
%for all $f \in C_0^{\infty}$.

In fact, the fractional Laplacian $(- \Delta_{\Hp^d})^{\frac{s}{2}}$ is bounded on $L^p$ for all $s \le 0$ and all $p \in (1, \infty)$. This can be used to show that the above definition of Sobolev spaces is equivalent  to the usual definition of Sobolev spaces using the Riemannian structure of $\Hp^d.$  For a proof of this fact we refer the reader to \cite{Tat01hyp}. In particular, we remark that we have for $p \in (1, \infty)$,
\ant{
\|f\|_{W^{1, p}( \Hp^d)} = \| (- \Delta_{\Hp^d})^{\frac{1}{2}} f\|_{L^p(\Hp^d)} \simeq  \left( \int_{\Hp^d} \abs{ \nabla f}_{\g}^p \mu(dx) \right)^{\frac{1}{p}}
}
where  in local coordinates, $\abs{\nabla f}_{\g}^2 = \g^{ij} \p_i f \p_j f$.  

%Note of the following fact: contrary to $\R^d$, on $\Hp^d$ we have the uniform inequality
%\ant{
%\int_{\Hp^d} \abs{ f(x)}^2 \, \mu(dx) \le C \int_{\Hp^d} \abs{ \nabla f(x) }_{\g}^2 \, \mu(dx)
%}
%for all $f \in C_0^{\infty}(\Hp^d)$ for a uniform constant $C>0$. This means that there is no distinction between $\dot{H}^1(\Hp^d)$ and $H^1(\Hp^d)$ -- indeed the above means that $\dot{H}^1(\Hp^d) \hookrightarrow L^2( \Hp^d)$ -- and hence we just write $H^1(\Hp^d)$. This is easily verified in the radial setting. Indeed, let $f \in C_0^{\infty}( \Hp^d)$ be a radial function. Integration by parts gives the following equality:
%\ant{
%\int_{0}^{\infty} f^2(r) \, \sinh^2 r \, \, dr + \int_{0}^{\infty} f^2(r) \cosh^2 r \, dr =  -2 \int_{0}^{\infty} f(r) \p_r f(r) \, \sinh r \cosh r \, dr
%}
%Therefore,
%\begin{multline*}
%2 \left( \int_0^{\infty} f^2(r)  \sinh^2 r \, dr \right)^{\frac{1}{2}}\left( \int_0^{\infty}  f^2(r)  \cosh^2 r \, dr \right)^{\frac{1}{2}}\\ \le \int_{0}^{\infty} f^2(r) \, \sinh^2 r \, \, dr+ \int_{0}^{\infty} f^2(r) \cosh^2 r \, dr \\
% =  \abs{2 \int_{0}^{\infty} f(r) \p_r f(r) \, \sinh r \cosh r \, dr} \\
% \le2 \left( \int_0^{\infty} (\p_r f(r))^2  \sinh^2 r \, dr \right)^{\frac{1}{2}}\left( \int_0^{\infty}  f^2(r)  \cosh^2 r \, dr \right)^{\frac{1}{2}} ,\\
%\end{multline*}
%which proves that
%\ant{
% \int_0^{\infty} f^2(r)  \sinh^2 r \, dr \le \int_0^{\infty} (\p_r f(r))^2  \sinh^2 r \, dr.
% }
 
\subsection{Littlewood-Paley theory via heat equation and refined Sobolev inequality}
We have the Sobolev embedding theorem:
\EQ{
W^{s, p}(\Hp^d) \hookrightarrow L^q(\Hp^d) \mfor 1 < p \le q < \infty, \mand \frac{1}{q} = \frac{1}{p}- \frac{s}{d}
}
In particular, we see that
\ant{
H^1(\Hp^d) \hookrightarrow L^{\frac{2d}{d-2}}( \Hp^d). 
}

To prove the Bahouri-G\'erard profile decomposition in Section~\ref{sec: bg} we will need a refined version of the above embedding. For this purpose, we need to define a suitable replacement for the usual Littlewood-Paley frequency projections in Euclidean space. Our approach will be based on the linear heat equation $(\rd_{s} - \lap_{\bbH^{d}}) f = 0$ on $\bbH^{d}$. Such an idea is implicit\footnote{More precisely, in the language below, the operator $P_{\lmb}$ in \cite{IPS} is defined to as $P_{\lmb} := 2 \lmb^{-2} \lap_{\bbH^{d}} e^{-\lmb^{-2} \lap_{\bbH^{d}}}$.} in the definition of the projections $P_{\lmb}$ in \cite{IPS}; in contrast to \cite{IPS}, however, we give an alternative approach which does not depend on the Helgason-Fourier transform, but rather directly on properties of the linear heat equation on $\bbH^{d}$. 

Let $p(x, y ; s)$ be the \emph{heat kernel} on $\bbH^{d}$, i.e., the kernel of the heat semi-group operator $e^{s \lap_{\bbH^{d}}}$ acting on scalar functions on $\bbH^{d}$. Since $\lap_{\bbH^{d}}$ is invariant under the symmetries of $\bbH^{d}$, it follows that 
\begin{equation*}
	e^{s \lap_{\bbH^{d}}} f (x) = \int p(x, y; s) f(y) \, \mu(\ud y) = f \ast p_{s} (x)
\end{equation*}
where $p_{s}(x) = p(0, x; s)$ is a radial function on $\bbH^{d}$. Abusing the terminology a bit, we will also refer to $p_{s}(x)$ as the heat kernel on $\bbH^{d}$. In the following lemma, we collect the properties of $p_{s}(x)$ that we will need in the sequel.
\begin{lem} \label{lem:p-est}
The following statements concerning the heat kernel on $\bbH^{d}$ ($d \geq 2$) hold.
\begin{enumerate}
\item \label{item:p-est:1}
	For $0 < s \leq 2$ and $k = 0, 1, 2$, there exists $N_{k} > 0$ so that the following \emph{short-time heat kernel estimate} holds:
\begin{equation} \label{eq:p-est:shorttime}
	\abs{\rd_{s}^{k} p_{s}(x)} \aleq_{k} s^{-\frac{d}{2} - k} \bb( 1+ \frac{\abs{x}^{2}}{s} \bb)^{N_{k}} e^{-\frac{\abs{x}^{2}}{4s}}.
\end{equation}
where $\abs{x} := \bfd_{\bbH^{d}}(x, \zero)$.
\item \label{item:p-est:2}
	For $s > 2$ and any $1 \leq p \leq \infty$, we have the \emph{long-time estimate}
\begin{equation} \label{eq:p-est:longtime}
	\nrm{e^{s \lap} f}_{L^{p}(\bbH^{d})} = \nrm{p_{s} \ast f}_{L^{p}(\bbH^{d})} \aleq e^{-\frac{(d-1)^{2}}{4} s} \nrm{f}_{L^{p}(\bbH^{d})}.
\end{equation}
\end{enumerate}
\end{lem}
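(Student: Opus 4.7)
My plan is to address the two parts of the lemma separately, as they rely on different ideas.

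For the short-time estimate in part (a), I would use the explicit formulas for the heat kernel on $\bbH^{d}$. In odd dimension $d=2n+1$ one has the Millson-type identity
\[
  p_{s}(r) = \frac{(-1)^{n}}{(2\pi)^{n}(4\pi s)^{1/2}}\Bigl(\frac{-1}{\sinh r}\,\rd_{r}\Bigr)^{\!n} e^{-n^{2}s - r^{2}/(4s)},
\]
and in even dimension a subordination (method-of-descent) identity expresses the heat kernel on $\bbH^{2n}$ as an integral transform of the heat kernel on $\bbH^{2n+1}$. The bound then reduces to direct differentiation of an explicit function of $r$ and $s$. The Gaussian $e^{-r^{2}/(4s)}$ supplies the dominant factor; the shift $e^{-n^{2}s}$ is bounded for $s\le 2$; and the derivatives of $(\sinh r)^{-1}$ produce singular factors $r^{-j}$ for $r$ small and exponentially decaying factors for $r$ large, both of which are absorbed into the polynomial correction $(1+r^{2}/s)^{N_{k}}$ with $N_{k}$ sufficiently large (using elementary estimates of the form $r^{2j} e^{-r^{2}/(8s)} \aleq s^{j}$ to soak up negative powers of $r$). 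The $s$-derivatives are handled by iterating the heat equation $\rd_{s} p_{s} = \lap_{\bbH^{d}} p_{s}$; each application of $\lap_{\bbH^{d}}$ in geodesic polar coordinates contributes at most two radial derivatives and only enlarges $N_{k}$.

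For the long-time estimate in part (b), I would split by $p$. For $p=2$ the bound is immediate from the spectral theorem applied to $-\lap_{\bbH^{d}}$, whose $L^{2}$-spectrum is $[(d-1)^{2}/4,\infty)$; thus $\nrm{e^{s\lap}}_{L^{2}\to L^{2}} \le e^{-(d-1)^{2}s/4}$. For general $p$, the plan is to invoke the Davies--Mandouvalos pointwise estimate
\[
  p_{s}(r) \aleq s^{-d/2}(1+r)(1+r+s)^{(d-3)/2}\, e^{-(d-1)^{2}s/4 - (d-1)r/2 - r^{2}/(4s)},
\]
valid for $s\ge 1$. The factor $e^{-(d-1)r/2}$ is the critical structural feature: it precisely offsets the volume growth $\sinh^{d-1}r \aeq e^{(d-1)r}$ of the hyperbolic measure, so that $\nrm{p_{s}}_{L^{q}(\bbH^{d})}$ inherits the decay $e^{-(d-1)^{2}s/4}$ up to polynomial factors in $s$ for each fixed $q$. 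Young's convolution inequality, combined with interpolation against the sharp $L^{2}$ bound, then transfers this decay to $\nrm{e^{s\lap} f}_{L^{p}}$.

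The main obstacle will be in (b), namely proving the Davies--Mandouvalos pointwise bound without invoking the Helgason--Fourier transform, in keeping with the paper's stated philosophy. One approach is to iterate the semigroup identity $p_{s} = p_{s/2}\ast p_{s/2}$ together with the spectral-gap $L^{2}$ decay, bootstrapping to pointwise information via the short-time Gaussian estimate from part (a). An alternative, more direct route is to conjugate the radial part of $\lap_{\bbH^{d}}$ by $(\sinh r)^{(d-1)/2}$, which produces a half-line Schr\"odinger operator whose potential tends to $-(d-1)^{2}/4$ at infinity; analyzing the corresponding one-dimensional heat kernel produces the pointwise weight $e^{-(d-1)r/2}$ after undoing the conjugation. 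Either route requires careful interplay between the spectral bottom $(d-1)^{2}/4$ and the exponential volume element, which is what makes the estimate delicate.
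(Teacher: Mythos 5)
Your part (a) proceeds by a genuinely different route than the paper. Whereas you propose differentiating the explicit Millson recursion (odd $d$) together with a subordination identity (even $d$), the paper reads off the $k=0$ bound directly from the Davies--Mandouvalos pointwise estimate (for $s\le 2$ the factors $e^{-(d-1)^2 s/4}$ and $e^{-(d-1)|x|/2}$ are simply discarded), and then cites the standard machinery of Davies and Grigor'yan that upgrades a pointwise kernel bound to bounds on its time derivatives, rather than computing $\rd_s^k p_s$ by hand. Both are valid; your route is more concrete but more laborious (especially the even-dimensional subordination integral), while the paper's is shorter and, as the remark following the lemma emphasizes, applies verbatim to more general manifolds.

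Part (b) has a genuine gap. The assertion that $e^{-(d-1)r/2}$ ``precisely offsets'' the volume factor $\sinh^{d-1}r \sim e^{(d-1)r}$ is off by a factor of two: the product is $e^{(d-1)r/2}$, and once one completes the square against the Gaussian $e^{-r^2/(4s)}$ this residual linear exponential cancels the $e^{-(d-1)^2 s/4}$ prefactor entirely. In fact $\nrm{p_s}_{L^1(\bbH^d)}=1$ for all $s>0$, because $\bbH^d$ is stochastically complete, and dually $e^{s\lap_{\bbH^d}}1=1$ so the $L^\infty\to L^\infty$ operator norm of $e^{s\lap_{\bbH^d}}$ is exactly $1$. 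So $\nrm{p_s}_{L^q}$ does not decay at the rate $e^{-(d-1)^2 s/4}$ for any fixed $q\neq 2$, and no combination of Young's inequality and interpolation against the sharp $L^2$ bound will produce that rate at the extremal exponents $p=1,\infty$. (The paper's own one-line argument for part (b) reduces, via Young, to the claim $\nrm{p_s}_{L^1}\aleq e^{-(d-1)^2 s/4}$ and attributes it to the Davies--Mandouvalos bound; that claim is inconsistent with stochastic completeness for the same reason, and the lemma as stated really needs either to exclude the endpoint exponents or to be applied only after composing the semigroup with a positive power of $-\lap_{\bbH^d}$, which is in fact how it is used in the proof of the refined Sobolev inequality.) Finally, the closing speculation about re-deriving Davies--Mandouvalos without the Helgason--Fourier transform is misplaced: the paper simply cites that pointwise bound, and its stated preference for avoiding the Helgason--Fourier transform concerns the construction of the Littlewood--Paley projections and the refined Sobolev inequality, not the proof of the heat kernel estimate itself.
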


\begin{proof} 
By Young's inequality, part (\ref{item:p-est:2}) is equivalent to showing that 
\begin{equation} \label{eq:p-est:longtime:pf}
\nrm{p_{s}}_{L^{1}(\bbH^{d})} \aleq e^{-\frac{(d-1)^{2}}{4} s}.
\end{equation}
Part (\ref{item:p-est:1}) for $k = 0$ and~\eqref{eq:p-est:longtime:pf} follow immediately from the following classical heat kernel bound on $\bbH^{d}$:
\begin{equation} \label{eq:DM}
	p_{s}(x) \aleq s^{-\frac{d}{2}} e^{-\frac{(d-1)^{2}}{4} s - \frac{\abs{x}^{2}}{4s} - \frac{d-1}{2} \abs{x}} (1+\abs{x}+s)^{\frac{d-3}{2}} (1 + \abs{x}).
\end{equation}
For a proof of~\eqref{eq:DM}, we refer to \cite[Theorem 3.1]{DaviesMandouvalos}. Then from~\eqref{eq:p-est:shorttime} in the case $k =0$, the cases $k=1, 2$ follow by a standard machinery for deriving estimates for the time derivatives of the heat kernel from that of the heat kernel itself; we refer to \cite{Davies} and \cite[Theorems 1.1 and 1.2]{Grigoryan} for details. \qedhere
\end{proof}

\begin{rem}
As stated earlier, Lemma \ref{lem:p-est} essentially contains all the properties for the heat kernel that will be used in this paper (i.e., except for manifestly more elementary ones).
Although in our proof we have relied on the explicit heat kernel bound on $\bbH^{d}$ for simplicity, the estimates in Lemma \ref{lem:p-est} are known to hold in a much wider generality. A sufficient condition is that the manifold $M$ be a smooth $d$-dimensional complete Riemannian manifold satisfying the following properties:
\begin{itemize}
\item The lowest eigenvalue $\lmb_{1}(M)$ of the Laplace-Beltrami operator is strictly positive. 

\item The manifold $M$ has \emph{bounded geometry}, in the sense that there exists $\eps > 0$ such that each geodesic ball $B_{\eps}(x) = \set{y \in M : \bfd_{M}(x, y) < \eps}$ of radius $\eps$ is \emph{uniformly quasi-isometric} to the Euclidean ball $B_{\eps}^{\euc} = \set{y \in \bbR^{d} : \abs{y} < \eps}$. That is, for all $B_{\eps}(x)$ there is a diffeomorphism $\varphi_{\eps} : B_{\eps}^{\euc} \to B_{\eps}(x)$ satisfying $\frac{1}{C} \bfe \leq \varphi_{\eps}^{\ast} \bfg_{M} \leq C \bfe$ with a uniform constant $C > 0$, where $\varphi_{\eps}^{\ast} \bfg_{M}$ is the pullback of the metric of $M$ by $\varphi_{\eps}$ and $\bfe$ is the Euclidean metric on $B_{\eps}^{\euc}$.
\end{itemize}
For a proof, see~\cite[Theorem~15.4, Remark~15.5]{Grigoryan-book} and~\cite[Theorem~3.1]{Grigoryan}.

The subject of heat kernels on manifolds is a vast topic that we cannot survey adequately in this limited space; we recommend the reader to consult the monograph~\cite{Grigoryan-book} for further results and references.
\end{rem}

Next, we proceed to define a version of Littlewood-Paley projections in our context. For $f \in C^{\infty}_{0}(\bbH^{d})$, we may write
\begin{align*}
	f 
	= & - \int_{0}^{\infty} \rd_{s} e^{s \lap_{\bbH^{d}}} f \, \ud s \\
	=&  \lim_{s \to 0} s \rd_{s} e^{s \lap_{\bbH^{d}}} f - \lim_{s \to \infty} s \rd_{s} e^{s \lap_{\bbH^{d}}} f + \int_{0}^{\infty} s^{2} \rd_{s}^{2} e^{s \lap_{\bbH^{d}}} f \, \frac{\ud s}{s} 
\end{align*}
Writing $s \rd_{s} e^{s \lap_{\bbH^{d}}} f = s e^{s \lap_{\bbH^{d}}} \lap_{\bbH^{d}} f$, it is clear that the two limits in the preceding expression vanish for $f \in C^{\infty}_{0}(\bbH^{d})$. Therefore, defining the frequency projection $P_{\lmb}$ as
\begin{equation*}
	P_{\lmb} := 2 \lmb^{-4} \lap_{\bbH^{d}}^{2} e^{\lmb^{-2} \lap_{\bbH^{d}}}
\end{equation*}
for $f \in C^{\infty}_{0}(\bbH^{d})$ we have the identity
\begin{equation*}
	f = \int_{0}^{\infty} P_{\lmb} f \, \frac{\ud \lmb}{\lmb}.
\end{equation*}

%Since the Laplace-Beltrami operator $-\Dlt_{\Hp^{d}}$ and hence the heat operator $\rd_{s} - \Dlt_{\Hp^{d}}$ are translation invariant, it follows that $P_{\la}$ is translation invariant as well. 
%Therefore,
%\begin{equation}  \label{eq:PP convo}
%	P_{\lmb} f(h) 
%	= \ang{\tau_{h} f \mid \PP^{1}_{\lmb}}_{L^{2}(\Hp^{d})} 
%	= \ang{\tau_{h} f \mid \PP^{0}_{\lmb}}_{H^{1}(\Hp^{d})} 
%\end{equation}
%where
%\begin{equation*}
%	\PP^{1}_{\la} = \la^{-4} \Dlt_{\Hp^{d}}^{2} e^{\la^{-2} \Dlt_{\Hp^{d}}} \dlt_{\zero}, \quad
%	\PP^{0}_{\la} = \la^{-4} (-\Dlt_{\Hp^{d}}) e^{\la^{-2} \Dlt_{\Hp^{d}}} \dlt_{\zero}
%\end{equation*}
%and $\dlt_{\zero}$ is the Dirac delta function at $\zero \in \Hp^{d}$. 
%
%\comment{Need: Pointwise heat kernel bound for $s < 1$; needed to handle large $\abs{x}$.}
%
%\begin{equation} \label{eq:PP short time est}
%	\lmb^{2} \nrm{\PP^{0}_{\lmb}}_{L^{2}(\bbH^{d})} + \nrm{\PP^{1}_{\lmb}}_{L^{2}(\bbH^{d})} 
%	\aleq \lmb^{\frac{d}{2}}
%\end{equation}
%By the long time estimate for the heat equation, we have for $p \in [1, \infty]$
%\begin{equation} \label{eq:PP long time est}
%	\nrm{e^{s \Dlt_{\Hp^{d}}}}_{L^{p}(\Hp^{d}) \to L^{p}(\Hp^{d})} \aleq e^{- (\frac{d-1}{2})^{2} s} \quad \hbox{ for } s \geq 1.
%\end{equation}

We are now ready to prove a version of refined Sobolev inequality in our setting.
\begin{lem}[Refined Sobolev inequality] \label{lem: se} If $f \in H^{1}(\bbH^{d})$, where $d\geq3$, then
\begin{equation} \label{eq:impSob}
	\nrm{f}_{L^{\frac{2d}{d-2}}(\bbH^{d})}
	\aleq \nrm{\nb f}_{L^{2}(\bbH^{d})}^{\frac{d-2}{d}} 
	\cdot \bb( \sup_{\lmb \geq 1, x \in \bbH^{d}} \lmb^{-\frac{d-2}{2}} \abs{P_{\lmb} f(x)} \bb)^{\frac{2}{d}}.
\end{equation}
\end{lem}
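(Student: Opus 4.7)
The plan is to mimic the classical Bahouri--G\'erard proof of the refined Sobolev inequality, with the Littlewood--Paley projections being those just defined via the heat semigroup. Using the reproducing identity $f = \int_{0}^{\infty} P_{\lmb} f \, \frac{\ud \lmb}{\lmb}$, for each threshold $\Lambda \geq 1$ I would split
\[
  f = f_{< \Lambda} + f_{\geq \Lambda}, \qquad f_{< \Lambda} := \int_{0}^{\Lambda} P_{\lmb} f \, \frac{\ud \lmb}{\lmb}, \qquad f_{\geq \Lambda} := \int_{\Lambda}^{\infty} P_{\lmb} f \, \frac{\ud \lmb}{\lmb},
\]
and establish two complementary bounds. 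First, using the hypothesis $\abs{P_{\lmb} f(x)} \leq A \lmb^{(d-2)/2}$ directly for $\lmb \in [1, \Lambda]$ and the long-time heat kernel decay \eqref{eq:p-est:longtime} for $\lmb \in (0, 1)$ (which, combined with the spectral gap of $-\lap_{\bbH^{d}}$ and Sobolev embedding, gives $\nrm{P_{\lmb} f}_{L^{\infty}} \lesssim e^{-c/\lmb^{2}} \nrm{\nb f}_{L^{2}}$, an absolutely integrable contribution), one obtains
\[
  \nrm{f_{< \Lambda}}_{L^{\infty}(\bbH^{d})} \lesssim A \Lambda^{(d-2)/2} \mfor \Lambda \geq 1.
\]
Second, by direct spectral calculus on $-\lap_{\bbH^{d}}$ (the multiplier of $P_{\geq \Lambda}$ is $1 - (1 + \mu/\Lambda^{2}) e^{-\mu/\Lambda^{2}}$, which acts like $\min(1, \mu/\Lambda^{2})$), one gets the Bernstein-type bound
\[
  \nrm{f_{\geq \Lambda}}_{L^{2}(\bbH^{d})} \lesssim \Lambda^{-1} \nrm{\nb f}_{L^{2}(\bbH^{d})}.
\]

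With these in hand, I would run a layer-cake/Chebyshev argument. Writing $q = \tfrac{2d}{d-2}$, for each $\alpha \gec A$ choose $\Lambda(\alpha) \sim (\alpha/A)^{2/(d-2)} \geq 1$, so that $\nrm{f_{< \Lambda(\alpha)}}_{L^{\infty}} \leq \alpha/2$, and therefore $\set{\abs{f} > \alpha} \subseteq \set{\abs{f_{\geq \Lambda(\alpha)}} > \alpha/2}$. Chebyshev combined with the second bound gives the weak-type estimate
\[
  \abs{\set{\abs{f} > \alpha}} \lesssim \alpha^{-2} \Lmb(\alpha)^{-2} \nrm{\nb f}_{L^{2}}^{2} \lesssim A^{q-2} \nrm{\nb f}_{L^{2}}^{2} \alpha^{-q}.
\]
For $\alpha \lec A$, one falls back on the plain Chebyshev $\abs{\set{\abs{f}>\alpha}} \lesssim \alpha^{-2} \nrm{f}_{L^{2}}^{2}$ (with $\nrm{f}_{L^{2}} \lesssim \nrm{\nb f}_{L^{2}}$ from the spectral gap / Poincar\'e inequality on $\bbH^{d}$). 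Plugging these into the layer-cake formula $\nrm{f}_{L^{q}}^{q} = q \int_{0}^{\infty} \alpha^{q-1} \abs{\set{\abs{f}>\alpha}}\, \ud \alpha$ yields $\nrm{f}_{L^{q}}^{q} \lec A^{q-2} \nrm{\nb f}_{L^{2}}^{2}$, which is the inequality raised to the $q$-th power.

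The main obstacle, and the step that genuinely uses the structure of $\bbH^{d}$ rather than a direct transposition of the Euclidean argument, is the $L^{\infty}$ bound on $f_{< \Lambda}$. The restriction $\lmb \geq 1$ in the supremum defining $A$ is exactly what the spectral gap of $-\lap_{\bbH^{d}}$ allows: for $\lmb < 1$ the multiplier $2(\mu/\lmb^{2})^{2} e^{-\mu/\lmb^{2}}$ acts on frequencies $\mu \geq (d-1)^{2}/4$, and the factor $e^{-\mu/\lmb^{2}}$ produces rapid decay that converts the formally divergent $\int_{0}^{1} \lmb^{-4} \frac{\ud \lmb}{\lmb}$ type integral into a convergent one. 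This is where Lemma~\ref{lem:p-est}\eqref{item:p-est:2} enters, and it is also the reason the conclusion of Lemma~\ref{lem: se} only requires the Besov-type norm to be supremized over $\lmb \geq 1$, not over the full $\lmb > 0$ range as in the Euclidean refined Sobolev inequality.
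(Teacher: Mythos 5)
Your overall architecture mirrors the paper's -- a reproducing decomposition via the heat semigroup, a low-frequency $L^{\infty}$ bound, a high-frequency $L^{2}$ bound, and a layer-cake argument -- but there are two genuine gaps, and the first is serious.

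\textbf{The layer-cake step diverges.} The Bernstein-type bound $\nrm{f_{\geq \Lmb}}_{L^{2}} \aleq \Lmb^{-1} \nrm{\nb f}_{L^{2}}$ is correct, but when you feed it into the layer-cake formula you get only a \emph{weak}-$L^{q}$ estimate. Explicitly, your weak-type bound $\abs{\set{\abs{f} > \alp}} \aleq A^{q-2} \nrm{\nb f}_{L^{2}}^{2} \alp^{-q}$ for $\alp \gec A$ gives
\begin{equation*}
	q \int_{A}^{\infty} \alp^{q-1} \cdot A^{q-2} \nrm{\nb f}_{L^{2}}^{2} \alp^{-q} \, \ud \alp
	= q\, A^{q-2} \nrm{\nb f}_{L^{2}}^{2} \int_{A}^{\infty} \frac{\ud \alp}{\alp} = \infty,
\end{equation*}
so the conclusion $\nrm{f}_{L^{q}}^{q} \aleq A^{q-2}\nrm{\nb f}_{L^{2}}^{2}$ does not follow. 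This is the standard failure of weak $L^{q}$ to imply strong $L^{q}$. What actually makes the argument converge is that for each $\alp$ the $L^{2}$ mass of $f_{\geq \Lmb(\alp)}$ sits on the $\lmb > \Lmb(\alp)$ part of the decomposition, so each $\lmb$-scale contributes to the $\alp$-integral only over a bounded range; one must therefore \emph{interchange} the $\alp$- and $\lmb$-integrals before estimating. The paper packages this by introducing the weight $c(\lmb)$ (see \eqref{eq:impSob:c}) with the two properties $\nrm{f_{>\lmb_{0}}}_{L^{2}}^{2} \aleq \int_{\lmb_{0}}^{\infty} c^{2}(\lmb)\,\frac{\ud\lmb}{\lmb}$ and $\int_{0}^{\infty} \lmb^{2} c^{2}(\lmb) \, \frac{\ud\lmb}{\lmb} \aleq \nrm{\nb f}_{L^{2}}^{2}$, and then Fubini-ing $\int\int_{\lmb_{0}(\alp)}^{\infty} \alp^{q-2} c^{2}(\lmb) \frac{\ud\lmb}{\lmb}\frac{\ud\alp}{\alp}$ gives the finite $\int \lmb^{2} c^{2}(\lmb)\frac{\ud\lmb}{\lmb}$. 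If you prefer your spectral-multiplier presentation, the analogous fix is to write $\nrm{f_{\geq \Lmb(\alp)}}_{L^{2}}^{2}$ as a spectral integral $\int m_{\Lmb(\alp)}(\mu)^{2}\,\ud \nrm{E_{\mu} f}^{2}$ and interchange $\ud\alp$ and $\ud\mu$ first; the crude Bernstein bound applied scale-by-scale discards exactly the information needed.

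\textbf{The low-frequency bound factors through the wrong quantity.} You control the $\lmb \in (0,1)$ contribution to $\nrm{f_{<\Lmb}}_{L^{\infty}}$ by $e^{-c/\lmb^{2}} \nrm{\nb f}_{L^{2}}$, which produces an additive $\nrm{\nb f}_{L^{2}}$ in the $L^{\infty}$ bound. Then $\nrm{f_{<\Lmb(\alp)}}_{L^{\infty}} \leq \alp/2$ is valid only for $\alp \gtrsim \nrm{\nb f}_{L^{2}}$, not $\alp \gtrsim A$, and the intermediate range $A \lesssim \alp \lesssim \nrm{\nb f}_{L^{2}}$ (which is nonempty precisely when $A \ll \nrm{\nb f}_{L^{2}}$, the interesting regime) is not covered correctly; your $L^{2}$-Chebyshev fallback on that range gives $\nrm{\nb f}_{L^{2}}^{q-2} \nrm{f}_{L^{2}}^{2}$, not $A^{q-2}\nrm{\nb f}_{L^{2}}^{2}$. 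The fix is to factor through the supremum quantity $A$ rather than $\nrm{\nb f}_{L^{2}}$: for $\lmb < 1$, write $e^{\lmb^{-2}\lap} = e^{(\lmb^{-2}-\frac14)\lap}\, e^{\lap/4}$, so $P_{\lmb} f = 16\lmb^{-4} e^{(\lmb^{-2}-\frac14)\lap} P_{2} f$. The long-time heat-kernel estimate \eqref{eq:p-est:longtime} as an $L^{\infty}\to L^{\infty}$ bound yields $\lmb^{-4}\nrm{e^{(\lmb^{-2}-\frac14)\lap}}_{L^{\infty}\to L^{\infty}} \aleq 1$, so $\nrm{P_{\lmb} f}_{L^{\infty}} \aleq \nrm{P_{2} f}_{L^{\infty}} \aleq A$. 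This is precisely how the paper obtains $\sup_{\lmb \in (0,1)} \lmb^{-(d-2)/2} \nrm{P_{\lmb} f}_{L^{\infty}} \aleq 1$ under the normalization $A \leq 1$. Your observation that the spectral gap is what allows the supremum to be taken only over $\lmb \geq 1$ is correct as far as it goes; it simply must be implemented through $P_{2} f$ rather than $\nrm{\nb f}_{L^{2}}$.
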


\begin{proof} 
By homogeneity, we may assume that
\begin{equation} \label{eq:impSob:B=1}
	\sup_{\lmb \geq 1, \, x \in \bbH^{d}} \lmb^{-\frac{d-2}{2}} \abs{P_{\lmb} f} \leq 1.
\end{equation}
Furthermore, in view of density of $C^{\infty}_{0}(\bbH^{d})$ in $H^{1}(\bbH^{d})$, we may furthermore assume that $f \in C^{\infty}_{0}(\bbH^{d})$. Given $\lmb_{0} > 0$, we decompose $f$ into two parts as
\begin{equation*}
f = \int_{0}^{\lmb_{0}} P_{\lmb} f \, \frac{\ud \lmb}{\lmb} 
	+ \int_{\lmb_{0}}^{\infty} P_{\lmb} f \, \frac{\ud \lmb}{\lmb}
=: f_{\leq \lmb_{0}} + f_{> \lmb_{0}}.
\end{equation*}
We claim that there exists a function $c(\lmb) : (0, \infty) \to [0, \infty)$ such that for all $\lmb_{0} > 0$, the following inequalities hold:
\begin{align} 
	\nrm{f_{\leq \lmb_{0}}}_{L^{\infty}(\bbH^{d})} 
	& \aleq \lmb_{0}^{\frac{d-2}{2}}, 
	\label{eq:impSob:lowFreq} \\
	\nrm{f_{> \lmb_{0}}}_{L^{2}}^{2}
	& \aleq \int_{\lmb_{0}}^{\infty} c^{2}(\lmb) \, \frac{\ud \lmb}{\lmb},
	\label{eq:impSob:hiFreq:1} \\
	\int_{0}^{\infty} \lmb^{2} c^{2}(\lmb) \, \frac{\ud \lmb}{\lmb} 
	& \aleq \nrm{\nb f}_{L^{2}(\bbH^{d})}^{2}.
	\label{eq:impSob:hiFreq:2} 
\end{align}
Assuming these inequalities, we first prove~\eqref{eq:impSob}. We begin by writing
\begin{align*}
\nrm{f}_{L^{\frac{2d}{d-2}}(\bbH^{d})}^{\frac{2d}{d-2}}
= \frac{2d}{d-2} \int_{0}^{\infty} \alp^{\frac{2d}{d-2}} \mu(\set{\abs{f} > \alp}) \, \frac{\ud \alp}{\alp}.
\end{align*}
Choosing $\lmb_{0}(\alp) = c \alp^{\frac{2}{d-2}}$ so that $\nrm{f_{< \lmb_{0}(\alp)}}_{L^{\infty}} < \frac{\alp}{2}$, it follows that $\set{\abs{f} > \alp} \subseteq \set{\abs{f_{>\lmb_{0}(\alp)}} > \frac{\alp}{2}}$. Using also~\eqref{eq:impSob:hiFreq:1} and Chebyshev's inequality, the preceding expression can be estimated by
\begin{align*}
	& \aleq \int_{0}^{\infty} \alp^{\frac{2d}{d-2} - 2} \nrm{f_{> \lmb_{0}(\alp)}}_{L^{2}(\bbH^{d})}^{2} \, \frac{\ud \alp}{\alp} \\
	& \aleq \int_{0}^{\infty} \int_{c \alp^{\frac{2}{d-2}}}^{\infty} \alp^{\frac{2d}{d-2} - 2} c^{2}(\lmb) \, \frac{\ud \lmb}{\lmb} \frac{\ud \alp}{\alp} \\
	& \aleq \int_{0}^{\infty} \bb( \int_{0}^{(\frac{\lmb}{c})^{\frac{d-2}{2}}} \alp^{\frac{2d}{d-2} - 2} \, \frac{\ud \alp}{\alp} \bb) c^{2}(\lmb) \, \frac{\ud \lmb}{\lmb} \\
	& \aleq \int_{0}^{\infty} \lmb^{2} c^{2}(\lmb) \frac{\ud \lmb}{\lmb}.
\end{align*}
By~\eqref{eq:impSob:hiFreq:2}, the last line is bounded by $\aleq \nrm{\nb f}_{L^{2}(\bbH^{d})}$, which concludes the proof of~\eqref{eq:impSob}.

It remains to prove~\eqref{eq:impSob:lowFreq}--\eqref{eq:impSob:hiFreq:2}. For~\eqref{eq:impSob:lowFreq}, we begin with the obvious bound
\begin{equation*}
	\nrm{f_{\leq \lmb_{0}}}_{L^{\infty}(\bbH^{d})}
	\leq \int_{0}^{\lmb_{0}} \nrm{P_{\lmb} f}_{L^{\infty}(\bbH^{d})} \, \frac{\ud \lmb}{\lmb}
%	\leq \int_{0}^{\lmb_{0}} \lmb^{\frac{d-2}{2}} \frac{\ud \lmb}{\lmb} \sup_{\lmb > 0} \lmb^{-\frac{d-2}{2}}\nrm{P_{\lmb} f}_{L^{\infty}(\bbH^{d})} 
	\aleq_{d} \lmb_{0}^{\frac{d-2}{2}} \sup_{\lmb > 0} \lmb^{-\frac{d-2}{2}}\nrm{P_{\lmb} f}_{L^{\infty}(\bbH^{d})}.
\end{equation*}
Then by the long time estimate for the heat kernel, i.e., Lemma \ref{lem:p-est}(\ref{item:p-est:2}), we have
\begin{align*}
	\sup_{\lmb \in (0, 1)} \lmb^{-\frac{d-2}{2}} \nrm{P_{\lmb} f}_{L^{\infty}(\bbH^{d})}
	& \aleq \sup_{\lmb \in (0, 1)} \lmb^{-\frac{d-2}{2} - 4} \nrm{e^{(\lmb^{-2} - \frac{1}{4}) \lap_{\bbH^{d}}}P_{2} f}_{L^{\infty}(\bbH^{d})} \\
	& \aleq \nrm{P_{2} f}_{L^{\infty}(\bbH^{d})} \aleq 1
\end{align*}
where we have used~\eqref{eq:impSob:B=1} in the last inequality. Combined with the previous inequality,~\eqref{eq:impSob:lowFreq} follows. 

Next, in order to prove~\eqref{eq:impSob:hiFreq:1}, we begin by writing
\begin{align*}
	\nrm{P_{> \lmb_{0}} f}_{L^{2}}^{2}
= & \ang{\int_{\lmb_{0}}^{\infty} P_{\lmb_{1}} f \frac{\ud \lmb_{1}}{\lmb_{1}} \, \mid \,
		\int_{\lmb_{0}}^{\infty} P_{\lmb_{2}} f \frac{\ud \lmb_{2}}{\lmb_{2}}}_{L^{2}}.
\end{align*}
By symmetry, without loss of generality, we may restrict to the parameter range $\lmb_{0} < \lmb_{1} < \lmb_{2}$. Then the last line can be estimated by
\begin{align*}
\aleq & \int_{\lmb_{0}}^{\infty} \int_{\lmb_{1}}^{\infty}
\ang{	\lmb_{1}^{-4} \lap_{\bbH^{d}}^{2} e^{\lmb_{1}^{-2} \lap_{\bbH^{d}}} f 
	\, \mid \,
		\lmb_{2}^{-4} \lap_{\bbH^{d}}^{2} e^{\lmb_{2}^{-2} \lap_{\bbH^{d}}} f }_{L^{2}} \frac{\ud \lmb_{2}}{\lmb_{2}} \, \frac{\ud \lmb_{1}}{\lmb_{1}}  \\
\aleq & \int_{\lmb_{0}}^{\infty} \int_{\lmb_{1}}^{\infty}
\bb( \frac{\lmb_{1}}{\lmb_{2}} \bb)^{2} 
	\ang{	\lmb_{1}^{-6} \lap_{\bbH^{d}}^{3} e^{\lmb_{1}^{-2} \lap_{\bbH^{d}}} f 
	\, \mid \,
		\lmb_{2}^{-2} \lap_{\bbH^{d}} e^{\lmb_{2}^{-2} \lap_{\bbH^{d}}} f }_{L^{2}} \frac{\ud \lmb_{2}}{\lmb_{2}} \, \frac{\ud \lmb_{1}}{\lmb_{1}} \\
\aleq & \int_{\lmb_{0}}^{\infty} \int_{\lmb_{1}}^{\infty} 
\bb( \frac{\lmb_{1}}{\lmb_{2}} \bb)^{2} c(\lmb_{1}) c(\lmb_{2}) 
	\,\frac{\ud \lmb_{2}}{\lmb_{2}} \, \frac{\ud \lmb_{1}}{\lmb_{1}} 
	\aleq \int_{\lmb_{0}}^{\infty} c^{2} (\lmb) \frac{\ud \lmb}{\lmb}
\end{align*}
where $c(\lmb)$ is defined to be
\begin{equation} \label{eq:impSob:c}
	c^{2}(\lmb) := \nrm{\lmb^{-2} \lap_{\bbH^{d}} e^{\lmb^{-2} \lap_{\bbH^{d}}} f}_{L^{2}(\bbH^{d})}^{2}
			+ \nrm{\lmb^{-6} \lap_{\bbH^{d}}^{3} e^{\lmb^{-2} \lap_{\bbH^{d}}} f}_{L^{2}(\bbH^{d})}^{2}.
\end{equation}

Finally, we turn to the proof of~\eqref{eq:impSob:hiFreq:2}. Note the following identities for $f \in C^{\infty}_{0}(\bbH^{d})$, which are proved by simple integration by parts arguments:
\begin{align*}
	\rd_{s} \nrm{\nb e^{s \lap_{\bbH^{d}}} f}_{L^{2}(\bbH^{d})}^{2}
	=& -2 \nrm{\lap_{\bbH^{d}} e^{s \lap_{\bbH^{d}}} f}_{L^{2}(\bbH^{d})}^{2} \\
	\rd_{s} (s^{4} \nrm{\nb \lap_{\bbH^{d}}^{2} e^{s \lap_{\bbH^{d}}} f}_{L^{2}(\bbH^{d})}^{2})
	=& - 2 s^{4} \nrm{\lap_{\bbH^{d}}^{3} e^{s \lap_{\bbH^{d}}} f}_{L^{2}(\bbH^{d})}^{2} \\
		& + 4 s^{3} \brk{\lap_{\bbH^{d}}^2 e^{s \lap_{\bbH^{d}}} f \, \mid \, 
				\lap_{\bbH^{d}}^{3} e^{s\lap_{\bbH^{d}}}f}_{L^{2}}
\end{align*}
For a constant $\dlt_{E} > 0$ to be determined, consider the quantity 
\begin{equation*}
E(s) := \nrm{\nb e^{s \lap_{\bbH^{d}}}f}_{L^{2}(\bbH^{d})}^{2} + \dlt_{E} s^{4} \nrm{\nb \lap_{\bbH^{d}}^{2}e^{s \lap_{\bbH^{d}}}f}_{L^{2}(\bbH^{d})}^{2}.
\end{equation*}
Applying the preceding identities and Cauchy-Schwartz, we have for sufficiently small $\dlt_{E} > 0$
\begin{equation*}
	-\rd_{s} E(s)
	\ageq_{\dlt_{E}} \nrm{\lap_{\bbH^{d}} e^{s \lap_{\bbH^{d}}} f}_{L^{2}(\bbH^{d})}^{2}
				+s^{4} \nrm{\lap_{\bbH^{d}}^{3} e^{s \lap_{\bbH^{d}}} f}_{L^{2}(\bbH^{d})}^{2}.
\end{equation*}
On the other hand, note that $E(0) = \nrm{\nb f}_{L^{2}(\bbH^{d})}^{2}$ and $\lim_{s \to \infty} E(s) = 0$ as $f \in C^{\infty}_{0}(\bbH^{d})$. Therefore, 
\begin{equation*}
\int_{0}^{\infty} (s^{-\frac{1}{2}} \nrm{s \lap_{\bbH^{d}} e^{s \lap_{\bbH^{d}}} f}_{L^{2}(\bbH^{d})})^{2}
				+ (s^{-\frac{1}{2}} \nrm{s^{3} \lap_{\bbH^{d}}^{3} e^{s \lap_{\bbH^{d}}} f}_{L^{2}(\bbH^{d})})^{2} \, \frac{\ud s}{s} \aleq \nrm{\nb f}_{L^{2}(\bbH^{d})}^{2}.
\end{equation*}
Making the change of variable $\lmb := s^{-\frac{1}{2}}$ and recalling the definition~\eqref{eq:impSob:c} of $c(\lmb)$,~\eqref{eq:impSob:hiFreq:2} follows. \qedhere
\end{proof}

For $k = 0, 1, 2, \ldots$, define the radial function ${}^{(k)} \PP_{\lmb}$ on $\bbH^{d}$ by
\begin{equation*}
	{}^{(k)} \PP_{\lmb}(r) := 2 \lmb^{-2k} \lap_{\bbH^{d}}^{k} p_{\lmb^{-2}} (r).
\end{equation*}
Note that ${}^{(2)}\PP_{\lmb}$ is the convolution kernel of the operator $P_{\lmb}$. More precisely, we have
\begin{equation} \label{}
	  P_{\lmb} f(x) = {}^{(2)} \calP_{\la} \ast f(x).
\end{equation}
Since the heat kernel $p_{s}(r)$ solves the linear heat equation $\rd_{s} p_{s} = \lap_{\bbH^{d}} p_{s}$, we have the identity
\begin{equation} \label{eq:PP}
	{}^{(k)} \calP_{\lmb}(r) = 2 (s^{k} \rd_{s}^{k} p_{s}) \bb\vert_{s = \lmb^{-2}} (r)
\end{equation}
In particular, by Lemma \ref{lem:p-est}(\ref{item:p-est:1}), for $k = 0, 1, 2$ we have the pointwise estimate
\begin{equation} \label{eq:PPest:1}
	{}^{(k)} \calP_{\lmb}(r) \aleq \lmb^{d} \bb( 1 + \lmb^{2} r^{2} \bb)^{N_{k}} e^{-\frac{\lmb^{2} r^{2}}{4}}.
\end{equation}

An important role will be played by the functions ${}^{(1)} \PP_{\lmb}$ in the proof of the linear profile decomposition theorem (Theorem \ref{thm: BG}), in particular for identifying nontrivial profiles. The relevance of ${}^{(1)} \PP_{\lmb}$ arises from the following simple chain of identities: For $\lmb \in [1, \infty)$ and $h \in \GG$, we have
\begin{equation} \label{eq:PP1}
\begin{aligned}
	P_{\lmb} f(h \cdot \zero) = {}^{(2)} \PP_{\lmb} \ast (\tau_{h} f) (\zero) 
	& = \ang{\tau_{h} f \mid {}^{(2)} \PP_{\lmb}}_{L^{2}(\bbH^{d})} \\
	& = - \ang{\tau_{h} f \mid  \lmb^{-2} \, {}^{(1)} \PP_{\lmb}}_{H^{1}(\bbH^{d})} .
\end{aligned}
\end{equation}
In the next lemma, we collect properties of ${}^{(1)} \PP_{\lmb}$ that will be needed in the sequel.
\begin{lem} \label{lem:concPP}
Let $\set{\lmb_{n}} \subseteq [1, \infty)$ be a sequence such that $\lmb_{n} \to \infty$. Let $\eta \in C^{\infty}_{0}(\bbR)$ be an even function such that $\eta = 1$ on $(-1, 1)$ and $\eta = 0$ outside $(-2, 2)$. For any $R >0$, define $\eta_{R} := \eta(\cdot / R)$. Then the following statements hold:
\begin{enumerate}
\item \label{item:concPP:1}
	For all $n$ and $\gmm=0,1,2$, we have the uniform bound
\begin{equation} \label{eq:concPP:1}
	\lmb_{n}^{-\frac{d+2\gmm}{2}} \nrm{{}^{(1)} \PP_{\lmb_{n}}(r)}_{H^{\gmm}(\bbH^{d})} \aleq 1.
\end{equation}
\item \label{item:concPP:2} 
	Given any $\eps > 0$, there exists $R > 0$ such that for all $n$, we have
\begin{equation}
	 \lmb_{n}^{- \frac{d+2}{2}} \nrm{(1-\eta_{\lmb_{n}^{-1} R})(r) {}^{(1)} \PP_{\lmb_{n}}(r)}_{H^{1}(\bbH^{d})} < \eps.
\end{equation}

\item \label{item:concPP:3}
	Fix any $R \geq 1$. Let us view each
\begin{equation*}
	f_{n}(r) := \lmb_{n}^{-d} \eta_{R}(r) {}^{(1)}\PP_{\lmb_{n}}(r / \lmb_{n})
\end{equation*}
as a radial function on $\bbR^{d}$ with $r = \abs{x}$; equivalently, $f_{n}$ is the pullback of $\lmb_{n}^{-\frac{d}{2}} \eta_{\lmb_{n}^{-1} R} {}^{(1)}\PP_{\lmb_{n}}$ via $\Psi$, suitably rescaled. Then there exists $\PP_{\infty} \in \dot{H}^{1}(\bbR^{d})$ such that along a subsequence, which we still denote by $f_{n}$, we have the strong convergence $f_{n} \to  \PP_{\infty}(r)$ in $\dot{H}^{1}(\bbR^{d})$. Moreover, $\PP_{\infty}$ satisfies
\begin{equation} 
	\nrm{\PP_{\infty}}_{\dot{H}^{1}(\bbR^{d})} \leq C \mand \supp \, \PP_{\infty} \subseteq \set{r \leq 2R}
\end{equation}
where $C$ is an absolute constant independent of $\lmb_{n}$.
\end{enumerate}

\end{lem}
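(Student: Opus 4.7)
The plan is to leverage the pointwise heat kernel estimates from Lemma~\ref{lem:p-est}(\ref{item:p-est:1}) together with the scaling identity
\begin{equation*}
	\lap_{\bbH^{d}} \, {}^{(k)}\PP_{\lmb} = \lmb^{2} \, {}^{(k+1)}\PP_{\lmb},
\end{equation*}
which is immediate from the definition ${}^{(k)}\PP_{\lmb} = 2 \lmb^{-2k} \lap_{\bbH^{d}}^{k} p_{\lmb^{-2}}$. In polar coordinates all relevant norms reduce to one-dimensional integrals, and the change of variable $u = \lmb_{n} r$ matches the natural Gaussian scale $r \sim \lmb_{n}^{-1}$ of ${}^{(k)}\PP_{\lmb_{n}}$.

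For part~(\ref{item:concPP:1}), the $L^{2}$ bound $\|{}^{(k)}\PP_{\lmb_{n}}\|_{L^{2}}^{2} \aleq \lmb_{n}^{d}$ for $k = 1, 2$ follows by inserting~\eqref{eq:PPest:1} into $\int_{0}^{\infty} \abs{\cdot}^{2} \sinh^{d-1} r \, dr$ and splitting at $r = 1$: on $\{r \leq 1\}$ the change of variable $u = \lmb_{n} r$ produces a finite Gaussian integral times $\lmb_{n}^{d}$, while on $\{r > 1\}$ the Gaussian $e^{-\lmb_{n}^{2} r^{2}/2}$ dominates $e^{(d-1) r}$ since $\lmb_{n} \geq 1$. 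Combined with the scaling identity, the $H^{1}$ and $H^{2}$ bounds reduce to $\|{}^{(1)}\PP_{\lmb_{n}}\|_{H^{1}}^{2} = -\lmb_{n}^{2} \ang{{}^{(1)}\PP_{\lmb_{n}} \mid {}^{(2)}\PP_{\lmb_{n}}}_{L^{2}} \aleq \lmb_{n}^{d+2}$ by Cauchy--Schwarz and $\|{}^{(1)}\PP_{\lmb_{n}}\|_{H^{2}}^{2} = \lmb_{n}^{4} \|{}^{(2)}\PP_{\lmb_{n}}\|_{L^{2}}^{2} \aleq \lmb_{n}^{d+4}$, respectively.

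For part~(\ref{item:concPP:2}), the factor $(1 - \eta_{\lmb_{n}^{-1} R})$ localizes to $\{r > \lmb_{n}^{-1} R\}$, where~\eqref{eq:PPest:1} yields a decay factor of order $R^{C} e^{-R^{2}/8}$ after integration. For the $H^{1}$ piece, one expands
\begin{equation*}
	\|(1 - \eta_{\lmb_{n}^{-1} R}) {}^{(1)}\PP_{\lmb_{n}}\|_{H^{1}}^{2}
	= \ang{(1 - \eta_{\lmb_{n}^{-1} R}) {}^{(1)}\PP_{\lmb_{n}} \mid -\lap_{\bbH^{d}}[(1 - \eta_{\lmb_{n}^{-1} R}) {}^{(1)}\PP_{\lmb_{n}}]}_{L^{2}}
\end{equation*}
via the product rule for $\lap_{\bbH^{d}}$. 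This produces a main term $-\lmb_{n}^{2} \ang{(1 - \eta_{\lmb_{n}^{-1} R})^{2} {}^{(1)}\PP_{\lmb_{n}} \mid {}^{(2)}\PP_{\lmb_{n}}}_{L^{2}}$, controlled by the tail $L^{2}$ bounds on ${}^{(1)}\PP_{\lmb_{n}}$ and ${}^{(2)}\PP_{\lmb_{n}}$, together with commutator terms involving $\na \eta_{\lmb_{n}^{-1} R}$ and $\lap_{\bbH^{d}} \eta_{\lmb_{n}^{-1} R}$, both supported in the annulus $\{\lmb_{n} r \sim R\}$ where~\eqref{eq:PPest:1} gives Gaussian smallness in $R$. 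Choosing $R$ sufficiently large yields the claim.

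For part~(\ref{item:concPP:3}), the plan is a soft compactness argument. By the scaling analysis of part~(\ref{item:concPP:1}) applied with $\gmm = 1, 2$, combined with the fact that for small radii the hyperbolic volume element, gradient, and Laplacian on radial functions are comparable to their Euclidean counterparts (the discrepancies being controlled by $\coth r - r^{-1} = O(r)$ and $\sinh r / r = 1 + O(r^{2})$ as $r \to 0$), the pulled-back functions $f_{n}$ are uniformly bounded in $H^{2}(\bbR^{d})$ with support in the fixed ball $\{r \leq 2R\}$. The compact embedding $H^{2}_{0}(B_{2R}) \hookrightarrow H^{1}_{0}(B_{2R})$ (Rellich--Kondrachov) then yields strong convergence of a subsequence in $H^{1}(\bbR^{d})$, producing a radial limit $\PP_{\infty}$ with $\supp \PP_{\infty} \subseteq \{r \leq 2R\}$ and $\|\PP_{\infty}\|_{\dot{H}^{1}(\bbR^{d})}$ bounded by the uniform $\dot{H}^{1}$ norm from part~(\ref{item:concPP:1}). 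The main technical point, and the place requiring the most care, is tracking how the $H^{2}(\bbR^{d})$ norm of $f_{n}$ arises from $\lmb_{n}^{-(d+4)/2}\|{}^{(1)}\PP_{\lmb_{n}}\|_{H^{2}(\bbH^{d})}$, modulo boundary terms from the cutoff $\eta_{R}$ (which are Gaussian-small by~\eqref{eq:PPest:1}) and errors from the hyperbolic-versus-Euclidean discrepancy on the shrinking ball $\{r \leq 2 \lmb_{n}^{-1} R\}$, uniformly in $\lmb_{n}$.
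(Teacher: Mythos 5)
Your proposal follows essentially the same route as the paper's proof: part~(a) is obtained by taking $L^{2}$ norms of the pointwise Gaussian bound~\eqref{eq:PPest:1} for $k=1,2$ and then using the scaling identity $\lap_{\bbH^{d}}\,{}^{(1)}\PP_{\lmb} = \lmb^{2}\,{}^{(2)}\PP_{\lmb}$ together with one integration by parts and Cauchy--Schwarz to pass from $L^{2}$ control of ${}^{(1)}\PP_{\lmb}$ and $\lap_{\bbH^{d}}{}^{(1)}\PP_{\lmb}$ to the $H^{1}$ and $H^{2}$ bounds; part~(b) uses the product rule, integration by parts, and the tail Gaussian decay exactly as in the paper (the paper prefers the triangle inequality plus an absorption argument where you wrote a product-rule expansion, but the content is identical); and part~(c) is the same Rellich--Kondrachov argument from a uniform $\nrm{f_{n}}_{L^{2}(\bbR^{d})}+\nrm{\lap_{\bbR^{d}} f_{n}}_{L^{2}(\bbR^{d})}$ bound on a fixed ball, with the paper being equally terse about the commutator/cutoff bookkeeping.
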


\begin{proof} 
We begin by proving part (\ref{item:concPP:1}). Note that ${}^{(2)} \PP_{\lmb_{n}} = \lmb_{n}^{-2} \lap_{\bbH^{d}} {}^{(1)} \PP_{\lmb_{n}}$. Therefore, taking the $L^{2}(\bbH^{d})$ norm of \eqref{eq:PPest:1} for $k = 1, 2$, we have the uniform bound
\begin{equation} \label{eq:concPP:pf:0}
	\lmb_{n}^{-\frac{d}{2}} \nrm{{}^{(1)} \PP_{\lmb_{n}}}_{L^{2}(\bbH^{d})} +\lmb_{n}^{-\frac{d+4}{2}} \nrm{\lap_{\bbH^{d}} {}^{(1)} \PP_{\lmb_{n}}}_{L^{2}(\bbH^{d})} \aleq 1.
\end{equation}
We claim that \eqref{eq:concPP:pf:0} implies the $H^{1}$ bound in \eqref{eq:concPP:1}. Indeed, by an integration by parts followed by Cauchy-Schwarz and \eqref{eq:concPP:pf:0}, we have
\begin{align*}
	\lmb_{n}^{-(d+2)}\nrm{{}^{(1)} \PP_{\lmb_{n}}}_{H^{1}(\bbH^{d})}^{2} 
	=& \lmb_{n}^{-(d+2)} \ang{{}^{(1)} \PP_{\lmb_{n}} \mid {}^{(1)} \PP_{\lmb_{n}}}_{H^{1}(\bbH^{d})}  \\
	=& - \ang{ \lmb_{n}^{-\frac{d}{2}}{}^{(1)} \PP_{\lmb_{n}} \mid \lmb_{n}^{-\frac{d+2}{4}}\lap_{\bbH^{d}} {}^{(1)} \PP_{\lmb_{n}}}_{L^{2}(\bbH^{d})} \aleq 1.
\end{align*}
Finally, by standard elliptic regularity on $\bbH^{d}$ (which may be proved by a simple integration by parts argument combined with the $H^{1}$ bound that we just proved), we obtain the desired bound for $\nrm{{}^{(1)} \PP_{\lmb_{n}}}_{H^{2}(\bbH^{d})}$. 

Next we establish part (\ref{item:concPP:2}). In what follows, the notation $o_{R}(1)$ will always refer to a quantity which vanishes \emph{uniformly} in $n$ as $R \to \infty$.
Using the pointwise bound~\eqref{eq:PPest:1} (in particular, the Gaussian decay as $\lmb_{n} r \to \infty$) for $k= 1, 2$ and taking the $L^{2}$ norm over the set $\set{r \geq R}$, it is not difficult to see that
\begin{equation} \label{eq:concPP:pf:1}
	\lmb_{n}^{- \frac{d}{2}}\nrm{{}^{(1)} \calP_{\lmb_{n}}(r)}_{L^{2}(\set{\lmb_{n} r \geq R})} 
	+ \lmb_{n}^{- \frac{d+4}{2}} \nrm{\lap_{\bbH^{d}} {}^{(1)} \calP_{\lmb_{n}}(r)}_{L^{2}(\set{\lmb_{n} r \geq R})} = o_{R}(1).
\end{equation}
We omit the routine details. We will show that~\eqref{eq:concPP:pf:1} imply part (\ref{item:concPP:2}) by an interpolation argument. We begin by writing
\begin{align*}
& \hskip-2em
	 \lmb_{n}^{- \frac{d+2}{2}} \nrm{(1-\eta_{\lmb_{n}^{-1} R})(r) {}^{(1)}\PP_{\lmb_{n}}(r)}_{H^{1}(\bbH^{d})} \\
	 = & \lmb_{n}^{- \frac{d}{2}} R^{-1} \nrm{\eta'_{\lmb_{n}^{-1} R}(r) {}^{(1)}\PP_{\lmb_{n}}(r)}_{L^{2}(\bbH^{d})}
	 + \lmb_{n}^{- \frac{d+2}{2}} \nrm{(1-\eta_{\lmb_{n}^{-1} R})(r) \nb {}^{(1)}\PP_{\lmb_{n}}(r)}_{L^{2}(\bbH^{d})}
\end{align*}
where the first term on the last line is $o_{R}(1)$, thanks to~\eqref{eq:concPP:pf:1}. To treat the second term, we take its square and perform an integration by parts as follows:
\begin{align}
& \hskip-2em
\lmb_{n}^{- (d+2) } \nrm{(1-\eta_{\lmb_{n}^{-1} R})(r) \nb {}^{(1)} \PP_{\lmb_{n}}(r)}_{L^{2}(\bbH^{d})}^{2} \notag \\
= &\lmb_{n}^{- (d+2)} \int_{\bbH^{d}} (1-\eta_{\lmb_{n}^{-1} R})^{2} \bfg^{ij} (\nb_{i} {}^{(1)} \PP_{\lmb_{n}}) (\nb_{j} {}^{(1)} \PP_{\lmb_{n}}) \, \mu(\ud x) \notag \\
= &  \lmb_{n}^{- (d+1)} R^{-1} \int_{\bbH^{d}} 2 \bfg^{ij} (1-\eta_{\lmb_{n}^{-1} R}) (\nb_{i} \eta)_{\lmb_{n}^{-1} R}  ({}^{(1)} \PP_{\lmb_{n}}) (\nb_{j} {}^{(1)}\PP_{\lmb_{n}}) \, \mu(\ud x) \label{eq:concPP:pf:3}  \\
& - \lmb_{n}^{- (d+2)} \int_{\bbH^{d}} (1-\eta_{\lmb_{n}^{-1} R})^{2} ({}^{(1)} \PP_{\lmb_{n}}) (\lap_{\bbH^{d}} {}^{(1)} \PP_{\lmb_{n}}) \, \mu(\ud x) \label{eq:concPP:pf:4}
\end{align}
where $(\nb_{i} \eta)_{\lmb_{n}^{-1} R}(x) := (\nb_{i} \eta)(\lmb_{n} x / R)$.
The term~\eqref{eq:concPP:pf:4} is $o_{R}(1)$ by Cauchy-Schwarz and~\eqref{eq:concPP:pf:1}. For~\eqref{eq:concPP:pf:3}, we apply Cauchy-Schwarz to estimate
\begin{equation*}
	\eqref{eq:concPP:pf:3} \aleq ( \lmb_{n}^{- \frac{d+2}{2}} \nrm{(1-\eta_{\lmb_{n}^{-1} R})(r) \nb {}^{(1)} \PP_{\lmb_{n}}(r)}_{L^{2}(\bbH^{d})} ) (\lmb_{n}^{-\frac{d}{2}} R^{-1} \nrm{{}^{(1)} \PP_{\lmb_{n}}}_{L^{2}(\set{\lmb_{n} r \geq R})})
\end{equation*}
Then the first factor can be absorbed into the left-hand side by Cauchy, and the second factor is $o_{R}(1)$ by~\eqref{eq:concPP:pf:1}. This completes the proof of part (\ref{item:concPP:2}).

We now turn to part (\ref{item:concPP:3}). Since every $f_{n}$ is supported in a common compact set $\set{r \leq 2 R} \subseteq \bbR^{d}$, the desired statement would follow, by Rellich-Kondrachov, from the uniform bound
\begin{equation} \label{}
	\nrm{f_{n}}_{L^{2}(\bbR^{d})} + \nrm{\lap_{\bbR^{d}} f_{n}}_{L^{2}(\bbR^{d})} \aleq 1 < \infty,
\end{equation}
where the implicit constant is independent of both $R \geq 1$ and $n = 1, 2, \cdots$. This bound can be derived from \eqref{eq:concPP:1}. We omit the details. \qedhere
\end{proof}

\part{Profiles for linear waves on hyperbolic space}\label{p:lin}
The grand aim of the first part of the paper is to formulate and prove an analogue of the Bahouri-G\'erard linear profile decomposition theorem from \cite{BG} in the setting of a linear wave equation with potential on $\bbR \times \bbH^{d}$ (Theorem~\ref{thm: BG}). Along the way, we also develop linear approximation theory for traveling and concentrating linear profiles, by comparing them with solutions to the potential-less linear wave equation on $\bbR \times \bbH^{d}$ and the linear wave equation on $\bbR^{1+d}$, respectively (see Section~\ref{subsec:linApprox}).  

In this part we have strived to develop general tools that could be applied to a variety of nonlinear problems. A specific nonlinear application is given in Part \ref{p:nonlin} of this paper, where we study the energy critical defocusing semilinear wave equation~\eqref{u eq}, possibly perturbed by a repulsive potential. 
% COMPANION PAPER
Another application will be given in a forthcoming work, where we address the asymptotic behavior of large energy equivariant wave maps from the hyperbolic plane.
%%%%%%%%%%%%%%%%%%%%%%%%%%%%%%%%%%%%%%%%%%%%%%%%%%%%%%%%%%%%%%%%%%%%%%%%%%%%%%%%%%%%%

\section{Dispersion, Strichartz estimates, and linear approximations}\label{sec:lin}

\subsection{Dispersive theory for free waves on $\Hp^d$}
We begin by recalling  Strichartz estimates for the free wave equation on $\R \times \Hp^d$ from \cite{AP, MT11, MTay12}. Consider the inhomogeneous wave equation on $\R \times \Hp^d$:
 \EQ{ \label{linear wave}
 &\Box_{\bbH^{d}} u := u_{tt} - \Delta_{\bbH^{d}} u = F\\
& \vec u(0) = ( f, g)
 }
 We will say that a triple $(p, q, \ga)$, is \emph{hyperbolic-admissible} if
 \EQ{
 & p \ge 2, q >2\\
 & \ga = \begin{cases} 
\frac{d+1}{2} (\frac{1}{2} - \frac{1}{q}) \mif \frac{2}{p} + \frac{d-1}{q} \ge \frac{d-1}{2} \\ 
d (\frac{1}{2} - \frac{1}{q}) - \frac{1}{p}   \mif \frac{2}{p}+ \frac{d-1}{q} \le \frac{d-1}{2} 
 \end{cases}
 }
  We also include the energy estimates $(p, q, \gamma) = (\infty, 2, 0)$.  Note that we are not allowing the endpoint $q = \infty$. The following Strichartz estimates were proved   independently by Metcalfe and Taylor,~\cite{MT11, MTay12}, and by Anker and Pierfelice~\cite{AP}. In fact, these estimates hold for a larger class of admissible exponents than in the Euclidean case, a phenomena which can be attributed to  the exponential volume growth of concentric spheres in $\Hp^d$. However, our application of Strichartz estimates to the energy critical semi-linear wave equation  only requires the usual range of exponents which are also admissible in the case of free waves on Euclidean space.

 \begin{prop}\cite{MT11, AP, MTay12} \label{strich}
 Let $\vec u(t)$ be a solution to~\eqref{linear wave} with initial data $\vec u(0) = (f, g)$ and let $I \subseteq \R$ be any time interval. Then for $(p, q, \gamma)$ and $(a, b, \s)$ hyperbolic-admissible we have the estimate
 \EQ{
 \| \nabla_{t, x} u\|_{L^p( I; W^{-\gamma, q}(\Hp^d))} \lesssim \| (f, g)\|_{H^1 \times L^2( \Hp^d)} + \|F\|_{L^{a'}(I; W^{ \s, b'}(\Hp^d))}
 }
 \end{prop}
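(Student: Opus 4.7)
The proof plan is to follow the standard $TT^{\ast}$ / Keel-Tao strategy, driven by frequency-localized dispersive estimates that reflect the enhanced dispersion coming from the exponential volume growth on $\bbH^{d}$. Since this is the free wave equation on $\bbH^{d}$ (no potential), the argument is classical and available in the references cited; my reconstruction is as follows.

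First, by Duhamel's formula I would write the solution as
\begin{equation*}
\vec{u}(t) = \cos(t \sqrt{-\lap_{\bbH^{d}}}) f + \frac{\sin(t \sqrt{-\lap_{\bbH^{d}}})}{\sqrt{-\lap_{\bbH^{d}}}} g + \int_{0}^{t} \frac{\sin((t-s)\sqrt{-\lap_{\bbH^{d}}})}{\sqrt{-\lap_{\bbH^{d}}}} F(s) \, \ud s,
\end{equation*}
and reduce the inhomogeneous bound to the homogeneous bound on the half-wave propagator via the Christ--Kiselev lemma, which applies since none of the admissible exponents $p, a$ is equal to $\infty$ outside of the trivial energy case. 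Thus it suffices to establish, for each hyperbolic-admissible triple $(p,q,\gmm)$,
\begin{equation*}
\nrm{ e^{\pm i t \sqrt{-\lap_{\bbH^{d}}}} f}_{L^{p}_{t} W^{-\gmm, q}_{x}} \aleq \nrm{f}_{L^{2}_{x}}.
\end{equation*}

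Second, using the Littlewood--Paley projections $P_{\lmb}$ from Section~\ref{sec: prelim}, I would decompose $f = \int P_{\lmb} f \, \ud \lmb / \lmb$. Thanks to the spectral gap $\sgm(-\lap_{\bbH^{d}}) = [(d-1)^{2}/4, \infty)$ I can restrict to $\lmb \gec 1$. For each dyadic block I would invoke the two complementary frequency-localized dispersive bounds proved in \cite{AP, MT11, MTay12}: a ``Euclidean-type'' estimate
\begin{equation*}
\nrm{e^{i t \sqrt{-\lap_{\bbH^{d}}}} P_{\lmb} f}_{L^{\infty}_{x}} \aleq \lmb^{\frac{d+1}{2}} \abs{t}^{-\frac{d-1}{2}} \nrm{P_{\lmb} f}_{L^{1}_{x}}, \quad \abs{t} \aleq 1,
\end{equation*}
and an improved long-time bound with exponential decay
\begin{equation*}
\nrm{e^{i t \sqrt{-\lap_{\bbH^{d}}}} P_{\lmb} f}_{L^{\infty}_{x}} \aleq \lmb \, e^{-\frac{d-1}{2} \abs{t}} \nrm{P_{\lmb} f}_{L^{1}_{x}}, \quad \abs{t} \gec 1.
\end{equation*}
These in turn are obtained by a stationary phase analysis of the wave kernel, whose explicit expression follows from the Abel transform on the symmetric space $\GG/\K$. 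Interpolation with the $L^{2}$ conservation of the propagator and the abstract Keel--Tao $TT^{\ast}$ lemma then yield the Strichartz estimate at each dyadic frequency. The key point is that the improved long-time bound produces the enlarged set of admissible exponents in the second branch of the definition of hyperbolic-admissible triples; the fractional derivative loss $\gmm$ is chosen precisely to make the interpolation scale match.

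Third, I would sum the dyadic estimates using standard Littlewood--Paley square-function bounds on $\bbH^{d}$, which hold for $1 < q < \infty$ (hence the exclusion of $q = \infty$ in the definition). The inhomogeneous estimate with mixed admissible triples $(p,q,\gmm)$ and $(a,b,\sgm)$ then follows from the dual pairing of the $TT^{\ast}$ argument between the two triples, again combined with Christ--Kiselev to handle the time-truncation in the Duhamel integral.

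The main obstacle in this scheme, and the content of the cited works \cite{AP, MT11, MTay12}, is the derivation of the sharp frequency-localized dispersive bounds in the two time regimes; this is a delicate oscillatory integral computation tied to the structure of $\bbH^{d}$ as a rank-one symmetric space. Once these are in place, the rest of the argument is a direct adaptation of the Euclidean framework, with the cleaner spectral structure of $-\lap_{\bbH^{d}}$ (positivity and spectral gap) actually simplifying several Littlewood--Paley considerations.
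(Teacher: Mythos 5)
The paper does not give a proof of Proposition~\ref{strich} at all: it is a black-box citation of \cite{MT11, AP, MTay12}, with the surrounding text simply recording the dispersive input (Proposition~\ref{prop: disp}) on which those references rely. So your reconstruction should be measured against the strategy in those papers, not against anything proved here. At the level of framework you have it right: Duhamel plus Christ--Kiselev to pass from the inhomogeneous to the homogeneous half-wave, a Littlewood--Paley decomposition with the spectral gap $\sgm(-\lap_{\bbH^{d}}) = [(d-1)^{2}/4, \infty)$ used to discard low frequencies, frequency-localized dispersive estimates in two time regimes, Keel--Tao $TT^{\ast}$ interpolation, and square-function resummation is indeed how Anker--Pierfelice and Metcalfe--Taylor proceed.

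The place where your sketch does not match the actual input is the form of the long-time dispersive bound. You posit a frequency-localized $L^{1}_{x} \to L^{\infty}_{x}$ estimate with \emph{exponential} time decay, $\aleq \lmb\, e^{-\frac{d-1}{2}\abs{t}}$ for $\abs{t} \gec 1$. What the paper records, and what the references supply, is the \emph{polynomial} long-time bound of Proposition~\ref{prop: disp}: $\nrm{e^{\pm i t\sqrt{-\lap_{\bbH^{d}}}} h}_{L^{q}} \aleq_{q} \abs{t}^{-3/2} \nrm{h}_{W^{\sgm, q'}}$ for $q \in (2,\infty)$, with $q = \infty$ excluded. The $\abs{t}^{-3/2}$ rate is $q$- and $d$-independent and comes from the vanishing of the Plancherel density to order two at the bottom of the Helgason spectrum; the exponential factor $e^{-\frac{d-1}{2} r}$ that you seem to have in mind lives in the \emph{spatial} variable through the ground spherical function and should not migrate into the time decay rate. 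Your two dispersive branches also do not match at $\abs{t} \sim 1$ with the prefactors as written. None of this kills the scheme --- both $\abs{t}^{-3/2}$ and $e^{-c\abs{t}}$ are integrable at infinity, so the Keel--Tao machine runs either way --- but the enlarged admissibility range is calibrated precisely to the $\abs{t}^{-3/2}$ rate measured $W^{\sgm, q'} \to L^{q}$, not to an $L^{1} \to L^{\infty}$ endpoint, and this is exactly the delicate content that the cited works contribute; it should be quoted, not heuristically reconstructed.
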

 
The Strichartz estimates proved above are a consequence of the fundamental dispersive estimate proved in~\cite{AP, MTay12}, which we will also use in Section~\ref{sec:nonlinApprox}. Note the improved rate of decay for large times below. 
%  \begin{prop}\cite{AP, MTay12} \label{prop: disp}
%  Let $2<q< \infty$ and let $\s  =  2 - \frac{4}{q}$. Then, 
 % \EQ{ \label{disp}
 % \| (- \De_{\Hp^3})^{-\frac{\s}{2}} e^{\pm i t  \sqrt{- \De_{\Hp^3}}} f\|_{L^q} \lesssim   \begin{cases}  \abs{t}^{-2( \frac{1}{2} - \frac{1}{q})} \|f\|_{L^{q'}} \mif 0 <t \le 1\\  \abs{t}^{-\frac{3}{2}} \|f\|_{L^{q'}}\mif t \ge 1 \end{cases}
%  }
%  \end{prop}
\begin{prop} \label{prop: disp} \cite{AP, MTay12}
Let $d \geq 3$. 
Let $q \in (2, \infty)$ and $\sgm = (d+1)( \frac{1}{2} - \frac{1}{q} )$. 
\begin{enumerate}
\item For $0 < \abs{t} \leq 2$, the following \emph{short time dispersive estimate} holds:
\begin{equation} \label{eq:shortDisp}
	\nrm{e^{\pm i t \sqrt{-\De_{\bbH^{d}}}}h}_{L^{q}(\bbH^{d})} \aleq_{q} \abs{t}^{-(d-1) (\frac{1}{2} - \frac{1}{q})} \nrm{h}_{W^{\sgm, q'}(\bbH^{d})}
	\end{equation}
\item For $\abs{t} \geq 2$, the following \emph{long time dispersive estimate} holds: 
\begin{equation} \label{eq:longDisp}
	\nrm{e^{\pm i t \sqrt{-\De_{\bbH^{d}}}}h}_{L^{q}(\bbH^{d})} \aleq_{q} \abs{t}^{-\frac{3}{2}} \nrm{h}_{W^{\sgm, q'}(\bbH^{d})}
\end{equation}
\end{enumerate}
\end{prop}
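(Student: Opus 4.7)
\medskip

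\noindent\textbf{Proof proposal.}  The plan is to derive both the short-- and long--time dispersive bounds as the result of a single kernel estimate combined with Stein-interpolation against the trivial $L^{2}\to L^{2}$ bound afforded by the spectral theorem.  Concretely, setting $\sigma_{\infty}=(d+1)/2$, it suffices to establish the $L^{1}\to L^{\infty}$ endpoint
\begin{equation*}
  \nrm{e^{\pm it\sqrt{-\Delta_{\bbH^{d}}}}(1-\Delta_{\bbH^{d}})^{-\sigma_{\infty}/2} h}_{L^{\infty}(\bbH^{d})}
  \aleq
  \begin{cases} |t|^{-\frac{d-1}{2}}, & 0<|t|\le 2,\\ |t|^{-\frac{3}{2}}, & |t|\ge 2,\end{cases}
\end{equation*}
since complex interpolation with the $L^{2}\to L^{2}$ bound at the rate $\theta = 1-2/q$ produces exactly the stated exponents $\gamma = -(d-1)(1/2-1/q)$ (resp.\ $-3/2$) together with the Sobolev loss $\sigma = (d+1)(1/2-1/q)$.

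To prove the endpoint bound I would realize the half--wave propagator as a spherical convolution on $\bbH^{d}$.  Via the Helgason--Fourier transform, $-\Delta_{\bbH^{d}}$ is unitarily equivalent to multiplication by $\tfrac{(d-1)^{2}}{4}+\lambda^{2}$ on $\bbR_{+}\times S^{d-1}$ with spectral density $|\bfc(\lambda)|^{-2}\,\ud\lambda\,\ud\omega$; consequently the radial kernel takes the form
\begin{equation*}
  K_{t}(r) = \int_{0}^{\infty} e^{\pm it\sqrt{\frac{(d-1)^{2}}{4}+\lambda^{2}}}
  \Bigl(1+\tfrac{(d-1)^{2}}{4}+\lambda^{2}\Bigr)^{-\frac{d+1}{4}}
  \varphi_{\lambda}(r)\, |\bfc(\lambda)|^{-2}\,\ud\lambda,
\end{equation*}
where $\varphi_{\lambda}$ is the elementary spherical function.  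The kernel estimate reduces to a pointwise bound on $K_{t}(r)$ uniform in $r\ge 0$.

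The core analysis is a stationary phase / Van der Corput study of the above oscillatory integral.  One dyadically decomposes the $\lambda$--integral and exploits the known asymptotics $|\bfc(\lambda)|^{-2}\sim \lambda^{d-1}$ at high frequency, $\sim \lambda^{2}$ at low frequency (for even $d$ one must account for the non-analytic piece, but this is harmless), together with the standard oscillatory representation of $\varphi_{\lambda}(r)$ in terms of $\cos(\lambda r)$ modulated by Harish-Chandra type symbols.  In the high frequency / short time regime one finds a non-degenerate stationary point whose contribution decays like the Euclidean rate $|t|^{-(d-1)/2}$ after absorbing the Bessel factor $(1+\lambda^{2})^{-(d+1)/4}$; the Sobolev loss is precisely tuned to compensate for the $\lambda^{d-1}$ Plancherel density.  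In the long--time regime one uses that the phase $\sqrt{(d-1)^{2}/4+\lambda^{2}}$ has a \emph{strictly positive lower bound} $(d-1)/2$ with a $\lambda^{2}$ vanishing of its derivative at $\lambda=0$, so that the dominant stationary--phase contribution comes from the spectral gap and produces the $|t|^{-3/2}$ decay independent of dimension.

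The main obstacle will be the careful treatment of the endpoint $\lambda=0$, where the Plancherel measure degenerates and the phase becomes stationary; this is what drives the long--time rate and it is also where even vs.\ odd dimensional parity of $d$ manifests itself in the structure of $|\bfc(\lambda)|^{-2}$ and $\varphi_{\lambda}$.  A clean way to handle this is to split $K_{t}$ via a smooth cutoff $\chi(\lambda)+(1-\chi(\lambda))$ separating low and high frequencies at a $t$--dependent threshold, estimate the high-frequency part by repeated integration by parts (Van der Corput of order two) and the low-frequency part by a direct Van der Corput argument on the interval $(0,1)$ using the quadratic vanishing of $\partial_{\lambda}\sqrt{(d-1)^{2}/4+\lambda^{2}}$ at $\lambda=0$.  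Once the pointwise bound on $K_{t}(r)$ is in hand, Young's convolution inequality gives the $L^{1}\to L^{\infty}$ estimate and the interpolation step outlined above completes the proof.
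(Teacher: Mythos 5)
The proposition is stated in the paper as a citation of Anker--Pierfelice and Metcalfe--Taylor and is not proved there, so there is no internal proof to compare against. Your sketch for the \emph{short-time} bound is sound: reduce to an $L^{1}\to L^{\infty}$ kernel bound with Sobolev loss $(d+1)/2$, prove it by Helgason--Fourier transform plus stationary phase, and interpolate against $L^{2}\to L^{2}$ unitarity; the exponents $\gamma=(d+1)(1/2-1/q)$ and $-(d-1)(1/2-1/q)$ come out of the $\theta=1-2/q$ interpolation exactly as you say.

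However, the \emph{long-time} part of your argument contains a genuine error. You claim that interpolating an $L^{1}\to L^{\infty}$ rate of $|t|^{-3/2}$ with the $L^{2}\to L^{2}$ bound produces the stated rate $|t|^{-3/2}$ for all $q\in(2,\infty)$. It does not: interpolating a no-decay endpoint with a decaying endpoint at the index $\theta=1-2/q$ gives $|t|^{-\frac{3}{2}\theta}=|t|^{-3(\frac12-\frac1q)}$, which degrades to no decay at all as $q\to 2^{+}$, whereas the proposition asserts a uniform rate $|t|^{-3/2}$ for every $q>2$. Your interpolation scheme is dimensionally consistent and therefore cannot output a dimensionally inconsistent bound; indeed the paper's remark immediately following the proposition flags precisely this feature of the long-time estimate (``not consistent with dimensional analysis, and exhibits a \emph{better} decay rate as $q\to 2$''). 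The correct route in the cited references bypasses interpolation entirely for $|t|\ge 2$: one estimates the $L^{q'}\to L^{q}$ convolution norm directly, exploiting the Kunze--Stein phenomenon on $\Hp^{d}$ (or equivalently the explicit pointwise bound on the radial kernel, which carries the exponential weight $e^{-\frac{d-1}{2}r}$ coming from $|\bfc(\lambda)|^{-2}$ and the spherical functions). It is precisely this exponential gain, with no Euclidean analogue, that absorbs the hyperbolic volume growth and yields the $q$-independent $|t|^{-3/2}$ decay. Without replacing your interpolation step by this direct weighted argument, the long-time estimate as stated does not follow.
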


\begin{rem} 
The short time estimate~\eqref{eq:shortDisp} is exactly the same as what one can obtain on the Minkowski space $\bbR^{1+d}$ by interpolating between the $L^{2} \to L^{2}$ energy estimate and the $L^{1} \to L^{\infty}$ dispersive estimate. In particular, this estimate is consistent with dimensional analysis. On the other hand, the long time estimate~\eqref{eq:longDisp} is \emph{not} consistent with dimensional analysis, and exhibits a \emph{better} decay rate as $q \to 2$.
\end{rem}

\subsection{Dispersive theory for linear waves with potential}

%Let $\bbH^{d}$ be the $d$-dimensional hyperbolic space. We fix a point $\mathbf{0} \in \bbH^{d}$, which we deem to be the \emph{origin} on $\bbH^{d}$, and consider the standard polar coordinates $(r, \Tht)$ about $\mathbf{0}$ on $\bbH^{d}$.

Next, we consider the linear wave equation with potential on $\bbR \times \bbH^{d}$:
\begin{equation} \label{eq:Vwave}
	\Box_{\bbH^{d}} u + V u =0.
\end{equation}

Our aim will be to establish a Bahouri-G\'erard type profile decomposition for solutions to the equation~\eqref{eq:Vwave}. We first introduce some notation to be used below. Let
\begin{equation*}
	D_{V} := \sqrt{- \De_{\bbH^{d}} + V}
\end{equation*}
and define the $H^{1}_{V}$ norm for $f \in C^{\infty}_{0}(\bbH^{d})$ as follows:
\begin{equation*}
	\nrm{f}_{H^{1}_{V}} := \nrm{D_{V} f}_{L^{2}(\bbH^{d})}.
\end{equation*}
As usual, we define the space $H^{1}_{V}(\bbH^{d})$ by taking the completion of $C^{\infty}_{0}(\bbH^{d})$ under the $H^{1}_{V}$ norm.

We also define the propagator $S_{V}$ for the linear wave equation with potential~\eqref{eq:Vwave} as follows: For $\vec{u}(0) = (u_{0}, u_{1})$, let 
\begin{equation*}
S_{V}(t) \vec{u}(0) := (S_{V, 0}(t) \vec{u}(0), S_{V, 1}(t) \vec{u}(0)), 
\end{equation*}
where
\begin{equation} \label{eq:propagator4VWave}
\begin{aligned}
	S_{V, 0}(t) \vec{u}(0) := & \cos(t D_{V}) u_{0} + \frac{\sin (t D_{V})}{D_{V}} u_{1}, \\
	S_{V, 1}(t) \vec{u}(0) := & \rd_{t} S_{V, 0}(t) = - \sin (t D_{V}) D_{V} u_{0} + \cos (t D_{V}) u_{1}.
\end{aligned}
\end{equation}

When $V\equiv0$ we replace $S_V$ (respectively $S_{V,0}$ and $S_{V,1}$) by $S_\hyp$ (respectively $S_{\hyp, 0}$ and $S_{\hyp,1}$).

We make the following assumptions on the potential $V$:

\begin{enumerate}
\item {\it (Decay of $V$)} We assume that $V$ decays exponentially towards spatial infinity, i.e., for some $A_{1}, \alp_{1} > 0$, we have
\begin{equation} \label{eq:Vdecay}
	\abs{V (x)} \leq A_{1} e^{-\alp_{1} r}.
\end{equation}

\item {\it (Positivity of energy)} We assume that the operator $-\De_{\bbH^{d}} + V$ is self-adjoint and positive on $H^{1}(\bbH^{d})$. More precisely, there exists $\mu_{V} > 0$ such that for every $f \in H^{1}(\bbH^{d})$, we have
\begin{equation} \label{eq:Vpos}
	\brk{(-\De_{\bbH^{d}} + V) f, f}_{L^{2}} \geq \mu^{2}_{V} \nrm{f}^{2}_{L^{2}}.
\end{equation}

\item {\it (Integrated local energy decay)} We assume that the following \emph{integrated local energy decay estimate} holds: There exist constants $A_{2} > 0$ and $\alp_{2} > 0$ such that for every $f \in C^{\infty}_{0}(\bbH^{d})$, we have
\begin{equation} \label{eq:VLED}
	\nrm{e^{-\alp_{2} r} e^{\pm i t D_{V}} f}_{L^{2}_{t,x}} \leq A_{2} \nrm{f}_{L^{2}}.
\end{equation}
\item {\it (Strichartz estimates)} We assume that non-sharp Strichartz estimates hold for the half-wave propagators $e^{\pm i t D_{V}}$. More precisely, if $(p, q, \gmm)$ is a hyperbolic admissible triple, then we assume 
\begin{equation} \label{eq:Str4halfVWave:1}
	 \nrm{e^{\pm i t D_{V}} f}_{L^{q}_{t} W^{-\gmm, r}_{x}(\bbR \times \bbH^{d})} \leq A_{3} \nrm{f}_{L^{2}}
\end{equation}
and
\begin{equation} \label{eq:Str4halfVWave:2}
	\nrm{\sqrt{-\lap_{\bbH^{d}}} \, e^{\pm i t D_{V}} f}_{L^{q}_{t} W^{-\gmm, r}_{x}(\bbR \times \bbH^{d})}
	\leq A_{3} \nrm{D_{V} f}_{L^{2}}
\end{equation}
hold for some constant $A_{3} = A_{3}(p, q) > 0$.
\end{enumerate}

\begin{rem} 
The assumptions above on $V$ are by no means minimal for the theorems in this paper to hold. With an eye towards further applications, they are designed to be merely general enough to include the class of potentials that arise in \cite{LOS1} as a result of linearization of the equivariant wave maps equation about a family of stationary solutions.
\end{rem}

\begin{rem}
By the assumptions~\eqref{eq:Vdecay} and~\eqref{eq:Vpos}, the operator $-\De_{\bbH^{d}} + V$ is self-adjoint on its form domain $H^{1}(\bbH^{d})$ and has non-negative spectrum; see~\cite[Section~X.2]{RS2}. These facts justify the use of functional calculus for $-\De_{\bbH^{d}} + V$, e.g., in the definition of $D_{V} = \sqrt{-\De_{\bbH^{d}} + V}$ and $S_{V}(t)$.
It is possible to derive further spectral properties of $-\De_{\bbH^{d}} + V$ from the assumptions, but we will refrain from doing so since they will not be used in the sequel. 
%Furthermore, the integrated local energy decay estimate~\eqref{eq:VLED} implies that $-\De_{\bbH^{d}} + V$ does not have any eigenvalues, and hence $\sgm(-\lap_{\bbH^{d}} + V) = [\frac{(d-1)^{2}}{4}, \infty)$.

On the other hand, we note that under a further condition\footnote{The condition required for our proof of Lemma~\ref{lem: Strichartz proof} is $\alp_{1} \geq 1 + \alp_{2}$, but we do not claim that this is optimal.} on the exponents $\alp_{1}$ and $\alp_{2}$, the Strichartz estimates~\eqref{eq:Str4halfVWave:1},~\eqref{eq:Str4halfVWave:2} can be deduced from the first three assumptions~\eqref{eq:Vdecay}--\eqref{eq:VLED}. We refer to the argument in the proof of Lemma~\ref{lem: Strichartz proof} below, which is based on earlier arguments in \cite{LOS1, LS, RodS}.
%In many cases, the first three The integrated local energy decay estimate 
% In fact, the integrated local energy decay estimate is usually the main step towards proving Strichartz estimates. However, as proving Strichartz estimates would lead us away from the focus of this paper, we have chosen to include them as an assumption rather than prove them as a consequence of decay assumptions on the potential. We refer the reader to \cite{LOS1, LS, RodS} for specific examples where Strichartz estimates are proved using spectral information and decay estimates for the potential. See also Lemma~\ref{lem: Strichartz proof} below.
\end{rem}

%\begin{rem}
%Self-adjointness of $-\De_{\bbH^{d}} + V$ on its form domain $H^{1}(\bbH^{d})$ must follow easily from the fact that $V$ is real-valued and decays.  
%\comment{Integrated local energy decay must tell us something about the spectrum (e.g., absence of singular continuous spectrum via the RAGE theorem). I will try to make these points precise later, with references. } 
%We remark that these assumptions are deeply connected to the spectral properties of $-\lap_{\bbH^{d}} + V$, and can often be verified using a completely different set of techniques than what we use in this paper. See, for instance, \cite{LOS1}.
%\end{rem}

A simple example of a class of potentials that satisfy the assumptions \eqref{eq:Vdecay}--\eqref{eq:Str4halfVWave:2} above is that of \emph{repulsive potentials with compact support}, i.e., potentials that obey the assumptions of Theorem~\ref{main}. We record this fact in the following lemma:
\begin{lem} \label{lem:defocusingV}
Let $V$ be a smooth, compactly supported potential which is \emph{repulsive} in the sense that
\begin{equation} \label{eq:defocusingV}
	V \geq 0 \mand \rd_{r} V \leq 0.
\end{equation}
where $\rd_{r}$ is the radial directional derivative in the polar coordinates $(r, \omg)$ on $\bbH^{d}$. Then $V$ obeys the assumptions \eqref{eq:Vdecay}--\eqref{eq:Str4halfVWave:2}.
\end{lem}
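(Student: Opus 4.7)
The plan is to verify each of the four assumptions \eqref{eq:Vdecay}--\eqref{eq:Str4halfVWave:2} in turn, deferring the most substantial input (the Morawetz-type estimate) to Section~\ref{sec:morawetz}.

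The decay assumption \eqref{eq:Vdecay} is immediate from compact support, with any choice of $A_{1}, \alp_{1} > 0$. For the positivity assumption \eqref{eq:Vpos}, I would combine $V \geq 0$ with the spectral gap for the Laplacian on hyperbolic space, $\sgm(-\lap_{\bbH^{d}}) \subseteq [\tfrac{(d-1)^{2}}{4}, \infty)$, to conclude that for every $f \in C^{\infty}_{0}(\bbH^{d})$,
\EQ{
\brk{(-\lap_{\bbH^{d}} + V) f, f}_{L^{2}} \geq \brk{-\lap_{\bbH^{d}} f, f}_{L^{2}} \geq \tfrac{(d-1)^{2}}{4} \nrm{f}_{L^{2}}^{2},
}
so $\mu_{V}^{2} \geq \tfrac{(d-1)^{2}}{4}$ works. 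In particular, $-\lap_{\bbH^{d}} + V$ is self-adjoint and strictly positive on its form domain $H^{1}(\bbH^{d})$, and the operators $D_{V}$ and $S_{V}(t)$ are well-defined via functional calculus.

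The integrated local energy decay \eqref{eq:VLED} is the main obstacle and is precisely where the repulsivity assumption $\rd_{r} V \leq 0$ enters. The strategy, which will be carried out in Section~\ref{sec:morawetz}, is a Morawetz-type multiplier argument in the spirit of Ionescu--Pausader--Staffilani~\cite{IPS} and Shen--Staffilani~\cite{ShenS14}. One applies a radial vector field multiplier of the form $X = \phi(r) \rd_{r} + \tfrac{1}{2}(\rd_{r} \phi(r))$, with $\phi$ chosen to be nondecreasing and bounded (adapted to the hyperbolic geometry so that $\rd_{r} \phi \geq 0$ yields a nonnegative bulk on the kinetic side). Contracting the energy-momentum tensor associated with $\Box_{\bbH^{d}} + V$ with $X$ and integrating by parts over a space-time slab produces a bulk term of the schematic form
\EQ{
\int \phi (\rd_{r} V) \abs{u}^{2} + \textrm{(nonnegative kinetic terms)} = \textrm{(boundary terms controlled by the conserved energy)}.
}
The sign hypothesis $\rd_{r} V \leq 0$ ensures that the potential contribution $-\phi(\rd_{r} V)\abs{u}^{2}$ appears with a favorable sign on the left-hand side, and standard choices of $\phi$ (e.g.\ one which behaves like $\tanh(r)$ for small $r$ and stabilizes for large $r$) produce a lower bound of the form $\int e^{-2\alp_{2} r} \abs{u}^{2}\,\ud t\,\mu(\ud x) \lec \brk{(-\lap_{\bbH^{d}}+V) f, f}_{L^{2}}$. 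Since $D_{V}$ commutes with the half-wave propagator, applying this estimate to $e^{\pm i t D_{V}} f$ after dividing by $D_{V}$ yields \eqref{eq:VLED}.

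Finally, the Strichartz estimates \eqref{eq:Str4halfVWave:1}--\eqref{eq:Str4halfVWave:2} are derived from \eqref{eq:Vdecay}--\eqref{eq:VLED} together with the free Strichartz estimates of Proposition~\ref{strich}, via the standard perturbation scheme used in~\cite{LOS1, LS, RodS} (this is exactly what Lemma~\ref{lem: Strichartz proof} will accomplish; we invoke it as a black box). The idea is to write $u = e^{\pm i t D_{V}} f$ as a solution of $\Box_{\bbH^{d}} u = -Vu$ with appropriate data and apply the Duhamel formula; the free Strichartz estimate controls the homogeneous part, while the inhomogeneous contribution is absorbed using the bound $\nrm{V e^{\alp_{2} r}}_{L^{\infty}} \lec 1$ (which holds by compact support of $V$, absorbing any finite power of $e^{\alp_{2} r}$) combined with \eqref{eq:VLED} to close the bootstrap. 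This gives \eqref{eq:Str4halfVWave:1}, and \eqref{eq:Str4halfVWave:2} follows by the same argument after replacing $f$ by $D_{V}^{-1} \sqrt{-\lap_{\bbH^{d}}}\, f$ and using that $\sqrt{-\lap_{\bbH^{d}}} D_{V}^{-1}$ is bounded on $L^{2}$ by functional calculus together with \eqref{eq:Vpos}.
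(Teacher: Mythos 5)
Your overall structure matches the paper's proof exactly: \eqref{eq:Vdecay} is trivial from compact support, \eqref{eq:Vpos} follows from $V \geq 0$ together with the spectral gap for $-\lap_{\bbH^{d}}$, and both \eqref{eq:VLED} and \eqref{eq:Str4halfVWave:1}--\eqref{eq:Str4halfVWave:2} are deferred to Section~\ref{sec:morawetz}, with \eqref{eq:VLED} coming from a Morawetz-type multiplier argument exploiting $\rd_{r} V \leq 0$ and the Strichartz estimates derived from \eqref{eq:VLED} plus free Strichartz via a Duhamel/Christ--Kiselev bootstrap. The high-level plan is therefore correct.

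However, several details in your sketch of the deferred arguments would not close as written. The multiplier you propose, $X = \phi(r)\rd_{r} + \tfrac{1}{2}(\rd_{r}\phi(r))$, is not the one that works on $\bbH^{d}$: the zeroth-order ``Lagrangian correction'' should be $\tfrac{1}{2}(\lap_{\bbH^{d}}\phi)\,u$, not $\tfrac{1}{2}(\rd_{r}\phi)\,u$, and the paper's choice is $\phi = \bfa_{r}$ with $\lap_{\bbH^{d}}\bfa = 1$, so that the correction is exactly $\tfrac{1}{2}u$ and, crucially, $\bfa_{r}$ is \emph{bounded} on $\bbH^{d}$. Your claim that the multiplier yields directly a bound $\int e^{-2\alp_{2} r}\abs{u}^{2} \aleq \nrm{D_{V}f}_{L^{2}}^{2}$ also skips two essential steps: the bulk term controls $\abs{\nb u}^{2}_{\bfg}$, one needs a \emph{second} multiplier built from $\lap_{\bbH^{d}}\bfb = \cosh^{-2}r$ to obtain control of $\abs{\rd_{t} u}^{2}$, and the passage from there to a weighted $L^{2}_{t,x}$ bound on $e^{\pm i t D_{V}} f$ is via the decomposition $e^{i t D_{V}} f = -i\rd_{t}u + \rd_{t}v$ with $f = (-\lap_{\bbH^{d}} + V)g$, not by ``dividing by $D_{V}$.'' Finally, the step ``\eqref{eq:Str4halfVWave:2} follows after replacing $f$ by $D_{V}^{-1}\sqrt{-\lap_{\bbH^{d}}}\,f$'' would require $\sqrt{-\lap_{\bbH^{d}}}$ and $D_{V}$ to commute, which they do not; the paper instead applies $\sqrt{-\lap_{\bbH^{d}}}$ directly inside the Duhamel formula and uses the free Strichartz estimate for $\sqrt{-\lap_{\bbH^{d}}}\,S_{\hyp}$. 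None of this undermines the present lemma, which correctly reduces to the cited section, but the section would fail if carried out as you describe.
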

That such a potential $V$ satisfies \eqref{eq:Vdecay} and \eqref{eq:Vpos} is obvious; the only nontrivial parts are that the integrated local energy decay \eqref{eq:VLED} and the Strichartz estimates \eqref{eq:Str4halfVWave:1} and \eqref{eq:Str4halfVWave:2} hold. We defer the proof of these facts, hence also that of Lemma~\ref{lem:defocusingV}, until Section~\ref{sec:morawetz}.
 
 In the following lemma, we record few immediate consequences of the assumptions \eqref{eq:Vdecay}--\eqref{eq:Str4halfVWave:2}.
\begin{lem} \label{lem:basicVWave}
Let $V$ satisfy \eqref{eq:Vdecay}--\eqref{eq:Str4halfVWave:2}. Then the following statements hold.
\begin{enumerate}
\item \label{item:basicVWave:1}
	The $\nrm{\cdot}_{H^{1}}$ norm is equivalent to $\nrm{\cdot}_{H^{1}_{V}}$, i.e., for every $f \in C^{\infty}_{0}(\bbH^{d})$ we have
\begin{equation} \label{eq:H1equiv}
	\nrm{f}_{H^{1}} \aleq_{A_{1}, \mu_{V}} \nrm{f}_{H^{1}_{V}} \aleq_{A_{1}} \nrm{f}_{H^{1}}.
\end{equation}

\item \label{item:basicVWave:2}
	Suppose $u$ is a solution of $\Box_{\Hp^d}u+Vu=F$ with initial data $\vec u(0)=(f, g).$ Let $I \subseteq \bbR$ be an interval, and $(q, r, \gmm)$ be a hyperbolic admissible triple. Then we have
\begin{equation}
\begin{aligned}
& \hskip-2em
	\nrm{u}_{L^{q}_{t} (I; W^{1-\gmm, r}(\bbH^{d}))} + \nrm{\rd_{t} u}_{L^{q}_{t} (I; W^{-\gmm, r}(\bbH^{d}))} \\
	& \leq C A_{3} (\nrm{(f, g)}_{H^{1} \times L^{2}(\bbH^{d})} + \nrm{F}_{L^{1}_{t} (I; L^{2}(\bbH^{d}))})
\end{aligned}
\end{equation}
\end{enumerate}
\end{lem}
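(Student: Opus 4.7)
\textbf{Plan for Part (\ref{item:basicVWave:1}).} The $H^{1}_{V}$ norm expands as
\[
	\nrm{f}_{H^{1}_{V}}^{2} = \brk{(-\lap_{\bbH^{d}} + V) f, f}_{L^{2}} = \nrm{\nb f}_{L^{2}}^{2} + \int_{\bbH^{d}} V \abs{f}^{2} \, \mu(\ud x),
\]
while by the Poincar\'e inequality coming from the spectral gap $\sgm(-\lap_{\bbH^{d}}) \subseteq [\frac{(d-1)^{2}}{4}, \infty)$, one has $\nrm{f}_{H^{1}} \aeq \nrm{\nb f}_{L^{2}}$. For the upper bound I simply estimate $\int V \abs{f}^{2} \leq A_{1} \nrm{f}_{L^{2}}^{2} \aleq_{A_{1}} \nrm{f}_{H^{1}}^{2}$ using \eqref{eq:Vdecay}. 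For the lower bound, \eqref{eq:Vpos} gives $\nrm{f}_{L^{2}}^{2} \leq \mu_{V}^{-2} \nrm{f}_{H^{1}_{V}}^{2}$, so
\[
	\nrm{\nb f}_{L^{2}}^{2} = \nrm{f}_{H^{1}_{V}}^{2} - \int V \abs{f}^{2} \leq \nrm{f}_{H^{1}_{V}}^{2} + A_{1} \nrm{f}_{L^{2}}^{2} \leq (1 + A_{1} \mu_{V}^{-2}) \nrm{f}_{H^{1}_{V}}^{2},
\]
which (combined with the Poincar\'e inequality again) yields \eqref{eq:H1equiv}.

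\textbf{Plan for Part (\ref{item:basicVWave:2}).} Using the functional calculus representation \eqref{eq:propagator4VWave} and Duhamel's formula, write
\[
	u(t) = \cos(t D_{V}) f + \frac{\sin(t D_{V})}{D_{V}} g + \int_{0}^{t} \frac{\sin((t-s) D_{V})}{D_{V}} F(s) \, \ud s,
\]
and express each piece as a superposition of half-wave propagators $e^{\pm i t D_{V}}$. I will then convert the $W^{1-\gmm, r}$ norm to the $W^{-\gmm, r}$ norm via the identity $\nrm{v}_{W^{1-\gmm, r}} = \nrm{\sqrt{-\lap_{\bbH^{d}}}\, v}_{W^{-\gmm, r}}$, which puts every term in a form directly covered by the hypotheses \eqref{eq:Str4halfVWave:1}--\eqref{eq:Str4halfVWave:2}. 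For the spatial norm of the homogeneous data-part, apply \eqref{eq:Str4halfVWave:2} to the $f$ contribution and \eqref{eq:Str4halfVWave:1} (with the factor $1/D_{V}$ absorbed using functional calculus) to the $g$ contribution; for $\rd_{t} u$, which is a sum of $- D_{V} \sin(t D_{V}) f + \cos(t D_{V}) g$ plus the differentiated Duhamel term, use \eqref{eq:Str4halfVWave:1} after commuting $D_{V}$ through $e^{\pm i t D_{V}}$ and invoking Part (\ref{item:basicVWave:1}) to pass between $\nrm{D_{V} f}_{L^{2}}$ and $\nrm{f}_{H^{1}}$.

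For the inhomogeneous piece, apply Minkowski's integral inequality in $s$ followed by the same half-wave Strichartz bounds, which gives
\[
	\bb\| \int_{0}^{t} \frac{\sin((t-s)D_{V})}{D_{V}} F(s) \, \ud s \bb\|_{L^{q}_{t}(I; W^{1-\gmm, r})} \aleq A_{3} \int_{I} \nrm{F(s)}_{L^{2}} \, \ud s,
\]
and similarly for $\rd_{t}$ of the Duhamel term after noting that $\rd_{t}$ brings down a $D_{V}$ that cancels the $1/D_{V}$. Summing all contributions produces the claimed estimate.

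\textbf{Expected obstacle.} The only mildly subtle point is bookkeeping the operators $D_{V}$, $1/D_{V}$, and $\sqrt{-\lap_{\bbH^{d}}}$ so that the hypotheses \eqref{eq:Str4halfVWave:1}--\eqref{eq:Str4halfVWave:2} apply cleanly; here I will repeatedly use Part (\ref{item:basicVWave:1}) (the equivalence $\nrm{D_{V} f}_{L^{2}} \aeq \nrm{f}_{H^{1}} \aeq \nrm{\sqrt{-\lap_{\bbH^{d}}} f}_{L^{2}}$) and the commutativity of $D_{V}$ with the propagator. The rest is bookkeeping with standard Strichartz/Duhamel machinery.
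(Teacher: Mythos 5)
Your proposal follows essentially the same route as the paper: Part (\ref{item:basicVWave:1}) via the Poincar\'e inequality, the boundedness of $V$, and \eqref{eq:Vpos}; Part (\ref{item:basicVWave:2}) via Duhamel's formula, Euler's identity for $S_{V}$ in terms of $e^{\pm i t D_{V}}$, and the assumed half-wave Strichartz estimates \eqref{eq:Str4halfVWave:1}--\eqref{eq:Str4halfVWave:2}. The paper states this proof even more tersely, so yours is a correct elaboration of it.

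One small slip worth correcting: for the $g$-contribution to $\nrm{u}_{L^{q}_{t} W^{1-\gmm, r}}$, you write that you will apply \eqref{eq:Str4halfVWave:1} after absorbing the $1/D_{V}$. But \eqref{eq:Str4halfVWave:1} bounds $\nrm{e^{\pm i t D_{V}} h}_{L^{q}_{t} W^{-\gmm, r}}$, so applying it to $h = D_{V}^{-1} g$ only controls $\nrm{\sin(t D_{V}) D_{V}^{-1} g}_{L^{q}_{t} W^{-\gmm, r}}$, one spatial derivative short of what the $W^{1-\gmm, r}$ bound requires. After converting $W^{1-\gmm, r}$ to $\sqrt{-\lap_{\bbH^{d}}}$ in $W^{-\gmm, r}$, the term you must control is $\nrm{\sqrt{-\lap_{\bbH^{d}}}\, e^{\pm i t D_{V}} D_{V}^{-1} g}_{L^{q}_{t} W^{-\gmm, r}}$, and the external $\sqrt{-\lap_{\bbH^{d}}}$ does not commute with $e^{\pm i t D_{V}}$, so one cannot move it inside. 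The correct reference is \eqref{eq:Str4halfVWave:2}, applied with $h = D_{V}^{-1} g$, which directly yields the bound by $A_{3} \nrm{D_{V} D_{V}^{-1} g}_{L^{2}} = A_{3} \nrm{g}_{L^{2}}$. In short: use \eqref{eq:Str4halfVWave:2} for every piece of $\nrm{u}_{L^{q}_{t} W^{1-\gmm, r}}$ and \eqref{eq:Str4halfVWave:1} for every piece of $\nrm{\rd_{t} u}_{L^{q}_{t} W^{-\gmm, r}}$; the rest of your bookkeeping is correct.
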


\begin{proof} 
The second inequality in~\eqref{eq:H1equiv} follows from the embedding $H^{1}(\bbH^{d}) \subseteq L^{2}(\bbH^{d})$ and the fact that $V$ is uniformly bounded. The first inequality follows by the same proof, this time using~\eqref{eq:Vpos}. For the Strichartz inequalities note that by Duhamel's principle, the unique solution $u$ can be represented as
\begin{equation*}
	\vec{u}(t) = S_{V}(t)(f,g) + \int_{0}^{t} S_{V}(t-s)(0, F(s)) \, \ud s
\end{equation*}
Since the propagator $S_{V}$ can be related to the half-wave propagators $e^{\pm i t D_{V}}$ via the Euler identity $e^{\pm i t D_{V}} = \cos t D_{V} \pm i \sin t D_{V},$ Strichartz estimates are a corollary of \eqref{eq:Str4halfVWave:1} and \eqref{eq:Str4halfVWave:2}. \qedhere
\end{proof}
%
%Consider the inhomogeneous wave equation with potential
%\begin{equation*}
%	\Box_{\bbH^{d}} u + V u = F
%\end{equation*}
%with initial data $\vec{u}(0) = (u, \rd_{t} u)(0) = (f,g) \in H^{1} \times L^{2} (\bbH^{d})$. By Duhamel's principle, the unique solution $u$ can be represented as
%\begin{equation*}
%	\vec{u}(t) = S_{V}(t)(f,g) + \int_{0}^{t} S_{V}(t-s)(0, F(s)) \, \ud s
%\end{equation*}
%Note that the propagator $S_{V}$ can be related to the half-wave propagators $e^{\pm i t D_{V}}$ via the Euler identity $e^{\pm i D_{V}} = \cos D_{V} \pm i \sin D_{V}$. As a simple corollary of the preceding lemma, we have the following non-sharp Strichartz estimates for~\eqref{eq:Vwave}
%
%\begin{cor} \label{cor:Str4VWave}
%Let $I \subseteq \bbR$ be an interval, and $(q, r, \gmm)$ be a hyperbolic admissible triple. Then we have
%\begin{equation} \label{}
%\begin{aligned}
%& \hskip-2em
%	\nrm{u}_{L^{q}_{t} (I; W^{1-\gmm, r}(\bbH^{d}))} + \nrm{\rd_{t} u}_{L^{q}_{t} (I; W^{-\gmm, r}(\bbH^{d}))} \\
%	& \leq C(A_{1}, A_{2}, \mu_{V}) (\nrm{(f, g)}_{H^{1} \times L^{2}(\bbH^{d})} + \nrm{F}_{L^{1}_{t} (I; L^{2}(\bbH^{d}))})
%\end{aligned}
%\end{equation}
%\end{cor}

In the rest of this paper, we will suppress the dependence of constants on $A_{1}, A_{2}, A_{3}(p,q), \alp_{1}, \alp_{2}$ and $\mu_{V}$.

\subsection{Linear approximation for traveling and concentrating profiles} \label{subsec:linApprox}
Here we develop linear approximation theory for solutions to \eqref{eq:Vwave} with initial data that either travels out to infinity or concentrates to a point. More precisely, we show that such solutions can be well-approximated by a single solution (suitably translated and/or scaled) to the potential-free or Euclidean equation, respectively, which are the underlying symmetry-invariant equations.

\subsubsection{Approximation of traveling profiles by free hyperbolic waves}
Given a sequence $\set{h_{n}}$ of elements in $\bbG$, we say that $h_{n}$ \emph{escapes to infinity}, and write $\abs{h_{n}} \to \infty$, if for every compact subset $K \subset \bbG$, $h_{n} \not \in K$ for all sufficiently large $n$. Equivalently, there exists a Cartan decomposition~\eqref{eq:cartanDecomp} of $h_{n}$ of the form
\begin{equation*}
	h_{n} = k_{n} \circ a_{r_{n}} \circ \widetilde{k_{n}}
\end{equation*}
with $r_{n} \to +\infty$. From the last statement, we see that $\abs{h_{n}} \to \infty$ is furthermore equivalent to 
\begin{equation*}
	\bfd_{\bbH^{d}}(h_{n} \cdot \zero, \zero) \to +\infty.
\end{equation*}
Motivated by these considerations, henceforth we will use the notation $\abs{h} := \bfd_{\bbH^{d}}(h \cdot \zero, \zero)$ for $h \in \GG$; note that this is consistent with the shorthand $\abs{h_{n}} \to \infty$ introduced above.

The following lemma is our main linear approximation result for traveling linear profiles, i.e., linear waves with initial data $\tau_{h_{n}^{-1}} (f, g)$ where $h_{n}$ escapes to infinity.

\begin{lem} \label{lem:pert4traveling}
Let $(f, g) \in \HH(\bbH^{d})$ be an initial data set and $\set{h_{n}}$ be a sequence in $\bbG$ such that $\abs{h_{n}} \to \infty$.
Then for every hyperbolic admissible triple $(p, q, \gmm)$, we have
\begin{equation*}
	\nrm{\tau_{h_{n}} S_{V}(t) \tau_{h_{n}^{-1}} (f,g) 
	-  S_{\hyp}(t) (f,g)}_{L^{p}_{t} (\bbR ; W^{1-\gmm, q} \times W^{-\gmm, q}(\bbH^{d}))}
	\to 0
	\mas n \to \infty.
\end{equation*}
\end{lem}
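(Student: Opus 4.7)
The plan is to translate the problem so that the potential, rather than the initial data, is the moving object, and then to compare the perturbed evolution with the free hyperbolic wave. Define $v_{n}(t, x) := \tau_{h_{n}} S_{V}(t) \tau_{h_{n}^{-1}}(f, g)(x)$ and $u_{L} := S_{\hyp}(t)(f, g)$. Since $\tau_{h_{n}}$ is an isometry commuting with $\Box_{\bbH^{d}}$, the function $v_{n}$ satisfies
\begin{equation*}
\Box_{\bbH^{d}} v_{n} + V_{n} v_{n} = 0, \qquad \vec{v}_{n}(0) = (f, g),
\end{equation*}
where $V_{n}(x) := V(h_{n} \cdot x)$ inherits the decay \eqref{eq:Vdecay} but is now centered at $h_{n}^{-1} \cdot \zero$, a point escaping to infinity. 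The difference $w_{n} := v_{n} - u_{L}$ has zero data and solves $(\Box_{\bbH^{d}} + V_{n}) w_{n} = - V_{n} u_{L}$. By the isometric conjugation $\tau_{h_{n}^{-1}}$, the Strichartz assumptions \eqref{eq:Str4halfVWave:1}--\eqref{eq:Str4halfVWave:2} for $\Box_{\bbH^{d}} + V$ transfer to $\Box_{\bbH^{d}} + V_{n}$ with constants independent of $n$, giving
\begin{equation*}
\nrm{w_{n}}_{L^{p}_{t}(\bbR; W^{1-\gmm, q} \times W^{-\gmm, q})} \aleq \nrm{V_{n} u_{L}}_{L^{1}_{t} L^{2}_{x}}.
\end{equation*}
It therefore suffices to prove that $\nrm{V_{n} u_{L}}_{L^{1}_{t} L^{2}_{x}} \to 0$ as $n \to \infty$.

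By the same Strichartz bounds (applied to $v_{n}$ and to $u_{L}$), both sides of the inequality depend continuously on the data in $H^{1} \times L^{2}$ uniformly in $n$, so a density argument reduces to the case $(f, g) \in C^{\infty}_{0}(\bbH^{d})^{2}$, say supported in $\bar{B}(\zero, R)$. For such data we split
\begin{equation*}
\nrm{V_{n} u_{L}}_{L^{1}_{t} L^{2}_{x}} = I_{\leq T, n} + I_{>T, n}
\end{equation*}
according to $|t| \leq T$ and $|t| > T$. The bounded-time contribution $I_{\leq T, n}$ is controlled by the exponential decay of $V_{n}$ away from $h_{n}^{-1} \cdot \zero$: by finite speed of propagation, $\supp u_{L}(t, \cdot) \subseteq \bar{B}(\zero, R + |t|)$, so for $|h_{n}| > R + T$ the triangle inequality yields
\begin{equation*}
\nrm{V_{n} u_{L}(t)}_{L^{2}_{x}} \leq A_{1} e^{-\alp_{1}(|h_{n}| - R - T)} \nrm{u_{L}(t)}_{L^{2}_{x}},
\end{equation*}
where $\nrm{u_{L}(t)}_{L^{2}_{x}}$ is uniformly bounded (e.g.\ by Poincar\'e's inequality on $\bbH^{d}$). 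Hence $I_{\leq T, n} \to 0$ as $n \to \infty$ for each fixed $T$.

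For the tail $I_{>T, n}$ the decay of $V_{n}$ alone is useless, since the spatial support of $u_{L}$ grows linearly with $|t|$; this is the chief difficulty. Here we exploit the long-time dispersive estimate \eqref{eq:longDisp}: for smooth compactly supported data and some $q_{\ast} \in (2, \infty)$ we have $\nrm{u_{L}(t)}_{L^{q_{\ast}}_{x}} \aleq_{f, g} (1+|t|)^{-3/2}$ for $|t| \geq 2$. Choosing $s \in (2, \infty)$ with $\tfrac{1}{s} + \tfrac{1}{q_{\ast}} = \tfrac{1}{2}$ and using that $\nrm{V_{n}}_{L^{s}_{x}} = \nrm{V}_{L^{s}_{x}}$ is independent of $n$ by the isometry, H\"older gives
\begin{equation*}
\nrm{V_{n} u_{L}(t)}_{L^{2}_{x}} \leq \nrm{V}_{L^{s}_{x}} \nrm{u_{L}(t)}_{L^{q_{\ast}}_{x}} \aleq (1 + |t|)^{-3/2}
\end{equation*}
uniformly in $n$, and this is integrable in $t$. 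Therefore $\sup_{n} I_{>T, n} \to 0$ as $T \to \infty$. Sending $n \to \infty$ first and then $T \to \infty$ finishes the proof.
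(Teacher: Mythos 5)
Your proof is correct and follows essentially the same strategy as the paper's: reduce (by Duhamel and the $n$-uniform Strichartz theory) to showing $\nrm{V\,\tau_{h_n^{-1}} u_{\hyp}}_{L^1_tL^2_x}\to 0$, then split the time integral at $T$, handling the tail via the long-time dispersive estimate and $V\in L^s$ for some finite $s$, and the bounded-time piece via the spatial decay of $V$. The only cosmetic differences are that you conjugate the potential rather than the data (the resulting norm is identical), and you treat the bounded-time piece via finite speed of propagation and an explicit exponential bound, where the paper invokes the dominated convergence theorem; both are valid for compactly supported data.
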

\begin{rem} 
Given any $t \in \bbR$ and $h \in \bbG$, observe that we have the identity
\begin{equation*}
	S_{V}(t) \tau_{h^{-1}} = \tau_{h^{-1}} S_{\tau_{h} V}(t).
\end{equation*}
Thus, the preceding lemma aims to make precise the following statement: Fix an initial data set $(f,g)$ and consider a sequence of wave equations with a potential $\tau_{h_{n}} V$ that escapes to infinity. Then the corresponding solutions approach the solution to the free wave equation with the same initial data as $n \to \infty$.
\end{rem}

\begin{proof} 
By an approximation argument, we may assume that $(f, g) \in C^{\infty}_{0} \times C^{\infty}_{0}(\bbH^{d})$. 
Using the translation invariance of the $W^{1-\gmm, q} \times W^{-\gmm, q}(\bbH^{d})$ norm and the time reversibility of the equation~\eqref{eq:Vwave}, the lemma would follow once we show that
\begin{equation} \label{eq:pert4traveling:pf:1}
	\nrm{S_{V}(t) \tau_{h_{n}^{-1}} (f,g) -  \tau_{h_{n}^{-1}} S_{\hyp}(t) (f,g)}_{L^{p}_{t} ([0, \infty) ; W^{1-\gmm, q} \times W^{-\gmm, q}(\bbH^{d}))}
	\to 0,
\end{equation}
as $n \to \infty$.

In what follows, we will write $\vec{u}_{\hyp}(t) = (u_{\hyp}, \rd_{t} u_{\hyp})(t):= S_{\hyp} (t) (f,g)$. Note that
\begin{equation*}
	(\rd_{t}^{2} - \De_{\bbH^{d}} + V) \tau_{h_{n}^{-1}} u_{\hyp}  = V \, \tau_{h_{n}^{-1}} u_{\hyp}.
\end{equation*}
Observe also that $\tau_{h_{n}^{-1}} \vec{u}_{\hyp}(0) = \tau_{h_{n}^{-1}}(f,g)$. Therefore, by Duhamel's principle, we obtain the representation
\begin{equation*}
	\tau_{h_{n}^{-1}} \vec{u}_{\hyp}(t) 
	= S_{V}(t) \tau_{h_{n}^{-1}} (f,g) 
		+ \int_{0}^{t} S_{V}(t-s)(0, V \tau_{h_{n}^{-1}} u_{\hyp}(s)) \, \ud s.
\end{equation*}
Therefore, the left-hand side of~\eqref{eq:pert4traveling:pf:1} can be estimated using Minkowski's inequality as follows:
\begin{align*}
& \hskip-2em
\nrm{S_{V}(t) \tau_{h_{n}^{-1}} (f,g) -  \tau_{h_{n}^{-1}} S_{\hyp}(t) (f,g)}_{L^{p}_{t} ([0, \infty) ; W^{1-\gmm, q} \times W^{-\gmm, q}(\bbH^{d}))} \\
=& \nrm{\int_{0}^{t} S_{V}(t-s)(0, V \tau_{h_{n}^{-1}} u_{\hyp}(s)) \, \ud s}_{L^{p}_{t} ([0, \infty) ; W^{1-\gmm, q} \times W^{-\gmm, q})(\bbH^{d})} \\
\leq & \int_{0}^{\infty} \nrm{S_{V}(t-s)(0, V \tau_{h_{n}^{-1}} u_{\hyp}(s)) }_{L^{p}_{t} ([0, \infty) ; W^{1-\gmm, q} \times W^{-\gmm, q}(\bbH^{d}))} \, \ud s
\end{align*}
Writing out $S_{V}(t-s)$ using the Euler identity $e^{\pm i (t-s) D_{V}} = \cos (t-s) D_{V} \pm i \sin(t-s) D_{V}$, and applying the Strichartz estimates~\eqref{eq:Str4halfVWave:1},~\eqref{eq:Str4halfVWave:2} for the half-wave propagators $e^{\pm i t D_{V}}$, the preceding line can be bounded by
\begin{equation*}
	\aleq_{p, q,\gmm} \int_{0}^{\infty} \nrm{e^{\pm i s D_{V}} V \tau_{h_{n}^{-1}} u_{\hyp}(s)}_{L^{2}(\bbH^{d})} \, \ud s.
\end{equation*}
Since $e^{\pm i s D_{V}}$ is an isometry on $L^{2}(\bbH^{d})$, we see that proving~\eqref{eq:pert4traveling:pf:1} has been reduced to establishing
\begin{equation} \label{eq:pert4traveling:pf:2}
	\int_{0}^{\infty} \nrm{V \tau_{h_{n}^{-1}} u_{\hyp}(s)}_{L^{2}(\bbH^{d})} \, \ud s \to 0 \quad \hbox{ as } n \to \infty.
\end{equation}
Let $\eps > 0$; we will show that the left-hand side of~\eqref{eq:pert4traveling:pf:2} is $< \eps$ for all sufficiently large $n$. We begin by splitting the $s$-integral into $\int_{0}^{T} + \int_{T}^{\infty}$, where $T > 2$ is to be determined. The second integral is estimated using the long time dispersive estimate as follows. For $r \in (2, \infty)$, define $r^{\ast}$ by $\frac{1}{r} + \frac{1}{r^{\ast}} = \frac{1}{2}$.  Thanks to the exponential decay of $V$~\eqref{eq:Vdecay}, observe that $\nrm{V}_{L^{r^{\ast}(\bbH^{d})}_{x}} < \infty$ for some $r^{\ast} < \infty$. Therefore,
\begin{align*}
\int_{T}^{\infty} \nrm{V \tau_{h_{n}^{-1}} u_{\hyp}(s)}_{L^{2}(\bbH^{d})} \, \ud s 
& \aleq \int_{T}^{\infty} \nrm{V}_{L^{r^{\ast}}(\bbH^{d})} \nrm{\tau_{h_{n}^{-1}} u_{\hyp}(s)}_{L^{r}(\bbH^{d})} \, \ud s \\
 &\aleq_{f,g, r} \nrm{V}_{L^{r^{\ast}}_{x}} \int_{T}^{\infty} s^{-\frac{3}{2}} \, \ud s.
\end{align*}
Hence taking $T$ sufficiently large, we have
\begin{equation} \label{eq:pert4traveling:pf:3}
	\int_{T}^{\infty} \nrm{V \tau_{h_{n}^{-1}} u_{\hyp}(s)}_{L^{2}(\bbH^{d})} \, \ud s  < \frac{\eps}{2}.
\end{equation}
We remark that this inequality holds for all $n$.

Now it remains to treat the $s$-integral on $[0,T]$. For each fixed $s \in [0, \infty)$, we have
\begin{equation*}
	\nrm{V \tau_{h_{n}^{-1}} u_{\hyp}(s)}_{L^{2}} \to 0
	\quad \hbox{ as } n \to \infty,
\end{equation*}
since $u_{\hyp}(s) \in L^{2}(\bbH^{d})$, $V$ decays exponentially according to the assumption~\eqref{eq:Vdecay} and $\abs{h_{n}} \to \infty$. 
By the dominated convergence theorem, it follows that
\begin{equation*}
	\int_{0}^{T} \nrm{V \tau_{h_{n}^{-1}} u_{\hyp}(s)}_{L^{2}(\bbH^{d})} \, \ud s < \frac{\eps}{2}
\end{equation*}
for all sufficiently large $n$, as desired. \qedhere
\end{proof}

%\begin{cor} \label{cor:uniformScat4traveling}
%Let $\set{h_{n}}$ be a sequence in $\bbG$ such that $\abs{h_{n}} \to \infty$. Let $(f, g) \in \HH (\bbH^{d})$ be an initial data set on $\bbH^{d}$, and $(p, q, \gmm)$ a hyperbolic admissible triple. Then for any $\eps > 0$, there exists $T > 0$ such that 
%\begin{equation} \label{eq:uniformScat4traveling}
%	\lim_{T \to \infty} \bb( \sup_{n} \nrm{S_{V}(t) \tau_{h_{n}^{-1}} (f, g)}_{L^{p}_{t} ((T, \infty) ; W^{1-\gmm, q} \times W^{-\gmm, q}(\bbH^{d}))} \bb) = 0.
%\end{equation}
%An analogous statement holds in the negative time direction as well.
%\end{cor}
%\begin{proof} 
%By Lemma \ref{lem:pert4traveling}, it suffices to prove~\eqref{eq:uniformScat4traveling} for $S_{V} = S_{\hyp}$. Then the proof of~\eqref{eq:uniformScat4traveling} is obvious, as $W^{1-\gmm, q} \times W^{-\gmm, q}(\bbH^{d})$ is translation invariant. \qedhere
%\end{proof}

We also record a simple corollary of Lemma \ref{lem:pert4traveling}, which will be useful later in the definition of nonlinear profiles. We omit the obvious proof.
\begin{cor} \label{cor: h h:lin:smallScat}
Let $\set{h_{n}}$ be a sequence in $\bbG$ such that $\abs{h_{n}} \to \infty$. Let $(f, g) \in \HH (\bbH^{d})$ be an initial data set on $\bbH^{d}$, and $(p, q, \gmm)$ a hyperbolic admissible triple. 
Suppose furthermore that
\begin{equation*}
	\nrm{S_{\hyp}(t) (f,g)}_{L^{p}_{t} ([0, \infty); W^{1-\gmm, q} \times W^{-\gmm, q}(\bbH^{d}) )} < \eps
\end{equation*}
for some $\eps > 0$. Then we have
\begin{equation*}
	\limsup_{n \to \infty} \nrm{S_{V}(t) \tau_{h_{n}^{-1}} (f, g)}_{L^{p}_{t} ([0, \infty) ; W^{1-\gmm, q} \times W^{-\gmm, q}(\bbH^{d}))} < 2 \eps.
\end{equation*}
An analogous statement holds in the negative time direction as well.
\end{cor}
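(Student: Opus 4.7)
The plan is to deduce this corollary directly from Lemma~\ref{lem:pert4traveling} via a triangle inequality, after exploiting the fact that translation $\tau_{h_{n}^{-1}}$ acts as an isometry on the relevant Sobolev norms (since $SO(d,1)$ acts by isometries on $\bbH^{d}$).

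First, I would write the identity
\begin{equation*}
	S_{V}(t) \tau_{h_{n}^{-1}} (f,g) = \tau_{h_{n}^{-1}} S_{\hyp}(t) (f,g) + \bigl[ S_{V}(t) \tau_{h_{n}^{-1}} (f,g) - \tau_{h_{n}^{-1}} S_{\hyp}(t)(f,g) \bigr]
\end{equation*}
and apply the triangle inequality in $L^{p}_{t}([0,\infty); W^{1-\gmm,q} \times W^{-\gmm,q}(\bbH^{d}))$. Using the isometry property of $\tau_{h_{n}^{-1}}$ on $W^{s, q}(\bbH^{d})$, the first term on the right equals $\nrm{S_{\hyp}(t)(f,g)}_{L^{p}_{t}([0,\infty); W^{1-\gmm,q} \times W^{-\gmm,q})}$, which is bounded by $\eps$ by hypothesis.

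For the second (error) term, I would again use the isometry $\tau_{h_{n}}$ to rewrite it as
\begin{equation*}
	\nrm{\tau_{h_{n}} S_{V}(t) \tau_{h_{n}^{-1}} (f,g) - S_{\hyp}(t)(f,g)}_{L^{p}_{t}([0,\infty); W^{1-\gmm,q} \times W^{-\gmm,q})},
\end{equation*}
which is bounded by the corresponding norm over all of $\bbR$, and hence tends to $0$ as $n \to \infty$ directly by Lemma~\ref{lem:pert4traveling}. Taking $\limsup_{n\to\infty}$ in the triangle inequality then yields the desired bound. The analogous statement for $(-\infty, 0]$ follows by the time-reversibility of~\eqref{eq:Vwave}, applied to the same argument.

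There is no real obstacle here — the only thing to verify is that the $L^{p}_{t}$ norm commutes with spatial translations, which is immediate since the norm is built from $W^{1-\gmm, q}\times W^{-\gmm,q}(\bbH^{d})$ and the Haar measure on $\bbG$ is bi-invariant.
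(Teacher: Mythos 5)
Your proof is correct and is exactly the argument the paper has in mind when it declares the proof ``obvious'' and omits it: decompose via the triangle inequality, use that $\tau_{h_n^{-1}}$ is an isometry of $W^{s,q}(\bbH^d)$ to reduce the main term to the hypothesis, and invoke Lemma~\ref{lem:pert4traveling} for the error. Nothing to add.
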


\subsubsection{Approximation of concentrating profiles}
Here we consider the case of initial data that concentrate to smaller and smaller scales. As discussed earlier, the relevant equation in this case is the underlying scale-invariant Euclidean equation $\Box_{\bbR^{d}} v = 0$. To make this notion precise, we need a means to pass back and forth functions on $\bbH^{d}$ and $\bbR^{d}$. We will achieve this by using the map 
\begin{align*}
\Psi : (0, \infty) \times \bbS^{d-1} \to (\bbH^{d} \setminus \set{\zero}) \subset \bbR^{d+1}, \quad (r, \omg) \mapsto (\sinh r \omg, \cosh r),
\end{align*}
where we identify the domain with $\bbR^{d} \setminus \set{0}$, to pull back functions on $\bbH^{d}$ to $\bbR^{d}$ and vice versa. In practical terms, we identify functions on $\bbH^{d}$ and $\bbR^{d}$ by using polar coordinates on both spaces\footnote{There is the well-known issue of singularity at $r= 0$, but it will only be a minor inconvenience.}.

Let $\chi \in C^{\infty}_0(\R^d)$ be a radial function, with $\chi(r) = 1$ for $r \le 1$ and $\supp (\chi)\, \subseteq \,\{ r \le 2\}$, and set $\chi_{R}(r):= \chi(r/ R)$ for $R > 0$. For $M > 0$, we define the mapping 
\EQ{
&\QQ_{M}: \HH_{\euc}(\R^d) \to  \HH_{\euc}(\R^d) \cap C^{\infty}_0 \times C^{\infty}_0(\R^d)\\
&\QQ_{M} (f, g) :=  (\QQ_{M} f, \QQ_{M} g) :=(\chi_{\sqrt{M}} e^{M^{-1} \De } f, \, \, \chi_{\sqrt{M}} e^{M^{-1}\De } g)
}
where $e^{M^{-1} \De}$ is defined by the Euclidean Fourier multiplier, 
\EQ{
\ha{ e^{M^{-1} \De} f}( \xi) = e^{- \abs{\xi}^2/M} \ha{f}(\xi)
}
Thus, $\QQ_{M}$ regularizes and truncates the data $(f, g) \in \HH_{\euc}$. Note that we have 
\EQ{
\| \QQ_{M}(f, g) - (f, g)\|_{\HH_{\euc}(\R^d)} \to 0 \mas M \to \infty
}
Let $\lmb_{n} \in [1, \infty)$ be any sequence. We define the sequence of maps $\T_{\lmb_{n}}(f, g) : \HH_{\euc}(\bbR^{d}) \to \HH(\bbH^{d})$ by rescaling $\QQ_{\lmb_{n}}$ by $\lmb_{n}$ and then pulling back by $\Psi^{-1}$. We will simply write
\begin{align*}
	\T_{\lmb_{n}}(f, g) (r, \omg) =&  (\T_{\lmb_{n}}^{0} f, \T_{\lmb_{n}}^{1} g) (r, \omg) \\
	:=& \big(\lmb_{n}^{\frac{d-2}{2}} (\QQ_{\lmb_{n}} f)( \lmb_{n} r, \omg), 
		\lmb_{n}^{\frac{d}{2}} (\QQ_{\lmb_{n}} g)(\lmb_{n} r, \omg) \big).
\end{align*}

For any $p \in [1, \infty]$ and $\gmm \geq 0$, we have
\begin{equation} \label{eq:Tlmbn}
\begin{aligned}
	& \nrm{\T_{\lmb_{n}}^{0} f}_{W^{\gmm, p}(\bbH^{d})} 
	\aleq \lmb_{n}^{\frac{d-2}{2}} \nrm{\QQ_{\lmb_{n}} f(\lmb_{n} \cdot)}_{W^{\gmm, p}(\bbR^{d})}
	\aleq \lmb_{n}^{\frac{d-2}{2} - \frac{d}{p} + \gmm} \nrm{f}_{W^{\gmm, p}(\bbR^{d})}, \\
	& \nrm{\T_{\lmb_{n}}^{1} g}_{W^{\gmm, p}(\bbH^{d})} 
	\aleq \lmb_{n}^{\frac{d}{2}} \nrm{\QQ_{\lmb_{n}} g(\lmb_{n} \cdot)}_{W^{\gmm, p}(\bbR^{d})}
	\aleq \lmb_{n}^{\frac{d}{2} - \frac{d}{p} + \gmm} \nrm{g}_{W^{\gmm, p}(\bbR^{d})}.
\end{aligned}
\end{equation}
Moreover, for any sequence $\set{\lmb_{n}} \subseteq [1, \infty)$ such that $\lmb_{n} \to \infty$ as $n \to \infty$, we have
\begin{equation} \label{eq:Tlmbn:HH}
	\nrm{\T_{\lmb_{n}} (f, g)}_{\HH} = \nrm{(f, g)}_{\HH_{\euc}} + o_{n}(1),
\end{equation}
under the normalization 
\begin{align*}
	\brk{\vec{u} \mid \vec{v} \, }_{\HH} =& \brk{u_{0} \mid (-\lap_{\bbH^{d}}) v_{0}}_{L^{2}(\bbH^{d})} + \brk{u_{1} \mid v_{1}}_{L^{2}(\bbH^{d})} \\
	\brk{\vec{u} \mid \vec{v} \, }_{\HH_{\euc}} =& \brk{u_{0} \mid (-\lap_{\bbR^{d}}) v_{0}}_{L^{2}(\bbR^{d})} + \brk{u_{1} \mid v_{1}}_{L^{2}(\bbR^{d})} 
\end{align*}
for smooth and compactly supported $\vec{u}, \vec{v}$. We leave the routine verification of the above statements to the reader.
In the sequel, we will use the convention that if $\T_{\lmb_{n}}$ is applied to a single function $f$, then $\T_{\lmb_{n}} f := \T^{0}_{\lmb_{n}} f$.

The first approximation lemma we prove says that for a concentrating profile, the evolution $S_{V}(t)$ can be replaced by the potential-free evolution $S_{\hyp}(t)$, asymptotically as $n \to \infty$.
\begin{lem} [Approximation by potential-free evolution] \label{lem:pert4conc}
Let $(f, g) \in \HH_{\euc}(\bbR^{d})$ be an initial data set and $\set{\lmb_{n}} \subseteq [1, \infty)$, $\set{h_{n}} \subseteq \GG$ be sequences such that $\lmb_{n} \to \infty$ as $n \to \infty$. Then for every hyperbolic admissible triple $(p,q, \gmm)$, we have
\begin{equation*}
	\nrm{S_{V}(t) \tau_{h_{n}^{-1}} \T_{\lmb_{n}}(f, g) 
		- S_{\hyp}(t) \tau_{h_{n}^{-1}} \T_{\lmb_{n}}(f, g)}_{L^{p}_{t} (\bbR; W^{1-\gmm, q} \times W^{-\gmm, q}(\bbH^{d}))} \to 0
\end{equation*}
as $n \to \infty$.
\end{lem}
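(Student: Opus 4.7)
The plan mirrors that of Lemma~\ref{lem:pert4traveling}. By density we may assume $(f, g) \in C^\infty_0 \times C^\infty_0(\bbR^d)$. Setting $\vec u_V(t) := S_V(t) \tau_{h_n^{-1}} \T_{\lmb_n}(f, g)$ and $\vec u_{\hyp}(t) := S_{\hyp}(t) \tau_{h_n^{-1}} \T_{\lmb_n}(f, g)$, their difference satisfies $(\Box_{\bbH^d} + V)(u_V - u_{\hyp}) = -V u_{\hyp}$ with vanishing Cauchy data at $t = 0$. Using Duhamel's formula, Minkowski's inequality, and the half-wave Strichartz bounds~\eqref{eq:Str4halfVWave:1}--\eqref{eq:Str4halfVWave:2} (via the Euler identity for $S_V$) exactly as in the proof of Lemma~\ref{lem:pert4traveling}, the lemma reduces to
\begin{equation*}
	\int_{\bbR} \|V u_{\hyp}(s)\|_{L^2(\bbH^d)} \, \ud s \to 0 \quad \text{as } n \to \infty.
\end{equation*}
By translation invariance of $S_{\hyp}$ and the fact that $\tau_{h_n^{-1}}$ is an $L^2$-isometry, this is equivalent to $\int_{\bbR} \|(\tau_{h_n} V) \tilde u_{\hyp}(s)\|_{L^2} \, \ud s \to 0$, where $\tilde u_{\hyp}(s) := S_{\hyp, 0}(s) \T_{\lmb_n}(f, g)$. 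I would then split the $s$-integral at a threshold $|s| = T \gg 2$, to be chosen at the end.

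For the long-time piece $|s| > T$, I would expand $\tilde u_{\hyp}$ via the Euler identity and apply the long-time dispersive estimate~\eqref{eq:longDisp} with $\sigma := (d+1)(\tfrac{1}{r'} - \tfrac{1}{2})$, choosing $r' \in (1, 2)$ close enough to $1$ that $\sigma > 1$ (possible since $\sigma \to (d+1)/2 > 1$ as $r' \to 1$). This yields $\|\tilde u_{\hyp}(s)\|_{L^r} \aleq |s|^{-3/2}( \|\T^0_{\lmb_n} f\|_{W^{\sigma, r'}} + \|(-\De_{\bbH^d})^{-1/2} \T^1_{\lmb_n} g\|_{W^{\sigma, r'}} )$. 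The second data norm equals $\|\T^1_{\lmb_n} g\|_{W^{\sigma-1, r'}}$; since $\sigma - 1 > 0$, the scaling estimates~\eqref{eq:Tlmbn} then bound both data norms by $\aleq \lmb_n^{-3/2 + 1/r'}$, uniformly $O(1)$ in $n$. H\"older in $x$ with $\|V\|_{L^{r^*}} < \infty$ (finite by exponential decay \eqref{eq:Vdecay}, where $\tfrac{1}{r} + \tfrac{1}{r^*} = \tfrac{1}{2}$) then yields
\begin{equation*}
	\int_{|s| > T} \|(\tau_{h_n} V) \tilde u_{\hyp}(s)\|_{L^2} \, \ud s \aleq T^{-1/2},
\end{equation*}
uniformly in $n$, so that given any $\eps > 0$ this piece is $< \eps / 2$ for $T$ large.

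For the short-time piece $|s| \le T$, I would apply dominated convergence. The uniform dominator comes from the bound $\|V u_{\hyp}(s)\|_{L^2} \le \|V\|_{L^\infty} \|\tilde u_{\hyp}(s)\|_{L^2}$, which is uniformly $O(1)$ in $(s, n)$ thanks to conservation of free energy and Poincar\'e's inequality on $\bbH^d$ (available by the spectral gap of $-\De_{\bbH^d}$). For the pointwise vanishing, observe that each component of $\T_{\lmb_n}(f, g)$ is uniformly bounded in $L^2(\bbH^d)$ and supported in the shrinking ball $\set{\bfd_{\bbH^d}(x, \zero) \le 2 / \sqrt{\lmb_n}}$, so $\T_{\lmb_n}(f, g) \rightharpoonup 0$ weakly in $H^1 \times L^2(\bbH^d)$; since $S_{\hyp}(s)$ is a bounded linear operator on $H^1 \times L^2$, weak convergence is preserved and $\vec{\tilde u}_{\hyp}(s) \rightharpoonup 0$ weakly in $H^1 \times L^2$ for every $s$. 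Finite speed of propagation confines $\tilde u_{\hyp}(s)$ to the common compact set $\set{\bfd_{\bbH^d}(x, \zero) \le T + 1} \subset \bbH^d$ for all $n$ with $\lmb_n \ge 4$, and Rellich--Kondrachov upgrades the weak $H^1$ convergence to strong $L^2$ convergence there, giving $\|\tilde u_{\hyp}(s)\|_{L^2} \to 0$ pointwise. The main obstacle I anticipate is this weak-to-strong conversion in the short-time piece, combining the weak convergence of the concentrating initial data, the weak continuity of $S_{\hyp}$, and Rellich compactness on a Riemannian ball in $\bbH^d$; the long-time piece requires only routine scaling bookkeeping, and negative times are handled symmetrically by time reversibility.
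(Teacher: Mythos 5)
Your proof takes a genuinely different route from the paper's, and while the overall architecture is sound, the long-time piece as stated contains a gap.

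\textbf{How the two arguments differ.} After the common reduction to $\int_{\bbR}\nrm{V\,u_{\hyp,n}(s)}_{L^{2}}\,\ud s\to 0$, the paper runs a single-pass estimate: H\"older with $V\in L^{r^{\ast}}$, short-time dispersion on $[0,2]$ and long-time dispersion on $[2,\infty)$, and then the decay comes entirely from the scaling of the data norms, $\nrm{\T_{\lmb_{n}}(f,g)}_{W^{\sgm,r'}\times W^{\sgm-1,r'}}\aleq\lmb_{n}^{-\frac12-\frac1r}\to 0$, with $r$ taken close to $2$ so that $r^{\ast}$ is large (to accommodate an arbitrary $\alp_{1}>0$), $(d-1)(\tfrac12-\tfrac1r)<1$, and $\sgm-1<0$ (handled via the dual Sobolev embedding $L^{b}\hookrightarrow W^{\sgm-1,r'}$). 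You instead split at $|s|=T$, use long-time dispersion on $\{|s|>T\}$ aiming only for a $T^{-1/2}$ bound, and handle $\{|s|\le T\}$ by dominated convergence, with the pointwise vanishing obtained from weak convergence $\T_{\lmb_{n}}(f,g)\rightharpoonup 0$ in $\HH$, weak continuity of $S_{\hyp}(s)$, finite speed of propagation and Rellich compactness on a geodesic ball. That short-time argument is correct and, as a bonus, it evades the issue of integrability of $|s|^{-(d-1)(1/2-1/r)}$ near $s=0$ that the paper had to arrange; but it is also heavier machinery than the paper needs, since the scaling factor $\lmb_{n}^{-1/2-1/r}$ already kills the whole integral.

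\textbf{The gap.} For the long-time piece you pick $r'$ close to $1$ (equivalently $r$ large) to force $\sgm-1>0$ so that \eqref{eq:Tlmbn} applies directly to $\T^{1}_{\lmb_{n}}g$. But $\sgm>1$ forces $r > 2(d+1)/(d-1)$, hence $r^{\ast}<d+1$, and for $V\in L^{r^{\ast}}(\bbH^{d})$ the exponential decay \eqref{eq:Vdecay} only guarantees $V\in L^{p}$ when $p\alp_{1}>d-1$; requiring $r^{\ast}<d+1$ thus needs $\alp_{1}>(d-1)/(d+1)$ (e.g.\ $\alp_{1}>1/2$ for $d=3$), an assumption the hypotheses do not supply. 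The paper deliberately goes the other way (take $r$ close to $2$ so $r^{\ast}$ can be arbitrarily large) and pays for it with the dual Sobolev step when $\sgm-1<0$; that is the step you skipped. The fix is to adopt the paper's choice $r\to 2^{+}$ and use $L^{b}\hookrightarrow W^{\sgm-1,r'}$ with $\frac{d}{b}=\frac{d}{2}+\frac12+\frac1r$, after which the data norms are still $\aleq\lmb_{n}^{-1/2-1/r}$ and both pieces of your argument go through unchanged (and in fact the long-time piece then tends to $0$ on its own).
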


\begin{proof} 
As before, by an approximation argument, we may assume that $(f, g) \in C^{\infty}_{0} \times C^{\infty}_{0}(\bbR^{d})$.
Define $\vec{u}_{\hyp, n}(t) = (u_{\hyp, n}, \rd_{t} u_{\hyp, n})(t) := S_{\hyp}(t) \tau_{h_{n}^{-1}} \T_{\lmb_{n}}(f, g)$. Then applying Duhamel's principle to $(\rd_{t}^{2} - \Dlt_{\bbH^{d}} + V) u_{\hyp, n} = V u_{\hyp, n}$, we see that
\begin{equation*}
	\vec{u}_{\hyp, n} = S_{V}(t) \tau_{h_{n}^{-1}} \T_{\lmb_{n}}(f, g)
				 + \int_{0}^{t} S_{V}(t-s) (0, V u_{\hyp, n}(s)) \, \ud s 
\end{equation*}
Therefore, as in the proof of Lemma \ref{lem:pert4traveling}, it suffices to prove
\begin{equation}
	\int_{0}^{\infty} \nrm{V u_{\hyp, n}(s)}_{L^{2}(\bbH^{d})} \, \ud s \to 0 \mas n \to \infty.
\end{equation}

As before, given $r \in (2, \infty)$ we define $r^{\ast}$ by $\frac{1}{r} + \frac{1}{r^{\ast}}  = \frac{1}{2}$. Thanks to the assumption~\eqref{eq:Vdecay} on $V$, $V \in L^{r^{\ast}}(\bbH^{d})$ for a sufficiently large $r^{\ast}$, or equivalently, an exponent $r$ sufficiently close to $2$. Taking $r$ closer to $2$ if necessary, we can also ensure that $(d-1)(\frac{1}{2} - \frac{1}{r}) < 1$. By H\"older's inequality and Proposition \ref{prop: disp} (the dispersive estimate for free waves), we have
\begin{align*}
\int_{0}^{\infty} \nrm{V u_{\hyp, n}(s)}_{L^{2}(\bbH^{d})} \, \ud s
& \aleq \int_{0}^{\infty} \nrm{V}_{L^{r^{\ast}}(\bbH^{d})} \nrm{u_{\hyp, n}(s)}_{L^{r}(\bbH^{d})} \, \ud s \\
& \aleq  \nrm{V}_{L^{r^{\ast}}(\bbH^{d})} 
	\bb( \int_{0}^{2} \abs{s}^{-(d-1)(\frac{1}{2} - \frac{1}{r})} \, \ud s
		+ \int_{2}^{\infty} \abs{s}^{-\frac{3}{2}} \, \ud s \bb) \\
&\phantom{\aleq} 
	\times \nrm{\vec{u}_{\hyp, n}(0)}_{W^{\sgm, r'} \times W^{\sgm-1, r'} (\bbH^{d})} \\
& \aleq_{d, r}  \nrm{V}_{L^{r^{\ast}}(\bbH^{d})} \nrm{\T_{\lmb_{n}} (f, g)}_{W^{\sgm, r'} \times W^{\sgm-1, r'} (\bbH^{d})},
\end{align*}
where $\sgm = (d+1)(\frac{1}{2} - \frac{1}{r})$ and on the last line, we used the fact that $(d-1)(\frac{1}{2}-\frac{1}{r}) < 1$ to carry out the integral from $0$ to $2$. Now observe that
\begin{align*}
	\nrm{\T_{\lmb_{n}}^{0} f}_{W^{\sgm, r'}(\bbH^{d})} \aleq_{f} \lmb_{n}^{\frac{d-2}{2} - d(1 - \frac{1}{r}) + (d+1)(\frac{1}{2} - \frac{1}{r}) }
\end{align*}
%\begin{equation*}
%\frac{d-2}{2} - d(1 - \frac{1}{r}) + (d+1)(\frac{1}{2} - \frac{1}{r}) 
%= - d (\frac{1}{2} - \frac{1}{r}) + (d+1)(\frac{1}{2} - \frac{1}{r}) - 1
%= - \frac{1}{2} - \frac{1}{r} 
%\end{equation*}
and the right-hand side equals $\lmb_{n}^{- \frac{1}{2} - \frac{1}{r}}$, which goes to $0$ as $n \to \infty$. To handle $\T_{\lmb_{n}}^{1} g$, note first that by taking $r$ closer to $2$ if necessary, we can guarantee that $\sigma-1<0$. Then by the dual Sobolev inequality, we have $L^{b} \hookrightarrow W^{\sgm-1, r'}$, where $b \in (1, \infty)$ is given by $\frac{d}{b} = \frac{d}{2} + \frac{1}{2} + \frac{1}{r}$. Therefore,
%\begin{align*}
%	\frac{d}{b} = d-\frac{d}{r} + 1 - \sgm
%	= d - \frac{d}{r} + 1 - (d+1) (\frac{1}{2} - \frac{1}{r})
%	= \frac{d}{2} + \frac{1}{2} + \frac{1}{r}
%\end{align*}
\begin{align*}
	\nrm{\T_{\lmb_{n}}^{1} g}_{W^{\sgm-1, r'}(\bbH^{d})} 
	\aleq \nrm{\T_{\lmb_{n}}^{1} g}_{L^{b}(\bbH^{d})} 
%	\aleq_{g} \lmb_{n}^{\frac{d}{2} - \frac{d}{b}} 
	\aleq_{g} \lmb_{n}^{-\frac{1}{2} - \frac{1}{r}} 
	\to 0 \mas n \to \infty,
\end{align*}
which establishes the claim. \qedhere
\end{proof}

Our next approximation lemma makes precise the idea that the linear hyperbolic evolutions $S_{V}(t) \tau_{h_{n}^{-1}} \T_{\lmb_{n}}(f,g)$ are well-approximated by a concentrating sequence of Euclidean evolutions $\tau_{h_{n}^{-1}} \T_{\lmb_{n}} S_{\euc} (\lmb_{n} t) (f, g)$ as $n \to \infty$. Unlike Lemma \ref{lem:pert4conc}, however, the approximation is valid only on short intervals of the form $(-T_{0} / \lmb_{n}, T_{0} / \lmb_{n})$.
\begin{lem}[Approximation by Euclidean evolution] \label{lem: e h:lin}
Let $(f, g) \in \HH_{\euc}(\bbR^{d})$ be an initial data set and $\set{\lmb_{n}} \subseteq [1, \infty)$, $\set{h_{n}} \subseteq \GG$ be sequences such that $\lmb_{n} \to \infty$ as $n \to \infty$. Fix an interval $(-T_{0}, T_{0})$ and define $I_{n} := (-T_{0} / \lmb_{n}, T_{0} / \lmb_{n})$ for every $n$. 
Then we have
\begin{equation} \label{eq: e h:lin:H}
	\nrm{S_{V}(t) \tau_{h_{n}^{-1}} \T_{\lmb_{n}} (f, g) - \tau_{h_{n}^{-1}} \T_{\lmb_{n}} S_{\euc}(\lmb_{n} t) (f, g)}_{L^{\infty}_{t}(I_{n}; \HH)} \to 0 \mas n \to \infty.
\end{equation}
Moreover, for every hyperbolic admissible triple $(p, q, \gmm)$ with $\gmm = 1$ that is also Euclidean admissible, i.e.,
\begin{equation*}
	\frac{2}{p} + \frac{d-1}{q} \leq \frac{d-1}{2},
\end{equation*}
we have
\begin{equation} \label{eq: e h:lin:S}
	\nrm{S_{V, 0}(t) \tau_{h_{n}^{-1}} \T_{\lmb_{n}} (f, g) - \tau_{h_{n}^{-1}} \T_{\lmb_{n}}^{0} S_{\euc, 0}(\lmb_{n} t) (f, g)}_{L^{p}_{t}(I_{n}; L^{q} (\bbH^{d}))} \to 0
\end{equation}
as $n \to \infty$.
\end{lem}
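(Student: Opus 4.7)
By the translation invariance of $\Box_{\bbH^d}$ and of every norm in the statement, I may replace $V$ by the translate $\tau_{h_n} V$ (which still satisfies the same $L^\infty$ bound used below) and reduce to the case $h_n = \mathrm{id}$. The uniform continuity of $\T_{\lmb_n} : \HH_\euc \to \HH$ from \eqref{eq:Tlmbn:HH}, together with the Strichartz bounds of Lemma~\ref{lem:basicVWave} and their Euclidean analogues for $S_\euc$, permit the standard density reduction to $(f, g) \in C^\infty_0 \times C^\infty_0(\bbR^d)$. Next I set
\[
v_n(\tau, y) := S_{\euc, 0}(\tau) \QQ_{\lmb_n}(f, g) (y), \qquad w_n(t, \Psi(r, \omg)) := \lmb_n^{(d-2)/2} v_n(\lmb_n t, \lmb_n r, \omg),
\]
and note that for $(f,g) \in C^\infty_0 \times C^\infty_0$ and $\lmb_n$ large the function $\T_{\lmb_n} S_\euc(\lmb_n t)(f, g)$ agrees with $(w_n, \rd_t w_n)(t)$ modulo terms that are exponentially small in $\lmb_n$, since the cutoff $\chi_{\sqrt{\lmb_n}}$ built into $\QQ_{\lmb_n}$ is inert once applied to the Gaussian-localized data $e^{\lmb_n^{-1}\Dlt}(f,g)$ evolved over times $|\tau| \leq T_0$. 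Setting $e_n(t) := S_{V, 0}(t) \T_{\lmb_n}(f, g) - w_n(t)$, we then have $\V{e}_n(0) = 0$ and $(\rd_t^2 - \Dlt_{\bbH^d} + V) e_n = -F_n$ with $F_n := \Box_{\bbH^d} w_n + V w_n$. Duhamel's formula together with Lemma~\ref{lem:basicVWave} (applied both with the energy exponents and with the Euclidean-admissible hyperbolic triple $(p, q, 1)$) reduces both \eqref{eq: e h:lin:H} and \eqref{eq: e h:lin:S} to the single bound $\nrm{F_n}_{L^1_t(I_n; L^2(\bbH^d))} \to 0$.

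For the key calculation I use the polar forms
\[
\Dlt_{\bbH^d} = \rd_r^2 + (d-1) \coth r \, \rd_r + \sinh^{-2} r \, \Dlt_{\bbS^{d-1}}, \qquad \Dlt_{\bbR^d} = \rd_\rho^2 + (d-1) \rho^{-1} \rd_\rho + \rho^{-2} \Dlt_{\bbS^{d-1}},
\]
together with $\Box_{\bbR^d} v_n = 0$ and the chain rule $\rho = \lmb_n r$, yielding
\[
\Box_{\bbH^d} w_n = \lmb_n^{(d-2)/2} \bigl[ (d-1) \lmb_n (r^{-1} - \coth r)\, \rd_\rho v_n + (r^{-2} - \sinh^{-2} r)\, \Dlt_{\bbS^{d-1}} v_n \bigr]_{(\lmb_n t, \lmb_n r, \omg)}.
\]
By finite speed of propagation for $v_n$ and the compact support of $\QQ_{\lmb_n}(f, g)$ in $\set{\abs{y} \leq 2\sqrt{\lmb_n}}$, $w_n(t)$ is supported in $\set{r \leq R_n}$ with $R_n = O(\lmb_n^{-1/2})$ for $t \in I_n$, a region on which the Taylor expansions $r^{-1} - \coth r = O(r)$, $r^{-2} - \sinh^{-2} r = O(1)$, and $\sinh^{d-1} r \aeq r^{d-1}$ are all valid.

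Changing variables $\rho = \lmb_n r$ in the $L^2(\bbH^d)$ integrals and using the support bound $\rho \aleq \sqrt{\lmb_n}$, the first error term is controlled by $\aleq \lmb_n^{-1} \nrm{\rho\, \rd_\rho v_n}_{L^2(\bbR^d)} \aleq \lmb_n^{-1/2}$, invoking $\dot{H}^1$-energy conservation for $v_n$. For the second term I apply the identity $\Dlt_{\bbS^{d-1}} v_n = \rho^2 \Dlt_{\bbR^d} v_n - \rho^2 \rd_\rho^2 v_n - (d-1) \rho\, \rd_\rho v_n$ together with the support bound and a uniform $\dot{H}^2$ bound for $v_n$, which is inherited from the $e^{\lmb_n^{-1}\Dlt}$-smoothing inside $\QQ_{\lmb_n}$ acting on smooth compactly supported data; this yields an $L^2(\bbH^d)$ bound of $\aleq 1$ uniformly in $t$. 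Finally, $\nrm{V w_n}_{L^2} \leq \nrm{V}_{L^\infty} \nrm{w_n}_{L^2(\bbH^d)} \aleq \lmb_n^{-1}$. Integrating over the short interval $\abs{I_n} = 2T_0/\lmb_n$ turns each contribution into $o_n(1)$. The main obstacle is that the angular-Laplacian error term gains no extra power of $\lmb_n$ in $L^\infty_t L^2_x$; it survives only due to the shortness $\abs{I_n} \sim \lmb_n^{-1}$ of the interval, so the interplay between support size, $v_n$-regularity, and time length must be carefully tracked.
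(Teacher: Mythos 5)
Your proof is correct, and the overall strategy is the same as the paper's: reduce by Duhamel and the Strichartz bounds of Lemma~\ref{lem:basicVWave} to showing $\nrm{F_n}_{L^1_t(I_n;L^2(\bbH^d))} \to 0$; then compute the commutator $\Box_{\bbH^d} - \lmb_n^2\T_{\lmb_n}\Box_{\bbR^d}$ in polar coordinates, exploit the $O(\lmb_n^{-1/2})$ support of the rescaled data (so that $\coth r-r^{-1}=O(r)$ and $\sinh^{-2}r - r^{-2}=O(1)$ are valid), change variables, and invoke $\dot{H}^1$/$\dot{H}^2$ conservation for the Euclidean wave together with the factor $\abs{I_n}\sim \lmb_n^{-1}$ from time integration; the potential term is handled trivially.

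The one genuine deviation is the normalization trick at the start: by evolving the already-regularized-and-truncated data $\QQ_{\lmb_n}(f,g)$ under $S_\euc$ rather than applying $\T_{\lmb_n}$ (hence $\QQ_{\lmb_n}$) \emph{after} $S_\euc(\lmb_n t)$, you avoid the additional error $\Box_{\bbR^d}v_n - \lmb_n^2\T_{\lmb_n}(\Box_{\bbR^d}v)(\lmb_n t)$ that the paper's Claim~\ref{claim: e h:comm} estimates separately (it comes from $\Box_{\bbR^d}$-derivatives landing on the cutoff $\chi(\lmb_n^{1/2}r)$). You correctly justify this by noting that, for $(f,g)\in C^\infty_0\times C^\infty_0$ and $\abs{\tau}\le T_0$, the outer cutoff $\chi_{\sqrt{\lmb_n}}$ commutes with $S_\euc(\tau)e^{\lmb_n^{-1}\Dlt}$ up to exponentially small errors, by finite speed of propagation and the Gaussian tails of $e^{\lmb_n^{-1}\Dlt}(f,g)$. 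This is a mild simplification of the paper's argument; the paper's explicit commutator calculation is perhaps more robust (it does not rely on the exponential localization and works directly), but both routes are valid. The only thing worth tightening is that the ``exponentially small'' claim should be stated and verified in the specific norms appearing in the conclusion ($L^\infty_t(I_n;\HH)$ and $L^p_t(I_n;L^q(\bbH^d))$), not just pointwise; this follows readily from the Gaussian bounds, but deserves a line.

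Two small remarks. First, the translation reduction to $h_n=\mathrm{id}$ via $S_V\tau_{h_n^{-1}}=\tau_{h_n^{-1}}S_{\tau_{h_n}V}$ is fine, but make it explicit that the Strichartz constants for $S_{\tau_{h_n}V}$ are uniform in $n$ — this is automatic by translation invariance, but it is the Strichartz estimate, not merely $\nrm{V}_{L^\infty}$, that is used in the Duhamel step. Second, your bound $\nrm{Vw_n}_{L^2}\aleq\lmb_n^{-1}$ uses $\nrm{w_n}_{L^2(\bbH^d)}\aleq\lmb_n^{-1}\nrm{v_n}_{L^2(\bbR^d)}$ and hence a uniform $L^2(\bbR^d)$ bound on $v_n(\tau)$ for $|\tau|\le T_0$; the paper instead routes this through Poincaré and the uniform $\dot{H}^1$ bound. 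Both are available for smooth compactly supported data, but worth noting that they are not the same inequality.
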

\begin{proof} 
In this proof we abuse notation by using $S_V(t)\vec u$ to mean both $S_{V,0}(t)\vec u$ and $(S_{V,0}(t)\vec u,S_{V,1}(t)\vec u),$ for $\vec u\in\HH$,  depending on the context. We claim that it suffices to prove~\eqref{eq: e h:lin:H} and~\eqref{eq: e h:lin:S} for $(f, g) \in C^{\infty}_{0} \times C^{\infty}_{0}(\bbR^{d})$. To verify this claim, it is enough, by the density of $C^{\infty}_{0} \times C^{\infty}_{0}(\bbR^{d})$ in $\HH_{\euc}$, to show that the left-hand side of~\eqref{eq: e h:lin:S} can be made arbitrarily small uniformly in $n$ by taking $(f, g)$ to be small enough in $\HH_{\euc}$. Indeed, by Strichartz estimates for $S_{V}$ and~\eqref{eq:Tlmbn}, we have
\begin{align*}
	\nrm{S_{V}(t) \tau_{h_{n}^{-1}} \T_{\lmb_{n}} (f,g)}_{L^{\infty}_{t}(I_{n}; \HH) \cap L^{p}_{t}(I_{n}; L^{q}(\bbH^{d}))}
	\aleq & \nrm{\T_{\lmb_{n}}(f,g)}_{\HH} \aleq \nrm{(f,g)}_{\HH_{\euc}}.
\end{align*}
Moreover, by~\eqref{eq:Tlmbn} and Strichartz estimates for $S_{\euc}$, we have
\begin{align*}
&\hskip-2em
	\nrm{\tau_{h_{n}^{-1}} \T_{\lmb_{n}} S_{\euc} (t)(f,g)}_{L^{\infty}_{t}(I_{n}; \HH) \cap L^{p}_{t}(I_{n}; L^{q}(\bbH^{d}))} \\
	\aleq & \nrm{S_{\euc}(t)(f,g)}_{L^{\infty}_{t}((-T_{0}, T_{0}); \HH) \cap L^{p}_{t}((-T_{0},T_{0}); L^{q} (\bbR^{d}))} 
	\aleq \nrm{(f,g)}_{\HH_{\euc}}.
\end{align*}
Hence, by the triangle inequality, the desired conclusion follows. Henceforth, we assume that $(f, g) \in C^{\infty}_{0} \times C^{\infty}_{0}(\bbR^{d})$.

Since 
\begin{equation*}
S_{V}(0) \tau_{h_{n}^{-1}} \T_{\lmb_{n}} (f, g) = \tau_{h_{n}^{-1}} \T_{\lmb_{n}} S_{\euc}(0) (f, g) = \tau_{h_{n}^{-1}} \T_{\lmb_{n}} (f, g),
\end{equation*}
the estimates~\eqref{eq: e h:lin:H} and~\eqref{eq: e h:lin:S} would follow, by Duhamel's principle and the Strichartz estimate for $S_{V}$, once we prove
\begin{equation}
	\nrm{\Box_{V} (\tau_{h_{n}^{-1}} \T_{\lmb_{n}} S_{\euc}(\lmb_{n} t) (f, g))}_{L^{1}_{t}(I_{n}; L^{2}(\bbH^{d}))} \to 0 \mas n \to \infty.
\end{equation}
Using the shorthand $v(\cdot) := S_{\euc}(\cdot) (f, g)$, we decompose
\begin{equation} \label{eq: e h:lin:BoxV}
\Box_{V} (\tau_{h_{n}^{-1}} \T_{\lmb_{n}} v(\lmb_{n} t))
= \tau_{h_{n}^{-1}} \Box_{\bbH^{d}} (\T_{\lmb_{n}} v (\lmb_{n} t)) + V \tau_{h_{n}^{-1}} \T_{\lmb_{n}} v(\lmb_{n} t).
\end{equation}
Since $V$ is bounded, the contribution of the second term can be treated easily using Poincar\'e's inequality as follows:
\begin{align*}
	& \hskip-2em
	\nrm{V \tau_{h_{n}^{-1}} \T_{\lmb_{n}} v(\lmb_{n} t)}_{L^{1}_{t} (I_{n}; L^{2}(\bbH^{d}))} \\
	\aleq & (T_{0} / \lmb_{n}) \nrm{V}_{L^{\infty}(\bbH^{d})} \nrm{\T_{\lmb_{n}} v(\lmb_{n} t)}_{L^{\infty}_{t} (I_{n}; L^{2}(\bbH^{d}))} \\
	\aleq &  (T_{0} / \lmb_{n}) \nrm{V}_{L^{\infty}(\bbH^{d})} \nrm{v(t)}_{L^{\infty}_{t} (I; \dot{H}^{1}(\bbR^{d}))} \to 0 \mas n \to \infty.
\end{align*}
It remains to handle the contribution of the first term in~\eqref{eq: e h:lin:BoxV}. By the translation invariance of the $L^{2}(\bbH^{d})$ norm, we may remove $\tau_{h_{n}^{-1}}$. Then using the fact that $\Box_{\bbR^{d}} v = 0$, it suffices to prove
\begin{equation*}
	\nrm{\Box_{\bbH^{d}} (\T_{\lmb_{n}} v (\lmb_{n} t)) - \lmb_{n}^{2} \T_{\lmb_{n}} (\Box_{\bbR^{d}} v)(\lmb_{n} t)}_{L^{1}_{t}(I_{n}; L^{2}(\bbH^{d}))} \to 0 \mas n \to \infty.
\end{equation*}
This estimate is an immediate consequence of the fact that the $\dot{H}^{1}(\bbR^{d})$ and $\dot{H}^{2}(\bbR^{d})$ norm of $v$ is uniformly bounded, which is obvious since $v(t) = S_{\euc}(t) (f, g)$ with $(f, g) \in C^{\infty}_{0} \times C^{\infty}_{0}(\bbR^{d})$, and Claim \ref{claim: e h:comm} below. \qedhere
\end{proof}
\begin{claim}[Key commutator estimate] \label{claim: e h:comm}
Let $\set{\lmb_{n}} \subseteq [1, \infty)$ be a sequence such that $\lmb_{n} \to \infty$. Let $v(t) = v(t, x)$ be a smooth function on $\bbR^{1+d}$ that satisfies
\begin{equation} \label{eq: e h:comm:hyp}
	\nrm{v}_{L^{\infty}_{t}(\bbR; \dot{H}^{1}(\bbR^{d}))}
	+ \nrm{v}_{L^{\infty}_{t}(\bbR; \dot{H}^{2}(\bbR^{d}))} < \infty.
\end{equation}
Given a finite interval $(- T_{0}, T_{0})$, define $I_{n} := (-T_{0} / \lmb_{n}, T_{0} / \lmb_{n})$. Then we have
\begin{equation} \label{eq: e h:comm}
	\nrm{\Box_{\bbH^{d}} (\T_{\lmb_{n}} v(\lmb_{n} t))  - \lmb_{n}^{2} \T_{\lmb_{n}} (\Box_{\bbR^{d}} v)(\lmb_{n} t)}_{L^{1}_{t}(I_{n}; L^{2}(\bbH^{d}))} \to 0
\end{equation}
as $n \to \infty$.
\end{claim}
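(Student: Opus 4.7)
Writing $v_{n}(t, r, \omg) := (\T_{\lmb_{n}} v(\lmb_{n} t))(r, \omg) = \lmb_{n}^{(d-2)/2} F_{n}(\lmb_{n} t, \lmb_{n} r, \omg)$ with $F_{n}(t', \cdot) := \QQ_{\lmb_{n}} v(t', \cdot)$, I first observe that $\QQ_{\lmb_{n}}$ is a purely spatial operator and commutes with $\rd_{t}$. By the chain rule the time derivatives in $\Box_{\bbH^{d}} v_{n} - \lmb_{n}^{2} \T_{\lmb_{n}}(\Box_{\bbR^{d}} v)(\lmb_{n} t)$ cancel identically, and so the task reduces to bounding the spatial difference $\Dlt_{\bbH^{d}} v_{n} - \lmb_{n}^{2} \T_{\lmb_{n}}(\Dlt_{\bbR^{d}} v)(\lmb_{n} t)$ in $L^{1}_{t}(I_{n}; L^{2}(\bbH^{d}))$. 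Using the polar expressions $\Dlt_{\bbH^{d}} = \rd_{r}^{2} + (d-1)\coth r \, \rd_{r} + \sinh^{-2} r \, \Dlt_{\bbS^{d-1}}$ and $\Dlt_{\bbR^{d}} = \rd_{r'}^{2} + \frac{d-1}{r'}\rd_{r'} + \frac{1}{(r')^{2}} \Dlt_{\bbS^{d-1}}$, I decompose this spatial difference as $A_{n} + B_{n} + E_{n}$, where
\begin{align*}
A_{n} &:= (d-1)\lmb_{n}^{d/2}\bigl(\coth r - \tfrac{1}{r}\bigr)(\rd_{r'} F_{n})(\lmb_{n} t, \lmb_{n} r, \omg), \\
B_{n} &:= \lmb_{n}^{(d-2)/2}\bigl(\sinh^{-2} r - \tfrac{1}{r^{2}}\bigr)(\Dlt_{\bbS^{d-1}} F_{n})(\lmb_{n} t, \lmb_{n} r, \omg), \\
E_{n} &:= \lmb_{n}^{(d+2)/2}([\Dlt_{\bbR^{d}}, \QQ_{\lmb_{n}}] v)(\lmb_{n} t, \lmb_{n} r, \omg).
\end{align*}
The first two terms are \emph{geometric} errors from the mismatch between the two Laplacians in polar coordinates, while $E_{n}$ is a \emph{regularization} error caused by $[\Dlt_{\bbR^{d}}, \QQ_{\lmb_{n}}] = [\Dlt_{\bbR^{d}}, \chi_{\sqrt{\lmb_{n}}}] e^{\lmb_{n}^{-1}\Dlt_{\bbR^{d}}} \neq 0$.

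The key ingredients I will use are: (i) the Taylor expansions $\coth r - 1/r = r/3 + O(r^{3})$ and $\sinh^{-2} r - 1/r^{2} = -1/3 + O(r^{2})$ as $r \to 0$; (ii) the truncation $\chi_{\sqrt{\lmb_{n}}}$ forces $F_{n}$ to be supported in $\set{\abs{x} \leq 2\sqrt{\lmb_{n}}} \subset \bbR^{d}$, equivalently in $\set{r \leq 2/\sqrt{\lmb_{n}}}$ after scaling, where $\sinh^{d-1} r \aeq r^{d-1}$; and (iii) the heat-smoothing bounds $\nrm{F_{n}}_{\dot{H}^{s}(\bbR^{d})} \lec \nrm{v}_{\dot{H}^{s}(\bbR^{d})}$ for $s = 1, 2$. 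Changing variables $r' = \lmb_{n} r$ in the $L^{2}(\bbH^{d})$ integrals, these will yield the pointwise-in-time bounds $\nrm{A_{n}(t)}_{L^{2}(\bbH^{d})} \lec \lmb_{n}^{-1/2}$ (thanks to the $O(r) = O(1/\sqrt{\lmb_{n}})$ gain from $\coth r - 1/r$) and $\nrm{B_{n}(t)}_{L^{2}(\bbH^{d})} \lec 1$, the latter relying on the weighted inequality $\int \abs{\Dlt_{\bbS^{d-1}} F_{n}}^{2} \, \ud x \lec \lmb_{n}^{2} (\nrm{v}_{\dot H^{1}}^{2} + \nrm{v}_{\dot H^{2}}^{2})$, which follows from $\abs{\Dlt_{\bbS^{d-1}} F_{n}} \lec \abs{x}^{2} \abs{\nabla^{2} F_{n}} + \abs{x}\abs{\nabla F_{n}}$ combined with the support of $F_{n}$. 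For $E_{n}$, the cutoff derivatives $\nabla \chi_{\sqrt{\lmb_{n}}}$ and $\Dlt \chi_{\sqrt{\lmb_{n}}}$ are localized in $\set{\sqrt{\lmb_{n}} \leq \abs{x} \leq 2\sqrt{\lmb_{n}}}$ with $L^{\infty}$ sizes $\lmb_{n}^{-1/2}$ and $\lmb_{n}^{-1}$, so Sobolev embedding $\dot{H}^{1}(\bbR^{d}) \hookrightarrow L^{2d/(d-2)}(\bbR^{d})$ on this shell gives $\nrm{[\Dlt_{\bbR^{d}}, \QQ_{\lmb_{n}}] v}_{L^{2}(\bbR^{d})} \lec \lmb_{n}^{-1/2}$, and hence $\nrm{E_{n}(t)}_{L^{2}(\bbH^{d})} \lec \lmb_{n}^{1/2}$.

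The main obstacle will be $B_{n}$: since $\sinh^{-2} r - 1/r^{2} \to -1/3 \neq 0$ as $r \to 0$, this term does not gain any smallness at fixed time, and only an $O(1)$ pointwise bound is available. The sole mechanism for closing the estimate will be the shortness of the interval $\abs{I_{n}} = 2T_{0}/\lmb_{n}$: multiplying the three pointwise bounds by $\abs{I_{n}}$ produces contributions of size $\lmb_{n}^{-3/2}$, $\lmb_{n}^{-1}$, $\lmb_{n}^{-1/2}$ to $\nrm{\cdot}_{L^{1}_{t}(I_{n}; L^{2}(\bbH^{d}))}$, all of which tend to $0$ as $\lmb_{n} \to \infty$, concluding the proof.
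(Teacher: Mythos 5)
Your proposal is correct and follows essentially the same route as the paper's proof: both begin by observing that the temporal derivatives cancel, then split the remaining spatial discrepancy into geometric errors (from $\coth r - \tfrac{1}{r}$ and $\sinh^{-2}r - r^{-2}$) and a regularization error from the commutator $[\Delta_{\bbR^d},\chi_{\sqrt{\lmb_n}}]$, and close each piece by combining the support constraint $r\lec\lmb_n^{-1/2}$, a Hardy/Sobolev step, and the interval length $\abs{I_n}\aeq\lmb_n^{-1}$, arriving at the identical rates $\lmb_n^{-3/2},\lmb_n^{-1},\lmb_n^{-1/2}$. The small organizational differences (your $A_n,B_n,E_n$ labeling, and stating pointwise-in-time bounds before integrating, versus the paper's direct $L^1_t L^2$ estimate) do not change the substance of the argument.
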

\begin{proof} 
Recall that in the definition of $\T_{\lmb_{n}}$, we passed from functions on $\bbR^{d}$ to functions on $\bbH^{d}$ by using the geodesic polar coordinates on both spaces. 
In what follows, we will view both $\T_{\lmb_{n}} \vec{v}(\cdot)$ and $\vec{v}(\cdot)$ as functions of $(r, \omg)$ using these coordinates.
Comparing the volume form of $\bbH^{d}$ and $\bbR^{d}$ in polar coordinates, we have the identity
\begin{equation} \label{e h:vol}
\nrm{\cdot}_{L^{2}(\bbH^{d})}=\nrm{(\cdot) \bb( \frac{\sinh r}{r} \bb)^{\frac{d-1}{2}}}_{L^{2}(\bbR^{d})}. 
\end{equation}
Moreover, the d'Alembertians on $\bbR \times \bbH^{d}$ and $\bbR^{1+d}$ take the form
\begin{align*}
	\Box_{\bbH^{d}} =& \rd_{t}^{2} - \rd_{r}^{2} - (d-1) \coth r \rd_{r} - \frac{1}{\sinh^{2} r} \lap_{\bbS^{d-1}}, \\
	\Box_{\bbR^{d}} =& \rd_{t}^{2} - \rd_{r}^{2} - (d-1) \frac{1}{r} \rd_{r} - \frac{1}{r^{2}} \lap_{\bbS^{d-1}},
\end{align*}
respectively. 
Following the conventions just described and using the shorthand $\vec{v}_{n} (t) := \T_{\lmb_{n}} \vec{v}(\lmb_{n} t)$, we split the commutator on the left-hand side of~\eqref{eq: e h:comm} as follows:
\begin{align*}
	\Box_{\bbH^{d}} (\T_{\lmb_{n}} v(\lmb_{n} t))  - \lmb_{n}^{2} \T_{\lmb_{n}} (\Box_{\bbR^{d}} v)(\lmb_{n} t)
	= & (\Box_{\bbH^{d}} - \Box_{\bbR^{d}}) v_{n}(t) \\
	& + \Box_{\bbR^{d}} v_{n}(t) - \lmb_{n}^{2} \T_{\lmb_{n}} (\Box_{\bbR^{d}} v)(\lmb_{n} t)
\end{align*}

We first handle the contribution of $(\Box_{\bbH^{d}} - \Box_{\bbR^{d}}) (\T_{\lmb_{n}} v(\lmb_{n} t))$. We have
\begin{align*}
	(\Box_{\Hp^{d}} - \Box_{\R^{d}}) v_{n} 
	=&	- (d-1) \bb( \coth r - \frac{1}{r} \bb) \rd_{r} v_{n} 
		- \bb( \frac{1}{\sinh^{2} r} - \frac{1}{r^{2}} \bb) \lap_{\bbS^{d-1}} v_{n}  \\
	=&	- (d-1) \bb(\coth r - \frac{1}{r} \bb) 
			\lmb_{n}^{\frac{d-1}{2}} \chi'(\lmb_{n}^{\frac{1}{2}} r) e^{\lmb_{n}^{-1} \lap} v(\lmb_{n} t, \lmb_{n} r, \omg) \\
	&	- (d-1) \bb(\coth r - \frac{1}{r} \bb) \lmb_{n}^{\frac{d}{2}} \chi(\lmb_{n}^{\frac{1}{2}} r)  (\rd_{r} e^{\lmb_{n}^{-1}\lap} v) (\lmb_{n} t, \lmb_{n} r, \omg) \\
	&	- \bb(\frac{1}{\sinh^{2} r} - \frac{1}{r^{2}} \bb) \lmb_{n}^{\frac{d-2}{2}} \chi(\lmb_{n}^{\frac{1}{2}} r) \lap_{\bbS^{d-1}} e^{\lmb_{n}^{-1} \lap} v(\lmb_{n} t, \lmb_{n} r, \omg)  
\end{align*}
Note that $\chi(\lmb_{n}^{\frac{1}{2}} r)$ is supported on $\set{r \aleq \lmb_{n}^{-\frac{1}{2}}}$. As $\lmb_{n} \geq 1$, there exists an absolute constant $c > 0$ such that
\begin{equation*}
	\frac{\sinh r}{r} \leq c, \quad \abs{\coth r - \frac{1}{r}} \leq c r, \quad \abs{\frac{1}{\sinh^{2} r} - \frac{1}{r^{2}}} \leq c \quad \hbox{on} \,\, \supp \chi(\lmb_{n}^{\frac{1}{2}} \cdot)
\end{equation*}
Putting the above ingredients together and applying H\"older's inequality, we have
\begin{align*}
& \hskip-2em
	\nrm{(\Box_{\bbH^{d}} - \Box_{\bbR^{d}}) v_{n}}_{L^{1}_{t}(I_{n}; L^{2}(\bbH^{d}))} \\
\aleq & (T_{0} / \lmb_{n}) \lmb_{n}^{-\frac{1}{2}} \nrm{\lmb_{n} r^{2} \chi'(\lmb_{n}^{\frac{1}{2}} r)}_{L^{\infty}(\bbR^{d})} \nrm{\frac{\lmb_{n}^{\frac{d}{2}}}{\lmb_{n} r} e^{\lmb_{n}^{-1} \lap} v(\lmb_{n} t, \lmb_{n} r, \omg)}_{L^{\infty}_{t}(I_{n}; L^{2}(\bbR^{d}))} \\
& + (T_{0} / \lmb_{n}) \lmb_{n}^{-\frac{1}{2}} \nrm{\lmb_{n}^{\frac{1}{2}} r \chi(\lmb_{n}^{\frac{1}{2}} r)}_{L^{\infty}(\bbR^{d})} 
				\nrm{\lmb_{n}^{\frac{d}{2}} (\rd_{r} e^{\lmb_{n}^{-1} \lap} v) (\lmb_{n} r, \lmb_{n} r, \omg)}_{L^{\infty}_{t}(I_{n}; L^{2}(\bbR^{d}))} \\
& + (T_{0} / \lmb_{n}) \nrm{\lmb_{n} r^{2} \chi(\lmb_{n}^{\frac{1}{2}}r)}_{L^{\infty}(\bbR^{d})} \nrm{\frac{\lmb_{n}^{\frac{d}{2}}}{\lmb_{n}^{2} r^{2}} \lap_{\bbS^{d-1}} e^{\lmb_{n}^{-1} \lap} v(\lmb_{n} t, \lmb_{n} r, \omg)}_{L^{\infty}_{t} (I_{n}; L^{2}(\bbR^{d}))} \\
\aleq & T_{0} \lmb_{n}^{-\frac{3}{2}} \nrm{v}_{L^{\infty}_{t}(I; \dot{H}^{1}(\bbR^{3}))} + T_{0} \lmb_{n}^{-1} \nrm{v}_{L^{\infty}_{t}(I; \dot{H}^{2}(\bbR^{d}))}
\to 0 \mas n \to \infty,
\end{align*}
where we used the rescaling $(t, r, \omg) \mapsto (t/\lmb_{n}, r/\lmb_{n}, \omg)$ and Hardy's inequality for the last inequality. 

Next, we treat the term $\Box_{\bbR^{d}} v_{n}(t) - \lmb_{n}^{2} \T_{\lmb_{n}} (\Box_{\bbR^{d}} v)(\lmb_{n} t)$. We begin by computing
\begin{align*}
	\Box_{\bbR^{d}} v_{n}(t) - \lmb_{n}^{2} \T_{\lmb_{n}} (\Box_{\bbR^{d}} v)(\lmb_{n} t) 
%	= & \lmb_{n}^{\frac{d-2}{2}} \Box_{\bbR^{d}} (\chi(\lmb_{n}^{\frac{1}{2}} r) e^{\lmb_{n}^{-1} \lap} v(\lmb_{n} t, \lmb_{n} r, \omg))
%	- \lmb_{n}^{\frac{d+2}{2}} \chi(\lmb_{n}^{\frac{1}{2}} r) e^{\lmb_{n}^{-1} \lap} (\Box_{\bbR^{d}} v)(\lmb_{n} t, \lmb_{n} r, \omg) \\
	= & - 2 \lmb_{n}^{\frac{d+1}{2}} \chi'(\lmb_{n}^{\frac{1}{2}} r) (\rd_{r} e^{\lmb_{n}^{-1} \lap} v)(\lmb_{n} t, \lmb_{n} r, \omg) \\
	& - \lmb_{n}^{\frac{d}{2}} (\lap_{\bbR^{d}} \chi)(\lmb_{n}^{\frac{1}{2}} r) (e^{\lmb_{n}^{-1} \lap} v)(\lmb_{n} t, \lmb_{n} r, \omg).
\end{align*}
Then proceeding as before, we estimate
\begin{align*}
& \hskip-2em
	\nrm{\Box_{\bbR^{d}} v_{n}(t) - \lmb_{n}^{2} \T_{\lmb_{n}} (\Box_{\bbR^{d}} v)(\lmb_{n} t) }_{L^{1}_{t}(I_{n}; L^{2}(\bbH^{d}))} \\
\aleq & (T_{0} / \lmb_{n}) \lmb_{n}^{\frac{1}{2}} \nrm{\chi' (\lmb_{n}^{\frac{1}{2}} r)}_{L^{\infty}(\bbR^{d})} 
			\nrm{\lmb_{n}^{\frac{d}{2}} (\rd_{r} e^{\lmb_{n}^{-1} \lap} v)(\lmb_{n} t, \lmb_{n} r, \omg)}_{L^{\infty}_{t}(I_{n}; L^{2}(\bbR^{d}))} \\
	& +(T_{0} / \lmb_{n}) \lmb_{n}^{\frac{1}{2}} \nrm{\lmb_{n}^{\frac{1}{2}} r (\lap_{\bbR^{d}} \chi) (\lmb_{n}^{\frac{1}{2}} r)}_{L^{\infty}(\bbR^{d})} 
			\nrm{\frac{\lmb_{n}^{\frac{d}{2}}}{\lmb_{n} r} e^{\lmb_{n}^{-1} \lap} v(\lmb_{n} t, \lmb_{n} r, \omg)}_{L^{\infty}_{t}(I_{n}; L^{2}(\bbR^{d}))} \\
\aleq & T_{0} \lmb_{n}^{-\frac{1}{2}} \nrm{v}_{L^{\infty}_{t}(I; \dot{H}^{1}(\bbR^{d}))} \to 0 \mas n \to \infty,
\end{align*}
where we used the rescaling $(t, r, \omg) \mapsto (t/\lmb_{n}, r/\lmb_{n}, \omg)$ and Hardy's inequality for the last inequality. This completes the proof of the claim. \qedhere
\end{proof}

Our next goal is to prove an analogoue of Corollary \ref{cor: h h:lin:smallScat}, which will be used in the definition of nonlinear profiles. In the present case, we need an extra ingredient, whose proof depends on Lemma \ref{lem:pert4conc} and the dispersive estimate for the free wave equation on $\bbH^{d}$.
\begin{lem} \label{lem:uniformScat4conc}
Let $(f, g) \in \HH_{\euc}(\bbR^{d})$ be an initial data set and $\set{\lmb_{n}} \subseteq [1, \infty)$, $\set{h_{n}} \subseteq \GG$ be sequences such that $\lmb_{n} \to \infty$ as $n \to \infty$. Let $(p, q, \gmm)$ a hyperbolic admissible triple such that 
\begin{equation*} 
	\frac{2}{p} +  \frac{d-1}{q}  \leq \frac{d-1}{2}, \quad 2 < p < \infty.
\end{equation*}
Then we have
\begin{equation} \label{eq:uniformScat4conc}
	\lim_{T \to \infty} \bb( \sup_{n} \nrm{S_{V} (t)\tau_{h_{n}^{-1}} \T_{\lmb_{n}} (f, g)}_{L^{p}_{t} ((T / \lmb_{n}, \infty) ; W^{1-\gmm, q} \times W^{-\gmm, q}(\bbH^{d}))} \bb) = 0.
\end{equation}
An analogous statement holds in the negative time direction as well.
\end{lem}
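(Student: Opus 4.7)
The plan is to first reduce to the free hyperbolic propagator and then combine Lemma~\ref{lem: e h:lin} (in the full Strichartz norm) with the dispersive estimates of Proposition~\ref{prop: disp}, applied directly to the concentrating data $\T_{\lmb_{n}}(f,g)$. By Lemma~\ref{lem:pert4conc}, we may replace $S_{V}$ by $S_{\hyp}$, and since $\tau_{h_{n}^{-1}}$ is an isometry of $W^{s,q}(\bbH^{d})$ commuting with $S_{\hyp}$, we may drop $\tau_{h_{n}^{-1}}$. By density of $C^{\infty}_{0} \times C^{\infty}_{0}(\bbR^{d})$ in $\HH_{\euc}$ combined with the uniform Strichartz bound on $S_{\hyp}$, we may further assume $(f, g) \in C^{\infty}_{0} \times C^{\infty}_{0}(\bbR^{d})$.

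Fix $\eps > 0$ and a parameter $T_{0}$ (to be chosen depending only on $\eps$ and $(f,g)$). For $T \leq T_{0}$, split $(T/\lmb_{n}, \infty) = (T/\lmb_{n}, T_{0}/\lmb_{n}) \cup (T_{0}/\lmb_{n}, \infty)$. On the first sub-interval, Lemma~\ref{lem: e h:lin}---whose proof readily extends to the full $W^{1-\gmm, q} \times W^{-\gmm, q}$ Strichartz norm by using the $S_V$-Strichartz estimates in that topology in place of the bare energy estimate---gives
\begin{equation*}
\nrm{S_{\hyp}(t) \T_{\lmb_{n}}(f,g) - \T_{\lmb_{n}} S_{\euc}(\lmb_{n} t)(f,g)}_{L^{p}_{t}((T/\lmb_{n}, T_{0}/\lmb_{n}); W^{1-\gmm,q} \times W^{-\gmm,q})} = o_{n}(1).
\end{equation*}
Changing variables $s = \lmb_{n} t$ and applying the scaling identities for $\T_{\lmb_{n}}$ (which precisely balance the factor $\lmb_{n}^{-1/p}$ from the $L^{p}_{t}$ change of variables, since $1 - \gmm = 1 - \tfrac{d}{2} + \tfrac{d}{q} + \tfrac{1}{p}$), the Euclidean approximant is bounded by $\nrm{S_{\euc}(s)(f,g)}_{L^{p}_{s}((T, T_{0}); W^{1-\gmm, q} \times W^{-\gmm, q}(\bbR^{d}))}$, which is $< \eps/4$ for all $T \geq T_{\eps}$ (uniformly in $T_{0}$) by Euclidean Strichartz and dominated convergence.

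On the second sub-interval, one applies the dispersive estimates~\eqref{eq:shortDisp}--\eqref{eq:longDisp} directly. Using that $\T_{\lmb_n}(f,g)$ is supported in $\set{r \leq 2/\sqrt{\lmb_{n}}}$ (where $\sinh r \sim r$), a scaling calculation for $(f, g) \in C^{\infty}_{0} \times C^{\infty}_{0}(\bbR^{d})$ yields
\begin{equation*}
\nrm{\T_{\lmb_{n}}(f,g)}_{W^{\sgm + 1 - \gmm, q'} \times W^{\sgm - \gmm, q'}(\bbH^{d})} \aleq_{f,g} \lmb_{n}^{-a},
\end{equation*}
where $\sgm = (d+1)(\tfrac{1}{2} - \tfrac{1}{q})$ and $a = \tfrac{d-1}{2} - \tfrac{d-1}{q} - \tfrac{1}{p}$; the Euclidean admissibility $\tfrac{2}{p} + \tfrac{d-1}{q} \leq \tfrac{d-1}{2}$ forces $a \geq \tfrac{1}{p} > 0$. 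The identity $p a = p \sgm_{\ast} - 1$, where $\sgm_{\ast} = (d-1)(\tfrac{1}{2} - \tfrac{1}{q})$ is the short-time dispersive exponent, is a direct consequence of $\gmm = d(\tfrac{1}{2} - \tfrac{1}{q}) - \tfrac{1}{p}$ and is the key to what follows: the divergent short-time integral $\int_{T_{0}/\lmb_{n}}^{2} \abs{t}^{-p\sgm_{\ast}}\,\ud t \sim \lmb_{n}^{p \sgm_{\ast}-1} / T_{0}^{p\sgm_{\ast}-1}$ is cancelled \emph{exactly} by $\lmb_{n}^{-pa}$ (whenever $T_{0}/\lmb_{n} < 2$, i.e., for $n$ large), and combined with the finite long-time integral $\int_{2}^{\infty}\abs{t}^{-3p/2}\,\ud t < \infty$ (convergent because $p > 2$), this yields
\begin{equation*}
\nrm{S_{\hyp}(t) \T_{\lmb_{n}}(f,g)}_{L^{p}_{t}((T_{0}/\lmb_{n}, \infty); W^{1-\gmm, q} \times W^{-\gmm, q})} \aleq T_{0}^{-a} + \lmb_{n}^{-a}.
\end{equation*}
Choose $T_{0}$ large so that $T_{0}^{-a} < \eps/4$; the right-hand side is then $< \eps/2$ for all sufficiently large $n$, and the finitely many smaller $n$ are treated by the standard Strichartz tail vanishing applied to each fixed $\T_{\lmb_{n}}(f, g) \in \HH$. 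The negative-time direction follows by time reversibility.

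The main obstacle is establishing the \emph{exact} cancellation $pa = p\sgm_{\ast} - 1$: without it, the short-time integral on $(T_{0}/\lmb_{n}, 2)$ would contribute a $\lmb_{n}$-dependent factor that either grows with or fails to decay in $n$, precluding the uniform-in-$n$ smallness demanded by the statement. This cancellation is a rigidity built into the admissibility relation $\gmm = d(\tfrac{1}{2} - \tfrac{1}{q}) - \tfrac{1}{p}$ and is precisely why the conclusion holds across every admissible triple in the stated range.
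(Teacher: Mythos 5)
Your proof is correct, but takes a longer route than the paper's. You split $(T/\lmb_{n},\infty)$ at an intermediate scale $T_{0}/\lmb_{n}$ and handle the first piece via Lemma~\ref{lem: e h:lin} (extended — without verification — to the full $W^{1-\gmm,q}\times W^{-\gmm,q}$ topology for $\gmm\neq1$) and the second piece via the dispersive estimates of Proposition~\ref{prop: disp}. But notice: the bound $\aleq T_{0}^{-a} + \lmb_{n}^{-a}$ that you obtain on $(T_{0}/\lmb_{n},\infty)$ is, upon relabeling $T_{0}$ as $T$, precisely the conclusion of the lemma. That is, the short-time dispersive integral $\int_{T/\lmb_{n}}^{2}\abs{t}^{-p\sgm_{*}}\,\ud t$ combined with the $\lmb_{n}$-power from~\eqref{eq:Tlmbn} in the data norm gives $T^{-a}$ with the $\lmb_{n}$'s cancelling exactly (your identity $pa = p\sgm_{*}-1$, which is correct and is indeed the heart of the matter), while $\int_{2}^{\infty}\abs{t}^{-3p/2}\,\ud t$ with the same data norm gives $\lmb_{n}^{-a}\to0$. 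This is exactly the paper's argument, applied in one step on the whole set $(T/\lmb_{n},\infty)$. Your first sub-interval and the associated extension of Lemma~\ref{lem: e h:lin} are therefore redundant machinery: they are subsumed by your own dispersive computation once $T_{0}$ is replaced by $T$. What the extra step \emph{buys} is a conceptual connection to the Euclidean Strichartz tail, which may feel more intuitive, but it adds an unstated generalization of Lemma~\ref{lem: e h:lin} and a pigeonhole of scales where none is needed.
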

\begin{proof} 
We begin by making a few reductions. 
By Lemma \ref{lem:pert4conc}, it will suffice to prove~\eqref{eq:uniformScat4conc} for $S_{V} = S_{\hyp}$. Then as $S_{\hyp}$ and the norm on the left-hand side of~\eqref{eq:uniformScat4conc} are translation invariant, we may remove $\tau_{h_{n}^{-1}}$. We may futhermore restrict ourselves to $t > 0$, as the argument for $t < 0$ will be the same by the time reversal symmetry, and also assume that $(f, g) \in C^{\infty}_{0} \times C^{\infty}_{0} (\bbR^{d})$ by approximation. Thus it suffices to show that for any $\eps > 0$, there exists a $T > 0$ such that
\begin{equation} \label{eq:uniformScat4conc:reduce}
	\limsup_{n \to \infty} \nrm{S_{\hyp}(t) \T_{\lmb_{n}}(f, g)}_{L^{p}_{t}((T/\lmb_{n}, \infty);W^{1-\gmm, q} \times W^{-\gmm, q}(\bbH^{d}))} < \eps,
\end{equation}
where we remark that the $\limsup_{n \to \infty}$ can be replaced with $\sup_{n}$ by choosing $T$ larger if ncecessary.

Let $T > 0$ be a positive number to be determined. Our main tool is the dispersive estimate (Proposition~\ref{prop: disp}) for the free hyperbolic evolution $S_{\hyp}$. We first handle the short time contribution. As we are taking $\limsup_{n \to \infty}$ and $\lmb_{n} \to \infty$, we have $T/ \lmb_{n} < 2$ for $n$ large enough. By the first part of Proposition~\ref{prop: disp} and~\eqref{eq:Tlmbn}, we estimate
\begin{equation} \label{eq:uniformScat4conc:pf:1}
\begin{aligned}
	& \hskip-2em
	\nrm{S_{\hyp}(t) \T_{\lmb_{n}}(f, g)}_{L^{p}_{t}((T/\lmb_{n}, 2]; W^{1-\gmm, q} \times W^{-\gmm, q}(\bbH^{d}))}  \\
	\aleq & \nrm{\abs{t}^{-(d-1)(\frac{1}{2} - \frac{1}{q})}}_{L^{p}_{t}((T/\lmb_{n}, 2])} \nrm{\T_{\lmb_{n}} (f, g)}_{W^{1-\gmm+\sgm, q'} \times W^{-\gmm+\sgm, q'}(\bbH^{d})} \\
	\aleq & (T/\lmb_{n})^{-(d-1)(\frac{1}{2} - \frac{1}{q})+\frac{1}{p}} \nrm{\T_{\lmb_{n}} (f, g)}_{W^{1-\gmm+\sgm, q'} \times W^{-\gmm+\sgm, q'}(\bbH^{d})} \\
	\aleq & T^{-(d-1)(\frac{1}{2} - \frac{1}{q}) + \frac{1}{p}} \nrm{(f, g)}_{W^{1-\gmm+\sgm, q'} \times W^{-\gmm+\sgm, q'}(\bbR^{d})},
\end{aligned}
\end{equation}
which can be made arbitrarily small by taking $T > 0$ sufficiently large. Note that $-(d-1)(\frac{1}{2} - \frac{1}{q}) + \frac{1}{p} \leq -\frac{1}{p} < 0$ by hypothesis. To see the cancellation of $\lmb_{n}$, we recall that $\sgm = (d+1) (\frac{1}{2} - \frac{1}{q})$ and $\gmm = \frac{d}{2} - \frac{1}{p} - \frac{d}{q}$ and compute the power of $\lmb_{n}$ we obtain from~\eqref{eq:Tlmbn} to be
\begin{align*}
	\tfrac{d}{2} - \tfrac{d}{q'} - \gmm + \sgm 
%	=& \tfrac{d}{2} - d + \tfrac{d}{q} - \tfrac{d}{2} + \tfrac{1}{p} + \tfrac{d}{q} + \tfrac{d+1}{2} - \tfrac{d+1}{q} \\
%	=& - 2d (\tfrac{1}{2} - \tfrac{1}{q}) + (d+1)(\tfrac{1}{2} - \tfrac{1}{q}) + \tfrac{1}{p} \\
	=& -(d-1)(\tfrac{1}{2} - \tfrac{1}{q}) + \tfrac{1}{p}.
\end{align*}

Next, we deal with the long time contribution. This time we use the second part of Proposition \ref{prop: disp} for $\abs{t} > 2$ and~\eqref{eq:Tlmbn}. Proceeding as before, we obtain the following chain of estimates:
\begin{equation} \label{eq:uniformScat4conc:pf:2}
\begin{aligned}
& \hskip-2em
	\nrm{S_{\hyp}(t) \T_{\lmb_{n}}(f, g)}_{L^{p}_{t}((2, \infty); W^{1-\gmm, q} \times W^{-\gmm, q}(\bbH^{d}))}  \\
	\aleq & \nrm{\abs{t}^{-\frac{3}{2}}}_{L^{p}_{t}((2, \infty))} \nrm{\T_{\lmb_{n}} (f,g)}_{W^{1-\gmm+\sgm, q'} \times W^{-\gmm+\sgm, q'}(\bbH^{d})} \\
	\aleq & \lmb_{n}^{-(d-1)(\frac{1}{2} - \frac{1}{q})+\frac{1}{p}} \nrm{(f, g)}_{W^{1-\gmm+\sgm, q'} \times W^{-\gmm+\sgm, q'}(\bbR^{d})}.
\end{aligned}
\end{equation}
As $-(d-1)(\frac{1}{2} - \frac{1}{q}) + \frac{1}{p} \leq -\frac{1}{p}< 0$ by hypothesis, it follows that the last line goes to $0$ as we take $\limsup_{n \to \infty}$. Combining~\eqref{eq:uniformScat4conc:pf:1} and~\eqref{eq:uniformScat4conc:pf:2}, the desired inequality~\eqref{eq:uniformScat4conc:reduce} and hence the conclusion of the lemma follows. \qedhere
\end{proof}

We are now ready to state and prove the analogue of Corollary \ref{cor: h h:lin:smallScat} in the case of a concentrating profile.
\begin{cor} \label{cor: e h:lin:smallScat}
Let $(f, g) \in \HH_{\euc}(\bbR^{d})$ be an initial data set and $\set{\lmb_{n}} \subseteq [1, \infty)$, $\set{h_{n}} \subseteq \GG$ be sequences such that $\lmb_{n} \to \infty$ as $n \to \infty$. Let $(p, q, \gmm)$ be a hyperbolic admissible triple such that $\gmm = 1$ and
\begin{equation*} 
	\frac{2}{p} +  \frac{d-1}{q}  \leq \frac{d-1}{2}, \quad 2 < p < \infty.
\end{equation*}
Then there exists an absolute constant $C > 0$ such that the following holds: If
\begin{equation*}
	\nrm{S_{\euc, 0}(t)(f,g)}_{L^{p}_{t} ([0, \infty) ; L^{q}(\bbR^{d}))} < \eps,
\end{equation*}
for some $\eps > 0$, then we have
\begin{equation} \label{eq: e h:lin:smallScat}
	\limsup_{n \to \infty }\nrm{S_{V, 0} (t)\tau_{h_{n}^{-1}} \T_{\lmb_{n}} (f, g)}_{L^{p}_{t} ([0, \infty) ; L^{q}(\bbH^{d}))} < C \eps.
\end{equation}
An analogous statement holds in the negative time direction as well.
\end{cor}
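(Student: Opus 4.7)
The plan is a time-splitting argument at the natural Euclidean time scale $t \sim T_{0}/\lmb_{n}$, combining the uniform long-time scattering bound of Lemma~\ref{lem:uniformScat4conc} with the short-time Euclidean approximation of Lemma~\ref{lem: e h:lin}. Given $\eps > 0$, first fix $T_{0} > 0$ (independent of $n$) so that Lemma~\ref{lem:uniformScat4conc} gives
\begin{equation*}
	\sup_{n} \nrm{S_{V, 0}(t) \tau_{h_{n}^{-1}} \T_{\lmb_{n}}(f,g)}_{L^{p}_{t}((T_{0}/\lmb_{n}, \infty); L^{q}(\bbH^{d}))} < \eps.
\end{equation*}
It then suffices to show that the contribution on the complementary short interval $I_{n} := [0, T_{0}/\lmb_{n})$ is $\leq C' \eps$ uniformly in large $n$.

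On $I_{n}$, apply Lemma~\ref{lem: e h:lin} with the fixed $T_{0}$ to replace $S_{V, 0}(t) \tau_{h_{n}^{-1}} \T_{\lmb_{n}}(f,g)$ by the rescaled Euclidean evolution $\tau_{h_{n}^{-1}} \T^{0}_{\lmb_{n}} S_{\euc, 0}(\lmb_{n} t)(f, g)$ up to an error that tends to zero in $L^{p}_{t}(I_{n}; L^{q}(\bbH^{d}))$ as $n \to \infty$. The task is thereby reduced to bounding the latter quantity by $\aleq \eps$.

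The main remaining estimate is a direct comparison via change of variables. By translation invariance we may discard $\tau_{h_{n}^{-1}}$. Unwinding the definitions of $\T^{0}_{\lmb_{n}}$ and $\QQ_{\lmb_{n}} = \chi_{\sqrt{\lmb_{n}}} e^{\lmb_{n}^{-1} \lap}$ in polar coordinates and substituting $s = \lmb_{n} t$, $\rho = \lmb_{n} r$, the $L^{p}_{t}(I_{n}; L^{q}(\bbH^{d}))$ norm becomes, up to a factor of $\lmb_{n}$ raised to the power $p(\tfrac{d-2}{2} - \tfrac{d}{q}) - 1$, an integral of $|\QQ_{\lmb_{n}} S_{\euc, 0}(s)(f,g)|^{q}$ weighted by $\sinh^{d-1}(\rho/\lmb_{n})$ over $s \in [0, T_{0})$ and $\rho \in [0, 2\sqrt{\lmb_{n}})$ (the support of $\chi_{\sqrt{\lmb_{n}}}$). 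The hyperbolic admissibility of $(p, q, 1)$ is precisely the statement that this scaling exponent vanishes. Since $\rho/\lmb_{n} \leq 2/\sqrt{\lmb_{n}} \leq 1$ on the support, $\sinh^{d-1}(\rho/\lmb_{n}) \leq 2^{d-1}(\rho/\lmb_{n})^{d-1}$, so the weight reduces to the Euclidean $\rho^{d-1}$ up to an absolute constant. Finally, $\QQ_{\lmb_{n}}$ is an $L^{q}(\bbR^{d})$-contraction because $0 \leq \chi_{\sqrt{\lmb_{n}}} \leq 1$ and the heat semigroup is a contraction on $L^{q}$. Combining these observations bounds the displayed quantity by $\aleq \nrm{S_{\euc, 0}(s)(f,g)}_{L^{p}_{s}([0, \infty); L^{q}(\bbR^{d}))} < \eps$.

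The only genuine subtlety is the ordering of quantifiers: $T_{0}$ must be chosen \emph{before} letting $n \to \infty$, so that Lemma~\ref{lem: e h:lin} (whose error depends on $T_{0}$) can be applied cleanly and yields a genuine $o_{n}(1)$. No single step is a serious obstacle; the work is in the elementary but somewhat delicate volume comparison $\sinh^{d-1}(\rho/\lmb_{n}) \aleq (\rho/\lmb_{n})^{d-1}$ on the support of the cutoff, which is what ensures the Euclidean and hyperbolic $L^{q}$ norms of a profile concentrated at scale $\lmb_{n}^{-1}$ match up to an absolute constant. The negative-time statement follows by the time reversal $t \mapsto -t$ applied to the same argument.
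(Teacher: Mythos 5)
Your proof is correct and follows essentially the same route as the paper: split at the time scale $T/\lmb_{n}$, invoke Lemma~\ref{lem:uniformScat4conc} for the tail $[T/\lmb_{n}, \infty)$, invoke Lemma~\ref{lem: e h:lin} to pass to the rescaled Euclidean evolution on $[0, T/\lmb_{n})$, and bound the latter by the Euclidean $L^{p}_{t}L^{q}_{x}$ norm. The change-of-variables and volume-comparison computation you spell out in the final step is exactly what the paper packages inside the bound~\eqref{eq:Tlmbn}, with the admissibility condition for $\gmm = 1$ ensuring the scaling exponent cancels.
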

\begin{proof} 
By Lemma \ref{lem:uniformScat4conc}, there exists a $T > 0$ such that
\begin{equation*}
	\sup_{n} \nrm{S_{V, 0}(t) \tau_{h_{n}^{-1}} \T_{\lmb_{n}}(f, g)}_{L^{p}([T/\lmb_{n}, \infty); L^{q}(\bbH^{d}) )} < \eps. 
\end{equation*} 
On the other hand, by Lemma \ref{lem: e h:lin}, it follows that
\begin{equation*}
	\limsup_{n \to \infty } \nrm{S_{V, 0} (t) \tau_{h_{n}^{-1}} \T_{\lmb_{n}} (f, g) - \tau_{h_{n}^{-1}} \T_{\lmb_{n}}^{0} S_{\euc, 0}(\lmb_{n} t) (f, g)}_{L^{p}_{t} ([0, T / \lmb_{n}) ; L^{q}(\bbH^{d}))} = 0,
\end{equation*}
where we have
\begin{equation*}
	\nrm{\tau_{h_{n}^{-1}} \T_{\lmb_{n}}^{0} S_{\euc,0}(\lmb_{n} t) (f, g)}_{L^{p}_{t}([0, T/\lmb_{n}) ; L^{q}(\bbH^{d}))}
	\aleq \nrm{S_{\euc, 0} (f, g)}_{L^{p}_{t}([0, T), L^{q}(\bbR^{d}))} < \eps.
\end{equation*}
Combining the preceding statements, the corollary follows. \qedhere
\end{proof}

%%%%%%%%%%%%%%%%%%%%%%%%%%%%%%%%%%%%%%%%%%%%%%%%%%%%%%%%%%%%%%%%%%%%%%%%%%%%%%%%%%%%%%%%%%%%

\section{Bahouri-G\'erard linear profile decomposition}\label{sec: bg}

The goal of this section is to prove an analog of the linear Bahouri-G\'erard profile decomposition described in~\eqref{bg euc0} for sequences of linear waves $\vec u_n$ on $\R \times \Hp^d$ with bounded hyperbolic free energy, i.e., $\| \vec u_n\|_{\HH} \le C$. With further applications in mind, we will consider a more general class of linear wave equations than simply the free wave equation on $\R \times \Hp^d$. In particular our results will apply to bounded energy sequences of solutions $\vec u_n \in \HH$ to equations of the form 
\EQ{ \label{eq: u V free}
&u_{tt} - \De_{\Hp^d} u + V u = 0\\
}
%where $\mu \in (- \infty, \frac{(d-1)^2}{4})$ is a fixed number,  
where $V: \Hp^d \to  \R$ is a potential satisfying the assumptions in~\eqref{eq:Vdecay}--\eqref{eq:Str4halfVWave:2}. 

%In order to simplify the exposition, we will first give a detailed treatment in the case of free waves on $\R \times \Hp^3$ in Section~\ref{sec: prof 3d}, i.e, solutions $\vec u_n$ to~\eqref{eq: u V free} with $d=3$, $\mu = 0$ and $V \equiv 0$.  We will consider the general case in Section~\ref{sec:profgen}. 

% \subsection{Profile Decomposition for free waves on $\R \times \Hp^3$} \label{sec: prof 3d}

\subsection{Precise statement of the linear profile decomposition} 
 We begin by outlining a procedure for extracting limiting profiles from the sequence $\vec u_n$, which is originally due to Bahouri and G\'erard~\cite{BG} in the case of $\bbR^{1+3}$, and due to Ionescu, Pausader, and Staffilani~\cite{IPS} in this hyperbolic space incarnation.

 To identify a nonzero profile  that carries a nonzero amount of energy, one must first identify locations in space, time as well as the \emph{scale} at which the wave is concentrated. For waves that live at a fixed scale, the space and time translations as well as the limiting profiles are identified by examining the nonzero weak limits of
\EQ{
\tau_{h_n}S_{V}( t_n) \vec u_n(0) \rightharpoonup \vec V^j(0) \ \hbox{in} \ \HH
} 
for arbitrary sequences $\{t_n\} \subseteq \R$ and $\{ h_n\} \subseteq \GG$. However, such limits would fail to identify a nonzero limiting profile which arises due to concentration into smaller and smaller scales. One of the the key insights in~\cite{IPS} is that suitably defined Littlewood-Paley type projections $P_{\la}$ can be used to capture this particular failure of compactness. 

%
% To identify a nonzero profile  that carries a nonzero amount of energy, one must first identify locations in space, time as well as the \emph{frequency} at which the wave is concentrated. For waves that live at a fixed frequency, the space and time translations as well as the limiting profiles are identified by examining the nonzero weak limits of, 
%\EQ{
%\tau_{h_n}S_{V}( t_n) \vec u_n(0) \rightharpoonup \vec V^j(0) \ \hbox{in} \ \HH
%} 
%for arbitrary sequences $\{t_n\} \subseteq \R$ and $\{ h_n\} \subseteq \GG$. However, such limits would fail to identify a nonzero limiting profile which arises due to a high frequency concentration. One of the the key insights in~\cite{IPS} is that the Littlewood-Paley type projections $P_{\la}$ defined in Section~\ref{sec: prelim} can be used to capture this particular failure of compactness. 
%

Indeed, with the goal of proving a decomposition as in Theorem~\ref{thm:mainbg}, we seek to control the errors generated after extracting profiles in a suitable Strichartz norm $S(\bbR)$. To see directly how the the projections $P_\la$ are used, we make the following reduction, which is a simple consequence of Strichartz estimates and the refined Sobolev embedding, Lemma~\ref{lem: se}. 
\begin{lem}\label{lem: BS} 
Let $(p, q, \gmm)$ be a hyperbolic-admissible triple that is \emph{not} an endpoint in the sense that
\begin{equation} \label{eq:BS:non-endStr}
	\frac{2}{p} +  \frac{d-1}{q}  \leq \frac{d-1}{2}, \quad p > 2, \quad (p, q, \gmm) \neq (\infty, 2, 0).
\end{equation}
Let $\{\vec w^k(t)\}_{k \in \N}$ be a sequence of solutions to~\eqref{eq: u V free} with uniformly bounded energy, i.e., $E_{V}(\vec{w}^{k}) \le C$. Suppose that 
\EQ{  \label{BS vanish}
%\| \vec w_n\|_{L^{\infty}_t( \R ; B^{-\frac{1}{2}}_{\infty, \infty} ( \Hp^3)} :=   
\sup_{\la \ge 1, \, t \in \R, \, x \in \Hp^d} \abs{ \la^{-\frac{d-2}{2}}(P_\la w^k(t))( x)} \to 0 \mas k \to \infty
}
Then we have 
\begin{equation*}
\| w^k\|_{L^{p}_{t} (\R; W^{1-\gmm, q}_{x})} \to 0 \mas k  \to \infty.
\end{equation*}
\end{lem}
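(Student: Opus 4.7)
\medskip

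\noindent\textbf{Proof plan.} The strategy is to interpolate between a vanishing bound at the Strichartz endpoint $(p, q, \gmm) = (\infty, p_*, 1)$ with $p_{*} := \frac{2d}{d-2}$, obtained from the refined Sobolev inequality (Lemma~\ref{lem: se}), and a uniform bound at another admissible endpoint, obtained from the energy Strichartz estimate of Lemma~\ref{lem:basicVWave}(\ref{item:basicVWave:2}).

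\emph{Step 1 (Vanishing in $L^{\infty}_{t} L^{p_{*}}_{x}$).} Energy conservation for~\eqref{eq: u V free} together with the equivalence $\nrm{\cdot}_{H^{1}} \aeq \nrm{\cdot}_{H^{1}_{V}}$ from~\eqref{eq:H1equiv} gives the uniform bound $\sup_{t} \nrm{\nb w^{k}(t)}_{L^{2}(\bbH^{d})} \aleq E_{V}(\vec{w}^{k})^{1/2} \aleq 1$. Setting $B_{k} := \sup_{\lmb \ge 1,\, t,\, x} \lmb^{-(d-2)/2} |P_{\lmb} w^{k}(t,x)|$ and applying Lemma~\ref{lem: se} pointwise in $t$, we obtain
\begin{equation*}
	\nrm{w^{k}(t)}_{L^{p_{*}}(\bbH^{d})}
	\aleq \nrm{\nb w^{k}(t)}_{L^{2}(\bbH^{d})}^{(d-2)/d} B_{k}^{2/d}
	\aleq B_{k}^{2/d}.
\end{equation*}
Since $B_{k} \to 0$ by~\eqref{BS vanish}, we conclude $\nrm{w^{k}}_{L^{\infty}_{t} L^{p_{*}}_{x}} \to 0$. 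Note $(\infty, p_{*}, 1)$ is the admissible triple for which $W^{1-\gmm, q} = L^{p_{*}}$.

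\emph{Step 2 (Uniform Strichartz bound).} By Lemma~\ref{lem:basicVWave}(\ref{item:basicVWave:2}) applied to~\eqref{eq: u V free} with the uniform energy bound, for any hyperbolic-admissible triple $(p_{0}, q_{0}, \gmm_{0})$ we have the uniform estimate $\nrm{w^{k}}_{L^{p_{0}}_{t} W^{1-\gmm_{0}, q_{0}}_{x}} \aleq 1$.

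\emph{Step 3 (Interpolation).} The wave-admissible set, characterized by $\gmm = d(\tfrac{1}{2} - \tfrac{1}{q}) - \tfrac{1}{p}$ together with $\tfrac{2}{p}+\tfrac{d-1}{q} \le \tfrac{d-1}{2}$, is a convex subset of a $2$-plane in $(\tfrac{1}{p}, \tfrac{1}{q}, \gmm)$-space, of which $(0, \tfrac{1}{p_{*}}, 1)$ is an extreme point. For any non-endpoint target $(p, q, \gmm)$ satisfying \eqref{eq:BS:non-endStr}, there exist $\tht \in (0,1)$ and an admissible triple $(p_{0}, q_{0}, \gmm_{0})$ with $p_{0} \in (2, p)$ such that
\begin{equation*}
	\bb(\tfrac{1}{p}, \tfrac{1}{q}, 1-\gmm\bb)
	= (1-\tht) \bb(0, \tfrac{1}{p_{*}}, 0\bb) + \tht \bb(\tfrac{1}{p_{0}}, \tfrac{1}{q_{0}}, 1-\gmm_{0}\bb),
\end{equation*}
namely $\tht = p_{0}/p$, with $q_{0}$ and $\gmm_{0}$ then determined by the plane equation; wave-admissibility of $(p_{0}, q_{0}, \gmm_{0})$ is verified by choosing $p_{0}$ close enough to $p$. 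By complex interpolation between mixed Bochner--Sobolev spaces (recall that the Sobolev spaces $W^{s, q}(\bbH^{d})$ are defined via fractional powers of $-\lap_{\bbH^{d}}$, which form a complex interpolation scale), we then conclude
\begin{equation*}
	\nrm{w^{k}}_{L^{p}_{t} W^{1-\gmm, q}_{x}}
	\aleq \nrm{w^{k}}_{L^{\infty}_{t} L^{p_{*}}_{x}}^{1-\tht}
			\nrm{w^{k}}_{L^{p_{0}}_{t} W^{1-\gmm_{0}, q_{0}}_{x}}^{\tht}
	\To 0 \mas k \to \infty,
\end{equation*}
which is the desired conclusion.

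\emph{Main obstacle.} The only nontrivial point is the book-keeping in Step 3: one must verify, as a matter of elementary algebra on the wave-admissibility relation, that every non-endpoint admissible target lies on a segment between $(\infty, p_{*}, 1)$ and some other genuinely admissible triple. The fact that $p > 2$ and $(p, q, \gmm) \ne (\infty, 2, 0)$ is exactly what makes this interpolation non-degenerate; the Keel--Tao-type endpoint $p = 2$ is correctly excluded, since no such interpolation would yield a $\theta \in (0,1)$. The complex interpolation of Sobolev spaces defined spectrally by $(-\lap_{\bbH^{d}})^{s/2}$ is classical and requires no further comment.
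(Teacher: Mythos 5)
Your proof proceeds exactly along the lines the paper sketches: interpolate the $L^{\infty}_{t}L^{2d/(d-2)}$ bound, which vanishes by the refined Sobolev inequality (Lemma~\ref{lem: se}), against a uniform Strichartz bound from Lemma~\ref{lem:basicVWave}. Steps 1 and 2 are correct, and the algebraic verification in Step 3 that the Sobolev index extrapolates linearly under the relation $\gamma = d(\tfrac{1}{2}-\tfrac{1}{q})-\tfrac{1}{p}$ does go through.

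Two remarks on Step 3. First, the assertion that $(0,1/p_{*},1)$ is an extreme point of the admissible set is not right — it lies in the relative interior of the face $p=\infty$ — but that claim plays no role in the actual argument. Second, and more substantively, your claim that the interpolation endpoint $(p_{0},q_{0},\gamma_{0})$ is admissible ``by choosing $p_{0}$ close enough to $p$'' is verified only when the target lies strictly inside the regime $\tfrac{2}{p}+\tfrac{d-1}{q}<\tfrac{d-1}{2}$. If the target sits exactly on the curve $\tfrac{2}{p}+\tfrac{d-1}{q}=\tfrac{d-1}{2}$ (which the hypothesis $\le$ permits), the point at parameter $1/\theta>1$ along the ray from $(0,1/p_{*})$ immediately crosses into the other Strichartz regime, where the admissible derivative loss is $\gamma_{0}=\tfrac{d+1}{2}(\tfrac{1}{2}-\tfrac{1}{q_{0}})$, strictly larger than the linearly extrapolated value; the resulting Sobolev index then falls short of what interpolation requires. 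A similar degeneracy occurs at $p=\infty$ with $q\neq\tfrac{2d}{d-2}$, where $\theta=p_{0}/p$ collapses to $0$. Both cases can be patched (e.g.\ reduce to a strictly interior target via Sobolev embedding on $\bbH^{d}$, or interpolate along the time variable only when $p=\infty$), but as written your argument leaves them open. Since the paper only invokes the lemma at the strictly interior triple $(5,10,1)$ in $d=3$, and its own proof is a one-line sketch with ``we omit the details,'' the gap is largely cosmetic, but it is worth flagging if the lemma is meant to cover the full stated range.
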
 
\begin{proof}
The non-endpoint assumption \eqref{eq:BS:non-endStr} allows us to interpolate the $L^{\infty}_{t} L^{\frac{2d}{d-2}}(\bbH^{d})$ norm, which goes to $0$ thanks to the refined Sobolev inequality (Lemma \ref{lem: se}), with another Strichartz norm, which is uniformly bounded thanks to the Strichartz inequality (Lemma~\ref{lem:basicVWave} part (\ref{item:basicVWave:2})), to estimate $\| w^k\|_{L^{p}_{t} (\R; W^{1-\gmm, q}_{x})}$. We omit the details.
\end{proof} 

The point of Lemma~\ref{lem: BS} is that it motivates the following procedure for identifying nonzero limiting profiles -- in particular, it will suffice to look for profiles which carry a nonzero amount of the norm on the left-hand side of~\eqref{BS vanish}. 
%In what follows, we will use the following notation for the propagator for the linear equation~\eqref{eq: u V free}: If $\vec u(0) = (u_0, u_1)$ we have 
%\EQ{
%&S_{V}(t) \vec u(0) := (S_{V, 0}(t) \vec u(0), S_{V, 1}(t) \vec u(0))\\
%& S_{V, 0}(t) \vec u(0) :=  \cos\left( t D_{V} \right) u_0 +  \frac{\sin\left( t D_{V} \right)}{D_{V}} u_1, \\
%& S_{V, 1}(t) := \p_t  S_{V, 0}(t).
%}
%We will denote the free hyperbolic propagator (i.e., when $V = 0$) by $S_{\hyp}$, with $S_{\hyp, 0}$ and $S_{\hyp, 1}$ defined similarly as above.

We now begin an extended definition of  the limiting profiles associated to a sequence of solutions to~\eqref{eq: u V free} with bounded energy.
\begin{defn} \label{def:linprof} Let $\vec  u_n(t,  \cdot) \in \HH$ be a sequence of solutions to~\eqref{eq: u V free}, with uniformly bounded energy, i.e., $E_{V}(\vec{u}_{n}) \leq C$. Let $\{t_n, h_n, \la_n\} \subseteq \R \times \GG \times [1, \infty)$ be a sequence so that  
\ant{
0< \limsup_{n \to \infty} \abs{\la_n^{-\frac{d-2}{2}}P_{\la_n} S_{V, 0}(t_n)\vec u_n(0)}( h_n  \cdot \ori).
}

\vskip.5em
\noindent\textit{Case 1: A perturbed hyperbolic (or stationary) profile.} Suppose that 
\begin{equation*}
	\limsup_{n \to \infty}( \la_{n} + \abs{h_{n}}) < \infty,
\end{equation*}
where $\abs{h_{n}}$ is a shorthand for $\bfd_{\bbH^{d}}(h_{n} \cdot \zero, \zero)$. Then, up to passing to a subsequence, we can assume $\la_n \to \la_{\infty} \in [1, \infty)$ and $h_{n} \to h_{\infty} \in \GG$. Denote by $ \vec U_{V}(0)$ a weak limit in $\HH_{V}( \Hp^d)$ of the sequence 
\EQ{
\left(S_{V, 0}(t_n)\vec u_n(0), \, S_{V, 1}(t_n)\vec u_n(0) \right) \rightharpoonup \vec U_{V}(0) \ \hbox{in} \ \HH( \Hp^{d}).
}
We refer to $\vec U_{V}(0)$  as a \emph{limiting perturbed hyperbolic} (or \emph{stationary}) \emph{profile} associated to the sequence $\vec u_n(t)$. 
The profile $\vec U_{V}(0)$ will be related back to the original sequence $\vec u_n(0)$ by evolving back for the time $-t_n$ using the perturbed hyperbolic evolution 
\EQ{ \label{V prof}
\vec U_{V, n, L}(t):= \vec U_{V, L}(t-t_n, \cdot) := S_{V}(t-t_n) \vec U_{V}(0).
}
and we refer to $\vec U_{V, n, L}(0)$ as \emph{perturbed hyperbolic} (or \emph{stationary}) \emph{profiles} associated to the sequence $\vec u_n(0)$.

\begin{rem}
In practice we will take $h_n$ to be the sequence consisting of only the identity element in the case described above. That this can be done is justified in the process of extraction of profiles in the proof of Theorem~\ref{thm:  BG}.
\end{rem}

\vskip.5em
\noindent \textit{Case 2: A free hyperbolic (or traveling) profile.} Suppose that 
\begin{equation*}
\limsup_{n \to \infty} \la_n < \infty \quad \hbox{ but } \limsup_{n \to \infty} \abs{h_{n}} = \infty.
\end{equation*}
Up to passing to a subsequence, we can assume $\la_n \to \la_{\infty} \in [1, \infty)$. 
In this case, the solution lives at a fixed scale but is traveling out to the spatial infinity. Thanks to the assumption~\eqref{eq:Vdecay} on the spatial decay of the potential $V$, in this situation we expect the \emph{free} hyperbolic evolution to be a good approximation for the full evolution $S_{V}$.

With these heuristics in mind, we now define precisely the free hyperbolic (or traveling) profiles. Denote by $ \vec U_{\hyp}(0)$ a weak limit in $\HH( \Hp^d)$ of the sequence 
\EQ{
\left(\tau_{h_n}  S_{V, 0}(t_n)\vec u_n(0), \, \tau_{h_n}  S_{V, 1}(t_n)\vec u_n(0) \right) \rightharpoonup \vec U_{\hyp}(0) \ \hbox{in} \ \HH( \Hp^{d}).
}
We refer to $\vec U_{\hyp}(0)$  as a \emph{limiting free hyperbolic} (or \emph{traveling}) \emph{profile} associated to the sequence $\vec u_n(t)$. %The free hyperbolic evolution of $\vec V_{\hyp}(0)$ at the sequence of times $t_n$, namely, $S 
We relate the profile $\vec U_{\hyp}(0)$ back to the original sequence $\vec u_n(0)$ by the evolution
\EQ{ \label{hyp prof}
\vec U_{\hyp, n, L}(t) = \vec U_{\hyp, L}(t-t_n, h_n^{-1} \cdot) := S_{V}(t-t_n) \tau_{h_n^{-1}} \vec U_{\hyp}(0).
}
We will refer to $\vec U_{\hyp, n, L}(0)$ as \emph{free hyperbolic} (or \emph{traveling}) \emph{profiles} associated to the sequence $\vec u_n(0)$.  

\begin{rem}
As discussed above, by Lemma \ref{lem:pert4traveling}, we see that the wave $\vec{U}_{\hyp, n, L}$ is well-approximated by the \emph{free} hyperbolic wave, i.e.,
\begin{equation} \label{hyp prof is free}
	\nrm{\vec{U}_{\hyp, n, L}(t) - S_{\hyp}(t-t_{n}) \tau_{h_{n}^{-1}} \vec{U}(0)}_{L^{\infty}_{t} (\R; \HH)} \to 0 \mas n \to \infty.
\end{equation}
\end{rem}

\vskip.5em
\noindent \textit{Case 3: A Euclidean (or concentrating) profile.} Suppose that 
\begin{equation*}
\limsup_{n \to \infty} \la_n =  \infty.
\end{equation*} 
Then, up to passing to a subsequence, we can assume $\la_n \to \infty $. In this case, the solution is concentrating at a very small scale in physical space. This is precisely the situation in which the underlying scale-invariant Euclidean free evolution serves as a good approximation for the hyperbolic evolution, and this motivates the following lengthy definition of what we refer to as a \emph{Euclidean} (or \emph{concentrating}) \emph{profile}. The following definition is necessary to properly state  Theorem~\ref{thm: BG} below.
%\comment{Mysterious - go to nonlinear profile?}
%Although the construction may seem somewhat mysterious now, the proof of Theorem~\ref{thm: BG} will contain more motivation and details. 

Begin by evolving the original sequence $\vec u_n(0)$ up to time $t_{n}$, 
 \EQ{
  \vec u_n (0) \longmapsto S_{V}(t_{n}) \vec u_{n}(0) =: ( u_{n, L}\left(t_{n},r , \om \right), \p_t u_{n, L}\left(t_{n}, r, \om \right))
  }
  We next take the pair of functions on the right-hand side  above, which we are viewing as functions at the level of the  coordinates $(r, \om)$, upstairs to $\Hp^d \subset \R^{d+1}$ with the coordinate map  via  the identification 
\begin{multline}
( u_{n, L}\left(t_{n},r , \om \right), \p_t u_{n, L}\left(t_{n}, r, \om \right)) \\
 \longleftrightarrow   (u_{n, L}\left(t_{n}, \Psi(r, \om) \right), \, \p_t u_{n, L}\left(t_{n}, \Psi(r, \om) \right))
\end{multline}
Next, we translate by $h_{n} \in \GG$, 
\EQ{
(u_{n, L}\left(t_{n}, \Psi(r, \om) \right), \, &\p_t u_{n, L}\left(t_{n}, \Psi(r, \om) \right)) \longmapsto  \tau_{h_{n}} S_V(t_{n}) \vec u_n(0)( \Psi(r, \om)) \\
& = \left((S_{V}(t_{n}) \vec u_n(0)\right)(  h_{n} \cdot \Psi(r, \om))
}
%\begin{multline}
% \,   \la_{n}^{-\frac{d}{2}}  \chi_{ \la_{n}R_n}( r) \,  \p_t u_{n, L}\left(t_{n} \frac{ r }{ \la_{n}}\right) \longleftrightarrow \\ \longleftrightarrow
%  \,   \la_{n}^{-\frac{d}{2}}  \chi \left( \Psi(r/ \la_{n}R_n)\right) \,  \p_t u_{n, L}\left(t_{n},\,  \Psi\left(\frac{ r }{ \la_{n}}, \om\right)\right) 
% \end{multline}
 Now, truncate at the level of coordinates by  $\la_{n}^{-\frac{1}{2}}$ and rescale by $\la_{n}$, 
\begin{multline}
  \left((S_{V, 0}(t_{n}) \vec u_n(0)\right)(  h_{n} \cdot \Psi(r, \om)) \longmapsto \\
   \la_{n}^{-\frac{d-2}{2}}  \chi \left( \frac{r}{ \sqrt{\la_{n}}}\right) \,  u_{n, L}\left(t_{n} , \,  h_{n} \cdot \Psi\left(\frac{ r }{ \la_{n}}, \om\right) \right) 
   = :v_{n, 0}(r, \om)
  \end{multline}
  \begin{multline}
  \left((S_{V, 1}(t_{n}) \vec u_n(0)\right)(  h_{n} \cdot \Psi(r, \om)) \longmapsto \\
   \la_{n}^{-\frac{d}{2}}  \chi \left(\frac{r}{ \sqrt{\la_n}}\right) \,  \p_t u_{n, L}\left(t_{n},  \,h_{n} \cdot \Psi\left(\frac{ r }{ \la_{n}}, \om\right) \right) =:
   v_{n}(r, \om)
  \end{multline}
where $\chi$ is radial and supported in $\set{ r \leq 2}$, with $\chi(r) = 1$ for $r \leq 1$.
 % We remark that the point of  truncating first to a ball of radius $\sim \la_n^{\frac{1}{2}}$ is that 
  %\begin{multline}
%S_{V}(t_{n}) \vec u_{n}(0)  \longmapsto   \\  \left(   \la_{n}^{-\frac{d-2}{2}}  \chi_{ \la_{n}R_n}( r) \,  u_{n, L}\left(t_{n},\frac{ r }{ \la_{n}}, \om \right),  \,   \la_{n}^{-\frac{d}{2}}  \chi_{ \la_{n}R_n}( r) \,  \p_t u_{n, L}\left(t_{n} \frac{ r }{ \la_{n}}, \om \right)\right)
%\end{multline}
%We next take this pair of functions above, which we are viewing as functions at the level of the  coordinates $(r, \om)$, upstairs to $\Hp^d \subset \R^{d+1}$ with the coordinate map  with the identification 
%\begin{multline}
%   \la_{n}^{-\frac{d-2}{2}}  \chi_{ \la_{n}R_n}( r) \,  u_{n, L}\left(t_{n},\frac{ r }{ \la_{n}}\right) \longleftrightarrow  \\\longleftrightarrow   
%	\la_{n}^{-\frac{d-2}{2}}  \chi \left( \Psi(r/ \la_{n}R_n)\right) \,  u_{n, L}\left(t_{n}, \Psi\left(\frac{ r }{ \la_{n}}, \om\right) \right), 
%\end{multline}
%\begin{multline}
% \,   \la_{n}^{-\frac{d}{2}}  \chi_{ \la_{n}R_n}( r) \,  \p_t u_{n, L}\left(t_{n} \frac{ r }{ \la_{n}}\right) \longleftrightarrow \\ \longleftrightarrow
%  \,   \la_{n}^{-\frac{d}{2}}  \chi \left( \Psi(r/ \la_{n}R_n)\right) \,  \p_t u_{n, L}\left(t_{n},\,  \Psi\left(\frac{ r }{ \la_{n}}, \om\right)\right) 
% \end{multline}
% Finally, we translate by $h_{n}$, obtaining, 
 %\EQ{
 %v_{n, 0}^1(r, \om):= \la_{n}^{-\frac{d-2}{2}}  \chi \left( \Psi(r/ \la_{n}R_n)\right) \,  u_{n, L}\left(t_{n}, \Psi\left(\frac{ r }{ \la_{n}}, \om\right) \right)
% }
Note that the sequence $ \vec v_n := (v_{n, 0}, v_{n, 1})$ is uniformly bounded in $\HH_{\euc} := \dot{H}^1 \times L^2(\R^d)$.  We can then extract a weak limit, $\vec V_{\euc}(0) \in \HH_{\euc}$, i.e., 
\EQ{
 \vec v_n \rightharpoonup \vec V_{\euc}(0) \ \hbox{in} \ \HH_{\euc}. 
 }
We will refer to $\vec V_{\euc}(0)$ as a \emph{limiting Euclidean} (or \emph{concentrating}) \emph{profile} associated to the sequence $\vec u_n(0)$.  In order to relate this limiting profile back to the original sequence, we first pass back to the hyperbolic energy space $\HH$ via the map $\T_{\la_{n}}:  \HH_{\euc} \to \HH$ and set 
\EQ{
\vec U_{\euc, n}(0) := (\T_{\la_{n}} \vec V_{\euc})(0)
}
Finally, we define the \emph{Euclidean} (or \emph{concentrating}) \emph{profiles},  $\vec U_{\euc, n, L}(0)$, associated to the sequence $\vec u_n(0)$ and the parameters $\{t_{n}, h_n, \la_n\}$, by defining
\EQ{ \label{euc prof}
\vec U_{\euc, n, L}(t) &: =  S_{V}(t-t_{n}) \tau_{h_{n}^{-1}}\vec U_{\euc, n}(0)
}
and setting $t = 0$.

%\begin{rem}
%From Lemma \ref{lem:pert4conc}, it follows that the wave $\vec{U}_{\euc, n, L}$ is well-approximated by the free hyperbolic wave, i.e.,
%\begin{equation*}
%	\nrm{\vec{U}_{\euc, n, L}(t)
%		- S_{\hyp}(t) \tau_{h_{n}^{-1}} \T_{\lmb_{n}} \vec{V}_{\euc}}_{L^{\infty}_{t} (\bbR; \HH)} \to 0 \mas n \to \infty.
%\end{equation*}
%As the name suggests, it turns out that the \emph{Euclidean evolution} of $\vec{V}_{\euc}$ (i.e., solving $(\rd_{t}^{2} - \lap) v = 0$ on $\bbR^{d}$) approximates $\vec{U}_{\euc, n, L}(t)$ well in a suitable sense. Since it is not necessary for the proof of the linear profile decomposition below, we will not make this connection precise at the moment. Instead, this approximation procedure will be developed in detail in Section \ref{sec:nonlinApprox} for the nonlinear equation $(\rd_{t}^{2} - \lap_{\bbH^{3}} + V) u = - \abs{u}^{4} u$, and will be used crucially in the proof of the nonlinear profile decomposition (Theorem \ref{thm:nonlinbg}).
%\end{rem}
\end{defn}

With the above definitions we can now state the main result of this section, namely a Bahouri-G\'erard profile decomposition for linear waves on $\R \times \Hp^d$.

\begin{thm}[Bahouri-G\'erard profile decomposition]\label{thm: BG} 
Let $\vec u_n := (u_{0, n}, u_{1,n})$ be a sequence in $\HH(\Hp^d)$ with $E_{V}(\vec u_n) \le C$. Then, up to passing to a subsequence, there exist sequences %$\{t_{n, \ell}, h_{n, \ell}, \la_{n, \ell}\} \in \R \times \GG$ with $\la_{n, \ell} \to \la_{\infty, \ell}  \ge1$ corresponding hyperbolic limiting profiles, $\vec U^{\ell}_{\hyp}(0)$, sequences 
$\{ t_{n, j}, h_{n, j}, \la_{n, j}\} \in \R \times \GG \times [1, \infty)$, and corresponding profiles $\vec U^j_{n, L}(0)$ defined by
\begin{equation*}
	\vec U^{j}_{n, L}(0) =
	\left\{
	\begin{array}{lll}
	\vec{U}^{j}_{V, n, L}(0) & \hbox{ as in~\eqref{V prof}} & \hbox{ if $\la_{n, j} \to \la_{\infty, j} \geq 1$ and $h_{n, j} \to h_{\infty, j}$}, \\
	\vec{U}^{j}_{\hyp, n, L}(0) & \hbox{ as in~\eqref{hyp prof}} & \hbox{ if $\la_{n, j} \to \la_{\infty, j} \geq 1$ and $\abs{h_{n, j}} \to \infty$}, \\
	\vec{U}^{j}_{\euc, n, L}(0) & \hbox{ as in~\eqref{euc prof}} & \hbox{ if $\la_{n, j} \to \infty$},
	\end{array}
	\right.
\end{equation*}
and  errors $\vec w_{n}^J$ defined by 
\EQ{
\vec u_n (0) =  \sum_{0<j<J  }\vec U^j_{n, L}(0)+  \vec w_{n}^J , 
}
so that the following statements hold.

\begin{enumerate}
\item  Let $\vec w_n^J(t):= S_{V}(t) \vec w^J_n$. Then for any $(p, q, \gmm)$ as in~\eqref{eq:BS:non-endStr}, we have 
 \begin{align} \label{b norm to 0}
  & \limsup_{n \to \infty}  \sup_{ \la  \ge 1, t \in \R, x \in \Hp^d} \abs{  \la^{- \frac{d-2}{2}} (P_{\la} w_{n}^J(t)) (x) }  \to 0 \mas J \to \infty \\
   \mand & \limsup_{n \to \infty}\| w_n^J\|_{L^{p}_{t}(\R; W^{1-\gmm, q}_{x}) } \to 0 \mas J \to \infty \label{s norm to 0}
\end{align}

\item For each $J \in \N$ and for every $j \neq k  <J$, we have the following orthogonality of parameters, 
\EQ{ \label{o pa}
&\textrm{either}  \quad \frac{\la_{n, j}}{ \la_{n, k}} + \frac{\la_{n, k}}{\la_{n, j}}  \to \infty, \\
 & \textrm{or} \quad  \la_{n, j} \simeq \la_{n, k}  \\
&\phantom{\textrm{or} \quad} \mand   \la_{n, j}\abs{t_{n, j} - t_{n, k}} +  \la_{n, j}\bfd_{\Hp^d}(h_{n, j} \cdot \zero,  h_{n, k} \cdot \zero) \to \infty.
  }

\item For each $J \in \N$  and  $j<J$, if $\vec U^j_{n, L} =  \vec U_{V, n, L}$ arises as a perturbed hyperbolic profile, then we have the weak convergence 
\EQ{ \label{w weak V}
  S_{V}( t_{n,j}) \vec w_n^J(0) \rightharpoonup 0 \ \hbox{in} \  \HH.
  }
If $\vec U^j_{n, L} =  \vec U_{\hyp, n, L}$ arises as a free hyperbolic profile, then we have the weak convergence 
\EQ{ \label{w weak hyp}
  \tau_{h_{n, j}} S_{V}( t_{n,j}) \vec w_n^J(0) \rightharpoonup 0 \ \hbox{in} \ \HH.
  }
  If $\vec U^j_{n, L} = \vec U^j_{\euc, n, L}$ arises as a  Euclidean profile then we have the weak convergence, 
\begin{equation}  \label{w weak euc}
\begin{aligned}
  \bigg(\la_{n, j}^{-\frac{d-2}{2}}  & \chi( \cdot/ \la_{n, j}^{\frac{1}{2}}) ( \tau_{h_{n, j}}S_{V, 0}(t_{n, j}) \vec w_{n}^J)(\cdot/   \la_{n,j} )   , \,  \\
   &\la_{n, j}^{-\frac{d}{2}}  \chi( \cdot/  \la_{n, 1}^{\frac{1}{2}}) ( \tau_{h_{n, j}}S_{V, 1}(t_{n, j}) \vec w_{n}^J)( \cdot/  \la_{n,j} ) \bigg)  \rightharpoonup 0 \ \hbox{in} \ \HH_{\euc}.
\end{aligned}  
\end{equation}

\item Finally, we have the following Pythagorean decomposition of the energy: 
For each fixed $J \in \N$ we have 
\EQ{ \label{o en}
E_{V}( \vec u_n) = \sum_{j<J} E_{V} (\vec U^j_{n, L}) + E_{V}( \vec w^J_n) + o_n(1) \mas n \to \infty
}

\end{enumerate}
\end{thm}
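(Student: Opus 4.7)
The plan is to follow the Bahouri--G\'erard iterative extraction scheme, adapted to accommodate the three profile types stemming from the failure of compactness along $\bbR\times SO(d,1)\times[1,\infty)$. The preliminary reduction would be Lemma~\ref{lem: BS}: by interpolating the refined Sobolev bound of Lemma~\ref{lem: se} against a fixed Strichartz estimate from Lemma~\ref{lem:basicVWave}, the Strichartz decay~\eqref{s norm to 0} follows from the weak-norm decay~\eqref{b norm to 0}. The main task is therefore to extract profiles that exhaust the quantity
\[
\alp_J := \limsup_{n\to\infty}\sup_{\lmb\ge 1,\,t\in\R,\,x\in\Hp^d} \lmb^{-(d-2)/2}\bigl|P_\lmb S_{V,0}(t)\vec w_n^J(x)\bigr|
\]
at each step $J$.

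Setting $\vec w_n^0=\vec u_n$, at step $J$ I would proceed as follows: if $\alp_J>0$, pick parameters $(\lmb_{n,J},t_{n,J},h_{n,J})\in[1,\infty)\times\R\times\GG$ that nearly realize $\alp_J$, pass to a subsequence so that $\lmb_{n,J}\to\lmb_{\infty,J}\in[1,\infty]$ and $|h_{n,J}|\to r_{\infty,J}\in[0,\infty]$, and identify one of the three cases of Definition~\ref{def:linprof}. The key tool is the pairing identity~\eqref{eq:PP1},
\[
\lmb_{n,J}^{-(d-2)/2} P_{\lmb_{n,J}} f(h_{n,J}\cdot\zero) = -\langle \tau_{h_{n,J}} f \mid \lmb_{n,J}^{-(d+2)/2}\,{}^{(1)}\PP_{\lmb_{n,J}}\rangle_{H^1(\Hp^d)},
\]
which converts the weak-norm quantity into an $H^1$-pairing with an explicit test function. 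For the stationary and traveling cases ($\lmb_{\infty,J}<\infty$), Lemma~\ref{lem:concPP}(\ref{item:concPP:1}) yields uniform $H^1$ bounds on the test-function sequence; passing to a further subsequence with strong $H^1_{\loc}$ convergence, I would extract a nonzero weak $\HH$-limit of $\tau_{h_{n,J}}S_V(t_{n,J})\vec w_n^J$, yielding $\vec U^J_\hyp(0)$ (or $\vec U^J_V(0)$, replacing $h_{n,J}$ by the identity). For the concentrating case ($\lmb_{n,J}\to\infty$), I would instead rescale by $\calT_{\lmb_{n,J}}^{-1}$ and invoke Lemma~\ref{lem:concPP}(\ref{item:concPP:3}), which provides a strong $\dot H^1(\R^d)$-limit of the rescaled, truncated test functions; the corresponding weak $\HH_\euc$-limit gives $\vec V_\euc(0)$. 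In all three cases the extracted profile carries energy $\gec \alp_J^2$, by the lower bound on the pairing combined with~\eqref{eq:Tlmbn:HH} and~\eqref{eq:H1equiv}.

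The technical core of the argument is the joint verification, by induction on $J$, of the asymptotic orthogonality~\eqref{o pa} together with the weak convergence statements~\eqref{w weak V}--\eqref{w weak euc} for $\vec w_n^{J}:=\vec w_n^{J-1}-\vec U^J_{n,L}(0)$. I would argue the orthogonality by contradiction: if two extracted profiles $\vec U^j_{n,L}$, $\vec U^k_{n,L}$ failed the dichotomy~\eqref{o pa}, then by applying the approximation lemmas of Section~\ref{subsec:linApprox} (Lemmas~\ref{lem:pert4traveling}, \ref{lem:pert4conc}, \ref{lem: e h:lin}) one can conjugate both profiles into a common reference frame (free hyperbolic or Euclidean) and deduce that the defining weak convergence of the later profile forces it to vanish, contradicting $\alp_{k-1}>0$. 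The casework across the three profile types -- together with proving the weak convergences at the level of the errors -- is where I expect the main obstacle to lie. Once orthogonality and weak convergence are established, the Pythagorean identity~\eqref{o en} follows from a standard expansion of $\|\sum_{j<J}\vec U^j_{n,L}+\vec w_n^J\|_{\HH}^2$, using~\eqref{eq:H1equiv} to treat the potential term as a lower-order perturbation. Finally, since the total energy is bounded and each profile carries energy $\gec\alp_j^2$, summability forces $\alp_J\to 0$ as $J\to\infty$, and~\eqref{s norm to 0} follows via Lemma~\ref{lem: BS}, completing the proof.
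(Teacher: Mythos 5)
Your proposal reproduces the Bahouri–G\'erard extraction scheme used in the paper: reduce via Lemma~\ref{lem: BS}, extract profiles from the supremum $\sup_{\lmb\ge1,t,x}\lmb^{-(d-2)/2}|P_\lmb S_{V,0}(t)\vec w_n^J(x)|$ by passing to weak limits after pairing against ${}^{(1)}\PP_{\lmb}$ via~\eqref{eq:PP1}, use Lemma~\ref{lem:concPP} to get lower bounds on the extracted energy, prove orthogonality by contradiction using the approximation lemmas of Section~\ref{subsec:linApprox}, and close the induction by summability. Up to that point the proposal and the paper are essentially identical.

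However, your treatment of the Pythagorean decomposition~\eqref{o en} is under-specified in a way that hides a genuine technical obstacle. You claim the identity ``follows from a standard expansion of $\|\sum_{j<J}\vec U^j_{n,L}+\vec w_n^J\|_{\HH}^2$, using~\eqref{eq:H1equiv} to treat the potential term as a lower-order perturbation.'' That description is inadequate: the content of~\eqref{o en} is precisely that all cross terms $\langle \vec U^j_{n,L}(0)\mid\vec U^\ell_{n,L}(0)\rangle_{\HH_V}$ and $\langle \vec U^j_{n,L}(0)\mid\vec w_n^J\rangle_{\HH_V}$ go to zero, and proving this requires a case analysis across profile types. Most of the cases can indeed be handled by dispersive estimates for the free hyperbolic or Euclidean flows together with the approximation lemmas. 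But when both $\vec U^j_{n,L}$ and $\vec U^\ell_{n,L}$ are stationary (perturbed hyperbolic) profiles diverging only in time, the cross term is
\[
\langle S_V(t_{n,\ell}-t_{n,j})\vec U^j_V(0)\mid \vec U^\ell_V(0)\rangle_{\HH_V},
\]
and there is no dispersive estimate for $S_V$. The paper handles this by invoking the integrated local energy decay hypothesis~\eqref{eq:VLED}: since $\nrm{\chi_R D_V S_{V,0}(t)\vec U^j_V(0)}_{L^2_x}$ is $L^2_t(\bbR)$ and uniformly Lipschitz in $t$, it must vanish as $t\to\pm\infty$, which kills the cross term after truncating by $\chi_R$. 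Calling the potential ``a lower-order perturbation'' obscures exactly this: $-\lap_{\bbH^d}+V$ in general has very different long-time propagation than $-\lap_{\bbH^d}$ (e.g.\ bound states, if one drops the repulsivity/positivity assumptions), and the hypothesis~\eqref{eq:VLED} is precisely what the paper requires of $V$ to rule out such behavior. You should add the local-energy-decay argument for the two-stationary-profile case; the rest of the proposal matches the paper's proof.
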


\begin{proof} We begin by observing that in light of Lemma~\ref{lem: BS},~\eqref{s norm to 0} follows from~\eqref{b norm to 0}. Our goal will be to prove~\eqref{b norm to 0} with the remaining conclusions  to be established along the way. We proceed via a stopping time argument. Define 
\EQ{
 \nu_1 :=  \liminf_{n \to \infty}  \sup_{ \la  \ge 1, t \in \R, x \in \Hp^d} \abs{  \la^{- \frac{d-2}{2}} (P_{\la} u_{n, L}(t)) (x) }
}
where $\vec u_{n, L} (t):= S_{V}(t) \vec u_n$. If $\nu_1 = 0$ we stop here as we are done with $\vec w_n^k = \vec u_n$ for all $k$. If $\nu_1 >0$, we set $$\vec w_n^1:= \vec u_n, $$ and we can find  a sequence $\{t_{n, 1}, h_{n, 1}, \la_{n, 1}\}  \subseteq \R \times \GG \times [1, \infty)$ so that 
\EQ{
 \frac{\nu_1}{ 2} \le \abs{ \la_{n,1}^{- \frac{d-2}{2}} (P_{\la_{n, 1}} u_{n, L}(t_{n, 1}) )(h_{n, 1} \cdot  \mathbf{0})}
 }
 
 \vskip.5em
\textbf{Extraction of the first limiting profile}:
We divide the  extraction of the first profile into three separate cases. Note that after passing to  a subsequence  we can assume, without loss of generality that one of the following three scenarios hold:
\begin{enumerate}
\item $\la_{n, 1} \to \lmb_{\infty, 1} \geq 1$ and $h_{n, 1} \to h_{\infty, 1} \in \GG$;
\item $\la_{n, 1} \to \lmb_{\infty, 1} \geq 1$ and $\abs{h_{n, 1}} \to \infty$; or
\item $\la_{n, 1} \to \infty$.
\end{enumerate}

\vskip.5em
\noindent \textit{Case 1: A perturbed hyperbolic profile.} 
In the case $\la_{n, 1} \to \la_{ \infty, 1} \ge 1$ and $h_{n, 1} \to h_{\infty, 1} \in \GG$, consider the   sequence $\left( u_{n, L}(t_{n, 1}),   \p_t u_{n, L} (t_{n, 1}) \right)$, which is bounded in $\HH$. Therefore, we can extract a weak limit in $\HH$, i.e.,
\EQ{
 \left( u_{n, L}(t_{n, 1}),   \p_t u_{n, L} (t_{n, 1}) \right) \rightharpoonup \vec U^1_{V}(0) := (U_{V, 0}^1, U_{V, 1}^1) \in \HH.
  }
  We claim that 
  \EQ{ \label{U-V nu1}
	\nu_{1} \aleq
	 \| \vec U^1_{V}(0) \|_{\HH} \aleq 
	 \nrm{\vec{U}^{1}_{V}(0)}_{\HH_{V}(\bbH^{d})}
  }
The second inequality follows from Lemma \ref{lem:basicVWave}. To prove the first inequality, we use \eqref{eq:PP1} and proceed as follows:
\ant{
\nu_1 &\lesssim \la_{ \infty, 1}^{-\frac{d-2}{2}} \abs{ P_{\la_{\infty, 1}} u_{n, L}(t_{n, 1}) ( \zero)} \\
& =   \abs{ \ang{ u_{n, L}( t_{n, 1}) \mid  \la_{\infty, 1}^{-\frac{d+2}{2}} {}^{(1)} \PP_{\lmb_{\infty, 1}}}_{H^{1}( \Hp^d)}} \\
& =  \abs{ \ang{ U^{1}_{V, 0} \mid  \la_{\infty, 1}^{-\frac{d+2}{2}} {}^{(1)}\PP_{\lmb_{\infty, 1}}}_{H^{1}( \Hp^d)}} 
		+ o_{n}(1) \quad \hbox{ as } n \to \infty,
}
where we used the fact that $u_{n, L}( t_n, 1) \rightharpoonup U^1_{V, 0}$ weakly in $H^1(\Hp^d)$ on the last line. Then by Lemma \ref{lem:concPP}(\ref{item:concPP:1}), the claim~\eqref{U-V nu1} follows. 
  Now, set 
\EQ{
\vec U_{V,n, L}^1(t) &:=  S_{V}(t - t_{n, 1}) \vec U_{V}^1(0)  
 = \left( U_{V, L}^1 (t -t_{n, 1}, \cdot ), \, \p_t U_{V, L}^1 (t -t_{n, 1}, \cdot ) \right)
}
We define the second error term $\vec w_{n}^2$ as 
\EQ{
\vec w_n^2:=  \vec u_n -  \vec U^1_{V, n, L}(0)
}
Note that by construction we have 
\begin{align} 
S_{V}( t_{n, 1})\vec w_n^2 \rightharpoonup & \, 0 \ \hbox{in} \ \HH,  \label{V w^2 to 0} \\
E_{V}(\vec{U}^{1}_{V, n, L}) \ageq & \, \nu_{1}^{2}		.
\end{align}

\vskip.5em
\noindent \textit{Case 2: A free hyperbolic profile.} 
In the case $\la_{n, 1} \to \la_{ \infty, 1} \ge 1$ but $\abs{h_{n, 1}} \to \infty$, consider the sequence $\vec u_n^1:=  \left( \tau_{h_{n, 1}} u_{n, L}(t_{n, 1}),   \tau_{h_{n, 1}} \p_t u_{n, L} (t_{n, 1}) \right)$. Since this sequence is bounded in $\HH$, we can extract a weak limit, i.e., 
\EQ{
 \left( \tau_{h_{n, 1}} u_{n, L}(t_{n, 1}),   \tau_{h_{n, 1}} \p_t u_{n, L} (t_{n, 1}) \right)
  \rightharpoonup \vec U^1_{\hyp}(0) := (U_{\hyp, 0}^1, U_{\hyp, 1}^1) \in \HH.
  }
As in the previous case, we claim that 
  \EQ{ \label{U-hyp nu1}
 \| \vec U^1_{\hyp}(0) \|_{\HH} \gtrsim  \nu_1
  }
Proceeding as in Case 1, we have
\ant{
\nu_1 &\lesssim \la_{ \infty, 1}^{-\frac{d-2}{2}} \abs{ P_{\la_{\infty, 1}} u_{n, L}(t_{n, 1}) ( h_{n, 1} \cdot \zero)} \\
%& =   \abs{ \ang{ \tau_{h_{n, 1}} u_{n, L}( t_{n, 1}) \mid  \la_{\infty, 1}^{-\frac{d+2}{2}} {}^{(1)} \PP_{\lmb_{\infty, 1}}}_{H^{1}( \Hp^d)}} \\
& =  \abs{ \ang{ U^{1}_{\hyp, 0} \mid  \la_{\infty, 1}^{-\frac{d+2}{2}} {}^{(1)} \PP_{\lmb_{\infty, 1}}}_{H^{1}( \Hp^d)}} 
		+ o_{n}(1) \quad \hbox{ as } n \to \infty.
}
Then by Lemma~\ref{lem:concPP}(\ref{item:concPP:1}), the desired claim \eqref{U-hyp nu1} follows. 
We set
\EQ{
\vec U_{\hyp,n, L}^1 (t)
&:=  S_{V}(t - t_{n, 1}) \tau_{h_{n, 1}^{-1}} \vec U_{\hyp}^1(0)  ,
%& = \left( U_{\hyp, L}^1 (t -t_{n, 1}, h_{n, 1}^{-1} \cdot ), \, \p_t U_{\hyp, L}^1 (t -t_{n, 1}, h_{n, 1}^{-1} \cdot ) \right)
}
and define the second error term $\vec w_{n}^2$ as 
\EQ{
\vec w_n^2:=  \vec u_n -  \vec U^1_{\hyp, n, L}(0).
}
By construction, we see that
\EQ{ \label{w^2 to 0}
\tau_{h_{n,1}} S_{V}( t_{n, 1})\vec w_n^2 \rightharpoonup 0   \in \HH.
}
Moreover, by Lemma \ref{lem:basicVWave} and~\eqref{U-hyp nu1}, we have
\begin{equation} 
	E_{V}(\vec{U}^{1}_{\hyp, n, L}) \ageq \nu_{1}^{2}.
\end{equation}
Finally, by Lemma \ref{lem:pert4traveling} (linear approximation for traveling profiles), it follows that $\vec{U}_{\hyp, n, L}^{1}$ is well-approximated by the a free hyperbolic evolution, more precisely,
\begin{equation} \label{}
	\nrm{\vec{U}^{1}_{\hyp, n, L}(t) - S_{\hyp}(t -t_{n, 1}) \tau_{h_{n,1}^{-1}} \vec{U}^{1}_{\hyp}(0)}_{L^{\infty}_{t} (\bbR; \HH)} \to 0.
\end{equation}

\vskip.5em
\noindent \textit{Case 3: A Euclidean profile.} Consider the case in which $\la_{n, 1} \to \infty$ as $n \to \infty$.  
To construct the limiting Euclidean profile, we begin by evolving the original sequence $\vec u_n(0)$ up to time $t_{n,1}$, 
 \EQ{
  \vec u_n (0) \longmapsto S_{V}(t_{n, 1}) \vec u_{n}(0) =: ( u_{n, L}\left(t_{n, 1},r , \om \right), \p_t u_{n, L}\left(t_{n, 1}, r, \om \right))
  }
  We next take the pair of functions on the right-hand side  above, which we are viewing as functions at the level of the  coordinates $(r, \om)$, upstairs to $\Hp^d \subset \R^{d+1}$ with the coordinate map  via  the identification 
\EQ{
\vec{u}_{n, L}(t_{n, 1}, r, \om) 
\longleftrightarrow \vec{u}_{n, L} (t_{n, 1}, \Psi(r, \omg)).
%( u_{n, L}\left(t_{n, 1},r , \om \right), \p_t u_{n, L}\left(t_{n, 1}, r, \om \right)) \longleftrightarrow   (u_{n, L}\left(t_{n, 1}, \Psi(r, \om) \right), \, \p_t u_{n, L}\left(t_{n, 1}, \Psi(r, \om) \right))
}
Next, we translate by $h_{n, 1}$:
\EQ{
\vec{u}_{n, L} (t_{n, 1}, & \Psi(r, \omg))
\longmapsto  \\
& \tau_{h_{n, 1}} S_{V}(t_{n, 1}) \vec u_n(0)( \Psi(r, \om))  
			= S_{V}(t_{n, 1}) \vec u_n(0)( h_{n, 1} \cdot \Psi(r, \om)) 
%(u_{n, L}\left(t_{n, 1}, \Psi(r, \om) \right), \, &\p_t u_{n, L}\left(t_{n, 1}, \Psi(r, \om) \right)) 
%\longmapsto  \tau_{h_{n, 1}} S_{V}(t_{n, 1}) \vec u_n(0)( \Psi(r, \om)) \\
%& = \left((S_{V}(t_{n, 1}) \vec u_n(0)\right)(  h_{n, 1} \cdot \Psi(r, \om))
}
%\begin{multline}
% \,   \la_{n, 1}^{-\frac{d}{2}}  \chi_{ \la_{n, 1}R_n}( r) \,  \p_t u_{n, L}\left(t_{n, 1} \frac{ r }{ \la_{n, 1}}\right) \longleftrightarrow \\ 
%  \,   \la_{n, 1}^{-\frac{d}{2}}  \chi \left( \Psi(r/ \la_{n, 1}R_n)\right) \,  \p_t u_{n, L}\left(t_{n, 1},\,  \Psi\left(\frac{ r }{ \la_{n, 1}}, \om\right)\right) 
% \end{multline}
Then truncate at the level of coordinates by  $R_n := \la_{n,1}^{-\frac{1}{2}}$ and rescale by $\la_{n, 1}$: 
\begin{multline}
  \left((S_{V, 0}(t_{n, 1}) \vec u_n(0)\right)(  h_{n, 1} \cdot \Psi(r, \om)) \longmapsto \\
   \la_{n, 1}^{-\frac{d-2}{2}}  \chi \left( \frac{r}{ \la_{n, 1}R_n}\right) \,  u_{n, L}\left(t_{n, 1} , \,  h_{n, 1} \cdot \Psi\left(\frac{ r }{ \la_{n, 1}}, \om\right) \right) 
   = :v_{n, 0}^1(r, \om)
  \end{multline}
  \begin{multline}
  \left((S_{V, 1}(t_{n, 1}) \vec u_n(0)\right)(  h_{n, 1} \cdot \Psi(r, \om)) \longmapsto \\
   \la_{n, 1}^{-\frac{d}{2}}  \chi \left(\frac{r}{ \la_{n, 1}R_n}\right) \,  \p_t u_{n, L}\left(t_{n, 1},  \,h_{n, 1} \cdot \Psi\left(\frac{ r }{ \la_{n, 1}}, \om\right) \right) =:
   v_{n, 1}^1(r, \om)
  \end{multline}

Note that the sequence $ \vec v_n^1 := (v_{n, 0}^1, v_{n, 1}^1)$ is uniformly bounded in $\HH_{\euc} := \dot{H}^1 \times L^2(\R^d)$ (in fact this is so for all choices of $R_n$). By passing to a subsequence, we can then extract a weak limit, $\vec V_{\euc}^1(0) \in \HH_{\euc}$, i.e., 
\EQ{
 \vec v_n^1 \rightharpoonup \vec V^1_{\euc}(0) \in \HH_{\euc}. 
 }
We claim that
\EQ{ \label{V euc en}
\nu_1 \lesssim  \| \vec V^1_{\euc}(0)\|_{ \dot{H}^1 \times L^2 (\R^d)} .
}
Using \eqref{eq:PP1} and Lemma~\ref{lem:concPP}(\ref{item:concPP:2}), we have
\begin{align*}
\frac{\nu_{1}}{2}
\leq & \abs{\lmb_{n,1}^{-\frac{d-2}{2}} (P_{\lmb_{n, 1}} u_{n, L}(t_{n,1}))(h_{n,1} \cdot \zero)} \\
= & \ang{\tau_{h_{n,1}} u_{n, L}(t_{n,1}) \, \mid \, \lmb_{n,1}^{-\frac{d+2}{2}}  {}^{(1)} \PP_{\lmb_{n,1}}}_{H^{1}(\bbH^{d})} \\
%= & \ang{\tau_{h_{n,1}} u_{n, L}(t_{n,1}) \, \mid \, \lmb_{n,1}^{-\frac{d+2}{2}} \eta_{\lmb_{n}^{-1} R} {}^{(1)} \PP_{\lmb_{n,1}}}_{H^{1}(\bbH^{d})} \\
%& + \ang{\tau_{h_{n,1}} u_{n, L}(t_{n,1}) \, \mid \, \lmb_{n,1}^{-\frac{d+2}{2}} (1-\eta_{\lmb_{n}^{-1} R}) {}^{(1)} \PP_{\lmb_{n,1}}}_{H^{1}(\bbH^{d})} \\
= & \ang{\tau_{h_{n,1}} u_{n, L}(t_{n,1}) \, \mid \, \lmb_{n,1}^{-\frac{d+2}{2}} \eta_{\lmb_{n}^{-1} R} {}^{(1)} \PP_{\lmb_{n,1}}}_{H^{1}(\bbH^{d})} + o_{R}(1) 
\end{align*}
where $o_{R}(1)$ refers to a quantity that vanishes uniformly in $n$ as $R \to \infty$. We fix an $R \geq 1$ so that $o_{R}(1) \leq \frac{\nu_{1}}{4}$, and thus 
\begin{align*}
\frac{\nu_{1}}{4} \leq \ang{\tau_{h_{n,1}} u_{n, L}(t_{n,1}) \, \mid \, \lmb_{n,1}^{-\frac{d+2}{2}} \eta_{\lmb_{n}^{-1} R} {}^{(1)} \PP_{\lmb_{n,1}}}_{H^{1}(\bbH^{d})}
\end{align*}
for all $n$. We claim that there exists a function $\PP_{\infty} \in \dot{H}^{1}(\bbR^{d})$ such that, up to passing to a subsequence,
\begin{equation} \label{V euc en:pf:1}
\begin{aligned}
& \hskip-2em
\ang{\tau_{h_{n,1}} u_{n, L}(t_{n,1}) \, \mid \, \lmb_{n,1}^{-\frac{d+2}{2}} \eta_{\lmb_{n}^{-1} R} {}^{(1)} \PP_{\lmb_{n,1}}}_{H^{1}(\bbH^{d})} \\
=& \ang{V_{\euc, 0}^{1} (0) \mid \PP_{\infty}}_{\dot{H}^{1}(\bbR^{d})} + o_{n}(1) \mas n \to \infty.
\end{aligned}
\end{equation}
Moreover, $\PP_{\infty}$ satisfies
\begin{equation} \label{V euc en:pf:2}
	\nrm{\PP_{\infty}}_{\dot{H}^{1}(\bbR^{d})} \leq C \mand \supp \, \PP_{\infty} \subseteq \set{r \leq 2R},
\end{equation}
where $C$ is the absolute constant from Lemma~\ref{lem:concPP}(\ref{item:concPP:3}). Once we establish \eqref{V euc en:pf:1} and \eqref{V euc en:pf:2}, the desired conclusion \eqref{V euc en} would follow immediately.

Arguing as in Section~\ref{subsec:linApprox}, we may exploit the bound \eqref{eq:concPP:1} and the fact that the support of $\eta_{\lmb_{n}^{-1} R}$ shrinks to $\set{\zero}$ to relate $\lap_{\bbH^{d}}$, $\brk{\cdot \mid \cdot}_{L^{2}(\bbH^{d})}$ to their Euclidean counterparts $\lap_{\bbR^{d}}$, $\brk{\cdot \mid \cdot}_{L^{2}(\bbR^{d})}$, respectively, as $n \to \infty$ as follows:
\begin{align*}
& \hskip-2em
\ang{\tau_{h_{n,1}} u_{n, L}(t_{n,1}) \mid \lmb_{n,1}^{-\frac{d+2}{2}} \eta_{\lmb_{n}^{-1} R} {}^{(1)} \PP_{\lmb_{n,1}}}_{H^{1}(\bbH^{d})} \\
=& \ang{\tau_{h_{n,1}} u_{n, L}(t_{n,1})
		\mid (-\lap_{\bbH^{d}}) \lmb_{n,1}^{-\frac{d+2}{2}} \eta_{\lmb_{n}^{-1} R} {}^{(1)} \PP_{\lmb_{n,1}} }_{L^{2}(\bbH^{d})} \\
=& \ang{\tau_{h_{n,1}} u_{n, L}(t_{n,1}) (\Psi (r, \omg)) 
		\mid (-\lap_{\bbR^{d}}) \lmb_{n,1}^{-\frac{d+2}{2}} \eta_{\lmb_{n}^{-1} R} {}^{(1)} \PP_{\lmb_{n,1}} (\Psi (r, \omg)) }_{L^{2}(\bbR^{d})} + o_{n}(1)
\end{align*}
%
%SKETCH: Using the expressions of $\lap_{\bbH^{d}}$ and $\lap_{\bbR^{d}}$ in polar coordinates, first prove
%\begin{align*}
%\lmb_{n,1}^{-\frac{d+2}{2}} \nrm{\lap_{\bbH^{d}} (\eta_{\lmb_{n}^{-1} R} {}^{(1)} \PP_{\lmb_{n,1}})
%	- \lap_{\bbR^{d}} (\eta_{\lmb_{n}^{-1} R} {}^{(1)} \PP_{\lmb_{n,1}})}_{L^{2}(\bbH^{d})} = o_{n}(1)
%\end{align*}
%Then by the dominated convergence theorem, prove
%\begin{align*}
%\lmb_{n,1}^{-\frac{d+2}{2}} \nrm{ \bb( \big( \frac{\sinh r}{r} \big)^{d-1} -1 \bb) \lap_{\bbR^{d}} (\eta_{\lmb_{n}^{-1} R} {}^{(1)} \PP_{\lmb_{n,1}})}_{L^{2}(\bbR^{d})} = o_{n}(1)
%\end{align*}
%
We omit the routine details. Then rescaling by $\lmb_{n}$ and noting that $\chi_{\lmb_{n}^{\frac{1}{2}}}(r) = 1$ on the support of $\eta_{R}(r)$ for sufficiently large $n$, the first term on the last line equals
\begin{align*}
%& \ang{\tau_{h_{n,1}} u_{n, L}(t_{n,1}) (\Psi (r, \omg)) 
%		\mid \lap_{\bbR^{d}} \lmb_{n,1}^{-\frac{d+2}{2}} \eta_{\lmb_{n}^{-1} R} {}^{(1)} \PP_{\lmb_{n,1}} (\Psi (r, \omg)) }_{L^{2}(\bbR^{d})} \\
%= & \ang{\lmb_{n,1}^{-d+2} \tau_{h_{n,1}} u_{n, L}(t_{n,1}) (\Psi (\frac{r}{\lmb_{n}}, \omg)) 
%		\mid \lmb_{n,1}^{-\frac{d+2}{2}} \eta_{R} {}^{(1)} \PP_{\lmb_{n,1}} (\Psi (\frac{r}{\lmb_{n}}, \omg)) }_{\dot{H}^{1}(\bbR^{d})} \\
= & \ang{\lmb_{n,1}^{-\frac{d-2}{2}} \tau_{h_{n,1}} u_{n, L}(t_{n,1}) (\Psi (\frac{r}{\lmb_{n}}, \omg)) 
		\mid \lmb_{n,1}^{-d} \eta_{R} {}^{(1)} \PP_{\lmb_{n,1}} (\Psi (\frac{r}{\lmb_{n}}, \omg)) }_{\dot{H}^{1}(\bbR^{d})} \\
= & \ang{v^{1}_{n, 0}(r, \omg) \mid f_{n}(r)}_{\dot{H}^{1}(\bbR^{d})} \mfor n \hbox{ sufficiently large},
\end{align*}
where $f_{n}$ is defined as in Lemma~\ref{lem:concPP}(\ref{item:concPP:3}). By the weak and strong $\dot{H}^{1}(\bbR^{d})$ convergences of $v^{1}_{n, 0}$ and $f_{n}$ up to passing to subsequences, respectively, \eqref{V euc en:pf:1} follows. Moreover, from Lemma~\ref{lem:concPP}(\ref{item:concPP:3}), we see that \eqref{V euc en:pf:2} holds as well.

Now we define the corresponding linear profiles. Recall the regularization-truncation-rescaling operator $\T_{\lmb_{n}}$ from Section \ref{subsec:linApprox}. We set
\EQ{ \label{U 1 euc}
\vec U^1_{\euc, n}(r, \om)(0) &:= (\T_{\la_{n, 1}} \vec V^1_{\euc})(r, \om)(0) \\
%& = (\T_{\la_{n, 1}}^0V_{\euc, 0}^1(r, \om), 
%	\T_{\la_{n, 1}}^1V_{\euc, 1}^1(r, \om))(0)\\
\vec U^1_{\euc, n, L}(t) &: =  S_{V}(t-t_{n, 1}) \tau_{h_{n, 1}^{-1}} \vec U^1_{\euc, n}(0)
}
We claim the following lower bound on the energy of $\vec U^1_{\euc, n, L}(t)$: 
\EQ{ \label{U euc en}
E_{V} (\vec U^1_{\euc, n, L}) \gtrsim  \nu_1^{2}
}
Indeed, using the facts that the energy $E_{V}$ is conserved under $S_{V}$ and that $E_{V}(\cdot)$ is equivalent to $\nrm{\cdot}_{\HH}^{2}$ by Lemma~\ref{lem:basicVWave}, we have
\begin{equation*}
E_{V}(\vec{U}^{1}_{\euc, n, L})
\gtrsim \|\vec U^1_{\euc, n}(0)\|_{\HH}^{2} = \nrm{\T_{\la_{n, 1}} \vec{V}^{1}_{\euc}}_{\HH}^{2}.
\end{equation*}
Then by \eqref{eq:Tlmbn:HH} and \eqref{V euc en}, the desired claim follows.

Next, define the error term 
\EQ{
\vec w_{n}^2:= \vec u_n - \vec U^1_{\euc, n, L}(0)
}
%At this point we finally choose, $R_{n}:= \la^{-\frac{1}{2}}_{n, 1}$. 
We claim that we have the weak convergence, 
\begin{equation} \label{w^2 euc to 0}
\begin{aligned}
\bigg(\la_{n, 1}^{-\frac{d-2}{2}}  & \chi ( \cdot/ \la_{n, 1}^{\frac{1}{2}}) ( \tau_{h_{n, 1}}S_{V, 0}(t_{n, 1}) \vec w_{n}^2)(\cdot/   \la_{n,1} )   , \,  \\
  &  \la_{n, 1}^{-\frac{d}{2}}  \chi( \cdot/ \la_{n, 1}^{\frac{1}{2}}) ( \tau_{h_{n, 1}}S_{V, 1}(t_{n, 1}) \vec w_{n}^2)( \cdot/  \la_{n,1} ) \bigg)  \rightharpoonup 0 \in  \dot{H}^1 \times L^2 (\R^d)
\end{aligned}
  \end{equation}
Indeed, first observe that we have
\EQ{
 \tau_{h_{n, 1}} S_{V}( t_{n, 1}) \vec w_n^2  =  \tau_{h_{n, 1}} S_{V}( t_{n, 1}) \vec u_n - \vec U^1_{\euc, n}(0)
 }
 Now truncating by $\chi_{R_n}$ (recall that $R_{n} = \lmb_{n}^{-\frac{1}{2}}$)  and rescaling by $\la_{n, 1}$ yields 
\begin{align*}
&\vec v_{n}^1 -  \left(\chi_{\la_{n, 1} R_n}  (\QQ_{\lmb_{n, 1}} V_{\euc, 0})( r, \om), \, \, \chi_{\la_{n,1} R_n}  (\QQ_{\lmb_{n, 1}} V_{\euc, 1})( r, \om) \right) \\
& = \vec v_{n}^1 -  \big(\chi_{\la_{n, 1} R_n} \chi_{ \lmb_{n, 1}^{\frac{1}{2}}}  (e^{\lmb_{n, 1}^{-1} \De} V_{\euc, 0})( r, \om), \, \, 
	\chi_{\la_{n,1} R_n}  \chi_{\lmb_{n, 1}^{\frac{1}{2}}}  (e^{\lmb_{n, 1}^{-1} \De}  V_{\euc, 1})( r, \om) \big)\\
& = \vec v_{n}^1 -  \big(\chi^2_{\la_{n, 1}^{\frac{1}{2}}}   (e^{\la_{n, 1}^{-1} \De} V_{\euc, 0})( r, \om), \, \, \chi^2_{\la_{n,1}^{\frac{1}{2}}}   (e^{\la_{n, 1}^{-1} \De}  V_{\euc, 1})( r, \om) \big)\\
& =  \vec v_n^1- \vec V_{\euc}(0) + o_{n}(1)
\end{align*}
on the right-hand side above, where the last line is in the sense of a strong limit in $\dot{H}^{1} \times L^2 (\R^d)$.

%BEFORE:
%on the right-hand side above, where the second to last line follows from the fact that we have set $R_{n} = \la_{n, 1}^{-\frac{1}{2}}$ and the last line is in the sense of a strong limit in $\dot{H}^{1} \times L^2 (\R^d)$.

 \vskip.5em
\textbf{The second profile and orthogonality of the parameters}:  We have now successfully extracted the first profile and formed the second error, $\vec w_n^2$. Next we extract the second profile from the  sequence $\vec w^2_n$ which is uniformly bounded in $\HH$ regardless of the type of the first profile. Set 
\EQ{
\nu_2 :=  \sup_{\la \ge1,  \, h \in \GG, \, t \in \R} \abs{ \la^{-\frac{d-2}{2}} (P_{\la} S_{V, 0}(t) \vec w_n^2)( h \cdot  \mathbf{0})}
}
If $\nu_{2} = 0$, we stop here and we are done as there is only one nonzero profile $\vec U^1$.  If not, we proceed as before, finding a sequence $ \{(t_{n, 2}, h_{n, 2}, \la_{n, 2})\} \subseteq \R \times \GG \times [1, \infty)$ so that 
\EQ{
\frac{ \nu_2}{2}  \le \liminf_{n \to \infty} \abs{ \la_{n, 2}^{-\frac{d-2}{2}} (P_{\la_{n, 2}} S_{V, 0}(t_{n, 2}) \vec w_n^2)( h_{n, 2} \cdot  \mathbf{0})}
}
We divide into three cases, namely $(1)$ $\la_{n,2} \to \la_{\infty,2} \geq 1$ and $h_{n, 2} \to h_{\infty, 2} \in \GG$, $(2)$ $\la_{n, 2} \to \la_{\infty, 2} \ge 1$ but $\abs{h_{n, 2}} \to \infty$, or $(3)$ $\la_{n,2} \to \infty$. Arguing as before, we extract from $\vec{w}_{n}^{2}$ a perturbed hyperbolic profile $\vec{U}_{V, n}^{2}(0)$ in the first case, a free hyperbolic profile $\vec U_{\hyp, n}^2(0)$ in the second case, or a Euclidean profile, $ \vec U_{\hyp, n}^2(0)$ in the third case. 
Let
\EQ{
&\vec U_{V, n, L}^2(t) := S_{V}( t- t_{n, 2}) \vec U^2_{\hyp, n}(0)\\
&\vec U_{\hyp, n, L}^2(t) := S_{V}( t- t_{n, 2}) \tau_{h_{n, 2}^{-1}}  \vec U^2_{\hyp, n}(0)\\
&\vec U_{\euc, n, L}^2(t) := S_{V} ( t- t_{n, 2}) \tau_{h_{n, 2}^{-1}} \vec U^2_{\euc, n}(0)
}
Then we define the third error term as
\begin{equation*}
	\vec{w}_{n}^{3} 
	:= \vec{w}_{n}^{2} - \vec{U}^{2}_{n, L}(0)
	= \vec{u}_{n} - \vec{U}^{1}_{n, L}(0) - \vec{U}^{2}_{n, L}(0)
\end{equation*}
where we have suppressed the subscripts $V, \euc, \hyp$.  Note that in all three cases, we have the estimate
\EQ{
E_{V} (\vec U_{n, L}^2) \gtrsim  \nu_2^{2}.
}
Moreover, depending on whether $\vec{U}_{n, L}^{j} = \vec{U}_{V, n, L}^{j}$, $\vec{U}_{\hyp, n, L}^{j}$ or $\vec{U}_{\euc, n, L}^{j}$, the weak convergence statement~\eqref{w weak V},~\eqref{w weak hyp} or~\eqref{w weak euc} holds, respectively, for $J = 3$ and $j = 1, 2$.
%We note that both of these cases are constructed  exactly as above. Indeed, the hyperbolic profile $\vec U_{\hyp, n}^2(0)$  would arise as a weak limit of the sequence 
%\EQ{
%\vec u_{n}^2:=  \left( \tau_{h_{n, 2}} u_{n, L}(t_{n, 2}),   \tau_{h_{n, 2}}\p_t u_{n, L} (t_{n, 2}) \right)
%} 
%in $\HH$. The Euclidean profile, $\vec U_{\euc, n} (0)$  is given by 
%\EQ{
%\vec U^2_{\euc, n}(r, \om)(0) &:= (\T_{\la_{n, 2}} \vec V^2_{\euc}(0))(r, \om) \\
%&:=  ( (\T_{\la_{n, 2}, \la_{n, 2}}^0V_{\euc, 0}^2(r, \om), \T^1_{\la_{n, 2}, \la_{n, 2}} V_{\euc, 1}^2(r, \om))(0)
%} 
%with $R_{n}:=  \la_{n, 2}^{-\frac{1}{2}}$ and  $\vec V_{\euc}^2(0)$ is a weak limit in $\dot{H}^1 \times L^2(\R^3)$ of the sequence $\vec v_{n}^2 = ( v_{n, 0}^2, v_{n, 1}^2)$  defined by 
%\EQ{
% &v^2_{n, 0}:=  \la_{n, 2}^{-\frac{1}{2}} \chi_{\la_{n, 2}^{\frac{1}{2}}} w^2_{n, L} (t_{n, 2} , \, h_{n, 2} \cdot \Psi( \frac{\cdot}{ \la_{n, 2}} , \om)) \\
% &v^2_{n, 1}:= \la_{n, 2}^{-\frac{3}{2}} \chi_{\la_{n, 2}^{\frac{1}{2}}} \p_t w^2_{n, L} (t_{n, 2} , \, h_{n, 2} \cdot \Psi( \frac{\cdot}{ \la_{n, 2}} , \om))
% }
%The third error term is then defined as 
%\EQ{
%\vec w_n^3:=  \vec u_n - \vec U^1_{n, L}(0) - \vec U^2_{n, L}(0)
%}
%where $\vec U^1$ and $\vec U^2$ can be either hyperbolic, or Euclidean and we have suppressed the corresponding indices.  

Our next order of business is to show that a necessary consequence of this procedure is that the parameters $ \{(t_{n, 1}, h_{n, 1}, \la_{n, 1})\}$ and  $\{(t_{n, 2}, h_{n, 2}, \la_{n, 2})\}$ are orthogonal to each other as in~\eqref{o pa}. We make the following claim: 
\begin{claim}\label{claim: good pa} Let $\vec U^1$ and $\vec U^2$ be the first two profiles, both nonzero and as defined above, with parameters $ \{(t_{n, 1}, h_{n, 1}, \la_{n, 1})\}$ and  $\{(t_{n, 2}, h_{n, 2}, \la_{n, 2})\}$. Then, 
\EQ{\label{good pa}
 &\textrm{either}  \quad \frac{\la_{n, 1}}{ \la_{n, 2}} + \frac{\la_{n, 2}}{\la_{n, 1}}  \to \infty\\
 & \textrm{or} \quad  \la_{n, 1} \simeq \la_{n, 2}  \mand   \la_{n, 1}\abs{t_{n, 1} - t_{n, 2}} +  \la_{n, 1}\bfd_{\Hp^d}(h_{n, 1} \cdot \zero,  h_{n, 2} \cdot \zero) \to \infty
  }
  \end{claim}
  
  \begin{proof}[Proof of Claim~\ref{claim: good pa}]
First, observe that~\eqref{good pa} is obvious when the two profiles $\vec U^{1}$ and $\vec U^{2}$ are of different types.
We may thus assume that $\vec U^{1}$ and $\vec U^{2}$ are of the same type. We divide the argument into three cases based the type of the profiles.

\vskip.5em
\noindent \textit{Case $1$: two perturbed hyperbolic profiles.}
Here, without loss of generality we can assume that $\la_{n,1} = \la_{n,2} = 1$. If~\eqref{good pa} fails then, up to passing to a further subsequence, we may assume that
\begin{equation} 
	t_{n,1} - t_{n,2} \to t_{0} \in \bbR \mas n \to \infty.
\end{equation}
Under these assumptions, we claim that
\begin{equation} \label{eq: good pa:V}
	\vec{u}_{n}^{2} := S_{V}(t_{n,2}) \vec{w}_{n}^{2} \rightharpoonup 0 \ \hbox{in} \ \HH,
\end{equation}
which contradicts the fact that the profile $\vec{U}_{V}^{2}(0)$ is nonzero. To prove this claim, first observe that it suffices to prove that $\vec{u}_{n}^{2} \rightharpoonup 0$ in $\HH_{V}$, as $\HH$ and $\HH_{V}$ are equivalent by Lemma~\ref{lem:basicVWave}. Now for any $\vec{\phi} \in C^{\infty}_{0} \times C^{\infty}_{0}(\Hp^{d})$, we have
\begin{align*}
\ang{\vec{u}^{2}_{n} \mid \vec{\phi}}_{\HH_{V}}
=& \ang{S_{V}(t_{n, 1}) \vec{w}_{n}^{2} \mid S_{V}(t_{n,1} - t_{n, 2}) \vec{\phi}}_{\HH_{V}} \\
=& \ang{S_{V}(t_{n,1}) \vec{w}_{n}^{2} \mid S_{V}(t_{0})\vec{\phi}}_{\HH} + o_{n}(1) \mas n \to \infty.
\end{align*}
From~\eqref{V w^2 to 0}, and again using the fact that $\HH$ and $\HH_{V}$ are equivalent, the desired contradiction~\eqref{eq: good pa:V} follows.

\vskip.5em
\noindent \textit{Case $2$: two free hyperbolic profiles.}
As in the previous case, we may assume that $\la_{n, 1} = \la_{n, 2} =1$. Suppose that~\eqref{good pa} fails. This means that, up to passing to further subsequences we can assume without loss of generality that 
  \EQ{ \label{eq: good pa:hyphyp}
  (t_{n, 1} - t_{n, 2}) \to t_0  \in \R   \mand  h_{n, 2}  \circ h_{n, 1}^{-1}  \to h_0 \in \GG \mas n \to \infty
  }
To see the latter point, note that we are assuming that $\bfd_{\Hp^d}( h_{n, 1} \cdot \zero,  h_{n, 2} \cdot \zero)$ is bounded, and hence up to extracting a subsequence we can extract a limit $\bfd_{\Hp^d}( h_{n, 1} \cdot \zero, h_{n, 2}\cdot \zero) \to c_0 >0$. Then using the Cartan decomposition $h_{n, j} = m_{n, j} \circ a_{s_{n, j}} \circ m'_{n,j}$ and the compactness of $\K$ we can extract a subsequence so that $h_{n, 1}  \circ h_{n, 2}^{-1}  \to h_0 \in \GG$. 

We claim that under these assumptions, we must have 
\EQ{
 \vec u_n^2:=  \tau_{h_{n, 2}} S_{V}( t_{n, 2}) \vec w_n^2(0) \rightharpoonup 0 \ \hbox{ in } \ \HH
 }
 which contradicts the fact that the profile $ \vec U_{\hyp}^2$ is nonzero. 
% To see this, we begin by showing that
% \begin{equation*}
%	\tau_{h_{n, 1}} S_{\hyp}(t_{n, 1}) \vec{w}^{2}_{n}(0) \rightharpoonup 0 \ \hbox{ in } \ \HH.
%\end{equation*}
%Indeed, given $\vec{\phi} \in C_0^{\infty} \times C^\infty_0( \Hp^d)$, we have
%\begin{align*}
%	\ang{\tau_{h_{n, 1}} S_{\hyp}(t_{n, 1}) \vec{w}^{2}_{n}(0) \mid \vec{\phi}}_{\HH}
%	=& 	\ang{\vec{w}^{2}_{n}(0) \mid \tau_{h_{n,1}^{-1}} S_{\hyp}(t_{n, 1}) \vec{\phi}}_{\HH} \\
%	=& 	\ang{\vec{w}^{2}_{n}(0) \mid S_{V}(t_{n, 1}) \tau_{h_{n,1}^{-1}}  \vec{\phi}}_{\HH} + o_{n}(1)
%\end{align*}
% let $\vec \phi \in C_0^{\infty} \times C^\infty_0( \Hp^d)$. Using~Lemma \ref{lem:pert4traveling}, we have 

Indeed, let $\vec \phi = (\phi_{0}, \phi_{1}) \in C_0^{\infty} \times C^\infty_0( \Hp^d)$. We need to be slightly careful, because $S_{V}$ is anti-self-adjoint under $\HH_{V}$ whereas $\tau_{h}$ is anti-self-adjoint under $\HH$. To pass from $\HH$ to $\HH_{V}$ and vice versa, we will rely on the following elementary observation: For any sequence $\set{h_{n}} \subseteq \bbG$ such that $\abs{h_{n, 1}} \to \infty$, we have
\begin{equation} \label{eq:travelingV}
	\nrm{V \tau_{h_{n}} \phi_{0}}_{L^{2}(\bbH^{d})} = o_{n}(1) \mas n \to \infty.
\end{equation}
To prove \eqref{eq:travelingV}, first note that $V \tau_{h_{n}} \phi_{0} \to 0$ pointwisely, thanks to the compact support assumption on $\phi_{0}$ and the decay assumption \eqref{eq:Vdecay} on $V$. As $\phi_{0} \in L^{2}(\bbH^{d})$ as well, \eqref{eq:travelingV} follows by the dominated convergence theorem.

We begin by writing
\begin{align*}
  \ang{  \vec u_n^2 \mid \vec \phi}_{\HH} 
%  &=  \ang{ \tau_{h_{n, 2}} S_{V}( t_{n, 2}) \vec w^2_n (0) \mid \vec \phi}_{\HH} \\
  &=  \ang{ S_{V}( t_{n, 2}) \vec w^2_n (0) \mid \tau_{h_{n, 2}^{-1}} \vec \phi}_{\HH} \\
  &=   \ang{ S_{V}( t_{n, 2}) \vec w^2_n (0) \mid \tau_{h_{n, 2}^{-1}} \vec \phi}_{\HH_{V}} + o_{n}(1) \\
   &=  \ang{ S_{V}( t_{n, 1}) \vec w^2_n (0) \mid S_{V}(t_{n,1} - t_{n,2}) \tau_{h_{n, 2}^{-1}} \vec \phi}_{\HH_{V}}  + o_{n}(1) \mas n \to \infty,
\end{align*}
where we used \eqref{eq:travelingV} on the second line. Then by Lemma~\ref{lem:pert4traveling}, \eqref{eq: good pa:hyphyp} and \eqref{eq:travelingV} again (note that $\tau_{h_{0}^{-1}} S_{\hyp, 0}(t_{0})  \vec \phi \in C^{\infty}_{0}(\bbH^{d})$), the last line equals
\begin{align*}
%   &=  \ang{ S_{V}( t_{n, 1}) \vec w^2_n (0) \mid S_{V}(t_{n,1} - t_{n,2}) \tau_{h_{n, 2}^{-1}} \vec \phi}_{\HH_{V}}  + o_{n}(1) \\
    &=  \ang{ S_{V}( t_{n, 1}) \vec w^2_n (0) \mid \tau_{h_{n, 2}^{-1}} S_{\hyp}(t_{n,1} - t_{n,2})  \vec \phi}_{\HH_{V}}  + o_{n}(1) \\
%    &=  \ang{ S_{V}( t_{n, 1}) \vec w^2_n (0) \mid \tau_{h_{n, 1}^{-1}} \tau_{h_{0}^{-1}} S_{\hyp}(t_{0})  \vec \phi}_{\HH_{V}}  + o_{n}(1) \\
    &=  \ang{ S_{V}( t_{n, 1}) \vec w^2_n (0) \mid \tau_{h_{n, 1}^{-1}} \tau_{h_{0}^{-1}} S_{\hyp}(t_{0})  \vec \phi}_{\HH}  + o_{n}(1) \\
    &=  \ang{ \tau_{h_{n, 1}} S_{V}( t_{n, 1}) \vec w^2_n (0) \mid \tau_{h_{0}^{-1}} S_{\hyp}(t_{0})  \vec \phi}_{\HH}  + o_{n}(1)
\end{align*}
and the last line goes to zero as $n \to \infty$ by~\eqref{w^2 to 0}.

\vskip.5em 
\noindent \textit{Case $3$: two Euclidean profiles.}
Finally we consider the case of two Euclidean profiles, which means that $\la_{n, 1}, \la_{n, 2} \to \infty$. If~\eqref{good pa} fails, we can assume that $\la_{n, 1} = \la_{n, 2}  \to \infty$ and it follows that 
\EQ{
\abs{t_{n, 1} - t_{n, 2}} \to 0 \mand \bfd_{\Hp^d}( h_{n, 1} \cdot \zero, h_{n, 2} \cdot \zero)  \to 0 \mas n \to \infty
}
We can then further reduce to the case that $t_{n, 1} = t_{n, 2}$ and $h_{n, 1} = h_{n, 2}$. Recall that in the case of a Euclidean profile, the limiting profile $\vec V^2_{\euc}(0)$ is extracted as a weak limit in $\dot{H}^1 \times L^2 (\R^d)$ of the sequence  $\vec v_{n}^2 = ( v_{n, 0}^2, v_{n, 1}^2)$  defined by 
\EQ{
 &v^2_{n, 0}:=  \la_{n, 2}^{-\frac{d-2}{2}} \chi_{\la_{n, 2}^{\frac{1}{2}}} w^2_{n, L} (t_{n, 2} , \, h_{n, 2} \cdot \Psi( \frac{\cdot}{ \la_{n, 2}} , \om)) \\
 &v^2_{n, 1}:= \la_{n, 2}^{-\frac{d}{2}} \chi_{\la_{n, 2}^{\frac{1}{2}}} \p_t w^2_{n, L} (t_{n, 2} , \, h_{n, 2} \cdot \Psi( \frac{\cdot}{ \la_{n, 2}} , \om))
 }
 But then, since $t_{n, 2} = t_{n, 1}$, $h_{n, 2} = h_{n, 1}$ and $\la_{n, 2}  = \la_{n, 1}$ we have for $\vec \phi =(\phi_{0}, \phi_{1}) \in C^{\infty}_0 \times C^{\infty}_0(\R^{d})$,
 \begin{align*}
\ang{  \vec v_n^2 \mid \vec \phi}_{\HH_{\euc}} 
 = &  \ang {\la_{n, 1}^{-\frac{d-2}{2}} \chi_{\la_{n, 1}^{\frac{1}{2}}} w^2_{n, L} (t_{n, 1} , \, h_{n, 1} \cdot \Psi( \frac{\cdot}{ \la_{n, 1}} , \om))\, \mid \phi_{0}}_{\dot{H}^{1}(\R^{d})}  
 	\\
 &   + \ang{\la_{n, 1}^{-\frac{d}{2}} \chi_{\la_{n, 1}^{\frac{1}{2}}} \p_t w^2_{n, L} (t_{n, 1} , \, h_{n, 1} \cdot \Psi( \frac{\cdot}{ \la_{n, 1}} , \om))  \, \mid \phi_{1}}_{L^{2}(\R^{d})} 
\end{align*} 
goes to $0$ as $n \to \infty$  by~\eqref{w^2 euc to 0}. This would mean that $\vec v_{n}^2$ tends to zero weakly in $\HH_{\euc} = \dot{H}^1 \times L^2 (\R^d)$ which contradicts the fact that limiting profile, $V^2_{\euc}$, is assumed to be nonzero. This completes the proof of Claim~\ref{claim: good pa}. \qedhere 
\end{proof}

\vskip.5em
\textbf{Extraction of remaining profiles}: We now proceed by induction, finding numbers $\nu_j$, parameters $\{t_{n, j}, h_{n, j}, \la_{n, j}\} \subseteq  \R \times  \GG \times [1, \infty)$ and profiles $\vec U^{j}$, which are perturbed hyperbolic, free hyperbolic or Euclidean depending on the asymptotic behavior of the parameters $h_{n, j}, \la_{n, j}$, satisfying $E_{V}(\vec{U}^{j}_{n, L}) \ageq \nu_{j}^{2}$. The $J$-th error term, $\vec w^{J}_n$ is defined as 
\EQ{
\vec w^J_n: =  \vec u_n - \sum_{j<J} \vec U^j_{n, L}(0)
}
One can show using the exact same argument used to prove Claim~\ref{claim: good pa} that for each $J$ and for every $j \neq k  <J$ we have 
\EQ{
&\textrm{either}  \quad \frac{\la_{n, j}}{ \la_{n, k}} + \frac{\la_{n, k}}{\la_{n, j}}  \to \infty\\
 & \textrm{or} \quad  \la_{n, j} \simeq \la_{n, k}  \mand   \la_{n, j}\abs{t_{n, j} - t_{n, k}} +  \la_{n, j}\bfd_{\Hp^d}(h_{n, j} \cdot \zero,  h_{n, k} \cdot \zero) \to \infty
  }
Moreover, for each $j<J$, one of the following weak convergence statements hold, depending on the type of $\vec{U}^{j}_{n, L}$:
\begin{enumerate}
\item If $\vec U^j_{n, L} = \vec U^j_{V, n, L}$ arises as a perturbed hyperbolic profile, then we have
\EQ{
  S_{V}( t_{n,j}) \vec w_n^J(0) \rightharpoonup 0 \ \hbox{ in } \ \HH
  }
\item If $\vec U^j_{n, L} = \vec U^j_{\hyp, n, L}$ arises as a free hyperbolic profile, then we have
\EQ{
  \tau_{h_{n, j}} S_{V}( t_{n,j}) \vec w_n^J(0) \rightharpoonup 0 \ \hbox{ in } \  \HH
  }
\item If $\vec U^j_{n, L} = \vec U^j_{\euc, n, L}$ arises as a  Euclidean profile, then we have 
  \begin{multline} \label{w^J euc to 0}
  \bigg(\la_{n, j}^{-\frac{d-2}{2}}  \chi( \cdot/ \la_{n, j}^{\frac{1}{2}}) ( \tau_{h_{n, j}}S_{V, 0}(t_{n, j}) \vec w_{n}^J)(\cdot/   \la_{n,j} )   , \,  \\
   \, \la_{n, j}^{-\frac{d}{2}}  \chi( \cdot/  \la_{n, 1}^{\frac{1}{2}}) ( \tau_{h_{n, j}}S_{V, 1}(t_{n, j}) \vec w_{n}^J)( \cdot/  \la_{n,j} ) \bigg)  \rightharpoonup 0 \ \hbox{ in } \   \HH_{\euc}
  \end{multline}
  \end{enumerate}
  
\vskip.5em
\textbf{Orthogonality of the free energy:} Next, we prove the Pythagorean decomposition of the energy $E_{V}$. 
\begin{claim} \label{claim: o en}
For each fixed $k \in \N$ we have 
\EQ{
E_{V}( \vec u_n) = \sum_{j<k} E_{V} (\vec U^j_{n, L}(0)) + E_{V} (\vec w^k_n) + o_n(1) \mas n \to \infty.
}
\end{claim}

\begin{proof}[Proof of Claim~\ref{claim: o en}] 
It suffices to check that for  fixed $k \in \N$ and $j, \ell <k$, $ j \neq \ell$ we have 
\begin{align}
& \ang{ \vec U^{j}_{n, L}(0) \mid  \vec U^{\ell}_{n, L}(0) }_{\HH_{V}}  \to 0 \mas n \to \infty   \label{Uj Uell}\\
 & \ang{ \vec U^j_{n, L}(0) \mid \vec w^k_n }_{\HH_{V}} \to 0 \mas n \to \infty \label{Uj w}
 \end{align}
%where above, we of course have to divided into cases depending on the types of $U^{j}_{n, L}$ and $U^{\ell}_{n, L}$.
%When one of the profiles is Euclidean we will often use the estimates 
%\EQ{ \label{T la s p}
%&\| \T^0_{\la, \la} f\|_{W^{s, p}(\Hp^d)} \lesssim \| \T^0_{\la, \la} f\|_{W^{s, p}(\R^d)}  \lesssim \la_n^{s+ \frac{d-2}{2} - \frac{d}{p}} \\
%&\| \T^1_{\la, \la} g\|_{W^{s, p}(\Hp^d)} \lesssim \| \T^1_{\la, \la} g\|_{W^{s, p}(\R^d)}  \lesssim \la_n^{s+ \frac{d}{2} - \frac{d}{p}} 
%}
%which hold for, say $f, g \in C^{\infty}_0$, see also~\eqref{Tla wsp}.  

We begin by proving~\eqref{Uj Uell}. We divide the proof into several cases, depending on the types of the profiles $\vec{U}^{j}_{n, L}$ and $\vec{U}^{\ell}_{n, L}$.

\vskip.5em
\noindent \textit{Case $1$: one Euclidean and one perturbed or free hyperbolic profile}: 
In this case, say $\vec U^j_{n, L} = \vec U^{j}_{\euc, n, L}$ is Euclidean and $\vec U^\ell_{n, L}$ is perturbed or free hyperbolic. We will write $ \vec{U}^{\ell} = \vec{U}_{\Box, n, L}^{\ell}$ with $\Box = V$ or $\hyp$, and when $\Box = V$, we may take $h_{n, \ell} \equiv 0$. 
Without loss of generality we can assume that $\la_{n, \ell}  \equiv 1$, and we of course have $\la_{n, j} \to \infty$ as $n \to \infty$.  Recall that we have
\begin{equation} 
\begin{aligned}
\vec U_{\euc, n, L}^j(0) =&  S_{V}(-t_{n, j}) \tau_{h_{n, j}^{-1}} \T_{\la_{n, j}} \vec V^j_{\euc}(0) \\
 \vec U^\ell_{\Box, n, L}(0)=& S_{V}(-t_{n, \ell}) \tau_{h_{n, \ell}^{-1}}  \vec U^\ell_{\Box}(0).
\end{aligned}
\end{equation}
By approximation, it suffices to consider the case that the limiting profiles satisfy $\vec V^j_{\euc}(0) \in C^{\infty}_0 \times C^{\infty}_0(\R^d)$ and $\vec U^\ell_{\Box}(0) \in C^{\infty}_0 \times C^{\infty}_0(\Hp^d)$ and $\supp \, \vec{V}^{j}_{\euc}(0)$, $\supp \, \vec{U}^{\ell}_{\Box}(0)$ are contained in a same compact set $\{r \le R\}$. We have 
\begin{align*}
&\abs{\ang{ \vec U^{j}_{\euc, n, L}(0) \mid  \vec U^{\ell}_{\Box, n, L}(0) }_{\HH_{V}} }  \\ 
& = \abs{\ang{ S_{V}(-t_{n, j}) \tau_{h_{n, j}^{-1}} \T_{\la_{n, j}} \vec{V}^j_{\euc}(0) \mid  S_{V}(-t_{n, \ell}) \tau_{h_{n, \ell}^{-1}} \vec{U}^{\ell}_{\Box}(0) }_{\HH_{V}} }  \\
& = \abs{\ang{\tau_{h_{n, j}^{-1}} \T_{\la_{n, j}} \vec{V}^j_{\euc}(0) \mid S_{V}(t_{n, j}-t_{n, \ell}) \tau_{h_{n, \ell}^{-1}} \vec{U}^{\ell}_{\Box}(0) }_{\HH_{V} }   } 
\end{align*}
Then recalling the definition of $\HH_{V} = H^{1}_{V} \times L^{2}$, the last line can be estimated by
\begin{align*}
%& \leq \abs{\ang{\tau_{h_{n, j}^{-1}} \T_{\lmb_{n,j}}^{0} V^j_{\euc, 0}(0) \mid D_{V}^{2} S_{V, 0}(t_{n, j}-t_{n, \ell}) \tau_{h_{n, \ell}^{-1}} \vec{U}^{\ell}_{\Box}(0) }_{L^{2}}} \\
%& \phantom{\leq} 
%	+ \abs{\ang{\tau_{h_{n, j}^{-1}} \T_{\lmb_{n,j}}^{1} V^j_{\euc, 1}(0) \mid S_{V, 1}(t_{n, j}-t_{n, \ell}) \tau_{h_{n, \ell}^{-1}} \vec{U}^{\ell}_{\Box}(0) }_{L^{2}}} \\
& \leq \nrm{\T_{\lmb_{n,j}} V_{\euc, 0}^{j}(0)}_{L^{2}} \nrm{D_{V}^{2} S_{V, 0}(t_{n, j}-t_{n, \ell}) \tau_{h_{n, \ell}^{-1}} \vec{U}^{\ell}_{\Box}(0)}_{L^{2}} \\
& \phantom{\leq}
		+ \nrm{\T_{\lmb_{n,j}} V_{\euc, 1}^{j}(0)}_{L^{\frac{2d}{d+2}}} \nrm{S_{V, 1}(t_{n, j}-t_{n, \ell}) \tau_{h_{n, \ell}^{-1}} \vec{U}^{\ell}_{\Box}(0) }_{L^{\frac{2d}{d-2}}},
\end{align*}
which goes to $0$ as $n \to \infty$, since we have  
\EQ{
& \|\T_{\la_{n, j}}^0V^j_{\euc, 0}\|_{L^2(\Hp^d)} \lesssim  \la_{n, j}^{-1} \| V^j_{\euc, 0}\|_{L^2(\R^d)} \to 0  \mas n \to \infty\\
 &\|\T_{\la_{n, j}}^1 V^j_{\euc, 1}\|_{L^{\frac{2d}{d+2}}(\Hp^d)} \lesssim \la_{n, j}^{-1} \| V^j_{\euc, 1}\|_{L^\frac{2d}{d+2}(\R^d)} \to 0 \mas n \to \infty
 }
by~\eqref{eq:Tlmbn},  and the factors with the evolutions $S_{V, 0}, S_{V, 1}$ are bounded by a constant thanks to the $H^{1} \hookrightarrow L^{\frac{2d}{d-2}}$ Sobolev inequality and the assumption that $\vec U^\ell_{\Box}(0) \in C^{\infty}_{0} \times C^{\infty}_{0}(\bbH^{d})$. 

\vskip.5em 
\noindent \textit{Case $2$: one free hyperbolic profile and one perturbed or free hyperbolic profile}: 
We may assume that $\vec{U}_{n, L}^{j} = \vec{U}_{\hyp, n, L}^{j}$ is free hyperbolic and $\vec{U}_{n, L}^{\ell}$ is perturbed or free hyperbolic. As before, we will write $ \vec{U}^{\ell} = \vec{U}_{\Box, n, L}^{\ell}$ with $\Box = V$ or $\hyp$, and when $\Box = V$, we take $h_{n, \ell} \equiv 0$. Then we have
\begin{equation} \label{eq:o en:one free hyp:1}
	\vec{U}_{\hyp, n, L}^{j}(0) = S_{V}(-t_{n, j}) \tau_{h_{n, j}^{-1}} \vec{U}_{\hyp}^{j}(0), \quad
	\vec{U}_{\Box, n, L}^{\ell}(0) = S_{V}(-t_{n, \ell}) \tau_{h_{n, \ell}^{-1}} \vec{U}_{\Box}^{\ell}(0).
\end{equation}

Again we can assume that both limiting profiles $\vec U_{\hyp}^j(0) , \, \vec U_{\Box}^\ell(0) \in C^{\infty}_0 \times C^{\infty}_0(\Hp^d)$  are smooth and supported in the same compact set $\{r \le R\}$. Without loss of generality we can assume that both $\la_{n, j} = \la_{n, \ell} = 1 $ for all $n$. 
 
 First consider the case 
\begin{equation}
 \abs{ t_{n, j} - t_{n, \ell}} \to \infty \mas n \to \infty.
\end{equation}
By~\eqref{eq:o en:one free hyp:1} and Lemma~\ref{lem:pert4traveling}, we have
\begin{align*}
& \hskip-2em
\abs{\ang{\vec{U}^{j}_{\hyp, n, L}(0) \mid \vec{U}^{\ell}_{\Box, n, L}(0)}_{\HH_{V}} } \\
& = \abs{\ang{ S_{V}(t_{n, \ell} - t_{n, j}) \tau_{h_{n, j}^{-1}} \vec{U}_{\hyp}^{j}(0) \mid \tau_{h_{n, \ell}^{-1}} \vec{U}^{\ell}_{\Box}(0)}_{\HH_{V}} } \\
& = \abs{\ang{ \tau_{h_{n, j}^{-1}} S_{\hyp}(t_{n, \ell} - t_{n, j})  \vec{U}_{\hyp}^{j}(0) \mid \tau_{h_{n, \ell}^{-1}} \vec{U}^{\ell}_{\Box}(0)}_{\HH_{V}} } + o_{n}(1) \mas n \to \infty.
\end{align*}
Then by the long time dispersive estimate in Proposition~\ref{prop: disp}, the assumption that $\vec U^j_{\hyp}(0), \vec U^\ell_{\Box}(0) \in C^{\infty}_{0} \times C^{\infty}_{0}(\bbH^{d})$ and the fact that $\abs{t_{n, j} - t_{n, \ell}} \to \infty$ as $n \to \infty$, it follows that the first term on the last line goes to $0$ as $n \to \infty$. We omit the details.

Next assume that we have 
\EQ{ \label{eq:o en:one free hyp:2}
 t_{n, j}- t_{n, \ell}\to t_0 \in \R, \mand \bfd_{\Hp^d}( h_{n, j} \cdot \zero , h_{n, \ell} \cdot \zero) \to \infty \mas n \to \infty
 }
Then by Lemma~\ref{lem:pert4traveling}, we have
\begin{align*}
&  \hskip-2em
 \abs{ \ang{ \vec U^{j}_{\hyp, n, L}(0), \vec U^{\ell}_{\Box, n, L}(0) }_{\HH_{V}} } \\
% = &  \abs{ \ang{ S_{V}(t_{n, \ell} - t_{n, j}) \tau_{h_{n, j}^{-1}} \vec U^{j}_{\hyp}(0), \tau_{h_{n, \ell}^{-1}} \vec U^{\ell}_{\Box, n, L}(0) }_{\HH_{V}} } \\
  = &  \abs{ \ang{ \tau_{h_{n, j}} S_{\hyp}(t_{n, \ell} - t_{n, j}) \vec U^{j}_{\hyp}(0), \tau_{h_{n, \ell}^{-1}} \vec U^{\ell}_{\Box, n, L}(0) }_{\HH_{V}} } + o_{n}(1)\\
 =&  \abs{\ang{\tau_{h_{n, \ell} \circ h_{n, j}^{-1}}  S_{\hyp}(-t_{0}) \vec U^j_{\hyp}(0)  \mid \vec U^\ell_{\Box}(0) }_{\HH} } \\
	& +\abs{\ang{\tau_{h_{n, \ell}} V \, \tau_{h_{n, \ell} \circ h_{n, j}^{-1}}  S_{\hyp, 0}(-t_{0}) U^j_{\hyp}(0)  \mid U^\ell_{\Box, 0}(0) }_{L^{2}} } + o_{n}(1) \mas n \to \infty,
\end{align*}
and both terms on the last line equal $0$ for $n$ sufficiently large, thanks to the support assumptions on $\vec U^j_{\hyp}(0), \vec U^\ell_{\Box}(0)$ and since $\bfd_{\Hp^d}( h_{n, j} \cdot \zero , h_{n, \ell} \cdot \zero) \to \infty$ as $n \to \infty$.

\vskip.5em
\noindent \textit{Case $3$: two perturbed hyperbolic profiles}: Since the dispersive estimate is not available for the perturbed wave equation~\eqref{eq: u V free}, we need to devise a different argument. The idea is to use instead the integrated local energy decay estimate~\eqref{eq:VLED}.

As in the preceding cases, by approximation, we may assume that $\vec{U}^{j}_{V}(0), \vec{U}^{\ell}_{V}(0) \in C^{\infty}_{0} \times C^{\infty}_{0}(\bbH^{d})$. Then for any smooth compactly supported cutoff $\chi_{R}$, from~\eqref{eq:VLED} it follows that
\begin{equation} \label{eq: o en:pf:VV:1}
	\nrm{(\chi_{R} D_{V} S_{V, 0}(t) \vec{U}^{j}_{V}(0), \chi_{R} S_{V, 1}(t) \vec{U}^{j}_{V}(0))}_{L^{2}_{t}(\bbR; L^{2} \times L^{2}(\bbH^{d}))} < \infty
\end{equation}
Thanks to the assumption that $\vec{U}^{j}_{V}(0) \in C^{\infty}_{0} \times C^{\infty}_{0}(\bbH^{d})$ and the fact that $S_{V}(t) \vec{U}^{j}_{V}(0)$ solves the equation $(\Box_{\bbH^{d}} + V) u = 0$, we see that
\begin{equation*}
	\nrm{(\chi_{R} D_{V} S_{V, 0}(t) \vec{U}^{j}_{V}(0), \chi_{R} S_{V, 1}(t) \vec{U}^{j}_{V}(0))}_{L^{2} \times L^{2}(\bbH^{d})}
\end{equation*}
is uniformly Lipschitz as a function of $t$. Then in order for the $L^{2}_{t}$ integral in~\eqref{eq: o en:pf:VV:1} to be finite, we must have
\begin{equation} \label{eq: o en:pf:VV:2}
	\nrm{(\chi_{R} D_{V} S_{V, 0}(t) \vec{U}^{j}_{V}(0), \chi_{R} S_{V, 1}(t) \vec{U}^{j}_{V}(0))}_{L^{2} \times L^{2}(\bbH^{d})} \to 0 \mas t \to \pm \infty.
\end{equation}

Now we turn to the proof of~\eqref{Uj Uell}. We begin by writing
\begin{align*}
& \hskip-2em
\abs{\ang{\vec{U}^{j}_{V, n, L}(0) \mid \vec{U}^{\ell}_{V, n, L}(0)}_{\HH_{V}} } \\
=&	\abs{\ang{S_{V}(t_{n, \ell}-t_{n, j}) \vec{U}^{j}_{V}(0) \mid \vec{U}^{\ell}_{V}(0)}_{\HH_{V}} } \\
=&	\abs{\ang{D_{V} S_{V, 0}(t_{n, \ell}-t_{n, j}) \vec{U}^{j}_{V}(0) \mid \chi_{R} D_{V} {U}^{\ell}_{V, 0}(0)}_{L^{2}}}  \\
& + \abs{\ang{S_{V, 1}(t_{n, \ell}-t_{n, j}) \vec{U}^{j}_{V}(0) \mid \chi_{R} {U}^{\ell}_{V, 1}(0)}_{L^{2}}}  + o_{R}(1) \mas R \to \infty.
\end{align*}
where $o_{R}(1)$ refers to a quantity that goes to $0$ uniformly in $n$ as $R \to \infty$. Now it suffices to show that for any fixed $R > 0$, the two $L^{2}$ inner products on the last line goes to $0$ as $n \to \infty$. Moving $\chi_{R}(r)$ to the first factors in these $L^{2}$ inner products and applying \eqref{eq: o en:pf:VV:2}, the desired conclusion follows as $\abs{t_{n, j} - t_{n, \ell}} \to \infty$. 

\vskip.5em
\noindent \textit{Case $4$: two Euclidean profiles}: We can again assume that the limiting profiles $\vec V^{j}_{\euc}(0)$, $\vec V_{\euc}^\ell(0)$ belong to the class $C^{\infty}_0 \times C^{\infty}_0(\R^d)$ and are compactly supported in, say $\{r \le R\}$. First, assume that 
\EQ{ \label{la 0 t inf}
\frac{ \la_{n, j}}{\la_{n, \ell}} \to 0 \mand \la_{n, j} \abs{ t_{n, j} - t_{n, \ell}} \to c_0  \ge 0 \mas n \to \infty
}
In this case, we can then assume without loss of generality, that $t_{n, j} = t_{n, \ell}$ for all $n \in \N$ since~\eqref{la 0 t inf} implies that $\abs{ t_{n, j} - t_{n, \ell}} \to 0$. We then have 
\begin{align*}
& \hskip-2em
	\abs{\ang {\vec{U}^{j}_{\euc, n, L}(0) \mid \vec{U}^{\ell}_{\euc, n, L}(0)}_{\HH_{V}} } \\
	= & \abs{\ang { \tau_{h_{n, j}^{-1}} \T_{\lmb_{n, j}} \vec{V}^{j}_{\euc}(0) \mid 
					S_{V}(t_{n, j} -t_{n, \ell}) \tau_{h_{n, \ell}^{-1}} \T_{\lmb_{n, \ell}} \vec{V}^{\ell}_{\euc}(0) }_{\HH_{V}} }  \\
%	= & \abs{\ang { \tau_{h_{n, \ell} \circ h_{n, j}^{-1}} \T_{\lmb_{n, j}} \vec{V}^{j}_{\euc}(0) \mid 
%					\T_{\lmb_{n, \ell}} \vec{V}^{\ell}_{\euc}(0) }_{\HH_{V}} }  \\
	\leq & \abs{\ang { D_{V}^{2} \tau_{h_{n, j}^{-1}} \T_{\lmb_{n, j}}^{0} {V}^{j}_{\euc, 0}(0) \mid 
					 \tau_{h_{n, \ell}^{-1}} \T_{\lmb_{n, \ell}}^{0} {V}^{\ell}_{\euc, 0}(0) }_{L^{2}} }  \\
		& +  \abs{\ang { \tau_{h_{n, \ell} \circ h_{n, j}^{-1}} \T_{\lmb_{n, j}}^{1} {V}^{j}_{\euc, 1}(0) \mid 
					\T_{\lmb_{n, \ell}}^{1} {V}^{\ell}_{\euc, 1}(0) }_{L^{2}} }  \\
	\aleq & \nrm{\T^{0}_{\lmb_{n, j}} {V}^{j}_{\euc, 0}(0)}_{H^{2}(\bbH^{d})} \nrm{\T_{\lmb_{n, \ell}}^{0} {V}^{\ell}_{\euc, 0}(0)}_{L^{2}(\bbH^{d})} \\
		&	+ \nrm{\T_{\lmb_{n, j}}^{1} {V}^{j}_{\euc, 1}(0)}_{L^{\frac{2d}{d-2}}(\bbH^{d})} \nrm{\T_{\lmb_{n, \ell}}^{1} {V}^{\ell}_{\euc, 1}(0)}_{L^{\frac{2d}{d+2}}(\bbH^{d})}
\end{align*}
where the last line satisfies $\aleq \frac{\lmb_{n, j}}{\lmb_{n, \ell}} \to 0$ as $n \to \infty$, by~\eqref{eq:Tlmbn} and the assumption that $\vec{V}^{j}_{\euc}, \vec{V}^{\ell}_{\euc} \in C^{\infty}_{0} \times C^{\infty}_{0}(\bbR^{d})$.
 
Next, assume that either 
\EQ{
&\frac{ \la_{n, j}}{\la_{n, \ell}} \to 0 \mand \la_{n, j} \abs{ t_{n, j} - t_{n, \ell}} \to \infty \mas n \to \infty, \\
& \mor \qquad  \frac{ \la_{n, j}}{\la_{n, \ell}} \to C_0>0 \mand \la_{n, j} \abs{ t_{n, j} - t_{n, \ell}} \to \infty \mas n \to \infty.
}
Here the strategy is to first replace $S_{V}$ by $S_{\hyp}$ using Lemma \ref{lem:pert4conc}, and then to apply the dispersive estimates from Proposition~\ref{prop: disp}. Proceeding as before, we begin by writing
%In applying the dispersive estimate, we will ignore the long-time, $\abs{t}^{-\frac{3}{2}}$ - gain and only use the weaker estimate~\eqref{eq:shortDisp}.
\begin{align}
& \hskip-2em
	\abs{\ang {\vec{U}^{j}_{\euc, n, L}(0) \mid \vec{U}^{\ell}_{\euc, n, L}(0)}_{\HH_{V}} } \notag \\
%	= & \abs{\ang { S_{V}(-t_{n, j}) \tau_{h_{n, j}^{-1}} \T_{\lmb_{n, j}} \vec{V}^{j}_{\euc}(0) \mid 
%					S_{V}(-t_{n, \ell}) \tau_{h_{n, \ell}^{-1}} \T_{\lmb_{n, \ell}} \vec{V}^{\ell}_{\euc}(0) }_{\HH_{V}} }  \\
%	= & \abs{\ang { S_{V}(t_{n, \ell} -t_{n, j}) \tau_{h_{n, j}^{-1}} \T_{\lmb_{n, j}} \vec{V}^{j}_{\euc}(0) \mid 
%					 \tau_{h_{n, \ell}^{-1}} \T_{\lmb_{n, \ell}} \vec{V}^{\ell}_{\euc}(0) }_{\HH_{V}} }  \\
	\leq & \abs{\ang {  S_{V, 0}(t_{n, \ell} - t_{n, j}) \tau_{h_{n, j}^{-1}} \T_{\lmb_{n, j}} \vec{V}^{j}_{\euc}(0) \mid 
					D_{V}^{2} \tau_{h_{n, \ell}^{-1}} \T_{\lmb_{n, \ell}}^{0} V^{\ell}_{\euc, 0}(0) }_{L^{2}} }  \label{eq: o en:pf:EucEuc:1} \\
		& +  \abs{\ang { S_{V, 1}(t_{n, \ell} - t_{n, j}) \tau_{h_{n, \ell} \circ h_{n, j}^{-1}} \T_{\lmb_{n, j}} \vec{V}^{j}_{\euc}(0) \mid 
					 \T_{\lmb_{n, \ell}}^{1} V^{\ell}_{\euc, 1}(0) }_{L^{2}} }  	\label{eq: o en:pf:EucEuc:2}. 
\end{align}
Using Lemma \ref{lem:pert4conc} with $(p, q, \gmm) = (\infty, \frac{2d}{d-2}, 1)$, we estimate~\eqref{eq: o en:pf:EucEuc:1} by
\begin{align*}
\eqref{eq: o en:pf:EucEuc:1}
\aleq & \nrm{S_{V, 0}(t_{n, \ell} - t_{n, j}) \tau_{h_{n}^{-1}} \T_{\lmb_{n, j}} \vec{V}^{j}_{\euc}(0)}_{L^{\frac{2d}{d-2}}} 
	\nrm{\T_{\lmb_{n, \ell}}^{0} \vec{V}^{\ell}_{\euc, 0}(0)}_{W^{2, \frac{2d}{d+2}}} \\
= & \nrm{S_{\hyp, 0}(t_{n, \ell} - t_{n, j}) \tau_{h_{n}^{-1}} \T_{\lmb_{n, j}} \vec{V}^{j}_{\euc}(0)}_{L^{\frac{2d}{d-2}}} 
	\nrm{\T_{\lmb_{n, \ell}}^{0} V^{\ell}_{\euc, 0}(0)}_{W^{2, \frac{2d}{d+2}}}  + o_{n}(1)
\end{align*}
as $n \to \infty$. Then by Proposition \ref{prop: disp}, where we ignore the long-time $\abs{t}^{-\frac{3}{2}}$-gain and only use the weaker estimate~\eqref{eq:shortDisp} with the $\abs{t}^{-\frac{d-1}{d}}$-gain and $\sgm = \frac{d+1}{d}$, we see that the first term on the last line is bounded by
\begin{align*}
%	\aleq & \abs{t_{n, j} - t_{n, \ell}}^{-(d-1)(\frac{1}{2} - \frac{d-2}{2d})} \nrm{\T_{\lmb_{n, j}}^{0} \vec{V}^{j}_{\euc, 0}(0)}_{W^{2, \frac{2d}{d+2}}}
%	\nrm{\T_{\lmb_{n, \ell}} \vec{V}^{\ell}_{\euc}(0)}_{W^{\sgm, \frac{2d}{d+2}} \times W^{\sgm-1, \frac{2d}{d+2}}} \\
	\aleq & \abs{t_{n, j} - t_{n, \ell}}^{-\frac{d-1}{d}} 
	\nrm{\T_{\lmb_{n, j}} \vec{V}^{j}_{\euc}(0)}_{W^{\frac{d+1}{d}, \frac{2d}{d+2}} \times W^{\frac{1}{d}, \frac{2d}{d+2}}} 
	\nrm{\T_{\lmb_{n, \ell}}^{0} {V}^{\ell}_{\euc, 0}(0)}_{W^{2, \frac{2d}{d+2}}} \\	
%	\aleq & \abs{t_{n, j} - t_{n, \ell}})^{-\frac{d-1}{d}} \lmb_{n, \ell}^{\frac{d-2}{2} - \frac{d+2}{2} + 2} \lmb_{n, j}^{\frac{d-2}{2} - \frac{d+2}{2} + \frac{d+1}{d}} \\	
	\aleq & (\lmb_{n, j} \abs{t_{n, j} - t_{n, \ell}})^{-\frac{d-1}{d}} \to 0 \mas n \to \infty,
\end{align*}
where we used~\eqref{eq:Tlmbn} on the last line. 
%Indeed,
%\begin{align*}
%	(d-1) (\frac{1}{2} - \frac{d-2}{2d} ) =& \frac{d-1}{2}(\frac{d - d +2}{d} ) = \frac{d-1}{d} \\
%	\sgm - 2 =& \frac{d+1}{2} (\frac{2}{d}) - \frac{2d}{d} = \frac{d+1 - 2d}{d} = - \frac{d-1}{d}.
%\end{align*}
%where $\sgm = (d+1) (\frac{1}{2} - \frac{d-2}{2d}) = \frac{d+1}{d}$.
Next, applying Lemma \ref{lem:pert4conc} again with $(p, q, \gmm) =(\infty, \frac{2d}{d-2}, 1)$ and using the assumption that $\vec{V}^{j}_{\euc}(0) \in H^{2} \times H^{1}(\bbR^{d}) \subseteq C^{\infty}_{0} \times C^{\infty}_{0} (\bbR^{d})$, we have
\begin{equation*}
	\eqref{eq: o en:pf:EucEuc:2} =
	 \nrm{S_{\hyp, 1}(t_{n, \ell} - t_{n, j}) \tau_{h_{n}^{-1}} \T_{\lmb_{n, j}} \vec{V}^{j}_{\euc}(0)}_{L^{\frac{2d}{d-2}}} 
	\nrm{\T_{\lmb_{n, \ell}}^{1} V^{\ell}_{\euc, 1}(0)}_{L^{ \frac{2d}{d+2}}}  + o_{n}(1).
\end{equation*}
Then proceeding similarly as before, it follows that the first term on the right-hand side obeys the bound 
\begin{equation*}
	\aleq (\lmb_{n, j} \abs{t_{n, j} - t_{n, \ell}})^{-\frac{d-1}{d}} \frac{\lmb_{n, j}}{\lmb_{n, \ell}} \to 0 \mas n \to \infty.
\end{equation*}
We omit the routine details.

Finally assume that we have 
\ant{
& \frac{\la_{n, j}}{\la_{n, \ell}}  \to C_0>0,  \quad \la_{n, j} \abs{t_{n, j} - t_{n, \ell}} \to C_1  \ge 0 \, \\
& \mand \la_{n, j}\bfd_{\Hp^d}( h_{n, j} \cdot   \zero , h_{n, \ell} \cdot \zero) \to \infty
}
Without loss of generality, we can assume that $\la_{n, j}  = \la_{n, \ell} \to \infty$ and  $t_{n, j}  = t_{n, \ell}$ for every $n \in \N$.  Using these assumptions we have 
\begin{align*}
& \hskip-2em
\abs{\ang{ \vec{U}^{j}_{\euc, n, L}(0) \mid  \vec{U}^{\ell}_{\euc, n, L}(0) }_{\HH_{V}} }   \\
  = & \abs{ \ang{ \tau_{h_{n, \ell} \circ h_{n, j}^{-1}}  \T_{\la_{n, j}} \vec V_{\euc}^j(0) \mid  \T_{\la_{n, \ell}} \vec V^\ell_{\euc}(0) }_{\HH} } \\
	& + \abs{ \ang{\tau_{h_{n, \ell}} V \, \tau_{h_{n, \ell} \circ h_{n, j}^{-1}} \T_{\la_{n, j}}^{0} V_{\euc, 0}^{j}(0) \mid 
					 \T_{\la_{n, \ell}}^{0} V_{\euc, 0}^{\ell}(0)}_{L^{2}} } \\
=& 0 \mfor n \, \hbox{ large enough}. 
\end{align*}
The last line follows since $\lmb_{n, j} \bfd_{\Hp^d}( h_{n, j}^{-1} \circ h_{n, \ell}   \cdot \zero  , \zero) \to \infty$ as $n \to \infty$ and from our assumption we have $\vec V_{\euc}^j(0), \vec{V}^{\ell}_{\euc}(0) \in C^{\infty}_{0} \times C^{\infty}_{0}(\bbR^{d})$. Indeed for $n$ large enough we see that the supports of $ \tau_{h_{n, j}^{-1} \circ h_{n, \ell}}  \T_{\la_{n, j}} \vec V_{\euc}^j(0)$ and $\T_{\la_{n, \ell}}\vec V^\ell_{\euc}(0)$ are disjoint.

The proof of~\eqref{Uj w} is very similar to that of the proof of~\eqref{Uj Uell} which we have done in detail. There are several cases that one must consider, and the only real difference is that in some cases, the weak convergences~\eqref{w weak V},~\eqref{w weak hyp} and~\eqref{w weak euc} have to be used when $\lmb_{n, j} \abs{t_{n, j}}$ remains bounded and hence dispersive estimates do not apply. We omit the details, and refer the reader to e.g.,~\cite[Proof of Lemma 2.16]{CKLS1} for a similar argument. 
  
This completes the proof of Claim~\ref{claim: o en}. 
\end{proof}

We can now complete  the proof of Theorem~\ref{thm: BG}. It only remains to show~\eqref{b norm to 0}. But this is now easy given the orthogonality of the free energies. Indeed,  by Claim~\ref{claim: o en}  we have 
\EQ{
\sum_{j <k} (\nu_j)^2 \lesssim \sum_{j<k} E_{V}(\vec U^j_{n, L})  \le \limsup_{n \to \infty} E_{V}(\vec u_n)
} 
and the above is~\emph{uniform in} $k$. This means that $\nu_J \to 0 $ as $J \to \infty$, which concludes the proof. 
\end{proof}

%\subsection{Linear Bahouri-G\'erard profile decomposition in the general case} \label{sec:profgen}
%
%\begin{thm}[Bahouri-G\'erard profile decomposition for linear waves with a potential] \label{thm:BGgen} 
%
%
%\end{thm}

%%%%%%%%%%%%%%%%%%%%%%%%%%%%%%%%%%%%%%%%%%%%%%%%%%%%%%%%%%%%%%%%%%%%%%%%%%%%%%%%%%%%%%

\part{Nonlinear profiles and applications: Scattering for semilinear wave equations} \label{p:nonlin}
%%%%%%%%%%%%%%%%%%%%%%%%%%%%%%%%%%%%%%%%%%%%%%%%%%%%%%%%%%%%%%%%%%%%%%%%%%%%%%%%%%%%%%

In the second part of this paper, we give a nonlinear application of the linear machinery developed in the first part. For concreteness, we restrict our attention to $d = 3$ and consider the semilinear wave equation~\eqref{u eq}, which we recall to be
\begin{equation*} \tag{\ref{u eq}}
\begin{aligned}
&u_{tt} - \De_{\Hp^3} u  + V u= - \abs{u}^{4} u,\\
& \vec u(0) = (u_0, u_1),
\end{aligned}
\end{equation*}
where the potential $V$ is either $0$, or more generally, assumed to satisfy the assumptions of Theorem~\ref{main:V}.

This part will culminate in the proof of Theorems~\ref{main} and \ref{main:V}, following the scattering blueprint developed by Kenig and Merle in~\cite{KM06, KM08}. This is an elaborate contradiction scheme that can roughly be divided into three steps: $(1)$ a suitable small data theory; $(2)$ a concentration compactness argument ending with the construction of a minimal non-scattering solution with certain compactness properties in the event that global well-posedness and scattering fails; and $(3)$ a rigidity theory which rules out non-zero solutions with compactness properties like the critical element. 

The main tool used in the concentration compactness step is a nonlinear version of the Bahouri-G\'erard profile decomposition, which we develop in Section~\ref{sec: bg nl}. We point out that, unlike the usual nonlinear profile decomposition on the flat space $\bbR^{1+3}$, the nonlinear profile has to be defined using a different nonlinear equation depending on the type of the linear profile. This feature originates from the fact that the action of non-compact groups responsible for the failure of compactness, as quantified in the linear profile decomposition theorem (Theorem \ref{thm: BG}), do \emph{not} leave the equation~\eqref{u eq} invariant. The relevant nonlinear equation arises as a `limiting equation' under the action of such groups, e.g., the potential-less equation for the action of translational group and the flat Euclidean equation for the scaling group.

We remark that nonlinear profile decompositions such as Theorem~\ref{thm:nonlinbg} have proved to be very versatile tools even outside the original Kenig-Merle scheme, see for example the work of Duyckaerts, Kenig, and Merle,~\cite{DKM1,  DKM3, DKM2,  DKM4} on the Euclidean focusing energy critical semi-linear wave equation, as well as far more complicated derivative nonlinearities such as for wave maps in the work of Krieger and Schlag~\cite{KS}. 

%%%%%%%%%%%%%%%%%%%%%%%%%%%%%%%%%%%%%%%%%%%%%%%%%%%%%%%%%%%%%%%%%%%%%%%%%%%%%%%%%%%%%%

 \section{Local Cauchy theory}\label{sec: wp}

We  formulate the local Cauchy theory as well as the small data theory for~\eqref{u eq}.
 %We note that global well-posedness for small initial data was previously established in~\cite[Theorem $6.1$]{AP}.
For a time interval $0 \ni I \subseteq \R$ , define the norms $S(I)$ and $N(I)$ by
\EQ{
&\|u\|_{S(I)} := \|u\|_{L^5_t(I; L^{10} (\Hp^3))}\\
&\|F\|_{N(I)} := \|F\|_{L^1_t(I; L^{2} (\Hp^3))}
}
In this section, we only require that the potential $V$ satisfies the assumptions \eqref{eq:Vdecay}--\eqref{eq:Str4halfVWave:2}, which are more relaxed than those in Theorem~\ref{main:V}. 

 \begin{prop}[Local Cauchy theory] \label{small data}
% \cite[Theorem $6.1$]{AP}
Let $\vec u(0) = (u_0, u_1) \in \HH (\Hp^3)$. Then there is a unique  solution $\vec u(t) \in \HH$ to \eqref{u eq} defined on a maximal interval of existence $0 \in I_{\max}( \vec u) = (-T_-, T_+)$.  Moreover, for any compact interval $J \subset I_{\max}$ we have
\ant{
\| u\|_{S(J)} < \infty.
}
Moreover, a globally defined solution $\vec u(t)$ for $t\in [0, \infty)$ scatters as $ t \to \infty$ to a \emph{free} wave, i.e., a solution $\vec u_{L}(t) \in \HH$ of
\ant{
\Box_{\bbH^{3}} u_{L} = \rd_{t}^{2} u_{L} - \lap_{\Hp^3} u_{L} =0
}
if and only if  $ \|u \|_{S([0, \infty))}< \infty$. Here scattering means that
\EQ{
\|\vec u(t) - \vec u_{L}(t) \|_{\HH} \to 0 \mas t \to \infty.
}
In particular, there exists a constant $\de>0$ so that
\EQ{ \label{global small}
 \| \vec u(0) \|_{\HH} < \de \Rightarrow  \| u\|_{S(\R)} \lesssim \|\vec u(0) \|_{\HH} \lesssim \de
 }
and hence $\vec u(t)$  scatters to free waves as $t \to \pm \infty$. Finally, we have the standard finite time blow-up criterion: 
\EQ{ \label{ftbu}
T_+(\vec u)< \infty \Longrightarrow  \| u \|_{S([0, T_+(\vec u)))}  = + \infty
}
An analogous statement holds in the negative time direction. 
\end{prop}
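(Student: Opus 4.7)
The plan is to carry out a standard Picard iteration in Strichartz norms, and then address the scattering criterion separately. I first check that $(p, q, \gmm) = (5, 10, 1)$ is a hyperbolic-admissible triple: since $\tfrac{2}{5} + \tfrac{2}{10} = \tfrac{3}{5} \leq 1 = \tfrac{d-1}{2}$ (for $d = 3$), one is in the Euclidean-type regime, and $\gmm = 3(\tfrac{1}{2} - \tfrac{1}{10}) - \tfrac{1}{5} = 1$. Hence Lemma~\ref{lem:basicVWave}(\ref{item:basicVWave:2}) yields
\begin{equation*}
\|u\|_{S(I)} \lesssim \|\vec u(0)\|_\HH + \|F\|_{N(I)}
\end{equation*}
for any solution of $(\Box_{\Hp^3} + V)u = F$ on $I \ni 0$. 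Combined with the pointwise bound $\||u|^4 u\|_{L^2_x} = \|u\|_{L^{10}_x}^5$ and its difference analogue $\||u|^4 u - |v|^4 v\|_{L^2_x} \lesssim (\|u\|_{L^{10}_x}^4 + \|v\|_{L^{10}_x}^4)\|u-v\|_{L^{10}_x}$, this yields a contraction on the Duhamel map
\begin{equation*}
\Phi(u)(t) := S_{V, 0}(t)\vec u(0) - \int_0^t S_{V, 0}(t-s)(0, |u|^4 u(s))\,\ud s
\end{equation*}
on a suitable ball in $S(I)$, for either $I$ sufficiently short (given arbitrary data) or $I = \R$ (given sufficiently small data). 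This produces the local-in-time solution, the maximal interval $I_{\max}(\vec u) = (-T_-, T_+)$, the blow-up criterion \eqref{ftbu} (by contradiction: if $T_+ < \infty$ and $\|u\|_{S([0, T_+))} < \infty$, pick $T$ close enough to $T_+$ that $\|u\|_{S([T, T_+))}$ is small, and then Strichartz combined with the contraction re-solves the Cauchy problem with data $\vec u(T)$ on an interval extending past $T_+$, contradicting maximality), and the small-data scattering bound \eqref{global small}.

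The subtle part of the scattering criterion is the direction $\|u\|_{S([0, \infty))} < \infty \Rightarrow$ scattering to a \emph{free} hyperbolic wave (as opposed to merely a perturbed wave). I would argue this in two steps. First, using the perturbed Duhamel formula together with $\||u|^4 u\|_{N([T_1, T_2])} = \|u\|_{S([T_1, T_2])}^5 \to 0$ as $T_1, T_2 \to \infty$, a standard Cauchy-in-$\HH$ argument produces $(v_0^+, v_1^+) \in \HH$ with $\|\vec u(t) - S_V(t)(v_0^+, v_1^+)\|_\HH \to 0$, so $\vec u$ scatters to a \emph{perturbed} linear wave. Second, one shows that the linear $V$-wave $S_V(t)(v_0^+, v_1^+)$ itself scatters to a free hyperbolic wave. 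By the free-propagator Duhamel formula, this reduces to the purely linear statement
\begin{equation*}
\int_0^\infty \|V\, S_{V, 0}(s)(v_0^+, v_1^+)\|_{L^2_x}\,\ud s < \infty.
\end{equation*}
The exponential spatial decay \eqref{eq:Vdecay} of $V$ together with the integrated local energy decay \eqref{eq:VLED} for $e^{\pm isD_V}$ provide $L^2_{t, x}$ control on $e^{-\alp_2 r}\, S_{V, 0}(s)(v_0^+, v_1^+)$, which one upgrades to the required $L^1_t L^2_x$ bound by interpolating against the uniform $L^\infty_t \HH$ bound on the linear wave, in the spirit of the proofs of Lemmas~\ref{lem:pert4traveling} and~\ref{lem:pert4conc} for traveling and concentrating profiles. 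The converse implication (scattering $\Rightarrow$ finite $S$-norm) follows from the free Strichartz estimate applied to the scattering free wave, together with a standard perturbative argument on a large time interval $[T, \infty)$ where $\vec u$ is close to a free wave in $\HH$.

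The main obstacle is the $L^2_{t, x} \to L^1_t L^2_x$ upgrade in the second step of the scattering argument: translating the spacetime $L^2$ control on $V$-weighted linear $V$-waves provided by \eqref{eq:VLED} into the absolutely integrable $L^1_t L^2_x$ bound needed for the free-propagator Duhamel integral to converge in $\HH$. Once this linear statement is in hand, the remainder of the proposition reduces to routine applications of the Strichartz and the quintic nonlinear estimate already invoked above.
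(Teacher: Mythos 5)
Your local Cauchy theory (the Picard/contraction argument in $S(I)$, the small-data global result \eqref{global small}, and the blow-up criterion \eqref{ftbu}) and the first scattering step (scattering to a perturbed $V$-wave via $\vec u_{V,L}(0) = \vec u(0) + \int_0^\infty S_V(-s)(0,-u^5(s))\,ds$) all agree with the paper. The reduction of the second scattering step is also correct: by Cook's method,
\begin{equation*}
\tfrac{d}{dt}\bigl(S_{\hyp}(-t)\vec u_{V,L}(t)\bigr) = S_{\hyp}(-t)(0,-V u_{V,L}(t)),
\end{equation*}
so convergence of $S_{\hyp}(-t)\vec u_{V,L}(t)$ in $\HH$ would follow from $\int_0^\infty\|V \, S_{V,0}(s)(v_0^+,v_1^+)\|_{L^2}\,\ud s<\infty$.

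The problem is your proposed proof of that estimate. The integrated local energy decay \eqref{eq:VLED} gives an $L^2_t L^2_x$ bound on the $V$-weighted perturbed wave, and you suggest interpolating against the uniform $L^\infty_t \HH$ bound to upgrade to $L^1_t L^2_x$. This goes in the wrong direction: real interpolation between $L^2_t$ and $L^\infty_t$ only produces $L^p_t$ for $p\in[2,\infty]$, never $p=1$. No combination of \eqref{eq:VLED} and the conserved energy gives an absolutely convergent time integral; $L^1_t$ control demands genuine pointwise-in-time decay, which ILED does not supply. The paper avoids this dead end by recasting the Duhamel formula so that the $V$-weighted integrand involves the \emph{free} wave $v_L(s) = S_{\hyp,0}(s)\vec u_{V,L}(0)$ rather than the perturbed wave: writing that $v_L$ solves $(\Box_{\bbH^3}+V)v_L = V v_L$ and applying Duhamel against $S_V$, one reduces to $\int_0^\infty\|V v_L(s)\|_{L^2}\,\ud s<\infty$, and this is finite by H\"older and the dispersive estimate for the free hyperbolic wave (Proposition~\ref{prop: disp}), which gives $\|v_L(s)\|_{L^r}\lesssim |s|^{-3/2}$ for $|s|>2$ after reducing to smooth compactly supported $\vec u_{V,L}(0)$. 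This is in fact exactly the mechanism in Lemmas~\ref{lem:pert4traveling} and~\ref{lem:pert4conc} that you cite — those proofs control $\int\|V\cdot(\text{free wave})\|_{L^2}\,\ud s$ via dispersion, not via \eqref{eq:VLED} — so the fix is to follow that blueprint rather than reach for the local energy decay estimate.
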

\begin{proof}
 The proof of Proposition~\ref{small data} follows from the usual contraction mapping argument based on the Strichartz estimates in Lemma~\ref{lem:basicVWave} part (\ref{item:basicVWave:2}) (see Proposition~\ref{strich} for the free hyperbolic evolution). Indeed applying Lemma~\ref{lem:basicVWave} part (\ref{item:basicVWave:2}) with $(p, q, \gamma) = (5, 10, 1)$ to any time interval $I$ we have
\begin{align}
\| u \|_{S(I)} + \| \vec u(t)\|_{L^{\infty}(I; \HH)} &\lesssim \|\vec u (0) \|_{\HH} + \| u^5 \|_{N(I)}\nonumber\\
& \lesssim \|\vec u (0) \|_{\HH} + \| u \|_{S(I)}^5 \nonumber
\end{align}
By the usual continuity argument, (expanding $I$) this implies the a priori estimate~\eqref{global small} for small data. 

To prove the scattering statement, assume that $\vec{u}(t)$ exists on $[0, \infty)$ and satisfies $\nrm{u}_{S([0, \infty))} < \infty$. We first prove that $\vec{u}(t)$ scatters to a solution $\vec{u}_{V, L}(t)$ to the perturbed equation $(\Box_{\bbH^{3}} + V) u_{V, L} = 0$ as $t \to \infty$. That is, we seek initial data $\vec u_{V, L}(0) \in \HH$ so that
\ant{
\nrm{\vec u(t) - \vec u_{V, L}(t)}_{\HH_{V}} \to 0 \mas t \to \infty,
}
where $\vec u_{V, L}(t) = S_{V}(t) \vec{u}_{V, L}(0)$. Note that $\HH_{V}$ is equivalent to $\HH$; we have chosen to use $\HH_{V}$ here, as $S_{V}(t)$ forms a unitary group on $\HH_{V}$. In view of the Duhamel representation for $\vec u(t)$, such a $\vec{u}_{V, L}(0)$ is given by the formula
\ant{
\vec u_{V, L}(0) =  \vec u(0) + \int_0^{\infty} S_{V}(-s)(0, -u^5(s)) \, ds,
}
where the integral on the right-hand side above is absolutely convergent in $\HH_{V}$ as long as $\|u\|_{S([0, \infty))}< \infty$.

Next, we prove that $\vec{u}_{V, L}(t)$ scatters to a solution $\vec{u}_{L}(t)$ to the free wave equation $\Box_{\bbH^{3}} u_{L} = 0$. That is, we claim that there exists a free wave $\vec{u}_{L}(t)$ such that
\begin{equation} \label{eq:small-data:lin-scat}
	\nrm{\vec{u}_{V, L}(t) - \vec{u}_{L}}_{\HH} \to 0 \mas t \to \infty.
\end{equation}
By approximation, it suffices to consider the case $\vec{u}_{V, L}(0) \in C^{\infty}_{0} \times C^{\infty}_{0}(\bbH^{3})$.
Let $\vec{v}_{L}(t) := S_{\hyp}(t)\vec{u}_{V, L}(0)$, where we remind the reader that $S_{\hyp}(t)$ is the propagator for the free wave equation. As $\vec{v}_{L}(t)$ solves $(\Box_{\bbH^{3}} + V) v_{L} = V v_{L}$, we have the following Duhamel representation:
\begin{equation*}
	S_{\hyp}(t) \vec{u}_{V, L}(0) = \vec{u}_{V, L}(t) + \int_{0}^{t} S_{V}(t-s)(0, V v_{L}(s)) \, \ud s.
\end{equation*}
Hence we see that \eqref{eq:small-data:lin-scat} holds for $\vec{u}_{L}(t) = S_{\hyp}(t) u_{L}(0)$ with
\begin{equation*}
	\vec{u}_{L}(0) = \vec{u}_{V, L}(0) - \int_{0}^{t} S_{\hyp}(- t) S_{V}(t-s) (0, V v_{L}(0)) \, \ud s,
\end{equation*}
once we prove that the $s$-integral on the right-hand side is strongly convergent in $\HH$ as $t \to \infty$. By unitarity of $S_{\hyp}$ on $\HH$, the equivalence $\nrm{\cdot}_{\HH} \simeq \nrm{\cdot}_{\HH_{V}}$ (see Lemma~\ref{lem:basicVWave}) and unitarity of $S_{V}$ on $\HH_{V}$, it is then sufficient to establish
\begin{equation} \label{eq:small-data:cook-criterion}
	\int_{0}^{\infty} \nrm{V v_{L}(s)}_{L^{2}} \, \ud s < \infty.
\end{equation}
Since $v_{L}(s) = S_{\hyp, 0}(s) \vec{u}_{V, L}(0)$ and $\vec{u}_{V, L}(0) \in C^{\infty}_{0} \times C^{\infty}_{0}(\bbH^{3})$, \eqref{eq:small-data:cook-criterion} is an easy consequence of the dispersive estimate for the free wave, Proposition~\ref{prop: disp}.

Finally, to show that finiteness of the $S$-norm is a necessary condition for scattering, assume that a solution $\vec{u}(t)$ defined on $[0, \infty)$ scatters to a free wave $\vec{u}_{L}(t)$ in $\HH$ as $t \to \infty$. Then by a similar argument as in the proof of \eqref{eq:small-data:lin-scat}, there exists a solution $\vec{u}_{V, L}(t)$ to $(\Box_{\bbH^{3}} + V) u_{V, L} = 0$ such that $\vec{u}(t) - \vec{u}_{V, L}(t) \to 0$ in $\HH$ as $t \to \infty$. Since the $S$-norm of $u_{V, L}$ is finite, by the small data theory (applied to large times) this carries over to $\vec{u}_{L}$, which proves $\nrm{u}_{S([0, \infty))} < \infty$ as desired. \qedhere 
%The scattering statement is also standard and based on a global Strichartz estimate. Indeed if we denote by $S_{V}(t)$ the propagator for $\Box_{\bbH^{3}} + V$ on $\Hp^3$, we seek initial data $\vec u_{V, L}(0) \in \HH$ so that
%\ant{
%\vec u(t) = S_{V}(t) \vec u_{V, L}(0) + o_{\HH}(1) \mas t \to \infty
%}
%In view of the Duhamel representation for $\vec u(t)$ and using the group property and unitarity of $S$ this is tantamount to
%\ant{
%\vec u_{V, L}(0) =  \vec u(0) + \int_0^{\infty} S_{V}(-s)(0, -u^5(s)) \, ds
%}
%The integral on the right-hand side above is absolutely convergent in $\HH$ as long as $\|u\|_{S([0, \infty))}< \infty$. That this finiteness of the $S$-norm is a necessary condition is due the fact that linear waves satisfy it, whence by the small data theory (applied to large times) it carries over to any nonlinear wave that scatters.
\end{proof}

 Next, we prove a perturbation lemma which will be a key ingredient in the Euclidean approximations at small scales, and the Kenig-Merle concentration compactness argument. This type of result is also standard, see \cite{KM06, KM08, LS}.
\begin{lem}[Perturbation Lemma] \label{lem: pert}
There are continuous functions \\$\e_0,C_0:(0,\I)\to(0,\I)$ such that the following holds:
Let $I \subseteq \R$ be an open interval (possibly unbounded), $u,v\in C(I; H^{1}(\Hp^3))\cap C^{1}(I;L^{2}(\Hp^3))$ functions satisfying for some $A>0$
\begin{align}
  \|\vec v\|_{L^\infty_{t}(I;\HH)} +   \|v\|_{S(I)} & \le A < \infty \label{v e s}\\
 \|\glei(u)\|_{L^1_t(I;L^2 (\bbH^{3}))}
   + \|\glei(v)\|_{L^1_t(I;L^2 (\bbH^{3}))} + \|w_0\|_{S(I)} &\le \e \le \e_0(A), \label{glei}
   \end{align}
where $\glei(u):=\rd_{t}^{2} u - \lap_{\bbH^d} u+ V u + u^{5}$ in the sense of distributions, and $\vec w_0(t):=S_{V}(t-t_0)(\vec u-\vec v)(t_0)$ with $t_0\in I$ arbitrary but fixed.  Then
\ant{
  \|\vec u-\vec v-\vec w_0\|_{L^{\infty}_t(I;\HH)}+\|u-v\|_{S(I)} \le C_0(A)\e.
  }
  In particular,  $\|u\|_{S(I)}<\I$.
\end{lem}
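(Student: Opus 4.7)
The plan is to reduce the lemma to a one-step perturbation estimate on small subintervals via the standard Strichartz-bootstrap machinery, and then iterate across a partition of $I$ whose size depends only on $A$. The key observation is that Lemma~\ref{lem:basicVWave}(\ref{item:basicVWave:2}) provides the $(5,10,1)$-Strichartz estimate for $S_V(t)$ with uniform constants, so the proof reduces to book-keeping.

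First, I would partition $I$ into $N=N(A)$ consecutive subintervals $I_1,\ldots,I_N$, with $t_0$ as one of the endpoints, chosen so that $\|v\|_{S(I_j)} \le \eta_0$ on each $I_j$, where $\eta_0$ is a small absolute constant to be fixed later. Let $w := u - v - w_0$. Using $\glei(w_0) = 0$ (since $w_0$ solves the homogeneous equation $\Box_{\bbH^3}w_0 + V w_0 = 0$), one derives
\[
\Box_{\bbH^3} w + V w = -\bigl(u^5 - v^5\bigr) + \glei(u) - \glei(v),
\]
which together with $u = v+w+w_0$ and the pointwise bound $|u^5-v^5| \lesssim (|v|^4+|w|^4+|w_0|^4)(|w|+|w_0|)$ gives, upon applying Lemma~\ref{lem:basicVWave}(\ref{item:basicVWave:2}) with $(p,q,\gamma)=(5,10,1)$ on each $I_j$ (with $w(t_j)$ as initial data where $t_j$ is the left endpoint of $I_j$),
\[
\|w\|_{S(I_j)} + \|\vec w\|_{L^\infty_t(I_j;\HH)} \lesssim \|\vec w(t_j)\|_{\HH} + \epsilon + \bigl(\|v\|_{S(I_j)}^4 + \|w\|_{S(I_j)}^4 + \|w_0\|_{S(I_j)}^4\bigr)\bigl(\|w\|_{S(I_j)} + \|w_0\|_{S(I_j)}\bigr).
\]

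Second, on the initial interval $I_1$ we have $\vec w(t_0) = 0$. Provided $\eta_0$ is sufficiently small (absolute) and $\epsilon \le \epsilon_0(A)$ is small enough that $\epsilon^4 \le \eta_0^4$, a standard continuity/bootstrap argument closes to give $\|w\|_{S(I_1)} + \|\vec w\|_{L^\infty_t(I_1;\HH)} \le C\epsilon$ for an absolute constant $C$. To pass to $I_{j+1}$, I would replace $w_0$ by the reference free evolution $\vec w_0^{(j+1)}(t) := S_V(t - t_{j+1})(\vec u - \vec v)(t_{j+1})$, and observe that
\[
\|w_0^{(j+1)} - w_0\|_{S(I_{j+1})} \lesssim \|\vec w(t_{j+1})\|_{\HH},
\]
again by the Strichartz bound for $S_V$. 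Thus $\|w_0^{(j+1)}\|_{S(I_{j+1})} \le \epsilon + C\sum_{k \le j}\|\vec w(t_k)\|_{\HH}$, and re-running the one-step estimate on $I_{j+1}$ with $w^{(j+1)} := u-v-w_0^{(j+1)}$ yields an inductive bound $\|\vec w(t_{j+1})\|_{\HH} + \|w^{(j+1)}\|_{S(I_{j+1})} \le C_j \epsilon$ for some constant $C_j$ depending only on $j$ and the absolute constants.

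Finally, after $N=N(A)$ iterations, one obtains the desired estimate with $C_0(A) := C^{N(A)}$, and the choice $\epsilon_0(A)$ is dictated by requiring all bootstrap hypotheses (e.g.\ $C_0(A)\epsilon \le \eta_0$) to hold simultaneously at every step. The finiteness $\|u\|_{S(I)} < \infty$ follows from the triangle inequality together with $\|v\|_{S(I)} \le A$. The main technical obstacle is the propagation of the reference free evolution across subintervals: we must ensure that the replacement $w_0 \rightsquigarrow w_0^{(j+1)}$ does not introduce errors larger than what the one-step estimate can absorb. This is precisely what forces $N$, hence $C_0(A)$, to depend on $A$, and requires $\epsilon_0(A)$ to be chosen so that $C^{N(A)}\epsilon_0(A)$ remains below the universal smallness threshold for the bootstrap to close uniformly at every stage.
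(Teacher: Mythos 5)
Your proposal is correct and takes essentially the same route as the paper: partition $I$ into $N(A)$ subintervals on which $\|v\|_{S(I_j)}$ is small, run the Strichartz--Duhamel--continuity bootstrap on each piece, and iterate, with the final constant growing like $C^{N(A)}$. The only difference is a matter of bookkeeping: you track $w := u - v - w_0$ and re-center the reference free evolution $w_0^{(j+1)}$ at each $t_{j+1}$, whereas the paper keeps $w := u - v$ and separately tracks the sequence $w_j(t) = S_V(t - t_j)\vec w(t_j)$; the two conventions are equivalent (your $w^{(j)}$ is the paper's $w - w_j$), and your update formula $\|w_0^{(j+1)} - w_0^{(j)}\|_{S} \lesssim \|\vec w(t_{j+1})\|_{\HH}$ is exactly the paper's estimate \eqref{eq:w1w0} on $\|w_{j+1} - w_j\|_{S(\R)}$.
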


\begin{proof}
Let $S:=L^5_tL^{10}_x$ as before and
\ant{
 w:=u-v, \pq e:=\Box_{\bbH^{3}} (u-v)+u^5-v^5 = \glei(u) - \glei(v).}
There is a partition of the right half of $I$ as follows, where $\delta_{0}>0$ is a small
absolute constant which will be determined below:
\ant{
 \pt t_0<t_1<\cdots<t_n\le \I,\pq I_j=(t_j,t_{j+1}),\pq I\cap(t_0,\I)=(t_0,t_n),
 \pr \|v\|_{S(I_j)} \le \de_{0} \pq(j=0,\dots,n-1), \pq n\le C(A,\de_{0}).}
We omit the estimate on $I\cap(-\I,t_0)$ since it is the same by symmetry.
Let $\vec w_j(t):=S_{V}(t-t_j)\vec w(t_j)$ for all $0\le j <n$.  Then
\EQ{\label{w form}
\vec w(t)
&= \vec w_{0}(t) + \int_{t_0}^{t} S_{V}(t-s) (0,e-(v+w)^{5} + v^{5})(s)\, ds
}
which implies that, for some absolute constant $C_{1}\ge1$,
\EQ{ \label{eq:ww0}
\pn \| w-w_{0}\|_{S(I_0)}
 \pt\lec  \|(v+ w)^5- v^5 -e\|_{L^1_tL^2_x(I_0)}
 \pr\le C_{1} (\de_{0}^{4}+\| w\|_{S(I_0)}^{4})\| w\|_{S(I_0)}+C_{1}\e
}
Note that $\|w\|_{S(I_{0})}<\I$ provided $I_{0}$ is a finite interval. If $I_{0}$ is half-infinite, then we first
need to replace it with an interval of the form $[t_{0},N)$, and  let $N\to\I$ after performing the estimates which are
uniform in~$N$.  Now assume that $C_{1}\delta_{0}^{4}\le \frac14$ and fix $\delta_{0}$ in this fashion.
By means of the continuity method (which refers to using that the $S$-norm is continuous in  the upper endpoint of $I_{0}$),
\eqref{eq:ww0} implies that $\| w\|_{S(I_{0})}\le 8C_{1}\e$.
Furthermore, Duhamel's formula implies  that
\ant{
\vec w_{1}(t)- \vec w_{0}(t) = \int_{t_{0}}^{t_{1}} S_{V}(t-s) (0,e-(v+w)^{5}  + v^{5})(s)\, ds
}
whence also
\EQ{\label{eq:w1w0}
\| w_{1}-w_{0}\|_{S(\R)} \lec \int_{t_0}^{t_1} \| (e-(v+w)^{5}  + v^{5})(s)\|_{L^2}\, ds
}
which is estimated as in~\eqref{eq:ww0}.  We conclude that $ \| w_{1}\|_{S(\R)}\le 8C_{1}\e$.
In a similar fashion one verifies that for all $0\le j<n$
\EQ{ \label{est S'}
 \pn\| w- w_j\|_{S(I_j)} + \| w_{j+1}-w_j\|_{S(\R)}
 \pt\lec  \|  e-(v+w)^{5} + v^{5}  \|_{L^1_tL^2_x(I_j)}
 \pr\le C_{1} (\de_{0}^{4}+\| w\|_{S(I_j)}^{4})\| w\|_{S(I_j)}+C_{1}\e}
 where $C_{1}\ge1$ is as above.
By induction in $j$ one obtains that
 \ant{
 \| w\|_{S(I_{j})} + \| w_{j}\|_{S(\R)} \le C(j)\, \e\quad \forall \; 1\le j<n
 }
 This requires that  $\e<\e_{0}(n)$, which can be done provided $\e_0(A)$ is chosen small enough.
Repeating the estimate~\eqref{est S'} once more,  but with the energy piece $L^{\I}_{t}\HH$ included on the left-hand side,
finishes the proof.
%can now bound the $S(I)$-norm on~$w$.
\end{proof}

It will be convenient to state the following small data result as a simple consequence of the Perturbation Lemma~\ref{lem: pert}. 

\begin{cor}\label{cor: lin pert}
There exists a function  $\e_1:(0, \infty) \to (0, \infty)$ with the following property. Let $(f, g) \in \HH(\Hp^3)$ and let $I \subseteq \R$ be an open time interval (possibly unbounded). Let $t_0 \in I$ and let  $\vec u_{V, L}(t) := S_{V}(t- t_0) \vec u(t_0)$
be the linear evolution of the data $\vec u_{V, L}(t_0):= (f, g)$. Suppose that 
\EQ{
\| u_{V, L} \|_{S(I)} \le \e \le \e_0( \| (f, g)\|_{\HH}).
}
Denote by $\vec u(t)$ the solution to~\eqref{u eq} with initial data $\vec u(t_0) = (f, g)$. Then $I \subseteq I_{\max}( \vec u) $ and we have
\EQ{
&\| \vec u - \vec u_{V, L} \|_{L^{\infty}(I ;  \HH)} + \| u- u_{V, L}\|_{ S(I)}  \lesssim C_0(\e) \e\\
&\| u\|_{S(I)}  \lesssim  \e
}
where the implicit constant above depends only on $\|(f, g)\|_{\HH}$. 
\end{cor}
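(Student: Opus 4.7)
The plan is to derive this corollary as a direct application of the perturbation lemma (Lemma~\ref{lem: pert}) with the approximate solution taken to be the \emph{linear} evolution itself, namely $v := u_{V, L}$. Since $v$ solves $(\Box_{\bbH^{3}} + V) v = 0$ exactly, we have $\glei(v) = v^{5}$, so
\[
\|\glei(v)\|_{L^{1}_{t} L^{2}_{x}(I)} \le \|v\|_{S(I)}^{5} \le \e^{5}.
\]
Moreover, since $\vec u(t_{0}) = \vec v(t_{0}) = (f, g)$, the transferred initial error $\vec w_{0}(t) := S_{V}(t-t_{0})(\vec u - \vec v)(t_{0})$ vanishes identically, and $\glei(u) = 0$ because $u$ is an exact solution. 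By Strichartz (Lemma~\ref{lem:basicVWave}), $\|\vec v\|_{L^{\infty}_{t}(I; \HH)} \lesssim \|(f,g)\|_{\HH}$, so the quantity $A$ in~\eqref{v e s} may be chosen as $A \simeq \|(f,g)\|_{\HH} + \e$. Fixing $\e_{1}(A)$ so that $\e_{1}(A)^{5} \le \e_{0}(2A)$, where $\e_{0}$ is the threshold supplied by Lemma~\ref{lem: pert}, the smallness condition~\eqref{glei} is fulfilled on any subinterval of $I$ on which $u$ is defined, with the lemma's $\e$ replaced by our $\e^{5}$.

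Strictly speaking, Lemma~\ref{lem: pert} presupposes that $u$ is already defined throughout the interval on which the comparison is to be made, so the remaining issue is to propagate existence of $u$ across all of $I$. I would install a standard continuity/bootstrap: starting from a small neighborhood of $t_{0}$ supplied by Proposition~\ref{small data}, let $J_{\ast} \subseteq I \cap I_{\max}(\vec u)$ be the maximal subinterval containing $t_{0}$ on which
\[
\|\vec u - \vec v\|_{L^{\infty}_{t}(J_{\ast}; \HH)} + \|u - v\|_{S(J_{\ast})} \le 2 C_{0}(A)\, \e^{5}
\]
holds. Local well-posedness makes $J_{\ast}$ relatively open, while Lemma~\ref{lem: pert}, applied on $J_{\ast}$ itself and delivering the stronger bound with constant $C_{0}(A)\, \e^{5}$, shows that $J_{\ast}$ is relatively closed. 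Hence $J_{\ast} = I \cap I_{\max}(\vec u)$.

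In particular $\|u\|_{S(I \cap I_{\max})} \le \|v\|_{S(I)} + 2 C_{0}(A)\, \e^{5} \lesssim \e$ after possibly shrinking $\e_{1}(A)$ so that the $\e^{5}$ contribution is absorbed into the $\e$ term. The finite-time blow-up criterion~\eqref{ftbu} then forces $I \subseteq I_{\max}(\vec u)$, and the two displayed estimates of the corollary follow. The only real subtlety is bookkeeping: verifying that $\vec w_{0} \equiv 0$ in this setup so that the \emph{entire} smallness parameter in~\eqref{glei} is supplied by $\|v^{5}\|_{L^{1}_{t} L^{2}_{x}}$, and choosing $\e_{1}(A)$ small enough that this fifth power meets the threshold $\e_{0}$. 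There is no novel analytic content beyond the perturbation lemma.
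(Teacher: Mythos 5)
Your proposal is correct and follows essentially the same route as the paper's one-line proof: take $v := u_{V,L}$ in Lemma~\ref{lem: pert}, observe $\glei(v) = v^{5}$ so that $\|\glei(v)\|_{L^{1}_{t}L^{2}_{x}(I)} \lesssim \e^{5}$, and conclude. The paper silently omits the continuity/bootstrap step needed to extend the existence of $u$ to all of $I$; you make it explicit, which is the right thing to do.
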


\begin{proof}
This is a simple consequence of the Pertubation Lemma~\ref{lem: pert}. Note that 
\ant{
 \glei( u_{V, L}) = u_{V, L}^5
 }
 and hence we have 
 \EQ{
 \|\glei( u_{V, L})\|_{L^1(I; L^2_x(\Hp^3))} = \|u_{V, L}^5\|_{L^1(I; L^2_x(\Hp^3))} \lesssim  \| u_{V, L}\|_{S(I)}^5 \lesssim \e^5.
 }
 Now apply Lemma~\ref{lem: pert} to $v= u_{V, L}$ and $u = u$.  \qedhere
\end{proof}
%

%
%It will also be  convenient to have the following reformulation of Lemma~\ref{lem: pert}, which we state as a corollary. 
%\begin{cor}\label{cor: pert}
%There are continuous functions $\e_0,C_0:(0,\I)\to(0,\I)$ such that the following holds:
%Let $I\subset \R$ be an open interval (possibly unbounded), $\vec v\in L^{\infty}(I; \HH)$  satisfying for some $A>0$
%\EQ{
%\|v\|_{S(I)}+\|\vec{v}\|_{L^{\infty}(I; \HH)} \le A< \infty.
%}
%In addition, assume $(u_0, u_1) \in \HH$, and $\vec v$ satisfy
%\EQ{
%\|  (u_0, u_1)-\vec v(t_0)\|_{\HH}+\|\glei(v)\|_{N( I)} \leq \e \le \e_0(A),
%}
%for some $t_0\in I,$ where $\glei(v):=\Box_{\V} v +v^5$ in the sense of distributions. Then there is a unique solution $\vec u \in L^{\infty}(I;  \HH)$ of
%\begin{align*}
%&\Box_{V} u+ u^5=0 \\
% &\vec u(t_0)= (u_0, u_1)
%\end{align*}
%Moreover, $\vec u$ satisfies
%\EQ{ \label{S finite}
%\|u-v\|_{S( I)}\le C(A)\e.
%}
%In particular, $\|u\|_{S( I)}< \infty$.
%\end{cor}

%\begin{proof}
%This follows from Lemma \ref{lem: pert} by letting $u$ be the local solution with initial data $\vec u(t_0) = (u_0, u_1)$. The bound on $w_0$ follows from Proposition~\ref{strich} with $(p, q, \gamma) = (5, 10, 1)$ and $(a, b, \sigma) = ( \infty, 2, 0)$. Since a solution exists as long as its $S-$norm is finite, Lemma \ref{lem: pert} implies that $u$ can be extended as a solution to all of $I,$ and satisfies the bounds~\eqref{S finite}.
%\end{proof}

%%%%%%%%%%%%%%%%%%%%%%%%%%%%%%%%%%%%%%%%%%%%%%%%%%%%%%%%%%%%%%%%%%%%%%%%%%%%%%%%%

\section{Nonlinear approximation for traveling and concentrating profiles}\label{sec:nonlinApprox}
In this section we continue the theme we began in Section~\ref{subsec:linApprox} and develop tools for approximating solutions to the nonlinear equation \eqref{u eq} whose initial data are either traveling or concentrating profiles. The results of this section will be used in the next section to properly formulate the notion of nonlinear profiles for \eqref{u eq}; see Definition~\ref{def:nonlin}. The analogous theory for  the nonlinear Schr\"odinger equation on $\Hp^3$ (with $V = 0$) was established  in \cite[Section $4$]{IPS}, and the statements in this section closely resemble the ones given there. 

As in the previous section, $V$ is only assumed to obey \eqref{eq:Vdecay}--\eqref{eq:Str4halfVWave:2}.

\subsection{Nonlinear approximation for traveling profiles}
Here, we show that solutions to \eqref{u eq} with traveling profiles as initial data are well-approximated by suitable translations of a solution to the potential-less equation
\begin{equation} \label{eq:noPtnlUEq}
	\rd_{t}^{2} U - \lap_{\bbH^{3}} U = \abs{U}^{4} U.
\end{equation}
The associated conserved energy is given by
\begin{equation*}
	\calE_{\hyp}[\vec{U}] := \int_{\bbH^{3}} \frac{1}{2} (\abs{\nb U_{0}}^{2} + \abs{U_{1}}^{2}) + \frac{1}{6} \abs{U}^{6} \, \ud x.
\end{equation*}

The main result is as follows.
\begin{prop} \label{prop: h h}
Given initial data $\vec{U}_{\hyp}(0) = (f, g) \in \calH(\bbH^{3})$ and a sequence $\set{h_{n}} \subseteq \bbG$ such that $\abs{h_{n}} \to \infty$ as $n \to \infty$, consider the following three objects:
\begin{itemize}
\item Let $\vec{U}_{\hyp}(t) \in \calH(\bbH^{3})$ be the nonlinear potential-free evolution of the data, i.e., $\vec{U}_{\hyp}(t)$ is the solution to~\eqref{eq:noPtnlUEq} with data $(f, g)$, defined on the maximal interval of existence $I_{\max} = (T_{-}, T_{+})$.
\item Applying $\tau_{h_{n}^{-1}}$ to $\vec{U}_{\hyp}$, define the sequence
\begin{equation*}
	\vec{U}_{n, \hyp} = \tau_{h_{n}^{-1}} \vec{U}_{\hyp} \in \HH(\bbH^{3}) \mfor t \in \bbR.
\end{equation*}
\item Let $\vec u_n(t) \in \HH(\Hp^3)$ be the nonlinear hyperbolic evolution of the initial data $\vec u_n(0):= \tau_{h_{n}^{-1}} (f, g)$, i.e, $\vec u_n(t)$ is the solution to~\eqref{u eq} with initial data $\vec u_n(0)$, defined on  the maximal interval of existence $I_{n, \max} = (T_{n, -}, T_{n, +})$.
\end{itemize}
 Then the following conclusions are true: 
\begin{enumerate}
\item \label{item: h h:1}
	Let $I \subseteq (T_{-}, T_{+})$ be an arbitrary \emph{finite} interval such that $I \neq (T_{-}, T_{+})$. Then there exists a positive integer $N = N(I, f, g)$ such that for all $n > N$, we have $I \subseteq I_{n, \max}$ and 
\EQ{
\| \vec{u}_n - \vec{U}_{n, \hyp}\|_{L^{\infty}_t(I; \HH)} +  \| u_n - U_{n, \hyp}\|_{S(I)}  = o_n(1) \mas n \to \infty
}
where the implicit constant in the $o_n(1)$ term depends only on the energy $\E_{\hyp}(f, g)$. 
\item \label{item: h h:2}
	Suppose that $T_{+} = +\infty$ and $\nrm{U_{\hyp}}_{S([0, \infty))} < \infty$. Then for $n$ large enough we have $[0, \infty) \subseteq I_{n, \max}$ and 
\EQ{
\| u_n\|_{S([0, \infty))} \le C( \nrm{U_{\hyp}}_{S([0 ,\infty))} ) .
}
	An analogous statement holds in the negative time direction as well.
 \item \label{item: h h:3}
 	As in (\ref{item: h h:2}), suppose that $T_{+} = +\infty$ and $\nrm{U_{\hyp}}_{S([0, \infty))} < \infty$. Denote by $\vec U_{\infty}$ the free scattering data for the nonlinear evolution $\vec U_{\hyp}(t)$, i.e, 
 \EQ{ \label{eq: h h:3:Uinfty}
 \| \vec U_{\hyp}(t) - S_{\hyp}(t) \vec U_{\infty}\|_{\HH} \to 0 \mas  t \to \infty.
 }
 Then for every $\e>0$ there exist $T, N>0$ such that we have
 \EQ{ \label{eq: h h:3}
\sup_{n \geq N} \| \vec{u}_{n}(t) - S_{V}(t) \tau_{h_{n}^{-1}} \vec U_{\infty}\|_{L^{\infty}_{t}([T, \infty); \HH )}  \le \e.
  }
  	An analogous statement holds in the negative time direction as well.
\end{enumerate}
\end{prop}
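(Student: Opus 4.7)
The engine for all three parts is the identity $\glei(U_{n, \hyp}) = V U_{n, \hyp}$, which expresses that $\vec U_{n, \hyp}$ is an approximate solution to \eqref{u eq} whose defect is the potential times the translated profile. Since $|h_{n}| \to \infty$ and $V$ decays exponentially by \eqref{eq:Vdecay}, this defect vanishes in $N(I) = L^{1}_{t} L^{2}_{x}$ on any finite interval: for each fixed $t$ a change of variables and dominated convergence give $\nrm{V \tau_{h_{n}^{-1}} U_{\hyp}(t)}_{L^{2}} \to 0$ (using only $U_{\hyp}(t) \in L^{2}$ and $V \in L^{\infty}$ decaying at infinity), and on a finite interval the $L^{\infty}_{t} L^{2}_{x}$ control of $U_{\hyp}$ provides a dominating function in $t$. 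This is the same mechanism that powers Lemma~\ref{lem:pert4traveling}.

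For part (\ref{item: h h:1}), local well-posedness (Proposition~\ref{small data}) gives $A := \nrm{U_{\hyp}}_{S(I)} + \nrm{\vec U_{\hyp}}_{L^{\infty}_{t}(I; \HH)} < \infty$, a bound inherited by $U_{n, \hyp}$ via translation invariance. I apply the Perturbation Lemma~\ref{lem: pert} on $I$ with $v = U_{n, \hyp}$, $u = u_{n}$, and $t_{0} = 0$, so that $\vec w_{0} \equiv 0$ since the data coincide at $t = 0$. The key observation makes $\nrm{\glei(U_{n, \hyp})}_{N(I)} < \e_{0}(A)$ for all $n$ sufficiently large, and the conclusion of Lemma~\ref{lem: pert} yields both $I \subseteq I_{n, \max}$ and the desired closeness.

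Parts (\ref{item: h h:2}) and (\ref{item: h h:3}) require extending this finite-time comparison past the scattering time. Given $\e > 0$, pick $T$ so large that $\nrm{U_{\hyp}}_{S([T, \infty))} < \e$ and $\nrm{\vec U_{\hyp}(T) - S_{\hyp}(T) \vec U_{\infty}}_{\HH} < \e$. Part (\ref{item: h h:1}) applied on $[0, T]$ yields $\nrm{\vec u_{n}(T) - \vec U_{n, \hyp}(T)}_{\HH} = o_{n}(1)$. On $[T, \infty)$, I apply Corollary~\ref{cor: lin pert} with data $\vec u_{n}(T)$; the required smallness of $\nrm{S_{V}(\cdot - T) \vec u_{n}(T)}_{S([T, \infty))}$ follows by a three-step triangle inequality: Strichartz replaces $\vec u_{n}(T)$ by $\vec U_{n, \hyp}(T) = \tau_{h_{n}^{-1}} \vec U_{\hyp}(T)$, Lemma~\ref{lem:pert4traveling} replaces $S_{V}(\cdot - T) \tau_{h_{n}^{-1}}$ by $\tau_{h_{n}^{-1}} S_{\hyp}(\cdot - T)$, and the choice of $T$ together with Strichartz shows $\nrm{S_{\hyp}(\cdot - T) \vec U_{\hyp}(T) - S_{\hyp}(\cdot) \vec U_{\infty}}_{S([T, \infty))} \aleq \e$ while $\nrm{S_{\hyp}(\cdot) \vec U_{\infty}}_{S([T, \infty))} = o_{T}(1)$. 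The resulting $\nrm{u_{n}}_{S([T, \infty))} \aleq \e$, combined with the $[0,T]$ estimate, proves (\ref{item: h h:2}); Corollary~\ref{cor: lin pert} simultaneously yields $\nrm{\vec u_{n}(t) - S_{V}(t-T) \vec u_{n}(T)}_{L^{\infty}_{t}([T, \infty); \HH)} \aleq \e$, which fed into the same three comparisons proves (\ref{item: h h:3}).

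The main obstacle is precisely the transition from the finite-interval perturbation scheme, where $\nrm{V U_{n, \hyp}}_{N(I)} \to 0$ for trivial reasons, to the infinite-time regime in parts (\ref{item: h h:2}) and (\ref{item: h h:3}), where one cannot bound $V U_{n, \hyp}$ globally in time. The resolution, as outlined above, is to perform the nonlinear perturbation only up to the scattering time $T$ and then switch to a linear small-data scattering argument on $[T, \infty)$, with Lemma~\ref{lem:pert4traveling} serving as the essential bridge between the propagator $S_{V}$ appearing in the nonlinear problem and the potential-free propagator $S_{\hyp}$ governing the asymptotic behavior of $\vec U_{\hyp}$.
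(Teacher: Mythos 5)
Your proof is correct and follows essentially the same strategy as the paper: identify the defect $\glei(U_{n,\hyp}) = V\,\tau_{h_n^{-1}} U_{\hyp}$, show it vanishes in $N(I)$ on finite intervals via the decay of $V$ and $\abs{h_n}\to\infty$, apply the perturbation lemma for part (a), and for parts (b), (c) split at a large scattering time $T$ and use Lemma~\ref{lem:pert4traveling} (equivalently Corollary~\ref{cor: h h:lin:smallScat}) to bridge $S_V$ and $S_\hyp$ on $[T,\infty)$. The only cosmetic differences are that you apply dominated convergence directly rather than reducing to compactly supported data and using finite speed of propagation, and you invoke Corollary~\ref{cor: lin pert} instead of Lemma~\ref{lem: pert} directly for the tail estimate; both choices are sound.
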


\begin{proof} 
We begin by proving part (\ref{item: h h:1}). By approximation, we may assume that $(f, g) \in C^{\infty}_{0} \times C^{\infty}_{0}(\bbH^{3})$. We apply Lemma \ref{lem: pert} (perturbation lemma) with $t_{0} = 0$, $\vec{u}(t) = u_{n}(t)$ and $\vec{v}(t) = \vec{U}_{n, \hyp}(t)$. Since $\vec{u}_{n}$ is already a solution to~\eqref{u eq}, part (\ref{item: h h:1}) would follow once we establish
\begin{equation} \label{eq: h h:1:pf:1}
	\nrm{\glei(U_{n, \hyp})}_{L^{1}_{t} (I; L^{2}(\bbH^{3}))} \to 0 \mas n \to \infty.
\end{equation}
Since $\vec{U}_{\hyp}$ solves the potential-free equation $\Box_{\bbH^{3}} U_{\hyp} + U_{\hyp}^{5} = 0$, we have
\begin{equation*}
\glei(U_{n, \hyp}) = \Box_{V} U_{n ,\hyp} + U_{n, \hyp}^{5} = V \tau_{h_{n}^{-1}} U_{\hyp}(t).
\end{equation*}
As $I$ is finite and $(f, g) \in C^{\infty}_{0} \times C^{\infty}_{0} (\bbH^{3})$, by the finite speed of propagation, it follows that $U_{\hyp}(t)$ is supported in a common compact set $K \subseteq \bbH^{3}$ for all $t \in I$. Using the assumption~\eqref{eq:Vdecay} concerning exponential decay of $V$ and the fact that $\abs{h_{n}} \to \infty$, we have 
\begin{equation*}
	\sup_{t \in I} \nrm{V \tau_{h_{n}^{-1}} U_{\hyp}(t)}_{L^{2}(\bbH^{3})} \to 0 \mas n \to \infty.
\end{equation*}
Integrating in $t$, the desired statement~\eqref{eq: h h:1:pf:1} follows.

Next, we turn to the proof of part (\ref{item: h h:2}). Without loss of generality, we focus on the forward time direction $\set{ t \geq 0}$; the backward time direction $\set{t \leq 0}$ can be handled similarly. Thanks to part (\ref{item: h h:1}), for any fixed $T > 0$, we have
\begin{equation} \label{eq: h h:2:pf:1}
	\nrm{\vec{u}_{n}(T) - \tau_{h_{n}^{-1}} \vec{U}_{\hyp}(T)}_{\HH}
	+ \nrm{u_{n} - \tau_{h_{n}}^{-1} U_{\hyp}}_{S([0, T))} = o_{n}(1) \mas n \to \infty.
\end{equation}
The preceding estimate is good for any finite interval $[0, T)$; hence part (\ref{item: h h:2}) in the positive time direction would follow once we show the following statement:
There exists $T, N > 0$ such that 
\begin{equation} \label{eq: h h:2:pf:2}
	\sup_{n \geq N} \nrm{u_{n}}_{S([T, \infty))} \leq 1.
\end{equation}
To prove~\eqref{eq: h h:2:pf:2}, we begin by noting that, given any $\eps > 0$, there exists a $T > 0$ such that $\nrm{U_{\hyp}}_{S([T, \infty))} < \eps$; this statement is a simple consequence of the fact that $\nrm{U_{\hyp}}_{S([0, \infty))} < \infty$ and the fact that the $S$-norm involves an integration in $t$. Then by the Strichartz estimate and Duhamel's formula
\begin{equation*}
	\vec{U}_{\hyp}(t) = S_{\hyp}(t - T) \vec{U}_{\hyp}(T) + \int_{T}^{t} S_{\hyp}(t-s) (0, U_{\hyp}^{5}(s)) \, \ud s
\end{equation*}
we have, for $\eps > 0$ sufficiently small, 
\begin{equation*}
	\nrm{S_{\hyp, 0}(t -T) \vec{U}_{\hyp}(T)}_{S([T, \infty))} \aleq \eps.
\end{equation*}
By Corollary \ref{cor: h h:lin:smallScat}, for sufficiently large $n$ we have
\begin{equation} \label{eq: h h:2:pf:3}
	\nrm{S_{V, 0}(t -T) \tau_{h_{n}^{-1}} \vec{U}_{\hyp}(T)}_{S([T, \infty))} \aleq \eps.
\end{equation}
We now apply Lemma \ref{lem: pert} (perturbation lemma) with $t_{0} = T$, $\vec{u}(t) = \vec{u}_{n}(t)$ and $\vec{v}(t) = S_{V}(t - T) \tau_{h_{n}^{-1}} \vec{U}_{\hyp}(T)$. Thanks to~\eqref{eq: h h:2:pf:1}, the data at $t_{0}$ get arbitrarily close to each other in $\HH$ as $n \to \infty$; moreover, by~\eqref{eq: h h:2:pf:3}, the $L^{1}_{t} ([T, \infty); L^{2}(\bbH^{3}))$ norm of $\glei(v)$ can be made arbitrarily small if we take $\eps > 0$ to be small enough. Therefore, for sufficiently large $n$ and small enough $\eps > 0$, we have
\begin{equation} \label{eq: h h:2:pf:4}
\begin{aligned}
& \hskip-2em
	\nrm{\vec{u}_{n}(t) - S_{V}(t - T) \tau_{h_{n}^{-1}} \vec{U}_{\hyp}(T)}_{L^{\infty}_{t} ([T, \infty); \HH)}  \\
&	+ \nrm{u_{n}(t) - S_{V, 0}(t-T) \tau_{h_{n}^{-1}} \vec{U}_{\hyp}(T)}_{S([T, \infty))} \aleq \eps.	
\end{aligned}
\end{equation}
The desired conclusion~\eqref{eq: h h:2:pf:2} now follows from~\eqref{eq: h h:2:pf:3} and~\eqref{eq: h h:2:pf:4}, once we choose $\eps > 0$ as necessary.

Finally, we prove part (\ref{item: h h:3}). As in the proof of part (\ref{item: h h:2}), we focus only on the positive time direction. Recalling the argument leading to~\eqref{eq: h h:2:pf:4}, given an $\eps > 0$, there exist $T = T(\eps, \vec{U}_{\hyp}), N(\eps, \vec{U}_{\hyp}) > 0$ such that
\begin{equation} \label{eq: h h:3:pf:1}
	\sup_{n \geq N} \nrm{\vec{u}_{n}(t) - S_{V}(t-T) \tau_{h_{n}^{-1}} \vec{U}_{\hyp}(T)}_{L^{\infty}_{t} ([T, \infty); \HH)} < \frac{\eps}{2}.
\end{equation}
We claim that, for possibly larger $T, N > 0$, we have
\begin{equation} \label{eq: h h:3:pf:2}
	\sup_{n \geq N} \nrm{\tau_{h_{n}}^{-1}  \vec{U}_{\hyp}(T) - S_{V}(T) \tau_{h_{n}^{-1}} \vec{U}_{\infty} }_{\HH} < \frac{\eps}{2},
\end{equation}
where $\vec{U}_{\infty}$ is defined as in~\eqref{eq: h h:3:Uinfty}. Then part (\ref{item: h h:3}) in the positive time direction would immediately follow.

It only remains to establish~\eqref{eq: h h:3:pf:2}. By the definition of $\vec{U}_{\infty}$, we have
\begin{equation*}
	\nrm{\vec{U}_{\hyp}(T) - S_{\hyp}(T) \vec{U}_{\infty}}_{\HH} = o_{T}(1) \mas T \to \infty.
\end{equation*}
On the other hand, by Lemma \ref{lem:pert4traveling}, we have
\begin{equation*}
	\nrm{S_{V}(T) \tau_{h_{n}^{-1}} \vec{U}_{\infty} - \tau_{h_{n}^{-1}} S_{\hyp}(T) \vec{U}_{\infty}}_{\HH} = o_{n}(1) \mas n \to \infty.
\end{equation*}
Combining the preceding two statements,~\eqref{eq: h h:3:pf:2} follows. \qedhere
\end{proof}

\subsection{Euclidean theory}
The underlying scale invariant nonlinear equation is the defocusing energy critical semi-linear wave equation on $\R^{1+3}$, namely:
\EQ{ \label{v eq}
&\Box_{\bbR^{3}} v=-v^5, \\
& \vec v(0)=(v_0, v_1).
}

In the rest of this section, we will prove that for concentrating initial data, the solution to \eqref{v eq} is a good approximation to the corresponding solution to the hyperbolic equation~\eqref{u eq} (in a suitable sense). In this subsection, we review some of the theory for \eqref{v eq} that will be necessary in what follows. 

%\comment{In this section we prove that for data which are highly localized, the solution to an equation on Euclidean space is a good approximation to the corresponding solution to the hyperbolic equation~\eqref{u eq}, at least for finite time. %To have a meaningful comparison, for data that are localized we use the coordinate functions to identify functions on $\R^3$ with functions on $\Hp^3.$ 
%The fact that this  heuristic can be made precise, is a straightforward consequence of the nonlinear perturbation theory, i.e., Lemma~\ref{lem: pert}. 
% 

The conserved energy for \eqref{v eq} is given by
\EQ{
\E_{\euc}( \vec v)(t) =  \int_{\R^3} \left[\frac{1}{2}(v_t^2 + \abs{\nabla v}^2 ) + \frac{1}{6} v^6 \right] \, dx = \textrm{constant}.
}
The Cauchy problem~\eqref{v eq} is referred to as  {\em energy critical} since the energy has the same scaling as the equation. Namely, if $\vec v(t)$ is a solution then so is $\vec v_\la(t)$, which is defined as
\EQ{\label{scaling}
\vec v_\la(t, x) = ( \la^{\frac{1}{2}} v( \la t, \la x), \la^{\frac{3}{2}} v_t(  \la t,  \la x))
}
and we have $$\E_{\euc}(\vec v_\la)(t) = \E_{\euc}(\vec v)( \la t) = \E_{\euc}(\vec v)(0).$$
We also define the free energy space $\HH_{\euc}:= \dot{H}^1 \times L^2 (\R^3)$.

The Cauchy problem for~\eqref{v eq} has been extensively studied and it is known that for any finite energy data there is a unique globally defined solution scattering to free waves as time goes to $\pm \infty$. Many authors contributed to the understanding of this equation and we refer the reader to \cite{SS93, SS94, BS, BG, KM08}, and the references therein for more information. 

\begin{thm}\cite{SS93, SS94, BS, BG, KM08}\label{thm: euc}
Given $(v_0, v_1)\in\HH_\euc:= \dot{H}^1 \times L^2 (\R^3)$ there exists a unique global solution $ v\in C(\R;\dot{H}^1(\R^3))\cap C^1(\R;L^2(\R^3))$ to~\eqref{v eq}.
Moreover, $v$ satisfies
\begin{align*}
&\|v\|_{L_t^5(\R; L_x^{10}(\R^3))}\le A(\E_{\euc}(\vec v) )
\end{align*}
where $A:[0, \infty) \to [0, \infty)$ is a non-decreasing function of the conserved energy $\E_{\euc}(\vec v)$ associated to $(v_0, v_1)$. This means that $ \vec v(t)$  scatters to free waves as $t\rightarrow\pm\infty$, i.e., there exist solutions, $\vec v_{L}^{\pm}(t) \in \HH_e$ to the free wave equation,
\ant{
\Box v_L^{\pm} = 0,
}
so that
\ant{
\| \vec v(t) - \vec v_{L}^{\pm}(t) \|_{\HH_\euc} \to 0 \mas t \to \pm \infty.
}
Moreover, additional regularity is preserved. In particular, if $s \ge 1$ and if the initial data $\vec v(0) \in \dot{H}^s \times \dot{H}^{s-1}$, then the solution $\vec v(t) \in \dot{H}^s \times \dot{H}^{s-1}$ for all $t \in \R$ and we have the estimate
\EQ{\label{high s}
 \sup_{t \in \R}  \| \vec v(t)\|_{ \dot{H}^s \times \dot{H}^{s-1}(\R^3)} \lesssim \|\vec v(0)\|_{ \dot{H}^s \times \dot{H}^{s-1}(\R^3)}
 }
% where the constant above depends on the scattering norm of the solution as outlined in the remark below. 
\end{thm}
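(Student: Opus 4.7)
The plan is to follow the now-classical blueprint for the energy-critical defocusing wave equation on $\bbR^{1+3}$, combining small data Strichartz theory, Morawetz-type estimates for global existence, concentration compactness for the a priori spacetime bound, and a standard iteration for persistence of regularity.

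First I would establish small data local well-posedness and scattering via the same contraction mapping argument as in Proposition~\ref{small data}, now using the Euclidean Strichartz estimates on $\bbR^{1+3}$ for the admissible triple $(5,10,1)$. For arbitrary finite-energy data, this yields a local-in-time solution on some interval whose length depends on the data, together with the standard blow-up criterion: if the maximal interval of existence is $(T_-,T_+)$ with $T_+<\infty$, then $\|v\|_{L^5_t([0,T_+);L^{10}_x)}=+\infty$. The defocusing sign ensures that energy conservation yields a uniform $L^\infty_t \HH_\euc$ bound, because $\|\nabla v\|_{L^2}^2+\|v_t\|_{L^2}^2+\tfrac{1}{3}\|v\|_{L^6}^6 = 2\calE_\euc(\vec v)(0)$.

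Next, I would produce a global $L^5_tL^{10}_x(\bbR)$ bound depending only on $\calE_\euc(\vec v)$; this is the main obstacle. I would first establish global existence (i.e. $T_\pm=\infty$) through the Morawetz-type identity obtained by testing the equation against the multiplier $(t\rd_t+x\cdot\nabla+1)v$, which together with Struwe's non-concentration-of-energy argument (and its non-radial extension by Grillakis) rules out singularity formation. To upgrade global existence to a quantitative spacetime bound, I would run the Kenig-Merle concentration compactness/rigidity scheme in its Euclidean form: if the sharp threshold function $A$ for the $L^5_tL^{10}_x$ norm in terms of energy were infinite at some level, the Euclidean linear profile decomposition of Bahouri-G\'erard would, together with a nonlinear profile decomposition, produce a minimal non-scattering ``critical element'' $\vec v_c$ whose trajectory is pre-compact in $\HH_\euc$ modulo the scaling and translation symmetries. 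The rigidity step then excludes $\vec v_c$ via the conformal Morawetz inequality of Ginibre-Velo, together with the fact that all three types of pre-compact trajectories (self-similar, soliton-like, and traveling-wave-like) force the energy to vanish in the defocusing setting. Alternatively, one could use the Bahouri-Shatah refined Morawetz estimate directly.

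Finally, for the persistence of higher regularity~\eqref{high s}, I would use the bound $\|v\|_{L^5_tL^{10}_x(\bbR)}\le A(\calE_\euc(\vec v))$ to partition $\bbR$ into a \emph{finite} collection of intervals $\{I_k\}_{k=1}^{N}$, with $N$ controlled by $A(\calE_\euc(\vec v))$, on each of which $\|v\|_{L^5_tL^{10}_x(I_k)}<\delta$ for a small absolute constant $\delta$. On each such $I_k$, Strichartz estimates applied to the equation satisfied by $(-\Delta)^{(s-1)/2}v$ yield
\[
\|\vec v\|_{L^\infty_t(I_k;\dot H^s\times \dot H^{s-1})} \lesssim \|\vec v(t_k)\|_{\dot H^s\times \dot H^{s-1}} + \| |v|^4 v\|_{L^1_t(I_k;\dot H^{s-1})},
\]
and the nonlinear term is controlled by $\lesssim \|v\|_{L^5_tL^{10}_x(I_k)}^{4}\|\vec v\|_{L^\infty_t(I_k;\dot H^s\times \dot H^{s-1})}\lesssim \delta^4 \|\vec v\|_{L^\infty_t(I_k;\dot H^s\times \dot H^{s-1})}$, using the fractional chain/Leibniz rule (which is valid here for $1\le s\le 5$ given the $C^{4,1}$ regularity of the nonlinearity $|v|^4v$). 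Absorbing the nonlinear term and iterating over $k=1,\ldots,N$ then yields~\eqref{high s}. The main technical subtlety here is the fractional chain rule for non-integer $s$, but this is by now standard.
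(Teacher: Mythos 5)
This theorem is stated as a citation of~\cite{SS93, SS94, BS, BG, KM08}; the paper does not prove it but follows it with a remark that (i) refers to \cite[Corollary 2]{BG} for the global $L^5_t L^{10}_x$ bound, (ii) notes scattering is a standard consequence, and (iii) sketches the persistence of regularity. Your outline of items (i)--(ii) essentially re-derives the Bahouri--G\'erard result via the later Kenig--Merle scheme; that is a legitimate alternative route but not what the paper does (and somewhat anachronistic, since the quantitative $L^5_t L^{10}_x$ bound in terms of energy is already in \cite{BG}).

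Your persistence-of-regularity argument has a genuine gap. You claim
\begin{equation*}
\| |v|^4 v\|_{L^1_t(I_k;\dot H^{s-1})}\lesssim \|v\|_{L^5_tL^{10}_x(I_k)}^{4}\,\|\vec v\|_{L^\infty_t(I_k;\dot H^s\times \dot H^{s-1})},
\end{equation*}
but this H\"older pairing does not close. Take $s=2$: the left side is $\|v^4 \nabla v\|_{L^1_t L^2_x}$. Spatial H\"older forces a choice: either $\|v\|_{L^{12}_x}^4 \|\nabla v\|_{L^6_x}$ (then the time integral gives $\|v\|_{L^4_t L^{12}_x}^4\, \|\nabla v\|_{L^\infty_t L^6_x}$, and $L^\infty_t \dot H^2$ controls the second factor by Sobolev -- but this needs the $L^4_t L^{12}_x$ norm, not $L^5_t L^{10}_x$), or $\|v\|_{L^{10}_x}^4 \|\nabla v\|_{L^{10}_x}$ (then you get $\|v\|_{L^5_t L^{10}_x}^4 \|\nabla v\|_{L^5_t L^{10}_x}$, which requires the \emph{Strichartz} norm of $\nabla v$ on the right, not the energy norm). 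The paper's remark partitions the time line according to smallness of $\|v\|_{L^4_t L^{12}_x}$ precisely so that the first pairing closes against $L^\infty_t \dot H^2$. Your version can be repaired either by switching the partition criterion to $L^4_t L^{12}_x$ (obtained from $L^5_t L^{10}_x$ and energy by interpolation, as in the remark), or by carrying $\|\nabla v\|_{L^5_t L^{10}_x(I_k)}$ as part of the bootstrap quantity and putting it on the left of the Strichartz inequality; but as stated, the absorption step fails.
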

\begin{rem} As mentioned above, the above result was the culmination of many years of work by several authors. For a precise statement of the first part of the theorem, involving the boundedness of the global Strichartz norm, we refer the reader to~\cite[Corollary $2$]{BG}. The scattering statement  is a standard consequence of this fact. 

The preservation of regularity is also a consequence of the global finiteness of the Strichartz norms. Indeed, one can show using Stichartz estimates and interpolation that in addition to finiteness of $L^5_tL^{10}_x(\R)$, every solution, $\vec v(t)$, also has finite $L^4_tL^{12}_x$ norm, and in fact,  
\ant{
\| v\|_{L^4_tL^{12}_x(\R^{1+3})} \lesssim 1, %\| \vec v(0)\|_{\dot{H}^1 \times L^2(\R^3)}
}  
where the constant above depends only on the conserved energy $\E_{\euc}(\vec v)$. To establish the preservation of regularity in, say the case  $s=2$, choose a partition, $0 < T_1<T_2 < \dots T_N < \infty$ of   $[0, \infty)$, so that 
\ant{
\| v\|_{L^4_t([T_j, T_{j+1}]; L^{12}_x(\R^3))} \le \e \quad \forall \, 1 \le j \le N
}
where $\e>0$ will be chosen below. Differentiating~\eqref{v eq} and applying Strichartz estimates on the interval $[0, T_1]$ one can deduce the estimates 
\ant{
\sup_{t \in [0, T_1]}\| \na \vec v(t)\|_{\dot{H}^1 \times L^2} &\lesssim \| ( \na v_0, \na v_1)\|_{\dot{H}^1 \times L^2} +  \| \na v v^4\|_{L^1_t([0, T_1]; L^2_x)}\\
&  \lesssim \| ( \na v_0, \na v_1)\|_{\dot{H}^1 \times L^2} + \| \na v\|_{L^{\infty}([0, T_1]; L^6)} \| v\|_{L^4([0, T_1]; L^{12})}^4
}
Now, choosing $\e>0$ small enough, the last term on the right-hand side can be absorbed on the left, which yields control of the $\dot{H}^2 \times \dot{H}^1$ norm on the interval $[0, T_1]$.  Iterating this procedure then yields the  global-in-time estimate,~\eqref{high s}. 
\end{rem}

We also will require the nonlinear Euclidean perturbation theory proved in~\cite{KM08}, namely: 
\begin{lem}[Perturbation Lemma]\cite{KM08}\label{lem: euc pert}
There are continuous functions \\$\e_0,C_0:(0,\I)\to(0,\I)$ such that the following holds:
Let $I \subseteq \R$ be an open interval (possibly unbounded), $u,v\in C(I; H^{1}(\R^3))\cap C^{1}(I;L^{2}(\R^3))$ functions satisfying for some $A>0$
\begin{align*}
  \|\vec v\|_{L^\infty(I;\HH_{\euc})} +   \|v\|_{S(I \times \R^3)} & \le A < \infty \\
 \|\glei(u)\|_{L^1_t(I;L^2_x(\R^3))}
   + \|\glei(v)\|_{L^1_t(I;L^2_x)(\R^3)} + \|w_0\|_{S(I \times \R^3)} &\le \e \le \e_0(A), 
   \end{align*}
where $\glei(u):=\Box_{\R^3} u+\abs{u}^{p} u$ in the sense of distributions, and $\vec w_0(t):=S_{\euc}(t-t_0)(\vec u-\vec v)(t_0)$ with $t_0\in I$ arbitrary but fixed.  Then
\ant{
  \|\vec u-\vec v-\vec w_0\|_{L^{\infty}_t(I;\HH_{\euc})}+\|u-v\|_{S(I \times \R^3)} \le C_0(A)\e.
  }
  In particular,  $\|u\|_{S(I \times \R^3)}<\I$.

\end{lem}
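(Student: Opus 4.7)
The strategy is to transcribe, step by step, the proof of the hyperbolic perturbation lemma (Lemma~\ref{lem: pert}) carried out earlier, substituting the free Euclidean propagator $S_{\euc}(t)$ for $S_V(t)$. The argument uses only two inputs, both available in $\R^{1+3}$: the classical Strichartz estimate on the admissible pair $(5,10,1)$ for $\Box_{\R^3}$, and the elementary pointwise bound $|(v+w)^5 - v^5| \lesssim (|v|^4 + |w|^4)|w|$. The absence of a potential and the exact scale invariance actually make the Euclidean case slightly cleaner than the hyperbolic one.

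Concretely, set $w := u - v$ and $e := \glei(u) - \glei(v)$, so that $w$ satisfies $\Box_{\R^3} w = e - ((v+w)^5 - v^5)$ distributionally. Partition the portion of $I$ to the right of $t_0$ as $t_0 < t_1 < \cdots < t_n$ with $I_j := (t_j, t_{j+1})$ chosen so that $\|v\|_{S(I_j \times \R^3)} \le \delta_0$ for a small absolute constant $\delta_0$ to be fixed; since $\|v\|_{S(I \times \R^3)} \le A$, one has $n \le C(A,\delta_0)$. Define $\vec w_j(t) := S_{\euc}(t-t_j) \vec w(t_j)$. Duhamel's formula on $I_0$ combined with Strichartz and the pointwise nonlinear estimate yields
\[
\|w - w_0\|_{S(I_0 \times \R^3)} \le C_1 \bigl(\delta_0^4 + \|w\|_{S(I_0 \times \R^3)}^4\bigr) \|w\|_{S(I_0 \times \R^3)} + C_1 \epsilon.
\]
Fixing $\delta_0$ with $C_1 \delta_0^4 \le 1/4$ and applying the continuity method in the upper endpoint of $I_0$ (as in the proof of Lemma~\ref{lem: pert}) yields $\|w\|_{S(I_0 \times \R^3)} \le 8 C_1 \epsilon$, and the analogous Duhamel identity propagated to all of $\R$ gives $\|w_1 - w_0\|_{S(\R \times \R^3)} \lesssim \epsilon$.

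Induction on $j$ then produces $\|w\|_{S(I_j \times \R^3)} + \|w_j\|_{S(\R \times \R^3)} \le C(j)\epsilon$ for all $1 \le j < n$, provided $\epsilon \le \epsilon_0(A)$ is chosen small enough that the smallness hypothesis $C_1 \|w\|_{S(I_j \times \R^3)}^4 \le 1/4$ is preserved throughout the iteration. Summing the pieces gives the $S$-norm bound on $I \cap (t_0,\infty)$; the symmetric argument on $I \cap (-\infty,t_0)$ handles the other half. A final application of the Strichartz inequality, this time with the energy component $L^\infty_t \HH_{\euc}$ retained on the left-hand side, promotes the $S$-norm control to the full conclusion $\|\vec u - \vec v - \vec w_0\|_{L^\infty_t(I; \HH_{\euc})} + \|u - v\|_{S(I \times \R^3)} \le C_0(A)\epsilon$. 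The only delicate point, and thus the closest thing to an obstacle, is bookkeeping: the constant $C_0(A)$ grows like $C_1^{n(A)}$ across the induction, but this is harmless since $A$ is fixed once and for all. As the mechanics are identical to the hyperbolic case treated in detail above, no new analytic difficulty is encountered, which is why this statement is typically quoted from \cite{KM08} rather than reproved.
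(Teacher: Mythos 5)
Your proof is correct and follows exactly the same bootstrap-and-induction scheme the paper uses for the hyperbolic perturbation lemma (Lemma~\ref{lem: pert}); the paper itself does not reprove the Euclidean version, instead citing \cite{KM08}, precisely because, as you observe, the argument transfers verbatim once $S_V$ is replaced by $S_{\euc}$.
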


%Given our choice of geodesic polar coordinates on $\Hp^3$, it will be convenient to rephrase the Euclidean equations above in geodesic polar coordinates on $\R^3$. Abusing notation we write $v(t, x)  = v(t, r, \om)$ where $(r, \om) \in [0, \infty) \times \Sp^2$. Note that $\vec v(t, x)$ solves~\eqref{v eq} if and only if $\vec v(t, r, \om)$ solves 
%\EQ{ \label{po box}
%v_{tt} - v_{rr} - \frac{2}{r} v_r - \frac{1}{r^2} \De_{\Sp^2} v = - v^5
%}
%In these coordinates the scaling invariance,~\eqref{scaling}, is as follows: if $\vec v(t, r, \om)$ solves~\eqref{po box}, then so does $\vec v_{\la}(t, r, \om)$ where 
%\EQ{
%\vec v_{\la}(t, r, \om) := ( \la^{\frac{1}{2}} v( \la t, \la r, \om), \la^{\frac{3}{2}} v_t( \la t, \la r, \om))
%}

\subsection{Nonlinear Euclidean approximation for concentrating profiles}
Here we establish the following approximation lemma, which is an analogue of Proposition~\ref{prop: h h} in the case of concentrating profiles. 
\begin{prop} \label{prop: e h} 
Given an initial data set $\vec{v}(0) = (f,g) \in \HH_{\euc}(\bbR^{3})$ and sequences $\set{\lmb_{n}} \subseteq [1, \infty)$, $\set{h_{n}} \subseteq \bbG$ such that $\lmb_{n} \to \infty$ as $n \to \infty$, consider the following three objects:
\begin{itemize}
\item Let $\vec v(t) \in \HH_{\euc}(\R^3)$ be the Euclidean evolution of this data, i.e, $\vec v(t)$ is the solution to~\eqref{v eq} with data $(f, g)$ given by Theorem~\ref{thm: euc}, which is defined for all $t \in \bbR$. 
\item Applying $\tau_{h_{n}^{-1}} \T_{\lmb_{n}}$ to $\vec{v}$ and rescaling $t$, define the sequence
\EQ{
\vec v_n(t) := \tau_{h_{n}^{-1}} \T_{\la_n} \vec v(\la_n t) \in \HH(\Hp^3) \mfor t \in \R. 
}
\item Let $\vec{u}_{n}(t) \in \HH(\bbH^{3})$ be the nonlinear hyperbolic evolution of the initial data $\vec{u}_{n}(0) := \tau_{h_{n}^{-1}} \T_{\lmb_{n}} (f, g)$, i.e., $\vec{u}_{n}$ is the solution to~\eqref{u eq} with initial data $\vec u_{n}(0)$, defined on the maximal interval of existence $I_{n, \max} = (T_{n, -}, T_{n, +})$. 
\end{itemize}
Then the following conclusions are true:
%Let $\{\la_n\}$ be any sequence with $\la_n \nearrow \infty$ as $n \to \infty$ and consider the sequence of initial data 
%\EQ{
%\T_{\la_n} (f, g)
%}
%where $\T_{\la} $ is defined as in~\eqref{Tla def}. We define $\vec v_n (t) \in C^{\infty}_0 \times C^{\infty}_0(\R^3)$ to be the Euclidean evolution of the initial data $\vec v_n(0):= \T_{\la_n} (f, g)$, i.e, $\vec v_n(t)$ are solutions to~\eqref{v eq} given by Theorem~\ref{thm: euc}. Similarly, we define $\vec u_n(t) \in \HH(\Hp^3)$ to be nonlinear hyperbolic evolution of the same initial data $\vec u_n(0):= \T_{\la_n}(f, g)$, i.e, $\vec u_n(t)$ are  solutions to~\eqref{u eq} with initial data $\vec u_n(0)$, defined on  maximal intervals of existence $I_{n, \max} = (T_{n, -}, T_{n, +})$. Then the following conclusions are true:
 
  %and  denote by $\vec v(t)$ the solution to~\eqref{v eq} given by Theorem~\ref{thm: euc}. Let $\{\la_n\}$ be any sequence with $\la_n \nearrow \infty$ as $n \to \infty$. Define a Euclidean evolution $\vec v_n(t)$  by se   
%\EQ{
%\vec v_n(t) := (v_n(t), \p_t v_{n}(t)) := ( \la_n^{\frac{1}{2}} v( \la_n t , \la_n \cdot) , \, \la_n^{\frac{3}{2}} \p_t v( \la_n t, \la_n  \cdot))
%}
 %Let $\T_{\la}$ be as in~\eqref{Tla def} and define hyperbolic initial data $$\vec u_{n}(0):= \T_{\la_n} \vec v(0)$$ and let $\vec u_n(t)$ denote the solutions to~\eqref{u eq}, with initial data $\vec u_n(0)$, defined on  maximal intervals of existence $I_{n, \max} = (T_{n, -}, T_{n, +})$. Then the following conclusions are true: 
\begin{enumerate}
\item \label{item: e h:1}
	Let $T_0 >0$ be arbitrary and denote by $I_n$ the interval $I_n:= (-T_0/ \la_n, T_0/\la_n)$. Then, there exists a positive integer $N = N(T_0, f, g)>0$ so that for all $n >N$ we have $I_n \subseteq I_{n, \max}$ and 
\EQ{
\| \vec{u}_n - \vec{v}_n\|_{L^{\infty}_t(I_n; \HH)} +  \| u_n - v_n\|_{S(I_n)}  = o_n(1) \mas n \to \infty
}
\item \label{item: e h:2}
	For $n$ large enough we have $I_{n, \max} = \R$. Moreover, we have
\EQ{
\| u_n\|_{S([0, \infty))} \le C( \E_{\euc}( f, g) ) 
}
 This means that $\vec u_n$ scatters to free waves as $t \to \infty$. An analogous statement holds in the negative time direction as well.
 \item \label{item: e h:3}
 	Finally, denote by $\vec v_{\pm \infty}$ the free scattering data for the nonlinear Euclidean evolution $\vec v(t)$, i.e, 
 \EQ{ \label{eq: e h:3:vinfty}
 \| \vec v(t) - S_{\euc}(t) \vec v_{\pm \infty}\|_{\HH_{\euc}} \to 0 \mas  t \to \pm \infty.
 }
 Then for every $\eps > 0$, there exist numbers $T, N > 0$ such that
 \EQ{ \label{eq: e h:3}
\sup_{n \geq N} \| \vec{u}_{n}(t) - S_{V}(t) \tau_{h_{n}^{-1}} \T_{\la_n} \vec v_{\infty}\|_{L^{\infty}_{t}((T/\lmb_{n}, \infty); \HH)}  < \eps.
  }
  An analogous statement holds in the negative time direction as well. 
\end{enumerate}
\end{prop}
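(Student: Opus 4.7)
The plan is to use the nonlinear perturbation lemma (Lemma~\ref{lem: pert}) with the genuine solution $\vec u_n$ as $u$ and the approximation $\vec v_n$ as $v$. The initial data match by construction, so the entire argument reduces to controlling $\glei(v_n) = \Box_{\bbH^3} v_n + V v_n + v_n^5$ in $L^1_t(I_n; L^2_x)$ and verifying the requisite uniform Strichartz bounds for $v_n$. The latter follows from the global Strichartz bounds for $v$ supplied by Theorem~\ref{thm: euc} together with the mapping properties of $\T_{\lambda_n}$ recorded in~\eqref{eq:Tlmbn}, so the heart of part~(\ref{item: e h:1}) is the commutator estimate.

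For part~(\ref{item: e h:1}), I will first reduce to the case where $(f,g)$ is smooth and compactly supported, using the persistence of regularity from Theorem~\ref{thm: euc} (the limiting profile $\vec v$ is then uniformly bounded in $\dot H^s\times \dot H^{s-1}$ for $s\leq 2$) and Lemma~\ref{lem: pert} to extend to general data. Writing $w(t,x) = v(\lmb_n t, \lmb_n x)$, which satisfies $\Box_{\bbR^{3}} w = -\lmb_n^2 w^5$, and observing that $\T_{\lmb_n}^0 = \lmb_n^{1/2} \chi_{\sqrt{\lmb_n}} e^{\lmb_n^{-1}\lap}$ commutes with rescaling, I split
\begin{align*}
	\glei(v_n) = & \bb( \Box_{\bbH^3} v_n - \tau_{h_n^{-1}} \lmb_n^2 \T_{\lmb_n} (\Box_{\bbR^3} v)(\lmb_n t) \bb) \\
	& + \tau_{h_n^{-1}} \bb( v_n^5 - \lmb_n^2 \T_{\lmb_n}(v^5)(\lmb_n t) \bb) + V v_n.
\end{align*}
The first parenthesis is exactly the commutator controlled by Claim~\ref{claim: e h:comm} and vanishes in $L^1_t(I_n; L^2)$ as $n\to\infty$. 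The third term is harmless since $\nrm{V v_n}_{L^1_t(I_n; L^2)} \lesssim (T_0/\lmb_n)\nrm{V}_{L^\infty}\nrm{v_n}_{L^\infty_t L^2_x}$, which tends to zero by Poincar\'e on $\bbH^3$ and $\lmb_n\to\infty$. The main difficulty lies in the nonlinear commutator, the second parenthesis: one must show that the truncation-regularization $\chi_{\sqrt{\lmb_n}} e^{\lmb_n^{-1}\lap}$ approximately commutes with the fifth power. For smooth compactly supported data one can write $v_n^5 - \lmb_n^2\T_{\lmb_n}(v^5) = \lmb_n^{5/2}[(\chi_{\sqrt{\lmb_n}} e^{\lmb_n^{-1}\lap} w)^5 - \chi_{\sqrt{\lmb_n}} e^{\lmb_n^{-1}\lap} w^5]$ and use standard heat-semigroup commutator bounds together with the uniform bounds on $\nrm{w}_{L^\infty_{t,x}}$ (obtained from the higher regularity of $v$ via Sobolev embedding) to produce an error that vanishes in $L^1_t(I_n; L^2)$ after rescaling. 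This is the step I expect to be the main obstacle and where care is needed; approximating by smooth data and using persistence of regularity is what makes this work.

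For part~(\ref{item: e h:2}), I fix $\eps > 0$ small and, using $\nrm{v}_{S(\bbR)} < \infty$, choose $T_0 > 0$ so large that $\nrm{v}_{S([T_0, \infty))} < \eps$, where also $\nrm{\vec v(T_0) - S_{\euc}(T_0) \vec v_{+\infty}}_{\HH_{\euc}} < \eps$. By part~(\ref{item: e h:1}) applied on $(-T_0/\lmb_n, T_0/\lmb_n)$, we have
\begin{equation*}
	\nrm{\vec u_n(T_0/\lmb_n) - \tau_{h_n^{-1}}\T_{\lmb_n} \vec v(T_0)}_{\HH} = o_n(1).
\end{equation*}
Then Corollary~\ref{cor: e h:lin:smallScat} (applied to $\vec v(T_0)$, noting that its linear Euclidean evolution has $S$-norm bounded by $2\eps$ on $[0,\infty)$) gives
\begin{equation*}
	\limsup_{n\to\infty}\nrm{S_{V,0}(t - T_0/\lmb_n) \tau_{h_n^{-1}}\T_{\lmb_n}\vec v(T_0)}_{S([T_0/\lmb_n,\infty))} \lesssim \eps.
\end{equation*}
Combined with the initial-data closeness above, Corollary~\ref{cor: lin pert} (or Lemma~\ref{lem: pert} with small source term) then yields $\nrm{u_n}_{S([T_0/\lmb_n,\infty))} \lesssim \eps$ for $n$ large; adding in the bound on $(-T_0/\lmb_n, T_0/\lmb_n)$, which follows from part~(\ref{item: e h:1}) and the Strichartz bound on $v_n$, gives the desired global $S$-norm estimate. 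The negative time direction is symmetric.

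For part~(\ref{item: e h:3}), I combine the preceding analysis as follows. Given $\eps > 0$, I pick $T$ so large that $\nrm{\vec v(T) - S_{\euc}(T)\vec v_{+\infty}}_{\HH_{\euc}} < \eps$, whence $\nrm{\tau_{h_n^{-1}}\T_{\lmb_n}\vec v(T) - \tau_{h_n^{-1}}\T_{\lmb_n} S_{\euc}(T)\vec v_{+\infty}}_{\HH} \lesssim \eps$ by~\eqref{eq:Tlmbn:HH}. Appealing to part~(\ref{item: e h:1}) at time $T/\lmb_n$ and to Lemma~\ref{lem: e h:lin} to replace the finite-time Euclidean evolution $\tau_{h_n^{-1}}\T_{\lmb_n} S_{\euc}(\lmb_n\cdot t)\vec v_{+\infty}$ by $S_V(t)\tau_{h_n^{-1}}\T_{\lmb_n}\vec v_{+\infty}$ at $t = T/\lmb_n$ modulo $o_n(1)$, one obtains
\begin{equation*}
	\nrm{\vec u_n(T/\lmb_n) - S_V(T/\lmb_n)\tau_{h_n^{-1}}\T_{\lmb_n}\vec v_{+\infty}}_{\HH} \lesssim \eps + o_n(1).
\end{equation*}
On $[T/\lmb_n, \infty)$ the $S$-norm of the full $\vec u_n$ and that of the linear $V$-evolution are both $O(\eps)$ by part~(\ref{item: e h:2}) and Corollary~\ref{cor: e h:lin:smallScat}; a final application of Lemma~\ref{lem: pert} propagates the $\HH$-closeness for all $t \geq T/\lmb_n$, yielding~\eqref{eq: e h:3}.
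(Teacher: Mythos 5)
Your scheme for all three parts mirrors the paper's: reduce to smooth compactly supported $(f,g)$ by density, apply Lemma~\ref{lem: pert} with $\vec u_n$ and $\vec v_n$ sharing initial data (so $\vec w_0 = 0$), reduce part~(\ref{item: e h:1}) to $\nrm{\glei(v_n)}_{L^1_t(I_n;L^2(\bbH^3))}\to 0$, and split $\glei(v_n)$ into the linear wave--operator commutator (Claim~\ref{claim: e h:comm}), a quintic error term, and $Vv_n$; for parts~(\ref{item: e h:2}) and~(\ref{item: e h:3}) the paper likewise combines part~(\ref{item: e h:1}) at a late time $T/\lmb_n$, Corollary~\ref{cor: e h:lin:smallScat}, and then Lemma~\ref{lem: pert} once more; and for part~(\ref{item: e h:3}) Lemma~\ref{lem: e h:lin} together with $\vec v(T)\to S_{\euc}(T)\vec v_\infty$, exactly as you outline.

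The substantive divergence is your treatment of the quintic error $(\T_{\lmb_n}v(\lmb_n t))^5 - \lmb_n^2\T_{\lmb_n}(v^5)(\lmb_n t)$. The paper applies the triangle inequality and estimates the two summands separately via $\dot H^1\hookrightarrow L^6$ and the small $L^6(\bbR^3)$ weight of $\chi(\lmb_n^{1/2}r)\sinh r/r$, whereas you insist on treating the whole expression as a commutator and closing with the higher regularity of $v$. Your instinct is worth retaining: the first summand is exactly $v_n^5$, and by scale invariance
\begin{equation*}
\nrm{v_n^5}_{L^1_t(I_n;L^2(\bbH^3))}=\nrm{v_n}_{L^5_t(I_n;L^{10}(\bbH^3))}^5\longrightarrow \nrm{v}_{L^5_t((-T_0,T_0);L^{10}(\bbR^3))}^5,
\end{equation*}
a fixed positive quantity; the second summand behaves identically. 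So neither piece alone is $o_n(1)$ in the norm the perturbation lemma requires, and one must exploit the cancellation between them --- precisely what a commutator argument does. However, your displayed identity carries a scaling error that would propagate into the estimates. By definition $\T_{\lmb_n}v(r,\omg)=\lmb_n^{1/2}\chi(\lmb_n^{1/2}r)\big(e^{\lmb_n^{-1}\lap}v\big)(\lmb_n r,\omg)$, i.e.\ the heat regularization is applied \emph{before} the rescaling $r\mapsto\lmb_n r$. Writing $w=v(\lmb_n\cdot)$ and forming $e^{\lmb_n^{-1}\lap}w$ corresponds to $\big(e^{\lmb_n\lap}v\big)(\lmb_n\cdot)$ --- a smoothing at a scale $\lmb_n^{1/2}\to\infty$, which wipes out $w$ rather than barely perturbing it --- and the cut-off scale $\chi_{\sqrt{\lmb_n}}$ in your display is inverted as well. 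The commutator that actually arises is
\begin{equation*}
\lmb_n^{5/2}\,\chi(\lmb_n^{1/2}r)\,\big[\chi^4(\lmb_n^{1/2}r)\big(e^{\lmb_n^{-1}\lap}v\big)^5 - e^{\lmb_n^{-1}\lap}(v^5)\big](\lmb_n t,\lmb_n r,\omg),
\end{equation*}
and after rescaling the relevant smallness comes from $(\chi^4-1)\to 0$ a.e.\ on $\set{\lmb_n^{1/2}r\le 2}$ and $e^{\lmb_n^{-1}\lap}\to\Id$ strongly on $L^6_x$, controlled uniformly in time by the persistence of regularity from Theorem~\ref{thm: euc}. Filling in those two points would complete part~(\ref{item: e h:1}).
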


\begin{proof} 
We begin by establishing part (\ref{item: e h:1}). By approximation, we may assume, without loss of generality, that $(f, g) \in C^{\infty}_{0} \times C^{\infty}_{0}(\bbR^{3})$.
We apply Lemma \ref{lem: pert} (perturbation lemma) with $\vec{u} = \vec{u}_{n}$, $\vec{v} = \vec{v}_{n}$ and $\vec{w}_{0} = 0$. As $\vec{u}_{n}$ already solves~\eqref{u eq}, part (\ref{item: e h:1}) would follow once we establish
\begin{equation} \label{eq: e h:pf:1}
	\nrm{\glei(v_{n})}_{L^{1}_{t} (I_{n}; L^{2}(\bbH^{3}))} \to 0 \mas n \to \infty.
\end{equation}
where $\glei(v_{n}) = \Box_{V} v_{n} + v_{n}^{5}$. Using the fact that $v$ solves the Euclidean equation $\Box_{\bbR^{3}} v + v^{5} = 0$, we may expand $\glei(v_{n})$ as follows:
\begin{align}
\glei(v_{n})
%=& \Box_{V} v_{n} + v_{n}^{5} \\
%=& \tau_{h_{n}^{-1}} \Box_{\bbH^{3}} (\T_{\lmb_{n}} v(\lmb_{n} t)) + V \tau_{h_{n}^{-1}} \T_{\lmb_{n}} v (\lmb_{n} t) + \tau_{h_{n}^{-1}} (\T_{\lmb_{n}} v (\lmb_{n} t))^{5} \\
=& \tau_{h_{n}^{-1}} \bb( \Box_{\bbH^{3}} (\T_{\lmb_{n}} v(\lmb_{n} t)) - \lmb_{n}^{2} \T_{\lmb_{n}} (\Box_{\bbR^{3}} v)(\lmb_{n} t) \bb)  \label{eq: e h:pf:comm1}\\
	& + \tau_{h_{n}^{-1}} \bb( (\T_{\lmb_{n}} v (\lmb_{n} t))^{5} - \lmb_{n}^{2} \T_{\lmb_{n}} v^{5}(\lmb_{n} t) \bb)  \label{eq: e h:pf:comm2}\\
	& + V \tau_{h_{n}^{-1}} \T_{\lmb_{n}} v (\lmb_{n} t)  	\label{eq: e h:pf:comm3}
\end{align}
We now treat the terms~\eqref{eq: e h:pf:comm1}--\eqref{eq: e h:pf:comm3} in order. By the Euclidean wellposedness theory in Theorem \ref{thm: euc} and the fact that $(f, g) \in C^{\infty}_{0} \times C^{\infty}_{0} (\bbR^{3})$, we have
\begin{equation} \label{eq: e h:pf:bdd}
	\nrm{\vec{v}}_{L^{\infty}_{t}(\bbR; \HH_{\euc})} + \nrm{v}_{L^{\infty}_{t}(\bbR; \dot{H}^{2}(\bbR^{3}))} < \infty.
\end{equation}
Therefore, we can apply Claim \ref{claim: e h:comm} to conclude that
\begin{equation*} 
\nrm{\eqref{eq: e h:pf:comm1}}_{L^{1}_{t}(I_{n}; L^{2}(\bbH^{3}))} \to 0 \mas n \to \infty.
\end{equation*}

Next, using~\eqref{e h:vol}, the $\dot{H}^{1}(\bbR^{3}) \hookrightarrow L^{6}(\bbR^{3})$ Sobolev inequalty and the fact that $\supp \, \chi (\lmb_{n}^{\frac{1}{2}} r) \subseteq \set{r \aleq \lmb_{n}^{-\frac{1}{2}}}$, we estimate the contribution of~\eqref{eq: e h:pf:comm2} as follows:
\begin{align*}
& \hskip-2em
	\nrm{\eqref{eq: e h:pf:comm2}}_{L^{1}_{t}(I_{n}; L^{2}(\bbH^{3}))} \\
%	= \nrm{\eqref{eq: e h:pf:comm2} \frac{\sinh r}{r}}_{L^{1}_{t}(I_{n}; L^{2}(\bbR^{3}))} \\
	\leq & (T_{0} / \lmb_{n}) \nrm{\chi^{5}(\lmb_{n}^{\frac{1}{2}} r) \frac{\sinh r}{r}}_{L^{6}(\bbR^{3})} \nrm{\lmb_{n}^{\frac{5}{2}} (e^{\lmb_{n}^{-1} \lap} v(\lmb_{n} t, \lmb_{n} r, \omg))^{5}}_{L^{\infty}_{t} (I_{n}; L^{\frac{6}{5}}(\bbR^{3}))} \\
	& + (T_{0} / \lmb_{n}) \nrm{\chi(\lmb_{n}^{\frac{1}{2}} r) \frac{\sinh r}{r}}_{L^{6}(\bbR^{3})} \nrm{\lmb_{n}^{\frac{5}{2}} (e^{\lmb_{n}^{-1} \lap} v^{5})(\lmb_{n} t, \lmb_{n} r, \omg)}_{L^{\infty}_{t} (I_{n}; L^{\frac{6}{5}}(\bbR^{3}))} \\
	\aleq & T_{0} \lmb_{n}^{-\frac{5}{4}} \nrm{v}_{L^{\infty}_{t} (I; \dot{H}^{1}(\bbR^{3}))}^{5} \to 0 \mas n \to \infty.
\end{align*}

Finally, for~\eqref{eq: e h:pf:comm3}, we use the fact that $V$ is bounded, Poincar\'e's inequality and~\eqref{eq: e h:pf:bdd} to estimate
\begin{align*}
	\nrm{\eqref{eq: e h:pf:comm3}}_{L^{1}_{t}(I_{n}; L^{2}(\bbH^{3}))}
	\leq & (T_{0} / \lmb_{n}) \nrm{V}_{L^{\infty}(\bbH^{3})} \nrm{\T_{\lmb_{n}} v(\lmb_{n} t)}_{L^{\infty}_{t}(I_{n}; L^{2}(\bbH^{3}))}  \\
	\leq & (T_{0} / \lmb_{n}) \nrm{V}_{L^{\infty}(\bbH^{3})} \nrm{v}_{L^{\infty}_{t}(I; \dot{H}^{1}(\bbR^{3}))} \to 0 \mas n \to \infty,
\end{align*}
which completes the proof of part (\ref{item: e h:1}).

Next, we prove part (\ref{item: e h:2}). Without loss of generality, we focus on the forward time direction $\set{t > 0}$.
By Theorem \ref{thm: euc}, we have
\begin{equation} \label{eq: e h:2:pf:1}
	\nrm{v}_{L^{5}_{t} (\bbR; L^{10}(\bbR^{3}))} \leq A(\E_{\euc}(\vec{v})) < \infty.
\end{equation}
By part (\ref{item: e h:1}), for any fixed $T > 0$ we have
\begin{equation} \label{eq: e h:2:pf:2}
	\nrm{\vec{u}_{n}(T / \lmb_{n}) - \tau_{h_{n}^{-1}} \T_{\lmb_{n}} \vec{v} (T)}_{\HH} 
	+ \nrm{u_{n} - v_{n}}_{S([0, T/\lmb_{n}))}= o_{n}(1) \mas n \to \infty.
\end{equation}
We claim that there exists a $T > 0$ and $N > 0$ such that
\begin{equation} \label{eq: e h:2:pf:3}
	\sup_{n \geq N} \nrm{u_{n}(t)}_{S([T/\lmb_{n}, \infty))} \leq 1.
\end{equation}
Then part (\ref{item: e h:2}) in the positive time direction would follow, since we have, by~\eqref{eq: e h:2:pf:2},
\begin{equation} \label{eq: e h:2:pf:4}
\begin{aligned}
	\nrm{u_{n}}_{S([0, T/\lmb_{n}))} 
	=& \nrm{v_{n}}_{S([0, T/\lmb_{n}))} + o_{n}(1) \\
	\aleq & \nrm{v}_{L^{5}_{t}([0, T); L^{10}(\bbR^{3}))} + o_{n}(1) \\ 
	\aleq & A(\E_{\euc}(\vec{v})) + o_{n}(1).
\end{aligned}
\end{equation}

It remains to prove~\eqref{eq: e h:2:pf:3}. By~\eqref{eq: e h:2:pf:1}, given any $\eps > 0$, we can find a $T = T(\eps, \vec{v}) > 0$ sufficiently large so that we have $\nrm{v}_{L^{5}_{t} ([T, \infty); L^{10}(\bbR^{3}))} < \eps$. Then by the Strichartz estimate and Duhamel's formula 
\begin{equation*}
	\vec{v}(t) = S_{\euc}(t - T) \vec{v}(T) + \int_{T}^{t} S_{\euc}(t-s)(0, v^{5}(s)) \, \ud s,
\end{equation*}
it follows, for $\eps > 0$ sufficiently small, that
\begin{equation*}
	\nrm{S_{\euc} (t-T) \vec{v}(T)}_{L^{5}_{t}([T, \infty), L^{10}(\bbR^{3}))} \aleq \eps.
\end{equation*}
Then by Corollary \ref{cor: e h:lin:smallScat}, for sufficiently large $n$ we have
\begin{equation} \label{eq: e h:2:pf:5}
	\nrm{S_{V}(t - T/\lmb_{n}) \tau_{h_{n}^{-1}} \T_{\lmb_{n}} \vec{v}(T)}_{S([T / \lmb_{n}, \infty))} \aleq \eps.
\end{equation}
Recall from~\eqref{eq: e h:2:pf:2} that $\vec{u}_{n}(T / \lmb_{n})$ and $\tau_{h_{n}^{-1}} \T_{\lmb_{n}} \vec{v} (T)$ get arbitrarily close to each other in $\HH$ as $n \to \infty$. Applying Lemma \ref{lem: pert} (perturbation lemma) with $t_{0} = T$ and
\begin{equation*}
(\vec{u}(t), \vec{v}(t)) = (\vec{u}_{n}(t), S_{V}(t - T/\lmb_{n}) \tau_{h_{n}^{-1}} \T_{\lmb_{n}} \vec{v}(T)),
\end{equation*}
we conclude that, for sufficiently large $n$ and small enough $\eps > 0$, we have
\begin{equation} \label{eq: e h:2:pf:6}
\begin{aligned}
	& \hskip-2em
	\nrm{\vec{u}_{n}(t) - S_{V}(t - T/\lmb_{n}) \tau_{h_{n}^{-1}} \T_{\lmb_{n}} \vec{v}(T)}_{L^{\infty}_{t}([T/\lmb_{n}, \infty); \HH)} \\
	& + \nrm{u_{n}(t) - S_{V, 0}(t- T/\lmb_{n}) \tau_{h_{n}^{-1}} \T_{\lmb_{n}} \vec{v}(T) }_{S([T/\lmb_{n}, \infty)}  \aleq \eps.
\end{aligned}
\end{equation}
Therefore, we obtain the following estimate for $\nrm{u_{n}(t)}_{S([T/\lmb_{n}, \infty))}$:
\begin{align*}
	\nrm{u_{n}(t)}_{S([T / \lmb_{n}, \infty))} 
%	\leq & \nrm{u_{n}(t) - S_{V, 0} (t - T/\lmb_{n}) \tau_{h_{n}^{-1}} \T_{\lmb_{n}} \vec{v}(T)}_{S([T/\lmb_{n}, \infty))}  \\
%	& + \nrm{S_{V, 0}(t - T/\lmb_{n}) \tau_{h_{n}^{-1}} \T_{\lmb_{n}} \vec{v}(T)}_{S([T/\lmb_{n}, \infty))}
	\aleq \eps
	 + \nrm{S_{V, 0}(t - T/\lmb_{n}) \tau_{h_{n}^{-1}} \T_{\lmb_{n}} \vec{v}(T)}_{S([T/\lmb_{n}, \infty))}
	\aleq \eps.
\end{align*}
Taking $\eps > 0$ smaller if necessary, the desired claim~\eqref{eq: e h:2:pf:3} follows.

Finally, we prove part (\ref{item: e h:3}). As in the proof of part (\ref{item: e h:2}), we focus only on the positive time direction. Fix an $\eps > 0$. Recalling the argument leading to~\eqref{eq: e h:2:pf:6}, we see that there exists a $T = T(\eps, \vec{v}) > 0$ and an $N = N(\eps, \vec{v})$ such that
\begin{equation*}
	\sup_{n \geq N} \nrm{\vec{u}_{n}(t) - S_{V}(t - T/\lmb_{n}) \tau_{h_{n}^{-1}} \T_{\lmb_{n}} \vec{v}(T)}_{L^{\infty}_{t}([T/\lmb_{n}, \infty), \HH ) } < \frac{\eps}{2}.
\end{equation*}
Hence the desired conclusion~\eqref{eq: e h:3} will follow once we show that, after taking $T, N$ larger if necessary,
\begin{equation} \label{eq: e h:3:pf:1}
	\sup_{n \geq N} \nrm{\tau_{h_{n}}^{-1} \T_{\lmb_{n}} \vec{v}(T) - S_{V}(T/\lmb_{n}) \tau_{h_{n}^{-1}} \T_{\lmb_{n}} \vec{v}_{\infty} }_{\HH} < \frac{\eps}{2},
\end{equation}
where $\vec{v}_{\infty}$ is defined as in~\eqref{eq: e h:3:vinfty}. 

To verify~\eqref{eq: e h:3:pf:1}, recall first that, by the definition of $\vec{v}_{\infty}$, we have
\begin{equation*}
	\nrm{\vec{v}(T) - S_{\euc}(T) \vec{v}_{\infty}}_{\HH_{\euc}} = o_{T}(1) \mas T \to \infty.
\end{equation*}
On the other hand, by Lemma \ref{lem: e h:lin}, we have
\begin{equation*}
	\nrm{S_{V}(T/\lmb_{n}) \tau_{h_{n}^{-1}} \T_{\lmb_{n}} \vec{v}_{\infty} - \tau_{h_{n}^{-1}} \T_{\lmb_{n}} S_{\euc}(T) \vec{v}_{\infty}}_{\HH} = o_{n}(1) \mas n \to \infty.
\end{equation*}
Combining the preceding two statements,~\eqref{eq: e h:3:pf:1} follows. \qedhere
\end{proof}

\section{A nonlinear profile decomposition} \label{sec: bg nl}
In this section, we prove a nonlinear version of the linear profile decomposition in Theorem~\ref{thm: BG} for the equation~\eqref{u eq}. As in the previous two sections, the potential $V$ is only assumed to satisfy~\eqref{eq:Vdecay}--\eqref{eq:Str4halfVWave:2}. We begin with a definition of the nonlinear profiles.
%% We begin with the case $d=3$ and solutions to~\eqref{u eq}.
%\subsection{Nonlinear profile decomposition for critical waves in $\Hp^3$} 
%In this subsection, we build on the linear profile decomposition in Section~\ref{sec: bg}, by first associating to each linear profile $\vec U_{n, L}$ a nonlinear profile. Indeed 

\begin{defn} \label{def:nonlin}
Consider a sequence $\set{\vec u_{n}} \subseteq \HH(\Hp^3)$ with bounded energy $E_{V}(\vec{u}_{n}) \leq C$ and the corresponding profile decomposition, 
\EQ{
\vec u_n (0) =  \sum_{0<j<J  }\vec U^j_{n, L}(0)+  \vec w_{n}^J
}
as in Theorem~\ref{thm: BG}. To each profile $\vec U^j_{n, L}(0) =  \vec U^j_{\Box, n, L}(0)$, with $\Box = V, \hyp$ or $\euc$, we can associate nonlinear profiles $\vec U^j_{\Box, n, \nl}$ of the respective type, which are defined as follows. 

\vskip.5em
\noindent\textit{Case 1: A perturbed hyperbolic (or stationary) profile.} In this case, recall that
\begin{equation*}
	\vec{U}^{j}_{V, n, L}(0) = S_{V}(-t_{n, j}) \vec{U}^{j}_{V}(0).
\end{equation*}
To the limiting profile $\vec{U}^{j}_{V}(0)$, we associate a \emph{limiting nonlinear profile} $\vec{U}^{j}_{V, \nl}(t)$, which is the unique solution to~\eqref{u eq} so that $- t_{n, j} \in I_{\max}(\vec{U}^{j}_{V, \nl})$ for every $n$ and 
\begin{equation*}
\nrm{\vec{U}^{j}_{V, n, L}(0) - \vec{U}^{j}_{V, \nl}(- t_{n,j})}_{\HH} \to 0 \mas n \to \infty.
\end{equation*}
We remark that such a limiting nonlinear profile always exists by the local Cauchy theory for~\eqref{u eq}. We then define the \emph{associated perturbed hyperbolic nonlinear profiles} as
\begin{equation} \label{eq:nonlin V}
	\vec{U}^{j}_{V, n, \nl}(t) := \vec{U}^{j}_{V, \nl}(t - t_{n, j}).
\end{equation}

\vskip.5em
\noindent\textit{Case 2: A free hyperbolic (or traveling) profile.} 
In this case, recall that $\abs{h_{n}} \to \infty$ as $n \to \infty$, and
\begin{equation*}
	\vec{U}^{j}_{\hyp, n, L}(0) = S_{V}(-t_{n, j}) \tau_{h_{n, j}^{-1}} \vec{U}^{j}_{\hyp}(0).
\end{equation*}
To the limiting profile $\vec{U}^{j}_{\hyp}(0)$, we associate a \emph{limiting nonlinear profile} $\vec{U}^{j}_{\hyp, \nl}(t)$, which is the unique solution to the potential-free equation $\Box_{\bbH^{3}} U + U^{5} = 0$ so that $-t_{n, j} \in I_{\max}(\vec{U}^{j}_{\hyp, \nl})$ for every $n$ and 
\begin{equation*}
	\nrm{S_{\hyp}(-t_{n, j}) \vec{U}^{j}_{\hyp}(0) - \vec{U}^{j}_{\hyp, \nl}(-t_{n, j})}_{\HH} \to 0 \mas n \to \infty.
\end{equation*}
Such a limiting nonlinear profile always exists by the local Cauchy theory for the equation $\Box_{\bbH^{3}} U + U^{5} = 0$. 
Define $\vec{W}^{j}_{\hyp, n, \nl}(t)$ to be the unique solution to~\eqref{u eq} with the data
\begin{equation} 
	\vec{W}^{j}_{\hyp, n, \nl}(0) := \tau_{h_{n, j}^{-1}} \vec{U}^{j}_{\hyp, \nl}(0).
\end{equation}
Then we set the \emph{associated free hyperbolic nonlinear profiles} to be
\begin{equation} \label{eq:nonlin hyp}
	\vec{U}^{j}_{\hyp, n, \nl}(t) := \vec{W}^{j}_{\hyp, n, \nl}(t - t_{n, j}).
\end{equation}

\vskip.5em
\noindent\textit{Case 3: A Euclidean (or concentrating) profile.} 
We proceed as in Case 2. Here we have $\lmb_{n} \to \infty$ as $n \to \infty$, and
\begin{equation*}
	\vec{U}^{j}_{\euc, n, L}(0) = S_{V}(-t_{n, j}) \tau_{h_{n, j}^{-1}} \T_{\lmb_{n}} \vec{V}^{j}_{\euc}(0).
\end{equation*}
To the limiting profile $\vec{V}^{j}_{\euc}(0)$, we associate a \emph{limiting nonlinear profile} $\vec{V}^{j}_{\euc, \nl}(t)$, which is the unique solution to the flat space equation $\Box_{\bbR^{3}} v + v^{5} = 0$ so that $- \lmb_{n, j} t_{n, j} \in I_{\max}(\vec{V}^{j}_{\euc, \nl})$ for every $n$ and 
\begin{equation*}
	\nrm{S_{\euc}(-\lmb_{n, j} t_{n, j}) \vec{V}^{j}_{\euc}(0) - \vec{V}^{j}_{\euc, \nl}(- \lmb_{n, j} t_{n, j})}_{\HH_{\euc}} \to 0 \mas n \to \infty.
\end{equation*}
Such a limiting nonlinear profile always exists by Theorem \ref{thm: euc}.
%Such a nonlinear profile always exists by the local Cauchy theory for the equation $\Box_{\bbH^{3}} U + U^{5} = 0$. 
Let $\vec{W}^{j}_{\euc, n, \nl}(t)$ be the unique solution to~\eqref{u eq} with the data
\begin{equation} 
	\vec{W}^{j}_{\euc, n, \nl}(0) := \tau_{h_{n, j}^{-1}} \T_{\lmb_{n}} \vec{V}^{j}_{\euc, \nl}(0).
\end{equation}
Then we define the \emph{associated Euclidean nonlinear profiles} as
\begin{equation} \label{eq:nonlin euc}
	\vec{U}^{j}_{\euc, n, \nl}(t) := \vec{W}^{j}_{\euc, n, \nl}(t - t_{n, j}).
\end{equation}
\end{defn}

\begin{rem} 
In all three cases, $\vec{U}_{\Box, n, \nl}(t)$ solves the nonlinear equation~\eqref{u eq}. 
However, in the case of Euclidean or free hyperbolic profiles (i.e., $\Box = \euc$ or $\hyp$), the nonlinear profiles $\vec{U}_{\Box, n, \nl}$ can be well-approximated by the respective underlying limiting nonlinear profiles $\vec{V}_{\Box, \nl}$, which solve different nonlinear equations. 
In particular, thanks to the Euclidean well-posedness theory (Theorem \ref{thm: euc}) and Proposition \ref{prop: e h}, the Euclidean nonlinear profiles $\vec{U}_{\euc, n, \nl}$ always satisfy
\begin{equation}
	I_{\max}(\vec{U}_{\euc, n, \nl}) = \bbR, \quad \limsup_{n \to \infty} \nrm{\vec{U}_{\euc, n, \nl}}_{S(\bbR)} \leq A(\E_{\euc}(\vec{V}^{j}_{\euc}(0) ) ) < \infty.
\end{equation}

Moreover, once we establish global well-posedness and scattering for the potential-free equation $\Box_{\bbH^{3}} U + U^{5} = 0$, which is precisely Theorem~\ref{main}, a similar statement holds for the free hyperbolic profiles. Namely, thanks to Proposition~\ref{prop: h h}, the free hyperbolic profiles $\vec{U}_{\euc, n, \nl}$ will always satisfy
\begin{equation}
	I_{\max}(\vec{U}_{\hyp, n, \nl}) = \bbR, \quad \limsup_{n \to \infty} \nrm{\vec{U}_{\hyp, n, \nl}}_{S(\bbR)} \leq A(\E(\vec{U}^{j}_{\hyp}(0) ) ) < \infty.
\end{equation}
once Theorem~\ref{main} is proved. This observation will be used in our proof of Theorem~\ref{main:V} in Section~\ref{sec:main:V:pf}, which will follow the proof of Theorem~\ref{main} in Section~\ref{sec:main:pf}.
\end{rem}

An important feature of the above defintion is that, in all three cases, the nonlinear profile asymptotically approaches the corresponding linear profile at $t = 0$ as $n \to \infty$. More precisely, the following lemma holds.
\begin{lem} \label{lem:nonlin hyp:t=0}
Let $\vec{U}^{j}_{\Box, n, L}$ be defined as in Definition \ref{def:nonlin} above.
Then for each of the types $\Box = V$, $\hyp$ or $\euc$, we have
\begin{equation} \label{eq:nonlin hyp:t=0}
	\nrm{\vec{U}^{j}_{\Box, n, L}(0) - \vec{U}^{j}_{\Box, n, \nl}(0)}_{\HH} \to 0 \mas n \to \infty.
\end{equation}
\end{lem}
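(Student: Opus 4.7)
The plan is to handle the three types of profiles separately. Case $1$ (perturbed hyperbolic) is essentially tautological: the limiting nonlinear profile $\vec{U}^{j}_{V, \nl}$ is defined precisely so that $\nrm{\vec{U}^{j}_{V, n, L}(0) - \vec{U}^{j}_{V, \nl}(-t_{n, j})}_{\HH} \to 0$, and by construction $\vec{U}^{j}_{V, n, \nl}(0) = \vec{U}^{j}_{V, \nl}(-t_{n, j})$, so \eqref{eq:nonlin hyp:t=0} follows immediately. For Cases $2$ and $3$ the argument will split into subcases based on the asymptotic behavior of the rescaled time parameter: $\set{t_{n, j}}$ for the free hyperbolic case and $\set{\lmb_{n, j} t_{n, j}}$ for the Euclidean case.

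For Case $2$, in the subcase $\limsup \abs{t_{n, j}} < \infty$ I would (up to passing to a subsequence) fix a finite interval $I$ containing every $-t_{n, j}$ and chain three approximations on $I$: first, Proposition~\ref{prop: h h}(\ref{item: h h:1}) applied with initial data $\vec{U}^{j}_{\hyp, \nl}(0)$ gives $\nrm{\vec{W}^{j}_{\hyp, n, \nl}(-t_{n, j}) - \tau_{h_{n, j}^{-1}} \vec{U}^{j}_{\hyp, \nl}(-t_{n, j})}_{\HH} \to 0$; second, the defining property of $\vec{U}^{j}_{\hyp, \nl}$ together with continuous dependence of the free evolution on data in the bounded-$t$ regime identifies $\vec{U}^{j}_{\hyp, \nl}(-t_{n, j})$ with $S_{\hyp}(-t_{n, j}) \vec{U}^{j}_{\hyp}(0)$ asymptotically in $\HH$; third, Lemma~\ref{lem:pert4traveling} with $(p, q, \gmm) = (\infty, 2, 0)$ replaces $\tau_{h_{n, j}^{-1}} S_{\hyp}$ by $S_{V} \tau_{h_{n, j}^{-1}}$ to land at $\vec{U}^{j}_{\hyp, n, L}(0)$. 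In the subcase $\abs{t_{n, j}} \to \infty$, the defining convergence for the limiting nonlinear profile forces $\vec{U}^{j}_{\hyp, \nl}$ to exist globally and to scatter, in the appropriate time direction, to the free wave $S_{\hyp}(t) \vec{U}^{j}_{\hyp}(0)$; Proposition~\ref{prop: h h}(\ref{item: h h:3}) then directly yields $\nrm{\vec{W}^{j}_{\hyp, n, \nl}(-t_{n, j}) - S_{V}(-t_{n, j}) \tau_{h_{n, j}^{-1}} \vec{U}^{j}_{\hyp}(0)}_{\HH} < \eps$ uniformly for sufficiently large $n$.

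Case $3$ will proceed in complete parallel, with the operator $\T_{\lmb_{n, j}}$ inserted throughout and the rescaled parameter $\lmb_{n, j} t_{n, j}$ playing the role of $t_{n, j}$. In the bounded subcase $\limsup \abs{\lmb_{n, j} t_{n, j}} < \infty$, Proposition~\ref{prop: e h}(\ref{item: e h:1}) plays the role of Proposition~\ref{prop: h h}(\ref{item: h h:1}) on a time interval $(-T_{0}/\lmb_{n, j}, T_{0}/\lmb_{n, j})$ (with $T_{0}$ chosen so large that every $-t_{n, j}$ lies strictly inside), while Lemma~\ref{lem: e h:lin} replaces Lemma~\ref{lem:pert4traveling}. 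The asymptotic isometry property \eqref{eq:Tlmbn:HH} transfers the $\HH_{\euc}$-closeness of $\vec{V}^{j}_{\euc, \nl}(-\lmb_{n, j} t_{n, j})$ and $S_{\euc}(-\lmb_{n, j} t_{n, j}) \vec{V}^{j}_{\euc}(0)$ into $\HH$-closeness after applying $\tau_{h_{n, j}^{-1}} \T_{\lmb_{n, j}}$. In the unbounded subcase $\abs{\lmb_{n, j} t_{n, j}} \to \infty$, Proposition~\ref{prop: e h}(\ref{item: e h:3}) yields the desired approximation directly, with the scattering data identified as $\vec{V}^{j}_{\euc}(0)$ via the defining convergence of $\vec{V}^{j}_{\euc, \nl}$.

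The main, rather mild, obstacle is bookkeeping: one must carefully verify that in the unbounded subcases the scattering data appearing in Propositions~\ref{prop: h h}(\ref{item: h h:3}) and~\ref{prop: e h}(\ref{item: e h:3}) coincide with $\vec{U}^{j}_{\hyp}(0)$ and $\vec{V}^{j}_{\euc}(0)$ respectively. This follows from the uniqueness of scattering data combined with the defining convergences of the limiting nonlinear profiles; once this identification is in hand, every remaining estimate is already packaged into the cited propositions and lemmas.
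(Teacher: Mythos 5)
Your proposal is correct and follows essentially the same route as the paper's: the paper's proof is a terse two sentences that, after dismissing the case $\Box = V$ as obvious, splits the remaining cases according to whether $\lmb_{n,j} t_{n,j}$ converges in $\R$ or diverges and then simply cites Propositions~\ref{prop: h h}(\ref{item: h h:1})/(\ref{item: h h:3}) and~\ref{prop: e h}(\ref{item: e h:1})/(\ref{item: e h:3}), exactly the tools you invoke. The only thing you do differently is to spell out the intermediate chain of approximations (inserting Lemma~\ref{lem:pert4traveling}, Lemma~\ref{lem: e h:lin}, the asymptotic isometry \eqref{eq:Tlmbn:HH}, and the identification of the scattering data with $\vec{U}^{j}_{\hyp}(0)$ or $\vec{V}^{j}_{\euc}(0)$) that the paper leaves implicit; this is useful bookkeeping but not a different argument. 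Note that your split by $|t_{n,j}|$ in the free hyperbolic case coincides with the paper's split by $\lmb_{n,j}|t_{n,j}|$ since there $\lmb_{n,j} \to \lmb_{\infty,j} \in [1,\infty)$.
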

\begin{proof} 
Indeed, \eqref{eq:nonlin hyp:t=0} is obvious for $\Box = V$, so it only remains to consider the cases $\Box = \hyp$ or $\euc$. In both cases, after possibly passing to a subsequence, we divide further into two scenarios: $ - \lmb_{n,j} t_{n, j} \to t_{0} \in \bbR$ or $\lmb_{n, j} \abs{t_{n, j}} \to \infty$, as $n \to \infty$. In the former scenario, \eqref{eq:nonlin hyp:t=0} follows from Proposition~\ref{prop: h h}(\ref{item: h h:1}) or \ref{prop: e h}(\ref{item: e h:1}), respectively. In the latter scenario, we use Proposition~\ref{prop: h h}(\ref{item: h h:3}) or \ref{prop: e h}(\ref{item: e h:3}), respectively.  \qedhere
\end{proof}

The main result of this section is the following nonlinear profile decomposition. 

\begin{thm}[Nonlinear profile decomposition]\label{thm:nonlinbg}
Let $V$ be a potential satisfying the assumptions~\eqref{eq:Vdecay}--\eqref{eq:Str4halfVWave:2}.
Consider a sequence $\vec u_n(0) \in \HH$ of initial data sets with uniformly bounded energy, i.e., $E_{V}(u_{n}) \leq C < \infty$, with a corresponding  linear profile decomposition given by Theorem~\ref{thm: BG}. Let $\vec U^j_{\Box, n, \nl}$ (where $\Box = V, \hyp$ or $\euc$) be the associated nonlinear profiles given by Definition~\ref{def:nonlin}. 

Let $\{s_n\} \subseteq (0, \infty)$ be any sequence of times so that for each positive integer $j \in \N$ corresponding to $\Box = V$ or $\hyp$, we have for all $n$
\EQ{ \label{boundedS}
s_n - t_{n, j} \leq T_+( \vec U^j_{\Box, \nl}) \mand  \limsup_{n \to \infty} \| U^j_{\Box, \nl}\|_{S([- t_{n, j}, s_n- t_{n, j}))} < \infty.
 }
If we denote by $\vec u_n(t) \in \HH$ the solution to~\eqref{u eq} with initial data $\vec u_n(0)$ and maximal interval of existence $I_{\max}(\vec u_n)$, then for each $n$, $\vec u_n(t)$ is defined on $[0, s_n)$, i.e., $[0, s_n) \subseteq I_{\max}(\vec u_n)$, and 
\EQ{
\limsup_{n \to \infty} \| u_n\|_{S([0, s_n))} < \infty. 
}
Moreover, the following nonlinear profile decomposition holds: For $\vec \gamma^J_n$ defined by 
\EQ{
 \vec u_n(t) =  \sum_{j <J} \vec U^j_{\Box, n, \nl}(t) + \vec w_{n, L}^J(t) + \vec  \ga_{n}^J(t)
 }
we have 
\EQ{ \label{eq:nonlinbg:w-gmm}
\lim_{J \to \infty} \limsup_{n \to \infty}  \Big(  \| \vec \ga^J_n\|_{L^{\infty}_t([0, s_n); \HH)} + \| \ga^J_n\|_{S([0, s_n))}\Big)  = 0
}
where $w_{n, L}^J(t) \in \HH$ is as in Theorem~\ref{thm: BG}.
\end{thm}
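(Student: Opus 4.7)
The strategy is to apply the nonlinear perturbation lemma (Lemma~\ref{lem: pert}) to compare the actual solution $\vec u_n(t)$ with the approximate solution
\[
\vec U_{\mathrm{app}}^{J}(t) := \sum_{j < J} \vec U^{j}_{\Box_{j}, n, \nl}(t) + \vec w^{J}_{n, L}(t),
\]
where $\Box_{j} \in \{V, \hyp, \euc\}$ denotes the type of the $j$-th profile. The plan is to verify the three hypotheses of Lemma~\ref{lem: pert}: (i) $\vec u_{n}(0) - \vec U_{\mathrm{app}}^{J}(0) \to 0$ in $\HH$, (ii) a uniform Strichartz bound $\|U_{\mathrm{app}}^{J}\|_{S([0,s_{n}))} \leq A$, and (iii) $\mathrm{eq}(\vec U_{\mathrm{app}}^{J}) \to 0$ in $N([0,s_{n})) = L^{1}_{t} L^{2}_{x}$. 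The linear profile decomposition already provides $\vec u_{n}(0) = \sum_{j<J} \vec U^{j}_{\Box_{j}, n, L}(0) + \vec w^{J}_{n}$, and combining this with Lemma~\ref{lem:nonlin hyp:t=0} yields (i) for each fixed $J$.

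For (ii), I would split the profiles into two groups according to a threshold $J_{0}$. The Pythagorean decomposition of the energy~\eqref{o en} implies that for every $\delta > 0$ there is $J_{0} = J_{0}(\delta)$ so that $E_{V}(\vec U^{j}_{\Box, n, L}) \leq \delta$ for all $j \geq J_{0}$ (uniformly in $n$). Choosing $\delta$ below the small data threshold of Proposition~\ref{small data}, together with Propositions~\ref{prop: h h}(\ref{item: h h:2}) and~\ref{prop: e h}(\ref{item: e h:2}) to transfer scattering bounds from the limiting profiles (free hyperbolic or Euclidean) to the associated nonlinear profiles, shows that each tail profile satisfies $\|U^{j}_{\Box, n, \nl}\|_{S(\bbR)} \lesssim E_{V}(\vec U^{j}_{\Box, n, L})^{1/2}$ for large $n$; the Pythagorean identity then makes $\sum_{j \geq J_{0}} \|U^{j}_{n, \nl}\|_{S(\bbR)}^{2}$ uniformly bounded. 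For the finitely many $j < J_{0}$, the hypothesis~\eqref{boundedS} combined with Propositions~\ref{prop: h h}(\ref{item: h h:1}) and~\ref{prop: e h}(\ref{item: e h:1}) (or triviality for stationary profiles) produces uniform bounds on $\|U^{j}_{\Box, n, \nl}\|_{S([0, s_{n}))}$. Summing the two groups via an almost-orthogonality estimate -- which, for the fifth power of a sum, reduces to controlling cross terms by the same parameter orthogonality used in (iii) -- produces the desired uniform $S$-norm bound.

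The main obstacle is (iii). Since each $\vec U^{j}_{\Box, n, \nl}$ solves~\eqref{u eq} and $\vec w^{J}_{n, L}$ solves $\Box_{V} w^{J}_{n, L} = 0$, the equation error collapses to the purely nonlinear discrepancy
\[
\mathrm{eq}(\vec U_{\mathrm{app}}^{J}) = \Big(\sum_{j<J} U^{j}_{n, \nl} + w^{J}_{n, L}\Big)^{5} - \sum_{j<J} (U^{j}_{n, \nl})^{5}.
\]
Expanding produces cross products $\prod_{i=1}^{5} U^{j_{i}}_{n, \nl}$ with not all $j_{i}$ equal, plus terms containing at least one factor of $w^{J}_{n, L}$. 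The hard work is to show that each such cross product vanishes in $L^{1}_{t} L^{2}_{x}$ as $n \to \infty$, using only the parameter orthogonality~\eqref{o pa}. Here I would first replace each free hyperbolic and Euclidean nonlinear profile by its underlying single evolution (translated by $h_{n, j}$, respectively rescaled by $\lmb_{n, j}$ and translated) via the approximation Propositions~\ref{prop: h h} and~\ref{prop: e h}; thanks to the uniform Strichartz bounds of step (ii) the perturbation lemma controls the passage to these simpler representations on $[0, s_{n})$. After this reduction, cross terms between two distinct profiles can be estimated by H\"older in $L^{5}_{t} L^{10}_{x}$ against remaining factors, and a change of variables centered at the parameters of one profile makes the second profile weakly converge to $0$ in the relevant integrable space whenever~\eqref{o pa} holds, yielding decay. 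The remainder terms involving $w^{J}_{n, L}$ are handled by trading the Strichartz norm against the small quantity in~\eqref{b norm to 0}; interpolating via Lemma~\ref{lem: BS} converts~\eqref{b norm to 0} into smallness of a scattering-type norm of $w^{J}_{n, L}$ first as $n \to \infty$ and then as $J \to \infty$.

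With (i)--(iii) in place, Lemma~\ref{lem: pert} produces, for any $\eta > 0$, an index $J(\eta)$ and integer $N(\eta, J)$ such that for all $n \geq N$ and $J \geq J(\eta)$, the interval $[0, s_{n})$ lies in $I_{\max}(\vec u_{n})$ and $\|\vec u_{n} - \vec U_{\mathrm{app}}^{J}\|_{L^{\infty}_{t}([0, s_{n}); \HH)} + \|u_{n} - U_{\mathrm{app}}^{J}\|_{S([0, s_{n}))} < \eta$, which is precisely~\eqref{eq:nonlinbg:w-gmm}. The same estimate gives $\limsup_{n} \|u_{n}\|_{S([0, s_{n}))} < \infty$ via the uniform bound on $\|U_{\mathrm{app}}^{J}\|_{S([0, s_{n}))}$ and the triangle inequality.
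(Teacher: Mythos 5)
Your proof is correct and proceeds along essentially the same lines as the paper's argument: the core of both is to apply the perturbation lemma (Lemma~\ref{lem: pert}) to the sum of nonlinear profiles, with the three hypotheses verified by (i) Lemma~\ref{lem:nonlin hyp:t=0} and the linear decomposition at $t=0$; (ii) an almost-orthogonality estimate for the $S$-norm, splitting at a threshold $J_{0}$ beyond which all profiles fall under the small-data theory; and (iii) vanishing of the cross terms in $\mathrm{eq}(\cdot)$ via the parameter orthogonality~\eqref{o pa}. The one cosmetic difference is that you fold $\vec w^{J}_{n,L}$ into the approximate solution $\vec U_{\mathrm{app}}^{J}$, so the initial data error is genuinely small and the equation error contains extra terms involving $w^{J}_{n,L}$, which you handle via the smallness of its Strichartz norm from~\eqref{b norm to 0} and Lemma~\ref{lem: BS}; the paper instead takes $v_{n}^{J}=\sum_{j< J}\vec U^{j}_{\Box,n,\nl}$ alone and places $\vec w^{J}_{n}$ into the free error $\vec w_{0}$ of the perturbation lemma. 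Both bookkeeping choices yield the same conclusion.
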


\begin{rem}
We note that although Theorem~\ref{thm:nonlinbg} is stated only for solutions to a defocusing equation,~\eqref{u eq}, the statement and proof can easily be adapted to focusing-type equations as well. Of course one must take into account the different possible behaviors of the underlying Euclidean equation and thus of the Euclidean profiles, e.g.,  type I and type II blowup, which do not occur in the defocusing setting.  

\end{rem} 

\begin{proof}[Proof of Theorem~\ref{thm:nonlinbg}] 
The proof is again a consequence of Lemma~\ref{lem: pert} (perturbation lemma). For $J \geq 1$, we define
%To distinguish between the Euclidean and hyperbolic profiles, we note that for each fixed $J \in \N$ we can find $J_1, J_2$ so that $J_1 + J_2 = J-1$, where $J_1$ corresponds to the number of linear hyperbolic profiles up to $J$ and $J_2$ corresponds to the number of linear Euclidean profiles up to $J$ in the linear profile decomposition for $\vec u_n(0)$. After relabeling the profiles, we then define 
\EQ{ \label{vnjdef}
\vec v_{n}^J(t):= \sum_{j  =1 }^{J} \vec U^j_{\Box, n, \nl}(t)
}
and we check that the conditions of Lemma~\ref{lem: pert} are satisfied by the pair $\vec u_n(t)$ and $\vec v_n^J(t)$ for large $n, J$. For all $n$, we set $I_n:= [0, s_n)$. 

First we claim that 
\EQ{ \label{gleismall}
\| \glei(v_n^J)\|_{L^1_t(I_n;  L^2(\Hp^3))}  = o_n(1) \mas n \to \infty
}
Since each nonlinear profile solves the equation~\eqref{u eq}, we see that
\ant{
 \glei( v^J_n) &=  (\Box_{\Hp^3} + V)v^J_n + (v_n^J)^5 \\
 & = \bb( \sum_{j=1}^{J} U^{j}_{\Box, n, \nl} \bb)^{5} - \sum_{j=1}^{J} \bb( U^{j}_{\Box, n, \nl} \bb)^{5}.
 }
Expanding out the last line, we obtain a sum of quintic terms of the form 
\begin{equation*}
	U^{j_1}_{\Box, n, \nl}U^{j_2}_{\Box, n, \nl}U^{j_3}_{\Box, n, \nl}U^{j_4}_{\Box, n, \nl}U^{j_5}_{\Box, n, \nl}
\end{equation*}
where at least two of the profiles are distinct. Without loss of generality, let $j_{1}$ and $j_{2}$ denote indices corresponding to distinct profiles. We then have
\begin{equation} 
\begin{aligned}
	& \hskip-2em
	\nrm{U^{j_1}_{\Box, n, \nl}U^{j_2}_{\Box, n, \nl}U^{j_3}_{\Box, n, \nl}U^{j_4}_{\Box, n, \nl}U^{j_5}_{\Box, n, \nl}}_{L^{1}_{t}(I_{n}; L^{2}(\bbH^{3}))} \\
	\aleq & \nrm{U^{j_{1}}_{\Box, n, \nl} U^{j_{2}}_{\Box, n, \nl}}_{L^{\frac{5}{2}}_{t}(I_{n}; L^{5}(\bbH^{3}))} \prod_{k=3}^{5} \nrm{U^{j_{k}}_{\Box, n, \nl}}_{S(I_{n})}.
\end{aligned}
\end{equation}
By the hypothesis~\eqref{boundedS} and the fact that all Euclidean profiles have bounded scattering norm, we have $\limsup_{n \to \infty} \nrm{U^{j_{k}}_{\Box, n, \nl}}_{S(I_{n})} < \infty$ for $k = 3,4,5$. Hence, to prove~\eqref{gleismall}, it suffices to show that for each fixed $J$ and distinct $1 \leq j_1, j_2 < J$, we have 
\EQ{ \label{525} 
 \| U^{j_1}_{\Box, n, \nl}U^{j_2}_{\Box, n, \nl}\|_{L^{\frac{5}{2}}_{t}(I_n; L^5(\bbH^{3}))} = o_n(1) \mas n \to \infty.
 }
 The proof of~\eqref{525}  follows from the asymptotic orthogonality of the parameters $\{t_{n, j_1}, h_{n, j_1}, \la_{n, j_1}\}$ and $\{t_{n, j_2}, h_{n, j_2}, \la_{n, j_2}\}$, and one must consider separately several different cases, much as in the proof of the orthogonality of the free energy of the linear profiles; see the proof of Claim~\ref{claim: o en}, \cite[Proof of Lemma 5.4(ii)]{IPS} or \cite[Proof of Lemma~$4.1$]{BG} for the version of this argument in the Euclidean case.
 
 Next, we need to show that 
 \EQ{ \label{boundedSJ}
  \limsup_{n \to \infty}  \| \sum_{j  =1 }^{J} U^j_{\Box, n, \nl}  \|_{S(I_n)} \le C < \infty
  }
  \textit{uniformly in $J$}. This follows from the small data theory,  the orthogonality of the energy of the linear profiles at time $t=0$, and the definition of the nonlinear profiles. The point is that we can divide the sum into two parts, one which only has indices $1 \le j <J_0$ and the other over $J_0 \le j  <J$. This division is performed by examining the $\HH$ norm, by noting that the orthogonality of energy~\eqref{o en} allows us to find $J_0>0$  so that 
  \EQ{
   \limsup_{n \to \infty}  \sum_{J_0  \le j < J} \| \vec U_{\Box, n, L}(0)\|_{\HH}^2  < \de_0^2
   }
   where $\de_0 > 0$ is a small number  as in the Proposition~\ref{small data}.  Using again the orthogonality of parameters~\eqref{o pa} as in the proof of~\eqref{525}, along with Proposition~\ref{small data} (small data theory) as well as  the definition of the nonlinear profiles, we now obtain
\begin{align*}
   \limsup_{n \to \infty} &  \| \sum_{1 \leq j \leq J} U^j_{\Box, n, \nl} \|_{S(I_n)}^5  
   = \limsup_{n \to \infty}  \sum_{1 \leq j \leq J} \|U^j_{\Box, n, \nl}\|_{S(I_n)}^5 \\
       \leq & \limsup_{n \to \infty}  \sum_{1 \leq j  < J_{0} } \|U^j_{\Box, n, \nl}\|_{S(I_n)}^5  
       		+ C \limsup_{n \to \infty}  \sum_{J_0  \le j <J} \| \vec U_{\Box, n, L}(0) \|_{\HH}^5 
\end{align*}
  with a constant $C$ that is absolute. Using the assumption~\eqref{boundedS} and the fact that all Euclidean profiles have bounded $S$-norm on the first $J_0 - 1$ profiles  gives~\eqref{boundedSJ} uniformly in $J$. Essentially the  same argument works for the $L^{\infty}_t( I_n; \HH)$ norm of the approximate solution $\vec v_n^J$. 
  
  Finally, we note that we can choose $J$ large enough to ensure that the $S(I_n)$ norm of $S_{V}(t)( \vec u_n(0) - \vec v_n^J(0))$ is small enough to apply Lemma~\ref{lem: pert}. Indeed, due to the definition of the nonlinear profiles,  the initial difference $\vec u_n(0) - \vec v_n^J(0)$ is given by $\vec w_n^J$ plus a small energy error. We know from~\eqref{s norm to 0} that $\vec S_{V}(t) \vec w_n^J$ has small $S(I_n)$ norm for  $n$ and $J>J_1$ large enough, and the smallness of the evolution of the difference $\vec u_n(0)-\vec v_n^J(0)$ follows from Lemma~\ref{lem:nonlin hyp:t=0}. The conclusions of Theorem~\ref{thm:nonlinbg} now follow directly from Lemma~\ref{lem: pert}. 
 \end{proof}

\section{Proof of Theorem~\ref{main} via Kenig-Merle argument} \label{sec:main:pf}
The purpose of this section is to prove the first nonlinear application of the tools developed so far, namely Theorem~\ref{main} concerning global well-posedness and scattering of solutions to the energy-critical defocusing semilinear wave equation without potential 
\begin{equation*} \tag{\ref{u eq:noV}}
	u_{tt} - \lap_{\bbH^{3}} u = - \abs{u}^{4} u.
\end{equation*}
The proof proceeds via the Kenig-Merle concentration compactness/rigidity method, \cite{KM06, KM08},  which is a contradiction argument. 
In Section~\ref{sec: ce}, we show that in the event Theorem~\ref{main} fails, there exists a global-in-time, nonzero solution $\vec{u}_{\ast}(t) \in \HH$ to \eqref{u eq:noV}, which we will referred to as a \emph{critical element}, that does not scatter forward in time and whose trajectory is pre-compact in $\HH$ up to translations. Then in Section~\ref{sec: rig}, we show contradiction by proving that such a solution cannot exist, thereby finishing the proof of Theorem~\ref{main}.

\subsection{Induction on energy: construction of a critical element}\label{sec: ce}
The goal of this section is to prove the following proposition. 
\begin{prop}[Construction of critical element] \label{prop:ce}
Suppose that Theorem~\ref{main} fails. Then there exists a global-in-time, nonzero solution $\vec u_{*}(t) \in \HH$ to \eqref{u eq:noV} (referred to as a \emph{critical element}) which does not scatter in forward time. In particular we have 
\EQ{
\| u_{*}\|_{S([0, \infty))} = \infty.
}
Moreover, there exists a function $h: [0, \infty) \to \GG$ so that the set 
\EQ{ \label{K}
K:= \{  \tau_{h(t)} \vec u_{*}(t) \mid t \in[0, \infty)\}
}
is pre-compact in $\HH$. 
\end{prop}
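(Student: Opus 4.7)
The plan is to execute the standard Kenig--Merle concentration compactness scheme, adapted to the hyperbolic setting. Define the critical energy
\[
  E_{c} := \sup\Big\{ E \geq 0 \;:\; \text{every } \vec{u}(0) \in \HH \text{ with } \E(\vec{u}) \leq E \text{ satisfies } \|u\|_{S(\R)} < \infty \Big\}.
\]
The small data theory (Proposition~\ref{small data}) gives $E_{c} > 0$, while the hypothesis that Theorem~\ref{main} fails yields $E_{c} < \infty$. From the definition of $E_{c}$ combined with a time-translation normalization, I would extract a sequence of data $\{\vec{u}_{n}(0)\} \subseteq \HH$ with $\E(\vec{u}_{n}) \searrow E_{c}$ whose forward evolutions $\vec{u}_{n}(t)$ to~\eqref{u eq:noV} satisfy $\|u_{n}\|_{S([0, T_{n,+}))} \to \infty$, where $T_{n,+} := T_{+}(\vec{u}_{n})$.

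I would then apply the linear profile decomposition (Theorem~\ref{thm: BG}) to $\{\vec{u}_{n}(0)\}$, obtaining linear profiles $\vec{U}^{j}$ with parameters $\{t_{n,j}, h_{n,j}, \la_{n,j}\}$, and pass to the associated nonlinear profiles via Definition~\ref{def:nonlin}. Because $V \equiv 0$, every concentrating Euclidean profile is automatically globally defined with uniformly bounded $S$-norm thanks to Proposition~\ref{prop: e h}(\ref{item: e h:2}) and Theorem~\ref{thm: euc}, while every perturbed or free hyperbolic profile is governed by the same equation~\eqref{u eq:noV}, just translated in space and time. The central claim is that \emph{exactly one} nonlinear profile survives and that it is of hyperbolic type. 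Indeed, if two or more profiles were present, the Pythagorean energy decomposition~\eqref{o en} forces each to carry energy $\leq E_{c} - \de$ for some $\de > 0$, so by the minimality of $E_{c}$ each of them scatters globally with a uniform $S$-norm bound. Combined with the Strichartz smallness of the error~\eqref{s norm to 0}, Theorem~\ref{thm:nonlinbg} then yields $\limsup_{n} \|u_{n}\|_{S([0, T_{n,+}))} < \infty$, a contradiction. The same reasoning rules out both the scenario of no surviving profile and the scenario with only Euclidean profiles. Thus there is a single hyperbolic profile $\vec{U}^{1}$ with $\E(\vec{U}^{1}) = E_{c}$, and the linear error $\vec{w}_{n}^{1}$ tends to zero strongly in $\HH$.

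Passing to a subsequence, I would define the critical element $\vec{u}_{\ast}$ to be the nonlinear evolution of the limiting profile (up to the time shift $t_{n,1} \to t_{\infty} \in [-\infty, \infty]$, with the cases $t_{\infty} = \pm\infty$ handled by scattering absorption using Proposition~\ref{prop: h h}(\ref{item: h h:3})). By construction $\E(\vec{u}_{\ast}) = E_{c}$ and $\|u_{\ast}\|_{S([0, T_{+}(\vec{u}_{\ast})))} = \infty$. A second application of the one-profile reduction, this time to the sequence $\vec{u}_{\ast}(t_{n})$ with $t_{n} \nearrow T_{+}(\vec{u}_{\ast})$, shows that $T_{+}(\vec{u}_{\ast}) = +\infty$ and that, up to passing to a subsequence, $\tau_{h_{n,1}} \vec{u}_{\ast}(t_{n})$ converges in $\HH$; defining $h(t) := h_{n,1}^{-1}$ along such sequences and extending by a standard diagonal/continuity argument yields a function $h : [0, \infty) \to \GG$ for which $K$ in~\eqref{K} is precompact in $\HH$.

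The principal obstacle in this scheme is that equation~\eqref{u eq:noV} lacks a dilation symmetry, so there is no way to normalize the scale of a minimal non-scattering solution, and Theorem~\ref{thm: BG} genuinely produces Euclidean profiles at arbitrarily small scales alongside hyperbolic traveling profiles. The crucial technical input that makes the scheme close is that such concentrating profiles are automatically governed by the globally well-posed defocusing Euclidean equation via Proposition~\ref{prop: e h}, and hence scatter by Theorem~\ref{thm: euc}; this is precisely what excludes any small-scale concentration from the critical element and explains why the resulting compactness is only modulo the translation group $\GG$, not modulo a full symmetry group including scaling as in the Euclidean setting.
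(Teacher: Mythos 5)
Your proposal follows the classic Kenig--Merle scheme of inducting on the conserved nonlinear energy $\E(\vec u)$, whereas the paper inducts on $A := \sup_{t} \nrm{\vec u(t)}_{\HH}$, following Kenig--Merle's later, more robust variant (cf.\ the discussion preceding Proposition~\ref{lem:ce}). The paper explicitly acknowledges that the energy-induction you chose would also work and would be ``slightly simplified,'' so your approach is legitimate; the authors prefer the $L^\infty_t\HH$-induction because it ports to problems where the conserved energy has less pleasant structure. The profile analysis in both routes is otherwise parallel: Euclidean profiles scatter by Proposition~\ref{prop: e h} and Theorem~\ref{thm: euc}, one hyperbolic profile must survive and capture all the mass, and compactness of the trajectory follows from applying the same reduction to $\vec u_*(t_n)$.

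There is, however, a genuine gap in how you deploy the minimality of $E_c$. When you say that two or more surviving profiles would each carry ``energy $\le E_c - \de$'' by the Pythagorean decomposition~\eqref{o en}, you are invoking an orthogonality of the \emph{conserved nonlinear} energy $\E$, but~\eqref{o en} only gives orthogonality of the linear energy $E_V$. In the defocusing case $\E(\vec U^j_{n,\nl}) = E_V(\vec U^j_{n,\nl}(0)) + \tfrac16 \int \abs{U^j_{n,\nl}(0)}^6$, and knowing $E_V(\vec U^j_{n,L}(0)) \le E_V(\vec u_n(0)) - \de$ does not by itself bound the $L^6$ contribution of each profile away from that of $\vec u_n$. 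To make your induction close, you would need in addition the asymptotic orthogonality of the $L^6$ norms of the profiles, i.e.\ a Br\'ezis--Lieb-type statement $\int \abs{u_n(0)}^6 = \sum_j \int \abs{U^j_{n,L}(0)}^6 + \int \abs{w_n^J}^6 + o_n(1)$, which is standard (via the divergence of the parameters $\set{t_{n,j},h_{n,j},\lmb_{n,j}}$) but is \emph{not} part of Theorem~\ref{thm: BG}. The paper sidesteps this entirely: since it inducts on $\sup_t \nrm{\vec u(t)}_\HH$, the minimality argument requires only the free-energy orthogonality, which it applies at a carefully chosen time $\tau^{k_\al}_{n,j_0}$ in~\eqref{oHtau}. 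You should either prove the $L^6$ orthogonality as an extra lemma, or switch to the $\HH$-norm induction as the paper does.

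A smaller imprecision: in your description of the time shift $t_{n,1}\to t_\infty$, the case $t_\infty = +\infty$ is not merely ``absorbed'' --- a hyperbolic profile with $-t_{n,1}\to +\infty$ is by construction forward-scattering, hence cannot be the critical (non-scattering) element. The correct dichotomy, made explicit in the paper's Claim~\ref{nscatprof}, is that the surviving profile has either $t_{n,1}\equiv 0$ or $-t_{n,1}\to -\infty$.
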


The main tool for establishing Proposition~\ref{prop:ce} will be the linear and nonlinear profile decompositions, namely Theorems~\ref{thm: BG} and~\ref{thm:nonlinbg}. Before we begin the proof, however, we point out a simplification due to the absence of a potential $V$. Namely, nonlinear profiles corresponding to stationary (or perturbed hyperbolic) and traveling (or free hyperbolic) are both given by
%\footnote{We remind the reader that $h_{n, j} \equiv \mathrm{Id}$ for all $n$ for stationary (perturbed hyperbolic) profiles.}
\begin{equation*}
	U_{\Box, n, \nl}^{j} = \tau_{h_{n, j}^{-1}} U_{\Box, \nl}^{j}(t),
\end{equation*}
where the limiting profile $U_{\Box, \nl}$ solves the same equation \eqref{u eq:noV}. 
Abusing the terminology a bit, we will refer to both types of profiles as \emph{hyperbolic} and write 
\begin{equation*}
U_{\hyp, n, \nl}^{j}(t) = \tau_{h_{n, j}^{-1}} U_{\hyp, \nl}^{j}(t).
\end{equation*}

To prove Proposition~\ref{prop:ce} we will essentially follow an argument due to Kenig and Merle from~\cite{KM11a} which is a more robust  version  of the induction on energy arguments from~\cite{KM06, KM08}. The main difference is that here we induct on the $L^{\infty}_t \HH$-norm of the solution rather than on the conserved nonlinear energy. This different procedure is useful in situations other than in the energy critical case, or when the conserved energy contains complicated nonlinear terms. We note that the argument can be slightly simplified by just inducting on the conserved nonlinear energy, but we present this more robust technique here to simplify potential further applications of the theory.
In particular, we will follow the argument from~\cite{L13}, which uses this procedure in the case where there are two different types of nonlinear profiles.
%, as is the case here. 
A similar argument is also found in~\cite{IPS}. 

We begin with some notation, following~\cite{KM11a}. Given initial data $(u_0, u_1) \in \HH$ we denote by $\vec u(t) \in \HH$ the solution to~\eqref{u eq} given by Proposition~\ref{small data} on its maximal interval of existence, $I_{\max}(\vec u) = (T_-(\vec u) , T_+(\vec u))$.  For all $A>0$ define 
\ant{
\B(A) := \{ (u_0, u_1) \in \HH \mid  \| \vec u( \cdot)\|_{L^{\infty}_t ([0, T_+( \vec u)); \HH(\Hp^3))}  \le A \}
}
\begin{defn} We will say that $\SC(A)$ holds if for all $(u_0, u_1) \in \B(A)$ one has $T_+(\vec u) = + \infty$ and $\| u \|_{S([0, \infty))}< \infty$. Also, we say that $\SC(A; \vec u)$ holds if $\vec u \in \B(A)$ and one has $T_+(\vec u) = + \infty$ and $\| u \|_{S([0, \infty))}< \infty$.
\end{defn} 

\begin{rem} We note that Theorem~\ref{main} is true in the positive time direction  if and only if $\SC(A)$ holds for all $A>0$. The reduction in the negative time direction is analogous. 
\end{rem}

Now suppose that Theorem~\ref{main} is \emph{false}. By the small data theory, i.e., Proposition~\ref{small data}, there exists a number $A_0>0$ small enough so that $\SC(A_0)$ holds. As we are assuming that Theorem~\ref{main} fails, we can then find a \emph{critical value} $A_C>0$ so that for $A<A_C$, $\SC(A)$ holds and for $A>A_C$, $\SC(A)$ fails. We note that $0< A_0<A_C$.   

We can now reformulate Proposition~\ref{prop:ce} in this new language, i.e., we find a special solution $\vec u$ to~\eqref{u eq:noV} that is an element of $\B(A_C)$ for which $\SC(A_C, \vec u)$ fails under the assumption that Theorem~\ref{main} is false.  
\begin{prop} \label{lem:ce}
Suppose that Theorem~\ref{main} fails and let $A_C>0$ be defined as above. Then there exists a solution $\vec u_*(t) \in \HH$ to \eqref{u eq:noV}, with $\vec u_*(0)\in\B(A_C)$, defined on $[0, \infty)$ so that $\SC(A_C;  \vec u_*)$ fails. In particular we have 
\EQ{
\| u_*\|_{S([0, \infty))} = \infty.
}
Moreover, there exists a function $[0, \infty): \R \to \GG$ so that the set 
\EQ{ \label{K*}
K:= \{  \tau_{h(t)} \vec u_*(t) \mid t \in [0, \infty)\}
}
is pre-compact in $\HH$. 
\end{prop}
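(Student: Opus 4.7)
The plan is to follow the Kenig--Merle concentration compactness strategy adapted to the $L^\infty_t \HH$-size threshold $A_C$, closely along the lines of \cite{KM11a} and its adaptations in the hyperbolic setting in \cite{IPS, L13}. By the very definition of $A_C$, we extract a sequence of initial data $\{\vec u_n(0)\}$ with $\vec u_n(0) \in \B(A_n)$, $A_n \downarrow A_C$, and $\|u_n\|_{S([0, T_+(\vec u_n)))} = +\infty$; by the blow-up criterion \eqref{ftbu}, each $\vec u_n$ fails to scatter forward on its maximal interval. We then apply the linear profile decomposition (Theorem~\ref{thm: BG}) to $\vec u_n(0)$ and pair it, via Definition~\ref{def:nonlin}, with the associated nonlinear profiles. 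Since $V \equiv 0$, only two types of profiles occur: Euclidean (concentrating) and hyperbolic (stationary or traveling, both governed by the same translation-invariant equation \eqref{u eq:noV}).

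Two standing observations immediately reduce the analysis: (i) every Euclidean nonlinear profile is global with a uniform scattering bound $\limsup_n \|U^j_{\euc,n,\nl}\|_{S(\R)} \leq A(\calE_\euc(\vec V^j_\euc(0)))$, by Theorem~\ref{thm: euc} together with Proposition~\ref{prop: e h}(ii); and (ii) the orthogonality \eqref{o en} of the linear energy at $t=0$ controls the free energy of each limiting profile. The central claim is then that the decomposition of $\vec u_n(0)$ contains exactly one non-trivial profile, and that this profile is hyperbolic. Indeed, if all profiles were Euclidean, the nonlinear profile decomposition (Theorem~\ref{thm:nonlinbg}) combined with the uniform scattering bound for Euclidean profiles and the Perturbation Lemma~\ref{lem: pert} would give $\limsup_n \|u_n\|_{S([0,\infty))} < \infty$, contradicting the choice of sequence. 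Hence at least one profile, say $\vec U^1_{\hyp}$, is hyperbolic with $\|U^1_{\hyp,\nl}\|_{S([0,\infty))} = +\infty$. If a second non-trivial profile were present, the orthogonality \eqref{o en} and the fact that the nonlinear energy $\calE$ decomposes asymptotically along profiles would force the $L^\infty_t \HH$-size of $\vec U^1_{\hyp,\nl}$ along its maximal interval of existence to be strictly below $A_C$; the inductive hypothesis $\SC(A)$ for $A < A_C$ then applies, so each limiting nonlinear profile scatters with a uniform $S$-bound, and Theorem~\ref{thm:nonlinbg} together with \eqref{boundedS} again produces $\limsup_n \|u_n\|_{S([0, T_+(\vec u_n)))} < \infty$, a contradiction.

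Thus the decomposition reduces to a single hyperbolic profile, $\vec w_n^1 \to 0$ strongly in $\HH$, and the limiting nonlinear profile $\vec U^1_{\hyp,\nl}$ has infinite $S$-norm on $[0,\infty)$. Using translation invariance to absorb the group elements $h_{n,1}$ and a time shift to normalize $t_{n,1}$, we take $\vec u_\ast := \vec U^1_{\hyp,\nl}$. Lemma~\ref{lem:nonlin hyp:t=0} and the strong convergence of $\vec w_n^1$ guarantee $\vec u_\ast(0) \in \B(A_C)$ and non-triviality of $\vec u_\ast$, while the same reasoning (applied to $\vec u_\ast$ itself) rules out finite-time blow-up, giving $T_+(\vec u_\ast) = +\infty$ and $\|u_\ast\|_{S([0,\infty))} = +\infty$. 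The pre-compactness of the trajectory modulo translations is obtained by repeating the same extraction argument for the sequence $\{\vec u_\ast(t_n)\}$ associated to any $t_n \to +\infty$: since $\|u_\ast\|_{S([t_n,\infty))} = +\infty$ for every $n$, the resulting profile decomposition again collapses to a single hyperbolic profile, producing $\{h_n\} \subseteq \GG$ along which $\tau_{h_n} \vec u_\ast(t_n)$ converges strongly in $\HH$; choosing $h(t) = h_n$ at $t = t_n$ yields the pre-compact set $K$ of \eqref{K*}.

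The main obstacle will be the step converting initial $\HH$-smallness of each non-trivial limiting nonlinear profile into an \emph{$L^\infty_t\HH$-smallness along its entire evolution}, which is exactly what the inductive hypothesis $\SC(A)$ with $A < A_C$ requires. This is the delicate feature of the $L^\infty_t \HH$-induction scheme of \cite{KM11a}: it is handled by iteratively re-applying Theorem~\ref{thm: BG} at the times $s_n$ appearing in \eqref{boundedS} to propagate the orthogonality of $\HH$-size forward along the flow, together with the energy Pythagorean \eqref{o en} and the Perturbation Lemma~\ref{lem: pert} to close the contradiction. Once this is in place, the remaining steps are routine applications of Theorems~\ref{thm: BG} and~\ref{thm:nonlinbg} together with Propositions~\ref{prop: h h} and~\ref{prop: e h}.
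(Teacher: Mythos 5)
Your proposal follows the same overall strategy as the paper: extract a sequence minimizing $\|\vec u_n\|_{L^\infty_t\HH}$ over non-scattering solutions, apply Theorems~\ref{thm: BG} and~\ref{thm:nonlinbg}, use that all Euclidean nonlinear profiles are automatically scattering (Theorem~\ref{thm: euc} plus Proposition~\ref{prop: e h}) to rule out the case with no hyperbolic profile, single out a non-scattering hyperbolic profile via the almost-orthogonality of the $\HH$-norm propagated along the flow, and then invoke small-data theory to force every other profile and the error to vanish.

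One clarification on the step you flag as the main obstacle: the paper does not propagate orthogonality by ``iteratively re-applying Theorem~\ref{thm: BG}'' at later times. Instead it fixes the profiles once, introduces a pigeonhole over auxiliary stopping times $T^{+}_{j,k}$ / $s^k_{n,j}$ to isolate the non-scattering hyperbolic profile $j_0$, and then proves the Pythagorean expansion \eqref{oHtau} directly at the times $\tau^{k_\alpha}_{n,j_0}$ where that profile nearly attains its $L^\infty_t\HH$ supremum, by analyzing cross terms exactly as in Claim~\ref{claim: o en}. At those times the constraint $\leq A_n^2$ on the total norm and the lower bound $\geq A_C^2$ on the $j_0$-th summand together force $\|\vec U^j_{n,\nl}(\tau^{k_\alpha}_{n,j_0})\|_\HH$ to be small for each $j\neq j_0$, and \emph{then} small-data theory upgrades pointwise-in-time smallness to $L^\infty_t\HH$-smallness and hence triviality. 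This is a subtle but important distinction: evaluating the fixed decomposition at the carefully chosen extremal times is what makes the argument close, whereas re-running the linear profile decomposition at a new time would produce new profiles and lose the connection to the original ones. Your high-level description of the remaining steps (normalizing translations since $V\equiv0$, compactness via iterated extraction along $t_n\to\infty$) matches the paper.
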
 
\begin{rem}
We note that Proposition~\ref{lem:ce} is simply a rewording of Proposition~\ref{prop:ce} and therefore it suffices to prove Proposition~\ref{lem:ce}. 
\end{rem}
 \begin{proof}[Proof of Lemma~\ref{lem:ce} and hence also of Proposition~\ref{prop:ce}]
 By the definition of $A_C$ we can find a sequence $A_n \searrow A_C$ and a sequence of initial data $\vec u_n(0) \in \HH$ with solutions $\vec u_n(t)  \in \HH$ to~\eqref{u eq} defined on maximal intervals $I_{\max, n} := (T_-(\vec u_n), T_+( \vec u_n))$ so that 
 \EQ{ \label{uncont}
  \sup_{t \in [0, T_+(\vec u_n))} \| \vec u_n(t) \|_{\HH}  \le A_n, \mand \|u_n \|_{S([0, T_+( \vec u_n))}  = \infty
  }
 By Theorem~\ref{thm: BG} we can find a linear profile decomposition for the sequence $\vec u_n(0)$, 
 \EQ{
 \vec u_n(0) =  \sum_{j < J}  \vec U^j_{n, L}(0) + \vec w_{n,L}^{J}(0),
 } 
and by Theorem~\ref{thm:nonlinbg} we also have a corresponding nonlinear profile decomposition 
 \EQ{
  \vec u_n(t)  = \sum_{j < J}  \vec U^j_{n, \nl}(t) + \vec w_{n, L}^J(t) + \vec \gamma_{n}^J(t)
 }
where $\vec w_{n, L}^J$ and $\vec \gamma_{n}^J$ obeys \eqref{eq:nonlinbg:w-gmm}  on any interval $[0, s_n)$ as in Theorem~\ref{thm:nonlinbg}. The main idea will be to use the minimality of $A_C$ to deduce that there can only be one nonzero profile above, and it must be non-scattering. As all Euclidean profiles necessary scatter, it then follows that the remaining profile is hyperbolic. 

We begin by deducing, using the orthogonality of the free energy in~\eqref{o en}, that there can only be finitely many non-scattering profiles. Indeed, by~\eqref{o en}, there must be an integer $J_0>0$ so that for all $j \ge J_0$ we have $\| \vec U^j_{n, L}(0)\|_{\HH} < \de_0$ where $\de_0>0$ is the small number in Proposition~\ref{small data}. It follows then from Proposition~\ref{small data} that for all $j > J_0$ and $n$ large enough we have 
 \ant{
I_{\max}( \vec U^j_{n, \nl}) = \R \mand \|  \vec U^j_{n, \nl} \|_{L^{\infty}_{t}(\R; \HH)} + \| U^j_{n, \nl}\|_{S(\R)} \le C \| \vec U_{n, L}^j(0)\|_{\HH}  \lesssim \de_{0}
}

Besides this fact, we also know by  Definition~\ref{def:nonlin} that \emph{all} of the nonlinear Euclidean profiles are global and scattering solutions. That is, we have 
\EQ{
\| U^\ell_{\euc, n, \nl}\|_{S(\R)} < C( \|\vec V^\ell_{\euc}(0)\|_{\HH}) \quad  \forall  \ell \in \N
}

However, we claim that there is at least one hyperbolic profile that does not scatter. 
\begin{claim}\label{nscatprof}
Let $J_0 \in \N$ be as above and let $J_1 = J_1(J_0)$ denote the number of hyperbolic profiles with indices  $j<J_0$. Then $J_{1} \geq 1$. Moreover, it is impossible that for all $1 \le j  \le J_1$ we have 
\EQ{ \label{boundedsprof}
\| U^j_{\hyp,  \nl}\|_{S(-t_{n, j}, T_+(\vec U^j_{\hyp, \nl}))}  < \infty
}
where we recall that  $ \vec U^j_{\hyp, n, \nl}(t) =   \tau_{h_{n, j}^{-1}} \vec U^j_{\hyp, \nl}(t -t_{n, j})$. 
\end{claim}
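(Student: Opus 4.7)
The plan is to argue by contradiction in both parts, exploiting the nonlinear profile decomposition (Theorem~\ref{thm:nonlinbg}) together with the two observations recorded just above the claim: every Euclidean nonlinear profile $\vec U^j_{\euc, n, \nl}$ is global and scatters, with $\limsup_n \|U^j_{\euc, n, \nl}\|_{S(\R)} \le C(\E_{\euc}(\vec V^j_{\euc}(0)))$; and every profile with index $j \ge J_0$ is small-data, so that $\|U^j_{\hyp, n, \nl}\|_{S(\R)} \lesssim \delta_0$ by Proposition~\ref{small data}.

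To show $J_1 \ge 1$, I would assume to the contrary that every profile in the decomposition is either Euclidean or indexed by $j \ge J_0$. Then every nonlinear profile is global and scatters, so hypothesis~\eqref{boundedS} of Theorem~\ref{thm:nonlinbg} is satisfied with any choice $s_n \le T_+(\vec u_n)$. The theorem then yields $\limsup_n \|u_n\|_{S([0, s_n))} < \infty$ uniformly in $s_n$. Sending $s_n \nearrow T_+(\vec u_n)$ and combining with the blow-up criterion~\eqref{ftbu} forces both $T_+(\vec u_n) = \infty$ and $u_n$ scattering, contradicting~\eqref{uncont}.

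For the main assertion, I would assume~\eqref{boundedsprof} holds for every $1 \le j \le J_1$ and first upgrade this to a uniform-in-$n$ $S$-norm bound on the limiting profiles. After passing to a subsequence I may assume that for each hyperbolic index $j$ the parameter $-t_{n,j}$ converges in $[-\infty, +\infty]$; combined with~\eqref{boundedsprof}, this forces $T_+(\vec U^j_{\hyp, \nl}) = +\infty$ (otherwise for $n$ large the interval $(-t_{n,j}, T_+)$ would contain the entire forward maximal interval, and the $S$-norm would be infinite by~\eqref{ftbu}), and then $\|U^j_{\hyp, \nl}\|_{S(\R)} < \infty$. Consequently, for any $s_n \le T_+(\vec u_n)$, condition~\eqref{boundedS} is satisfied for every hyperbolic profile, the Euclidean and small-data profiles being scattering automatically. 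Applying Theorem~\ref{thm:nonlinbg} with $s_n \nearrow T_+(\vec u_n)$ then yields a bound on $\|u_n\|_{S([0, s_n))}$ that is uniform in both $n$ and $s_n$; passing to the limit and invoking~\eqref{ftbu} once more gives the required contradiction with~\eqref{uncont}.

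The step I expect to be the main obstacle is the uniform-in-$n$ verification of hypothesis~\eqref{boundedS} when the parameters $t_{n,j}$ are unbounded: one has to check, separately in each of the cases $-t_{n,j} \to -\infty$, $-t_{n,j} \to +\infty$, and $-t_{n,j} \to t_{\infty,j} \in \R$, that the $S$-norm of the \emph{fixed} limiting profile over the moving interval $[-t_{n,j}, s_n - t_{n,j})$ remains controlled by $\|U^j_{\hyp, \nl}\|_{S(\R)}$. This is standard in the concentration-compactness literature (compare~\cite[Section~7]{KM08} and~\cite{KM11a}), but it does require a careful simultaneous diagonalization across all $j \le J_1$ to produce a single extracted subsequence on which all the bounds hold.
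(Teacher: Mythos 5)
Your proposal follows essentially the same route as the paper: both parts of the claim are proved by contradiction, using the nonlinear profile decomposition with the time sequence $s_n = +\infty$ (or, in your phrasing, $s_n \nearrow T_+(\vec u_n)$) to conclude that $\vec u_n$ would be globally defined and scattering, contradicting~\eqref{uncont}. The key facts — Euclidean profiles always scatter, the profiles with $j \ge J_0$ are small-data and hence scatter, and profiles with $-t_{n,j}\to+\infty$ automatically have small forward $S$-norm — are the same in both proofs. The paper is terser and simply takes $s_n = +\infty$ directly, which is cleaner than invoking a limit $s_n\nearrow T_+(\vec u_n)$: the conclusion of Theorem~\ref{thm:nonlinbg} already gives $[0,s_n)\subseteq I_{\max}(\vec u_n)$, so one should not condition $s_n$ on $T_+(\vec u_n)$ but rather deduce $T_+(\vec u_n)=\infty$ from the theorem.

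One imprecision worth flagging: you assert that~\eqref{boundedsprof} plus $T_+(\vec U^j_{\hyp,\nl})=+\infty$ gives $\|U^j_{\hyp,\nl}\|_{S(\R)}<\infty$, and then control the moving intervals by this full-line norm. This is not quite what follows, and can be false in the case $t_{n,j}\equiv 0$: the hypothesis~\eqref{boundedsprof} gives only $\|U^j_{\hyp,\nl}\|_{S([0,\infty))}<\infty$ and says nothing about the backward $S$-norm; indeed $T_-(\vec U^j_{\hyp,\nl})$ could even be finite, since Theorem~\ref{main} (scattering for the potential-free equation) is precisely what is being proven here and cannot be invoked. What is actually needed — and available — is the one-sided bound $\limsup_n \|U^j_{\hyp,\nl}\|_{S([-t_{n,j},\infty))}<\infty$, which is exactly what hypothesis~\eqref{boundedS} requires with $s_n=+\infty$: in the case $t_{n,j}\equiv 0$ it is $\|U^j_{\hyp,\nl}\|_{S([0,\infty))}<\infty$; in the case $-t_{n,j}\to+\infty$ the norm tends to $0$ by the small-data theory applied near $t=-t_{n,j}$; and only in the case $-t_{n,j}\to-\infty$ does one need (and have, by combining backward small-data scattering with~\eqref{boundedsprof}) the full $\|U^j_{\hyp,\nl}\|_{S(\R)}<\infty$. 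The "simultaneous diagonalization" you worry about in the final paragraph is moot — the asymptotic behavior of each $t_{n,j}$ along the working subsequence is already fixed as part of the linear profile decomposition.
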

\begin{proof}[Proof of Claim~\ref{nscatprof}]
First, if $J_{1} = 0$ then it follows that all profiles are Euclidean and hence scattering. Therefore, we can apply the conclusions of Theorem~\ref{thm:nonlinbg} with the time sequence $s_n  = + \infty$. It follows that $\vec u_n(t)$ is defined for all positive times for $n$ large enough and 
\ant{
 \limsup_{n \to \infty} \| u_n\|_{S([0, \infty))}  \le C < \infty
 }
which contradicts the definition of the $\vec u_n$ in~\eqref{uncont}.

Now assume that $J_{1} \geq 1$, and let $1 \leq j \leq J_{1}$. Note that~\eqref{boundedsprof} holds for all profiles $\vec U^j_{\hyp, \nl}$ for which $-t_{n, j} \to+ \infty$. Hence we have to prove that~\eqref{boundedsprof} fails for at least one profile with either $t_{n, j} = 0$ for all $n$ or in the case where $-t_{n, j} \to - \infty$.  If~\eqref{boundedsprof} holds for all of the hyperbolic profiles then $T_+(\vec U^j_{\hyp, \nl})  = + \infty$ for all $j \in \N$ and then we again can apply the conclusions of Theorem~\ref{thm:nonlinbg} with the time sequence $s_n  = + \infty$ and derive a contradiction, which proves the claim. 
\end{proof}

Given the conclusions of Claim~\ref{nscatprof} we can find an integer $ \ti J$ and relabel and rearrange the profiles, so that each profile for $1 \le j \le \ti J$ is hyperbolic and satisfies 
\ant{
\| U^j_{\hyp, \nl} \|_{S((-t_{n, j}, T_+( \vec U^j_{\hyp, \nl}))} =  + \infty \mfor 1 \le j \le \ti J
}
and every hyperbolic profile, $\vec U^j_{\hyp, \nl}$ with index $j> \ti J$ satisfies  $T_+(\vec U^j_{\hyp, \nl}) = \infty$ and scatters in forward time. Note again that all of the Euclidean nonlinear profiles exist globally and scatter in both time directions for $n$ large enough. We also note that for each index $j  \le  \ti J$ we can assume that either $t_{n, j} = 0$ for all $n$ or $- t_{n, j} \to - \infty$. 

Now, we define  sequences of times, $T^+_{j, k}$ and $s^n_{j, k}$ by 
\EQ{
&T_{j, k}^+ := \begin{cases} T_+(\vec U^j_{\hyp, \nl}) - \frac{1}{k} &\mif T_+(\vec U^j_{\hyp, \nl}) < \infty \\ k &\mif T_+(\vec U^j_{\hyp, \nl}) = + \infty\end{cases} \\
& s^k_{n, j}  - t_{n, j}  = T^+_{j, k} \\
& s^{k}_n:= \min_{1 \le j \le  \ti J} s^{k}_{n, j}
}
The point here is that for $n$ large enough, for all $j \ge 1$  and for each $k \in \N$ we have 
\ant{
s^n_k - t_{n, j} < T_+(\vec U^j_{\hyp, \nl}) \, \mand \| U^j_{\hyp, \nl}\|_{S(-t_{n, j}, s^n_k - t_{n, j} )}< \infty
}
and thus the sequence $s_{n}^k$ satisfies the conditions for the nonlinear profile decomposition, Theorem~\ref{thm:nonlinbg} and again with $J_1 + J_2 = J-1$ the decomposition 
\EQ{
  \vec u_n(t)  = \sum_{j  \le J_1}  \vec U^j_{\hyp, n, \nl}(t) + \sum_{\ell \le J_2} \vec U^\ell_{\euc, n, \nl}(t) + \vec w_{n, L}^J(t) + \vec \gamma_{n}^J(t)
}
holds on the interval $[0, s_n^k)$ for each $k$ with 
\ant{
\lim_{J \to \infty} \limsup_{n \to \infty} \left( \| w_{n, L}^J\|_{S([0, s_n^k))} + \| \ga_{n}^J\|_{S([0, s_n^k))} + \| \vec \ga_{n}^J\|_{L^{\infty}([0, s_n^k); \HH)}  \right) = 0
}
Next, note that  there exists $j_0$ with $1 \le j_0 \le  \ti J$ and a subsequence $k_{\al}$ so that we have 
\EQ{
s_{n}^{k_\al} = s_{n, j_0}^{k_{\al}} \mfor \, \, \textrm{all}   \, \, \al
}
Indeed this follows from the pigeonhole principle, due to the fact that $\ti J$ is finite. This allows us to single out the  profile $\vec U^{j_0}_{\hyp, \nl}$ for closer examination.  By definition of $\ti J$, and the fact  that either $t_{n, j} = 0$ for all $n$ or $- t_{n, j} \to - \infty$  we have $0 \ge -t_{n, j}$ for $n$ large enough and
\EQ{
 \| U^{j_0}_{\hyp, \nl}\|_{S([0, T_+(\vec U^{j_0}_{\hyp, \nl})))}  = \infty
 }
 Now, by the definition of $A_C$ we then have 
 \EQ{ \label{A2}
 A^2:= \sup_{t \in [0, T_+(\vec U^{j_0}_{\hyp, \nl}))} \| \vec U^{j_0}_{\hyp,\nl}(t) \|_{\HH}^2  \ge A_C^2
 }
 and by the definition of $T^+_{j_0, k}$, 
 \EQ{
 A_k^2 := \sup_{t \in [0, T^+_{j_0, k})} \| \vec U^{j_0}_{\hyp,\nl}(t) \|_{\HH}^2 \to A^2 \mas k \to \infty
 }
For each $k$ we can find a time $T_{j_0, k} \in [0, T^+_{j_0, k}]$ so that 
 \ant{
 A_k^2 =  \| \vec U^{j_0}_{\hyp,\nl}(T_{j_0, k}) \|_{\HH}^2
 }
 and  for each $n$ we denote by $\tau^k_{n, j_0}$ the time so that 
 \ant{
  \tau_{n, j_0}^k  - t_{n, j_0} = T_{j_0, k}
  }
  Note that for $n$ large enough and $k_{\al}$ as before, we have $0 \le \tau_{n, j_0}^{k_\al} \le s^{k_\al}_{n, j_0} = s_n^{k_\al}$. Hence the nonlinear profiles are all defined at the times $\tau_{n, j_0}^{k_{\al}}$.  

 Next, we claim the following almost orthogonality for each fixed $k_{\al}$ and large $J$ at the time $\tau_{n, j_0}^{k_{\al}}$: 
 \EQ{ \label{oHtau}
 \|  \vec u_n( \tau_{n, j_0}^{k_{\al}}) \|_{\HH}^2  = \sum_{1 \le j < J} \| \vec U^j_{n,  \nl}( \tau_{n, j_0}^{k_{\al}}) \|_{\HH}^2 
 +  \| \vec w_{n, L}^J(  \tau_{n, j_0}^{k_{\al}}) \|_{\HH}^2 + o_{n, J}
 } 
 where $\lim_{J \to \infty} \limsup_{n \to \infty} o_{n, J} = 0$. The proof of~\eqref{oHtau} again boils down to the orthogonality of the parameters as in the proof of~\eqref{o en} and various cases must be considered. The initial step is to first split the sum into two pieces $1 \le j \leq J_0$ and $ J_0 < j < J$ where $J_0$ is chosen large enough to ensure that the sum of the energies of all profiles after $J_0$ is very small.  Indeed for each $\e >0$ we can find $J_0$ large so that 
 \ant{
   \sum_{j > J_0} \| \vec U^j_{n, \nl}( \tau_{n, j_0}^{k_{\al}}) \|_{\HH}^2 < \e
   }
 This is a simple consequence of~\eqref{o en}, the definition of the nonlinear profiles, and Proposition~\ref{small data}. For the first $J_0$ profiles, one cannot simply rely on the triangle inequality and we must consider the cross terms. Indeed, it suffices now to show that for all distinct $j, \ell  \leq J_0$ we have 
 \EQ{ \label{crossterms}
  &\ang{  \vec U^j_{n, \nl}(  \tau_{n, j_0}^{k_{\al}}) \mid   \vec U^{\ell}_{n, \nl}( \tau_{n, j_0}^{k_{\al}})}_{\HH}  = o_n(1) \mas n \to \infty\\
  & \ang{ \vec U^j_{n, \nl}(  \tau_{n, j_0}^{k_{\al}}) \mid   \vec w_{n, L}^J(  \tau_{n, j_0}^{k_{\al}})}_{\HH} = o_n(1) \mas n \to \infty
  }
  The proof of cross terms is similar to the proof of~\eqref{o en} and involves analyzing different cases based on how the parameters diverge from each other and whether the profiles are hyperbolic or Euclidean. The only new ingredient (as compared to the proof of~\eqref{o en})   is the definition of the nonlinear profiles and we thus omit the details and  refer the reader to~\cite[Proof of $(3.22)$]{KM11a} for the details of an analogous  argument in the Euclidean case.

 From~\eqref{oHtau} we can deduce that  
% {(maybe using the letter $A$ for both of these constants is a little confusing after all, because they appear on the same line below and mean different things; on the other hand I got confused only after thinking harder about it so I have no strong opinions)}
\ant{
 A_n^2 \ge A_{k_{\al}}^2 + o_{n, J}
 }
 Letting $n \to \infty$ and then $J \to \infty$ yields $A_C^2 \geq A_{k_{\al}}^2$. Then taking $k_{\al} \to \infty$ we have $A_C^2  \ge A^2$. Combining this last point with~\eqref{A2} then shows that 
 \ant{
 A = A_C
 }
 From this we can immediately conclude that all of the profiles other that $\vec U^{j_0}_{\hyp, \nl}$ must be identically $= 0$ as well as $\|\vec w_{n}^J \|_{\HH} \to 0$  as $n \to \infty$ for each $J$. To see this suppose that some $j \neq j_0$ corresponds to a nonzero profile, $\vec U^j_{n, \nl}$. For any $\e>0$, choose $k_{\al}$ large enough so that 
 $\abs{A_C^2 - A_{k_{\al}}^2}< \e$. Using~\eqref{oHtau} we then have 
 \EQ{
 A_n^2 \ge A^2_C - \e + \| \vec U^{j}_{n, \nl}(\tau_{n, j_0}^{k_{\al}})\|_{\HH}^2 + o_{n, J}
 }
 Taking $n$ and $J$ large and $\e$ small we can make $\| \vec U^{j}_{n, \nl}(\tau_{n, j_0}^{k_{\al}})\|_{\HH}^2$ arbitrarily small. By the small data theory, we can then ensure that the evolution $ \sup_{t \in \R} \| \vec U^{j}_{n, \nl}\|_{\HH}$ is arbitrarily small, which then will imply that the associated limiting profile is identically zero. A similar argument shows the vanishing of the error $\vec w_{n}^J$ in the energy space. Therefore, there is only one nonzero profile and it is hyperbolic. 
 
 We now define $\vec u_*(t) :=  \vec U^{j_0}_{\hyp, \nl}(t)$.  One can apply the now standard arguments from~\cite[Proof of Proposition~$4.2$]{KM06, KM08} to deduce the compactness property~\eqref{K*}, and we omit many details below. The most relevant reference is perhaps~\cite[Proof of Lemma~$2.25$ and Proof of Corollary~$2.26$]{NakS}. Indeed the first order of business is the define the translation parameter $h(t)$. Indeed, we claim that one can find $h(t): [0, T_+(\vec u_*)) \to \GG$ so that for all $\e>0$ there exists $R(\e)$ so that 
 \EQ{ \label{NS1}
 \| \tau_{h(t)} \vec u_*(t,  \cdot)\|_{\HH( \bfd_{\Hp^3}(x,  \zero) \ge R(\e))}^2 < \e
 }
 for all $t  \in [0, T_+(\vec u_*))$. The proof of~\eqref{NS1} is precisely ~\cite[Proof of Lemma $2.25$]{NakS} adapted to the hyperbolic space setting. From~\eqref{NS1} it follows from the exact argument from~\cite[Proof of Corollary $2.26$]{NakS} that the trajectory 
 \EQ{
 K_+:= \{  \tau_{h(t)} \vec u_*(t) \mid t \in [0, T_+(\vec u_*)) \}
 } 
 is pre-compact in  $\HH$.   Finally, it remains to show that in fact we have $T_+(\vec u_*) = + \infty$.  For this argument, we refer the reader to~\cite[p. $31$-$32$]{L13} where an identical argument can be applied. The idea in all of the omitted arguments above is to apply linear profile decompositions to sequences $\vec u_*(t_n)$, where $t_n \to T_+(\vec u_*)$ using  the fact that  $\vec u_*$ has infinite Strichartz norm along with the minimality of the critical energy $A_C$. 
 This completes the proof of Lemma~\ref{lem:ce}. 
  \end{proof}

%%%%%%%%%%%%%%%%%%%%%%%%%%%%%%%%%%%%%%%%%%%%%%%%%%%%%%%%%%%%%%%%%%%%%%%%%%%%%%%%%%

\subsection{Rigidity}\label{sec: rig}

In this section we complete the proof of Theorem~\ref{main} by showing that the critical element $\vec u_*(t)$ constructed in Proposition~\ref{prop:ce} cannot exist. We will achieve this by proving the following rigidity result for solutions to~\eqref{u eq} that have pre-compact trajectories in $\HH$ modulo translation symmetries. 
\begin{prop}[Rigidity of critical element] \label{rig} Let $\vec u(t) \in \HH$ be a global-in-forward time solution to~\eqref{u eq:noV}. Suppose in addition that there exists a function $h: [0, \infty) \to \GG$ so that the trajectory 
\begin{equation} \label{eq:rig:trj}
K:=  \left\{  \tau_{h(t)} \vec u(t) \mid t \in [0, \infty) \right\}
\end{equation}
is pre-compact in the energy space $\HH$. Then $\vec u(t) \equiv (0, 0)$. 
\end{prop}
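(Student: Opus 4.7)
The plan is to combine a Morawetz-type spacetime estimate with the pre-compactness of the trajectory to force the conserved energy of $\vec u$ to vanish. As the starting point I would establish the global bound
\begin{equation*}
\int_{0}^{\infty}\!\int_{\bbH^{3}} |u(t,x)|^{6}\,\mu(\ud x)\,\ud t \;\lec\; \mathcal{E}(\vec u(0)),
\end{equation*}
which is the potential-free specialization of the Morawetz-type estimates deferred to Section~\ref{sec:morawetz}. The derivation uses a radial Morawetz multiplier of the Ionescu--Pausader--Staffilani type~\cite{IPS} (of the form $Xu + \tfrac12(\operatorname{div} X) u$ with $X = a(r)\rd_{r}$), whose bulk term controls $|u|^{6}$ with a strictly positive coefficient that remains uniformly bounded below thanks to the volume factor $\sinh^{2}r$ on $\bbH^{3}$; this is the feature that lets one obtain an \emph{unweighted} $L^{6}_{t,x}$ estimate, in contrast to $\bbR^{3}$. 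The boundary terms are controlled by $\mathcal{E}(\vec u(0))$ via Cauchy--Schwarz, and the defocusing sign of the nonlinearity ensures positivity of the power term.

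Granting this $L^{6}_{t,x}$ bound, I would pigeonhole: for any fixed $T_{0}>0$, the integrability of $t \mapsto \int |u(t)|^{6}\,\mu(\ud x)$ on $[0,\infty)$ yields a sequence $t_{n}\to\infty$ with
\begin{equation*}
\int_{t_{n}-T_{0}}^{t_{n}+T_{0}}\!\int_{\bbH^{3}} |u(t,x)|^{6}\,\mu(\ud x)\,\ud t \;\longrightarrow\; 0.
\end{equation*}
By pre-compactness of $K$ from~\eqref{eq:rig:trj}, after extracting a subsequence $\tau_{h(t_{n})}\vec u(t_{n})\to(U_{0},U_{1})$ in $\HH$. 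Setting $\vec w_{n}(s):=\tau_{h(t_{n})}\vec u(t_{n}+s)$, each $\vec w_{n}$ is again a solution to \eqref{u eq:noV}, since $\Delta_{\bbH^{3}}$ commutes with $\tau_{h(t_{n})}$, and $\vec w_{n}(0)\to(U_{0},U_{1})$ in $\HH$. Let $\vec v(s)$ denote the nonlinear evolution of $(U_{0},U_{1})$; by the local Cauchy theory (Proposition~\ref{small data}), $\vec v$ is defined on some interval $[-T_{0},T_{0}]$ with finite $S$-norm there. The perturbation lemma (Lemma~\ref{lem: pert}) then gives $\vec w_{n}\to\vec v$ in $L^{\infty}_{s}([-T_{0},T_{0}];\HH)$, and the Sobolev embedding $H^{1}(\bbH^{3})\hookrightarrow L^{6}(\bbH^{3})$ upgrades this to $w_{n}\to v$ in $L^{6}_{s,x}([-T_{0},T_{0}]\times\bbH^{3})$. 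Combining this with the previous display yields
\begin{equation*}
\int_{-T_{0}}^{T_{0}}\!\int_{\bbH^{3}} |v(s,x)|^{6}\,\mu(\ud x)\,\ud s \;=\; 0,
\end{equation*}
so $v\equiv 0$ on $[-T_{0},T_{0}]$, and in particular $(U_{0},U_{1})=\vec v(0)=(0,0)$. Continuity of $\mathcal{E}$ on $\HH$ together with energy conservation forces $\mathcal{E}(\vec u(0))=\lim_{n}\mathcal{E}(\tau_{h(t_{n})}\vec u(t_{n}))=\mathcal{E}(0,0)=0$, and since the equation is defocusing this implies $\vec u(0)=(0,0)$; uniqueness gives $\vec u\equiv 0$.

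The main technical obstacle is the derivation of the unweighted $L^{6}_{t,x}$ Morawetz bound: on $\bbR^{3}$ the standard multiplier only produces the $|x|^{-1}$-weighted integral $\int\!\!\int |u|^{6}/|x|$, and the upgrade to an unweighted spacetime bound genuinely exploits the hyperbolic volume growth. Selecting the right profile $a(r)$ and verifying positivity of the resulting bulk term, together with the energy bound on the boundary terms, is the delicate content; once in hand, the pigeonhole--perturbation argument above is routine. A secondary point of care is that the perturbation lemma requires convergence in $\HH$ only at $s=0$, so no compactness modulo translations of $\vec v$ itself is needed for the approximation step---the critical-element compactness is used only to extract the limit $(U_{0},U_{1})$.
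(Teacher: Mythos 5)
Your overall route is sound and genuinely different from the paper's. Both proofs start from the same unweighted Morawetz bound $\nrm{u}_{L^{6}_{t,x}(\bbR_{+}\times\bbH^{3})}^{6}\aleq\E(\vec u)$, but you then proceed by "soft" compactness: pigeonhole small $L^6$ windows, extract a strong limit along $t_n\to\infty$, and show via the perturbation lemma and Sobolev that the limit has vanishing $L^6_{t,x}$, hence is zero, after which translation-invariance and continuity of $\E$ force $\E(\vec u)=0$. The paper instead runs a quantitative bootstrap: using a mollifier $Q_{M}$ that commutes with $\tau_{h}$, it splits $u = Q_{M}u + (1-Q_{M})u$, uses pre-compactness of $K$ to make $(1-Q_{M})u(t)$ small in $L^{6}_{x}$ \emph{uniformly in $t$}, interpolates $Q_{M}u$ into $L^{8}_{t,x}$ and chooses a late starting time $T$, then closes a Strichartz continuity argument to show $\nrm{u}_{S((T,\infty))}<\infty$, i.e., the solution scatters, which is incompatible with pre-compactness unless $\vec u\equiv0$. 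The paper's version is more quantitative (it produces a global Strichartz bound), whereas yours avoids any Strichartz bookkeeping beyond the perturbation lemma; both land on the same final energy-vanishing step. As you note, the crucial input in either case is that $Q_{M}$ (resp.\ the $H^{1}\hookrightarrow L^{6}$ embedding) interacts well with the translation group, so the compactness modulo $h(t)$ can be exploited without tracking $h(t)$ explicitly.

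There is one genuine, though easily repaired, gap: you fix $T_{0}$ \emph{before} performing the pigeonhole and extraction, and only afterward invoke the local Cauchy theory to assert that the limiting data $(U_{0},U_{1})$ generates a nonlinear solution $\vec v$ on $[-T_{0},T_{0}]$ with finite $S$-norm. But the local existence time for $\vec v$ depends on $(U_{0},U_{1})$, which in turn depends on the $t_{n}$ you found for that particular $T_{0}$; the quantifiers are circular as written. Two standard fixes. (i) Reverse the order: first pick any $t_{n}\to\infty$, extract $(U_{0},U_{1})\in\overline{K}$, let $T_{0}>0$ be a local existence time for $\vec v$ on $[-T_{0},T_{0}]$, and then use the stronger fact (immediate from $L^{1}$-integrability of $t\mapsto\int|u(t)|^{6}\mu(\ud x)$) that $\int_{t_{n}-T_{0}}^{t_{n}+T_{0}}\!\int|u|^{6}\to 0$ for this fixed $T_{0}$ along the same $t_{n}$. (ii) Alternatively, invoke uniform local well-posedness on the compact set $\overline{K}$: by continuity of the solution map, there is a $T_{0}>0$ and an $A<\infty$ such that every initial datum in $\overline{K}$ generates a solution on $[-T_{0},T_{0}]$ with $S$-norm $\le A$, and then your original ordering of quantifiers is legitimate. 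Also note, for fix (i) and for the perturbation step, that you need $t_{n}>T_{0}$ so that $w_{n}(s)=\tau_{h(t_{n})}\vec u(t_{n}+s)$ is defined on $[-T_{0},T_{0}]$; this holds eventually since $t_{n}\to\infty$. With either repair the rest of your argument (Sobolev upgrade to $L^{6}_{s,x}$, $v\equiv 0$, $\E(\vec u)=0$, defocusing sign) goes through as stated.
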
 

The key ingredient of the proof of Proposition~\ref{rig} is the following \emph{Morawetz-type} estimate, which is of the same type as those in \cite{IS, IPS, ShenS14, Shen14}.
\begin{lem}[Morawetz estimate] \label{lem:mwtz}
Given an interval $J \subseteq \bbR$, let $\vec u(t)$ be a solution to~\eqref{u eq:noV} in the class $C_{t} ( J ; \calH)$. Then $u \in L^{6}(J \times \bbH^{3})$ and satisfies
\begin{equation*} %\label{eq:rig:mwtz1}
	\nrm{u}_{L^{6}(J \times \bbH^{3})}^{6} \leq C \calE(\vec u).
\end{equation*}
\end{lem}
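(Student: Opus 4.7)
My plan is to establish the estimate by the classical multiplier method, using a radial vector-field multiplier on $\Hp^3$ of the same type as those used by Ionescu--Pausader--Staffilani~\cite{IPS} for NLS and Shen--Staffilani~\cite{ShenS14} for the shifted wave equation. By a density argument based on the local Cauchy theory (Proposition~\ref{small data}), it suffices to consider data in $C_0^\infty\times C_0^\infty(\Hp^3)$, for which the corresponding solution is smooth and sufficiently decaying on any compact time interval to justify the integrations by parts below; the general finite-energy case then follows by passing to the limit.

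The core step is to derive the Morawetz identity obtained by multiplying the equation $u_{tt} - \Delta_{\Hp^3} u + |u|^{4} u = 0$ by
\begin{equation*}
  Xu := A(r)\,\rd_r u + B(r)\,u,
\end{equation*}
for suitable radial functions $A,B\colon[0,\infty)\to\R$ (with $A(0)=0$ to avoid an origin singularity), and integrating over $[t_0,t_1]\times\Hp^3$ with $[t_0,t_1]\Subset J$. In geodesic polar coordinates, where $\Delta_{\Hp^3}=\rd_r^2+2\coth r\,\rd_r+\sinh^{-2}r\,\Delta_{\Sp^2}$ and $\mathrm{dvol}=\sinh^2 r\,dr\,d\sigma(\omega)$, integration by parts in $r$ and $t$ produces a spacetime identity of the schematic form
\begin{equation*}
  \Bigl[\,\brk{u_t,\,Xu}_{L^2(\Hp^3)}\,\Bigr]_{t_0}^{t_1}
  \,=\, \int_{t_0}^{t_1}\!\!\int_{\Hp^3}\!\bigl(\alpha\,u_r^2 + \beta\,\tfrac{|\nb_{\omg}u|^2}{\sinh^{2}r} + \gamma\,u_t^2 + \delta\,|u|^6 + \varepsilon\,u^2\bigr)\,\mathrm{dvol}\,dt,
\end{equation*}
where $\alpha,\beta,\gamma,\delta,\varepsilon$ are explicit functions of $A,B$, and their first two radial derivatives (weighted by the metric coefficients).

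The heart of the argument is to choose $A,B$ so that: (i) each of $\alpha,\beta,\gamma,\varepsilon$ is pointwise nonnegative in $r$; (ii) $\delta(r)\geq c_0>0$ uniformly on $[0,\infty)$; and (iii) $A$ and $B$ are bounded in $r$ so that, by Cauchy--Schwarz and Poincar\'e's inequality on $\Hp^3$ (available because of the spectral gap $\sigma(-\Delta_{\Hp^3})=[1,\infty)$), the boundary term on the left is controlled by $C\,\calE(\vec u)$ uniformly in $t_0,t_1$. What makes (ii) available on $\Hp^3$, in contrast to $\R^3$ where a purely radial multiplier only yields a weighted Morawetz $\int |u|^6/|x|$, is the exponential volume growth of concentric spheres: the coefficient of $|u|^6$ is proportional to $A'(r)+2A(r)\coth r$, so a choice like $A(r)=\tanh r$ together with $B(r)=\tfrac{1}{2}(A'+2A\coth r)$ gives $\delta(r)\gtrsim \mathrm{sech}^{2}r+2\gtrsim 1$ on all of $[0,\infty)$. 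Dropping the nonnegative gradient and $u^2$ terms on the right yields $c_0\int_{t_0}^{t_1}\!\!\int_{\Hp^3}|u|^6\,\mathrm{dvol}\,dt \leq C\,\calE(\vec u)$, and letting $[t_0,t_1]\to J$ and using monotone convergence finishes the proof.

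The main technical obstacle is precisely the algebraic verification of (i)--(iii) for a \emph{single} pair $(A,B)$: making the gradient coefficients $\alpha,\beta$ and the $u_t^2$ coefficient $\gamma$ nonnegative while simultaneously keeping $\delta$ bounded below by a positive constant forces delicate growth restrictions on $B$ and its derivatives, and the lower-order remainder $\varepsilon u^2$ has to be absorbed using a Hardy-type inequality adapted to the hyperbolic measure. This balancing is exactly the content of the multiplier constructions of~\cite{IPS,ShenS14}, and I would adopt an analog of their specific choice, verifying the resulting identity term-by-term in the coordinate expression above.
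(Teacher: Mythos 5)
Your proof is correct and follows essentially the same multiplier method as the paper, which establishes this lemma as a special case of the general Morawetz inequality in Lemma~\ref{lem:mwtz-general}. The only real difference is the concrete choice of multiplier: the paper takes $A(r)=\bfa_r$ where $\bfa$ is the radial function solving $\Delta_{\bbH^3}\bfa = 1$ (so that the Green's identity step cancels the $u_t^2$ term with coefficient $\tfrac12\Delta\bfa=\tfrac12$), while you take $A(r)=\tanh r$ directly and choose $B=\tfrac12(A'+2A\coth r)$ to kill the $u_t^2$ coefficient by hand. Both are bounded radial functions vanishing at the origin, and both yield manifestly nonnegative coefficients together with a $|u|^6$ coefficient bounded below. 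The paper's choice is slightly more structured and is set up to work simultaneously in all $d\geq 2$ and with a repulsive potential (this is used for Lemma~\ref{lem:mwtz:V} and for the local energy decay in Lemma~\ref{lem:ILED}); for the present lemma ($d=3$, $V=0$), your explicit $\tanh r$ works just as well. One small simplification you could make: with your particular choice, one computes $\Delta B = -3\,\mathrm{sech}^4 r \leq 0$, so the zeroth-order remainder $-\tfrac12 u^2\Delta B = \tfrac{3}{2}\mathrm{sech}^4 r\,u^2$ is already nonnegative and no Hardy-type absorption is actually required; the hedge in your final paragraph is unnecessary for the multiplier you wrote down.
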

We defer the proof of Lemma~\ref{lem:mwtz} until Section~\ref{sec:morawetz}. In what follows, we assume Lemma~\ref{lem:mwtz} and prove Proposition~\ref{rig}.

%The proof of Proposition~\ref{rig} will proceed in two steps. The first key ingredient is a Morawetz-type estimate. We devote the next subsection to its proof, as it is independent from the hypothesis in Proposition~\ref{rig}, and rather holds for all finite energy solutions to~\eqref{u eq:no V}. Then in the subsection that follows, we will show that the Morawetz estimate that we proved is incompatible with the pre-compactness of $K$ unless $\vec{u}(t) \equiv (0, 0)$, thereby concluding the proof of Proposition~\ref{rig}.

\begin{proof} [Proof of Proposition~\ref{rig}]
Let $\vec{u}(t) \in \calH$ a global-in-time solution to~\eqref{u eq:noV}, which satisfies the hypothesis of Proposition~\ref{rig}. Applying Lemma \ref{lem:mwtz} with $J = \bbR_+$, we obtain the apriori bound
\begin{equation} \label{eq:rig:mwtz}
	\nrm{u}_{L^{6}_{t, x}(\bbR_+ \times \bbH^{3})} \leq C_{\calE(\vec{u})} < \infty.
\end{equation}

As the left-hand side involves an integration in time, this bound implies some decay of $\vec{u}(t)$ in time. Note, however, that~\eqref{eq:rig:mwtz} is not yet sufficient to guarantee that $\vec{u}(t)$ scatters as $t \to \pm \infty$, as the equation is \emph{super-critical} with respect to the bound~\eqref{eq:rig:mwtz}. In particular,~\eqref{eq:rig:mwtz} cannot be used to rule out concentration of energy into smaller and smaller scales as $t \to \pm \infty$. 
Fortunately, we have already precluded this scenario by ruling out Euclidean nonlinear profiles, so we expect~\eqref{eq:rig:mwtz} to be still effective. Indeed, as we shall see below,~\eqref{eq:rig:mwtz} is incompatible with the pre-compactness of the trajectory $K$ unless $\vec{u} \equiv (0, 0)$, which is sufficient for our purpose.

For $M \geq 1$, we introduce a smooth approximation of the identity $Q_{M}$, defined as
\begin{equation*}
	Q_{M} f = K_{M} \ast f
\end{equation*}
where $K_{M}(x)$ is a non-negative radial kernel on $\bbH^{3}$ supported in the ball $\set{\dist(0, x) < \frac{1}{M}}$ with $\int K_{M} = 1$. We may assume, without loss of generality, that $\nrm{K_{M}}_{L^{\infty}_{x}}$ is uniformly bounded for $M \geq 1$. Then by Young's inequality, for any $1 \leq p \leq q \leq \infty$ we have 
\begin{equation*}
\nrm{Q_{M}}_{L^{p}(\bbH^{3}) \to L^{q}(\bbH^{3})} \aleq M^{\frac{3}{p}-\frac{3}{q}}
\end{equation*}
We remark that $Q_{M}$ commutes with translations, i.e., $\tau_{h} Q_{M} f = Q_{M} \tau_{h} f$ for any $h \in \bbG$.

Given $\eps > 0$, we may use the pre-compactness of $K = \set{\tau_{h(t)}\vec{u}(t) \mid t \in \bbR_+}$ to choose $M > 0$ sufficiently large so that
\begin{equation*}
	\sup_{t \in \bbR_+} \nrm{(1-Q_{M}) \tau_{h(t)} u(t)}_{L^{6}(\bbH^{3})} < \frac{\eps}{2}.
\end{equation*}
Then thanks to the fact that $\tau_{h(t)}$ commutes with $Q_{M}$ and $L^{6}(\bbH^{3})$ is translation-invariant, we see that
\begin{equation*}
	\sup_{t \in \bbR_+} \nrm{(1-Q_{M}) u(t)}_{L^{6}(\bbH^{3})} < \frac{\eps}{2}.
\end{equation*}

Next, by Young's inequality, note that
\begin{equation*}
	\sup_{t \in \bbR_+} \nrm{Q_{M} u(t)}_{L^{\infty}(\bbH^{3})} \leq C_{M, \calE(\vec{u})} \,.
\end{equation*}
Interpolating with~\eqref{eq:rig:mwtz}, we see that the $L^{8}(\bbR_+ \times \bbH^{3})$ norm of $Q_{M} u$ is finite, i.e.,
\begin{equation*}
	\nrm{Q_{M} u}_{L^{8}(\bbR_+ \times \bbH^{3})} \leq C_{M, \calE(\vec{u})}.
\end{equation*}
Therefore, there exists $T = T(\eps, M, \calE(\vec{u}), \vec{u}) > 0$ such that
\begin{equation*}
	\nrm{Q_{M} u}_{L^{8}((T, \infty) \times \bbH^{3})} < \frac{\eps}{2}. 
\end{equation*}
Note that the $L^{8}(\bbR_+ \times \bbH^{3})$ norm is the unique Strichartz norm that is critical with respect to the scaling of~\eqref{u eq:noV} and has the same space and time Lebesgue exponents.

By homogeneous Strichartz estimates, we have
\begin{align*}
\nrm{S_{\bfg}[t-T] \vec{u}(T)}_{S(\bbR_+)}
+ \nrm{S_{\bfg} [t-T] \vec{u}(T)}_{L^{4}_{t} L^{12}_{x} \cap L^{8}_{t,x}(\bbR_+)}
\leq C \calE(\vec{u}),
\end{align*}
where $C > 0$ is independent of $T \in \bbR_+$ and for any interval $J \subseteq \bbR_+$ we write
\begin{equation*}
	\nrm{\cdot}_{L^{4}_{t} L^{12}_{x} \cap L^{8}_{t,x}(J)} := \nrm{\cdot}_{L^{4}_{t} (J; L^{12}(\bbH^{3}))} + \nrm{\cdot}_{L^{8}(J \times \bbH^{3})}.
\end{equation*}
Then for every $T' > T$, it follows from Duhamel's principle that 
\begin{align*}
	\nrm{u}_{S(T, T')}
+ \nrm{u}_{L^{4}_{t} L^{12}_{x} \cap L^{8}_{t,x}(T, T'))}
\leq& C \calE(\vec{u}) + C \nrm{\abs{u}^{4} u}_{L^{1}_{t} ((T, T') ; L^{2}(\bbH^{3}))}.
\end{align*}
We now estimate
\begin{align*}
\nrm{\abs{u}^{4} & u}_{L^{1}_{t} ((T, T'); L^{2}(\bbH^{3}))} \\
\leq & \nrm{\abs{u}^{4} Q_{M} u}_{L^{1}_{t} ((T, T'); L^{2}(\bbH^{3}))} 
	+ \nrm{\abs{u}^{4} (1-Q_{M}) u}_{L^{1}_{t} ((T, T'); L^{2}(\bbH^{3}))} \\
\leq & \nrm{u}_{L^{4}_{t}((T, T'); L^{12}(\bbH^{3})}^{3} \nrm{u}_{L^{8}((T, T') \times \bbH^{3})} \nrm{Q_{M} u}_{L^{8}((T, T') \times \bbH^{3})} \\
	&+ \nrm{u}_{L^{4}_{t} ((T, T'); L^{12}(\bbH^{3}))}^{4} \nrm{(1-Q_{M}) u}_{L^{\infty}_{t}((T, T') ; L^{6}(\bbH^{3}))}  \\
\leq & \eps \nrm{u}_{L^{4}_{t} L^{12}_{x} \cap L^{8}_{t,x} (T, T')}^{4} .
\end{align*}

Choosing $\eps >0$ sufficiently small compared to $\calE(\vec{u}) >0$ (which amounts to taking $M$ and $T$ sufficiently large), we may employ a standard continuity  argument in $T'$ to conclude that 
\begin{align*}
\nrm{u}_{S(T, \infty)} + \nrm{u}_{L^{4}_{t} L^{12}_{x} \cap L^{8}_{t,x} (T, \infty)}
\leq C \calE(\vec{u}) < \infty.
\end{align*}
%We can repeat the above argument on an appropriate $(-\infty, -T)$ to show that
%\begin{equation*}
%\nrm{u}_{S(\bbR_+)} + \nrm{u}_{L^{4}_{t} L^{12}_{x} \cap L^{8}_{t,x} (\bbR_+)}  \leq  C \calE(\vec{u}) < \infty.
%\end{equation*}
Since $\vec{u}$ has a pre-compact trajectory, it follows that $\vec{u} \equiv (0, 0)$ as desired. \qedhere
\end{proof}

%\begin{rem} 
%When $d = 4$, the preceding proof applies with the following modified numerology:
%\begin{gather*}
%	L^{6}_{t,x} \mapsto L^{4}_{t,x}, \quad
%	L^{\infty}_{t} L^{6}_{x} \mapsto L^{\infty}_{t} L^{4}_{x}, \\
%	L^{8}_{t,x} \mapsto L^{5}_{t,x}, \quad
%	L^{4}_{t} L^{12}_{x} \cap L^{8}_{t,x} \mapsto L^{2}_{t} L^{8}_{x} \cap L^{\frac{10}{3}}_{t} L^{\frac{40}{7}}_{x}.
%\end{gather*}
%\end{rem}

\section{Proof of Theorem \ref{main:V}: Inclusion of potential} \label{sec:main:V:pf}
In this section, we prove Theorem~\ref{main:V}, which concerns the energy-critical defocusing semi-linear wave equation \eqref{u eq}, with a smooth, compactly supported repulsive potential $V$. The argument so far is flexible enough to include a potential in almost every step without much modification, and as we sketch below, Theorem~\ref{main:V} can be proved essentially by a repetition of the proof of Theorem~\ref{main}. The key difference is that we are now able to use Theorem~\ref{main}, in conjunction with Proposition~\ref{prop: h h}, to conclude that all traveling (or free hyperbolic) nonlinear profiles must scatter, which simplifies some points below; compare \eqref{K:V} with \eqref{K}.

\begin{proof}[Proof of Theorem~\ref{main:V}]
As in the proof of Theorem~\ref{main} in the previous section, we begin by assuming, for the sake of contradiction, that the conclusions of Theorem~\ref{main:V} fail. Then we can deduce the existence of a non-trivial \emph{critical element} by the following proposition.
\begin{prop}[Construction of critical element] \label{prop:ce:V}
Suppose that Theorem~\ref{main:V} fails. Then there exists a global-in-time, nonzero solution $\vec u_{*}(t) \in \HH$ to \eqref{u eq} (referred to as a \emph{critical element}) which does not scatter in forward time. In particular we have 
\EQ{
\| u_{*}\|_{S([0, \infty))} = \infty.
}
Moreover, the set
\EQ{ \label{K:V}
K:= \{  \vec u_{*}(t) \mid t \in[0, \infty)\}
}
is pre-compact in $\HH$. 
\end{prop}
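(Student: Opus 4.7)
The plan is to mirror the Kenig--Merle induction on the $L^{\infty}_{t}\HH$ norm carried out in the proof of Proposition~\ref{prop:ce}, now applied to the perturbed equation~\eqref{u eq}. I would define the class $\B(A)$ of data whose forward evolution stays in a ball of radius $A$ in $\HH$, say that $\SC(A)$ holds if every such evolution is global and scattering, and invoke the small data theory (Proposition~\ref{small data}) together with the assumed failure of Theorem~\ref{main:V} to produce a finite critical threshold $A_{C}>0$. Choosing a sequence $\vec{u}_{n}(0)$ with $\sup_{t}\|\vec u_{n}(t)\|_{\HH}\le A_{n}\searrow A_{C}$ and infinite $S$-norm, I would then apply the linear profile decomposition (Theorem~\ref{thm: BG}) and its nonlinear counterpart (Theorem~\ref{thm:nonlinbg}) to $\vec u_{n}(0)$.

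The decisive new ingredient, absent in Proposition~\ref{prop:ce}, is that Theorem~\ref{main} is now available. Consequently, each free hyperbolic (traveling) nonlinear profile is global and scattering: its limiting profile $\vec U_{\hyp,\nl}^{j}$ solves the potential-free equation on $\bbR\times\bbH^{3}$, for which Theorem~\ref{main} yields a uniform $S(\bbR)$ bound, and Proposition~\ref{prop: h h}(\ref{item: h h:2}) then transfers this bound to $\vec U^{j}_{\hyp,n,\nl}$. Euclidean (concentrating) profiles are handled analogously by Theorem~\ref{thm: euc} and Proposition~\ref{prop: e h}(\ref{item: e h:2}). Hence the only nonlinear profiles that can fail to scatter are of perturbed hyperbolic (stationary) type.

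If every profile scattered, then Theorem~\ref{thm:nonlinbg} applied with $s_{n}=+\infty$ would give a uniform bound on $\|u_{n}\|_{S([0,\infty))}$, contradicting the choice of $\vec u_{n}$. Therefore, after a relabeling, there exists a finite nonempty list of stationary profiles $\vec U^{j}_{V,\nl}$ with $\|U^{j}_{V,\nl}\|_{S([-t_{n,j},T_{+}(\vec U^{j}_{V,\nl})))}=\infty$ along the subsequence. Following the argument of Proposition~\ref{prop:ce} essentially verbatim, I would introduce truncated times $T_{j,k}^{+}$ and $s_{n}^{k}$, use the pigeonhole principle to select a single critical profile $\vec U_{V,\nl}^{j_{0}}$ whose supremum of the $\HH$-norm reaches $A_{C}$ in the limit, and invoke the almost orthogonality of $\HH$-norms at time $\tau^{k_{\al}}_{n,j_{0}}$ (the analogue of~\eqref{oHtau}, whose proof reduces to the case analysis of Claim~\ref{claim: o en}, and in particular uses the integrated local energy decay~\eqref{eq:VLED} to treat pairs of stationary profiles). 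The minimality of $A_{C}$ then forces all other profiles and the error term $\vec w_{n}^{J}$ to vanish, leaving the critical element $\vec u_{\ast}(t):=\vec U_{V,\nl}^{j_{0}}(t)$.

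Because $\vec u_{\ast}$ arises as a perturbed hyperbolic profile (Case~1 of Definition~\ref{def:linprof}), no spatial translation sequence is attached to it: the parameters $h_{n,j_{0}}$ may be taken trivial, which is precisely why \eqref{K:V} asserts pre-compactness of the untranslated trajectory, in contrast to \eqref{K}. Global existence $T_{+}(\vec u_{\ast})=+\infty$ and pre-compactness of $K$ in $\HH$ then follow by the now-standard arguments of~\cite{KM06,KM08}, adapted to hyperbolic space as in the concluding step of Lemma~\ref{lem:ce}: were either property to fail, applying Theorem~\ref{thm: BG} to a sequence $\vec u_{\ast}(t_{n})$ approaching $T_{+}(\vec u_{\ast})$ (respectively, a sequence witnessing the failure of compactness) and again invoking the minimality of $A_{C}$ would yield a contradiction. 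The main obstacle I anticipate is the verification of the Pythagorean identity~\eqref{oHtau} for stationary profiles under the presence of $V$, but this is already subsumed by the orthogonality machinery developed in Section~\ref{sec: bg}, so no essential modification is required.
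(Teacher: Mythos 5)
Your proposal is correct and follows the paper's argument essentially verbatim: the same induction on the $L^{\infty}_{t}\HH$ norm, the same use of Theorem~\ref{main} together with Proposition~\ref{prop: h h} to dispose of traveling profiles and Theorem~\ref{thm: euc} with Proposition~\ref{prop: e h} for concentrating profiles, and the same observation that the lone surviving non-scattering profile must be stationary, so that no translation parameters are needed in~\eqref{K:V}. The paper also gives only a sketch at this level of detail, deferring to the proof of Proposition~\ref{prop:ce}.
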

\begin{proof} [Sketch of proof]
Proposition~\ref{prop:ce:V} can be proved by essentially following the proof of Proposition~\ref{prop:ce} in the previous section line by line. Namely, we form a sequence $\vec{u}_{n}(t)$ of non-scattering solutions to \eqref{u eq} which minimize the $L^{\infty}_{t} \HH$ norm, apply the nonlinear profile decomposition (Theorem~\ref{thm:nonlinbg}) and show that there exists only one non-scattering profile, which possesses the minimum $L^{\infty}_{t} \HH$ norm. This remaining profile is defined to be the critical element $\vec{u}_{\ast}(t)$. 

The key difference in the proof, which also results in the simpler compactness property \eqref{K:V}, is that two out of the three distinct types of profiles are always scattering; namely, traveling (or free hyperbolic) profiles always scatter by Theorem~\ref{main} and Proposition~\ref{prop: h h}, and concentrating (or Euclidean) profiles by Theorem~\ref{thm: euc} and Proposition~\ref{prop: e h}. Hence, at the end, the single remaining non-scattering profile must necessarily be stationary (or perturbed hyperbolic), i.e., $\vec{u}_{\ast}(t) = \vec{U}^{j_{0}}_{V, \nl}(t)$. Repeating the proof of \eqref{K} in Proposition~\ref{prop:ce}, but using the fact that only stationary profiles can arise, we obtain \eqref{K:V} with no translation parameters $h(t)$. \qedhere 
\end{proof}

Now we proceed to show that such a critical element cannot exist. The key ingredient is a Morawetz-type estimate analogous to Lemma~\ref{lem:mwtz}, which holds for \eqref{u eq} thanks to the assumption \eqref{eq:Vdefocusing}.
\begin{lem}[Morawetz estimate] \label{lem:mwtz:V}
Let $V$ be a smooth, compactly supported repulsive potential as in \eqref{eq:Vdefocusing}. Given an interval $J \subseteq \bbR$, let $\vec u(t)$ be a solution to~\eqref{u eq} in the class $C_{t} ( J ; \calH)$. Then $u \in L^{6}(J \times \bbH^{3})$ and satisfies
\begin{equation} \label{eq:rig:mwtz:V}
	\nrm{u}_{L^{6}(J \times \bbH^{3})}^{6} \leq C \calE(\vec u).
\end{equation}
\end{lem}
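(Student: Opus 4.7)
My approach will follow the Morawetz multiplier method for the nonlinear wave equation on $\bbH^{3}$, employing a radial vector field of the type used by Ionescu--Pausader--Staffilani in the Schr\"odinger setting and by Shen--Staffilani in the shifted wave setting. The starting point is to contract the energy-momentum tensor
\begin{equation*}
  T_{\alpha\beta} = \rd_{\alpha} u \, \rd_{\beta} u - \tfrac{1}{2} \bfg_{\alpha\beta} \bb( \rd^{\gamma} u \, \rd_{\gamma} u + V u^{2} + \tfrac{1}{3} \abs{u}^{6} \bb)
\end{equation*}
against a vector field of the form $X = \phi(r) \rd_{r}$, plus a scalar correction $\psi(r) u$, where $\phi, \psi$ are radial functions on $\bbH^{3}$ to be chosen. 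Taking the divergence, using $\Box_{\bbH^{3}} u + V u + \abs{u}^{4} u = 0$, and integrating over $J \times \bbH^{3}$ will produce (i) boundary terms at the two endpoints of $J$ that are controlled by $\calE(\vec u)$ via Cauchy--Schwarz and Poincar\'e, and (ii) a spacetime bulk identity whose principal contribution is a positive multiple of $\int_{J \times \bbH^{3}} \abs{u}^{6}$ arising from the interaction of the multiplier with the defocusing nonlinearity.

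The next step is to extract the favorable terms from the bulk. The nonlinear contribution produces, after integration by parts in $r$, a term of the form $\int_{J \times \bbH^{3}} F(r) \abs{u}^{6}$, where $F(r)$ depends on $\phi, \psi$ and on the volume factor $\sinh^{2} r$. The quadratic kinetic terms, controlled by the deformation tensor ${}^{(X)}\pi$ together with $\psi$, will be shown to be non-negative provided $\phi, \psi$ are chosen so that the relevant combinations $\phi', \phi \coth r, 2\psi - \phi'/\text{(volume factor)}$, etc., satisfy the right signs; this is essentially the same algebraic computation as in Section~3 of \cite{IPS}, but for the wave operator rather than the Schr\"odinger operator. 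Finally, the potential contributes two bulk terms: one proportional to $\int \phi(r) (\rd_{r} V) \abs{u}^{2}$ and another proportional to $\int \psi(r) V \abs{u}^{2}$. Under the repulsivity and positivity assumptions $\rd_{r} V \leq 0$ and $V \geq 0$ from \eqref{eq:Vdefocusing}, these terms have favorable signs for any choice of $\phi, \psi \geq 0$, so the potential poses no obstruction to absorbing them and closing the estimate.

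The main obstacle will be selecting $\phi, \psi$ so that three demands are simultaneously met: (a) $F(r)$ is bounded below by a positive absolute constant, so that pure $L^{6}_{t,x}$ (rather than a weighted norm) appears on the left; (b) the kinetic quadratic form is non-negative in $\bbH^{3}$, where the geometry differs significantly from the Euclidean case due to the $\sinh r$-weighted volume element; and (c) the boundary terms are dominated by $\calE(\vec u)$ uniformly. Good candidates for $\phi$ are bounded, monotone functions of $r$ such as $\tanh r$ or solutions of an explicit first-order ODE, paired with $\psi$ so that the ``lower-order" corrections due to $\rd_{r}$ not commuting with $\lap_{\bbH^{3}}$ cancel exactly. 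The detailed computation is essentially algebraic once $\phi, \psi$ are fixed and will be carried out in Section~\ref{sec:morawetz}, where the same multiplier will also be used to establish the integrated local energy decay \eqref{eq:VLED}. The statement for $V \equiv 0$ (Lemma~\ref{lem:mwtz}) will follow as a special case, since all potential terms simply vanish.
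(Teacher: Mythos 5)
Your plan matches the paper's actual proof, which applies the general multiplier identity of Lemma~\ref{lem:mwtz-general} with the vector field $\bfa_{r}\,\rd_{r}$ (where $\bfa$ is the radial solution of $\lap_{\bbH^{d}}\bfa = 1$, hence $\bfa_{r}$ is bounded and $\frac{1}{d}\tanh r \leq \bfa_{r} \leq \tanh r$) together with the constant scalar correction $\psi = \tfrac12$; the normalization $\lap_{\bbH^{d}}\bfa = 1$ is exactly what makes the $\abs{u}^{6}$ bulk term appear with constant weight, giving the unweighted $L^{6}$ bound you flag as criterion (a), and the repulsivity/positivity of $V$ enters with the favorable signs you identify. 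The only caveat is that the paper actually uses a second auxiliary multiplier (with $\lap_{\bbH^{d}}\bfb = \cosh^{-2}r$) when it later upgrades to the integrated local energy decay \eqref{eq:VLED}, so a single multiplier does not do both jobs.
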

As before, the proof will be given in Section~\ref{sec:morawetz}. Using Lemma~\ref{lem:mwtz:V} and proceeding in the identical manner as in the proof of Proposition~\ref{rig}, we obtain the following rigidity theorem.
\begin{prop}[Rigidity of critical element] \label{rig:V} 
Let $V$ be a smooth, compactly supported repulsive potential as in \eqref{eq:Vdefocusing}, and let $\vec u(t) \in \HH$ be a global-in-forward time solution to~\eqref{u eq}. Suppose that the trajectory 
\begin{equation} \label{eq:rig:trj:V}
K:=  \left\{  \vec u(t) \mid t \in [0, \infty) \right\}
\end{equation}
is pre-compact in the energy space $\HH$. Then $\vec u(t) \equiv (0, 0)$. 
\end{prop}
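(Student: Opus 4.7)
The plan is to replay the proof of Proposition~\ref{rig} essentially verbatim, the only notable simplification being that the pre-compactness hypothesis \eqref{eq:rig:trj:V} no longer involves translations $\tau_{h(t)}$, so every instance of $\tau_{h(t)} u(t)$ in the argument of Proposition~\ref{rig} can be replaced by $u(t)$ directly. Concretely, I would first apply Lemma~\ref{lem:mwtz:V} on $J = \bbR_+$ to obtain the apriori bound $\nrm{u}_{L^{6}(\bbR_{+} \times \bbH^{3})} \leq C_{\calE(\vec{u})} < \infty$. Although this norm is supercritical with respect to the scaling of the underlying Euclidean equation, it is effective here because pre-compactness of the trajectory $K$ (together with the fact that concentrating profiles have been ruled out in the construction of $\vec{u}_{\ast}$) prevents energy from escaping into smaller and smaller scales.

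Next, using the smoothing operators $Q_{M}$ introduced in the proof of Proposition~\ref{rig}, pre-compactness of $K$ in $\HH$ together with the Sobolev embedding $H^{1}(\bbH^{3}) \hookrightarrow L^{6}(\bbH^{3})$ yields, for any $\eps > 0$, the existence of $M \geq 1$ with
\begin{equation*}
\sup_{t \in \bbR_{+}} \nrm{(1-Q_{M}) u(t)}_{L^{6}(\bbH^{3})} < \tfrac{\eps}{2}.
\end{equation*}
Interpolating between this bound (combined with the Morawetz $L^{6}_{t,x}$ bound) and the uniform estimate $\nrm{Q_{M} u(t)}_{L^{\infty}(\bbH^{3})} \leq C_{M, \calE(\vec{u})}$, I obtain $\nrm{Q_{M} u}_{L^{8}(\bbR_{+} \times \bbH^{3})} \leq C_{M,\calE(\vec{u})}$, so that some $T > 0$ can be chosen with $\nrm{Q_{M} u}_{L^{8}((T, \infty) \times \bbH^{3})} < \eps/2$.

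The third step is a small-data-style continuity argument using the Strichartz estimates for the propagator $S_{V}(t)$. Here I will invoke Lemma~\ref{lem:basicVWave} together with Lemma~\ref{lem:defocusingV}, which guarantee that the full Strichartz machinery applies under the repulsive compactly supported assumption \eqref{eq:Vdefocusing}. Splitting the nonlinearity as $\abs{u}^{4} u = \abs{u}^{4} Q_{M} u + \abs{u}^{4} (1-Q_{M}) u$ and controlling each term in $L^{1}_{t}((T, T') ; L^{2}(\bbH^{3}))$ by the $L^{4}_{t} L^{12}_{x} \cap L^{8}_{t,x}$ norm times either $\nrm{Q_{M} u}_{L^{8}_{t,x}}$ or $\nrm{(1-Q_{M}) u}_{L^{\infty}_{t} L^{6}_{x}}$ — both of which are smaller than $\eps$ by construction — a standard continuity-in-$T'$ argument yields
\begin{equation*}
\nrm{u}_{S((T, \infty))} + \nrm{u}_{L^{4}_{t} L^{12}_{x} \cap L^{8}_{t,x}((T, \infty))} \leq C \calE(\vec{u}) < \infty
\end{equation*}
provided $\eps$ is small enough relative to $\calE(\vec{u})$.

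Finally, finiteness of $\nrm{u}_{S((T, \infty))}$ combined with Proposition~\ref{small data} implies that $\vec{u}(t)$ scatters as $t \to \infty$ to a free hyperbolic wave $\vec{u}_{L}(t)$, which by the dispersive estimate of Proposition~\ref{prop: disp} satisfies $\vec{u}_{L}(t) \rightharpoonup 0$ weakly in $\HH$. Hence $\vec{u}(t) \rightharpoonup 0$ weakly; combined with the pre-compactness of $K$, this forces $\vec{u}(t_{n}) \to 0$ strongly in $\HH$ along some sequence $t_{n} \to \infty$. Since $V \geq 0$, the conserved energy $\calE(\vec{u})$ is non-negative, and continuity of $\calE$ on $\HH$ then gives $\calE(\vec{u}) = 0$, whence $\vec{u} \equiv (0,0)$, contradicting Proposition~\ref{prop:ce:V} and completing the proof of Theorem~\ref{main:V}. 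I do not anticipate any genuine obstacle beyond verifying that the Strichartz ingredients invoked above all apply to $S_{V}$ under the assumption \eqref{eq:Vdefocusing}; this is precisely what Lemma~\ref{lem:defocusingV} (proved in Section~\ref{sec:morawetz}) provides, so the argument goes through verbatim.
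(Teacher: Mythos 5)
Your proposal is correct and follows exactly the route the paper takes: the paper states explicitly that Proposition~\ref{rig:V} is obtained ``using Lemma~\ref{lem:mwtz:V} and proceeding in the identical manner as in the proof of Proposition~\ref{rig},'' which is precisely your plan of replaying that proof with $\tau_{h(t)}$ replaced by the identity, with the Morawetz estimate of Lemma~\ref{lem:mwtz:V} in place of Lemma~\ref{lem:mwtz}, and with the $S_{V}$-Strichartz machinery supplied by Lemma~\ref{lem:defocusingV}. The only quibble is cosmetic: the closing clause ``contradicting Proposition~\ref{prop:ce:V} and completing the proof of Theorem~\ref{main:V}'' is not part of proving Proposition~\ref{rig:V} itself, which is a deterministic assertion that $\vec u\equiv(0,0)$; the contradiction with Proposition~\ref{prop:ce:V} belongs to the proof of Theorem~\ref{main:V}, one level up.
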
 
Hence, the critical element constructed in Proposition~\ref{prop:ce:V}, assuming the failure of Theorem~\ref{main:V}, cannot exist. This completes the proof of Theorem~\ref{main:V}.
\end{proof}

\section{Integrated local energy decay, Morawetz estimates \\ and Strichartz estimates} \label{sec:morawetz}
In this section, we prove Lemmas~\ref{lem:defocusingV},~\ref{lem:mwtz} and~\ref{lem:mwtz:V}. The latter two are Morawetz estimates for defocusing semi-linear equations, which played a crucial role in the proof of rigidity in Sections~\ref{sec:main:pf} and~\ref{sec:main:V:pf}. Lemma~\ref{lem:defocusingV} says that a smooth, compactly supported potential which is non-negative and repulsive satisfies the assumptions \eqref{eq:Vdecay}--\eqref{eq:Str4halfVWave:2}. A key property we need in the proof is an \emph{integrated local energy decay} estimate; see \eqref{eq:ILED}. As we will see, a single crucial multiplier argument, which goes back to \cite{IS, ShenS14}, underlies all three proofs; see Lemma~\ref{lem:mwtz-general} below.

Consider the (possibly semi-linear) wave equation
\begin{equation} \label{eq:mwtz:u-eq}
	u_{tt} - \lap_{\bbH^{d}} u + V u + F(u) = 0
\end{equation}
and define $G(\sgm) := \int_{0}^{\sgm} F(\sgm') \, \ud \sgm'$. The proofs in this section will be based on the following general result concerning~\eqref{eq:mwtz:u-eq}.
\begin{lem} \label{lem:mwtz-general}
Let $V$ be a bounded $C^{1}$ potential that is \emph{repulsive} in the sense that $-\rd_{r} V \geq 0$, where $\rd_{r}$ is the radial direction derivative in the polar coordinates on $\bbH^{d}$. Given an interval $J \subseteq \bbR$, consider a solution $\vec u(t)$ to~\eqref{eq:mwtz:u-eq} in the class $C_{t} ( J ; \calH)$. Then the solution $u$ satisfies the inequality
\begin{equation} \label{eq:mwtz-general}
\begin{aligned}
	\iint_{J \times \bbH^{d}}  
			&		\bfc^{2} \abs{\nb u}^{2}_{\bfg}
					+ \tanh r (- \rd_{r} V) \abs{u}^{2} \\
			&		+ \bb( \frac{1}{2} u F(u) - G(u) \bb) \, \ud t \,  \mu(\ud x) \leq C \nrm{\vec{u}}_{L^{\infty}_{t} (J; \HH)}^{2},
\end{aligned}
\end{equation}
where $\abs{\nb u}^{2}_{\bfg} := \bfg^{ij} \nb_{i} u \nb_{j} u$ and $\bfc = \bfc(r)$ is a positive radial function given by
\begin{equation} \label{eq:mwtz-general:c}
	\bfc^{2} (r) = \frac{\cosh r}{\sinh^{d} r} \int_{0}^{r} \frac{\sinh^{d-1} r'}{\cosh^{2} r'} \, \ud r'.
\end{equation}
\end{lem}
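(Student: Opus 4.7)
The plan is a Morawetz/virial identity: multiply \eqref{eq:mwtz:u-eq} by the radial first-order multiplier
\begin{equation*}
\calM u := a(r) \rd_r u + \tfrac{1}{2} u
\end{equation*}
and integrate over $J \times \bbH^d$, where $a(r)$ is the unique solution of the linear ODE $a'(r) + (d-1) a(r) \coth r = 1$ vanishing at $r = 0$; explicitly, $a(r) = \sinh^{-(d-1)} r \int_0^r \sinh^{d-1} r' \, \ud r'$. The form of $\calM$ is pinned down by the nonlinearity: writing $F(u) \rd_r u = \rd_r G(u)$ and integrating by parts against the volume weight $\sinh^{d-1} r$ gives
\begin{equation*}
\iint F(u) \calM u \, \ud t \, \mu(\ud x) = \iint \bigl[\tfrac{1}{2} u F(u) - (a' + (d-1) a \coth r) \, G(u)\bigr] \, \ud t \, \mu(\ud x),
\end{equation*}
which reduces to $\iint \bigl[\tfrac{1}{2} u F(u) - G(u)\bigr] \, \ud t \, \mu(\ud x)$ precisely when $a$ solves the above ODE.

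The time derivative contribution, after one integration by parts in $t$, becomes $\bigl[\int u_t \calM u \, \mu(\ud x)\bigr]_{\rd J} - \iint u_t \calM u_t \, \ud t \, \mu(\ud x)$; the boundary piece is bounded by $C \nrm{\vec u}_{L^\infty_t(J;\HH)}^2$ by Cauchy-Schwarz and Poincar\'e, and the bulk piece in fact vanishes identically, since integration by parts in $r$ together with the ODE for $a$ gives $\iint u_t \, a \rd_r u_t \, \mu(\ud x) = -\tfrac{1}{2} \iint u_t^2 \, \mu(\ud x)$, cancelling the $\tfrac{1}{2} u_t^2$ piece from the other half of $\calM$. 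The potential contribution, after integration by parts in $r$ and another use of the ODE (which cancels the $V u^2$ pieces), equals $\tfrac{1}{2} \iint a(r) (-\rd_r V) u^2 \, \ud t \, \mu(\ud x)$; since $a(r) \aeq \tanh r$ on $[0,\infty)$ (immediate from the integral formula, comparing behavior at $0$ and at $\infty$), this is bounded below by a positive multiple of $\iint \tanh r(-\rd_r V) u^2$, and the correct sign is ensured by the repulsivity hypothesis $-\rd_r V \geq 0$.

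The main step --- and the main technical obstacle --- is the Laplacian computation. Expanding $\lap_{\bbH^d}$ in polar coordinates and integrating by parts separately in $r$ and on $\bbS^{d-1}$, one finds
\begin{equation*}
-\iint_{J \times \bbH^d} \lap_{\bbH^d} u \cdot \calM u \, \ud t \, \mu(\ud x) = \iint_{J \times \bbH^d} \Bigl( a'(r) \, u_r^2 + a(r) \coth r \cdot \frac{|\nb_{\bbS^{d-1}} u|^2}{\sinh^2 r} \Bigr) \, \ud t \, \mu(\ud x),
\end{equation*}
where the cancellation of the $u^2$ term and the clean form of the $u_r^2$ coefficient again rely on the ODE satisfied by $a$ and the choice $b = \tfrac{1}{2}$. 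The algebraic crux is then the identity $a'(r) = \bfc^2(r)$, which follows from the elementary computation $\rd_r(\sinh^d r / \cosh r) = \sinh^{d-1} r \bigl((d-1) + \operatorname{sech}^2 r\bigr)$ upon integrating and combining with $a' = 1 - (d-1) a \coth r$. Together with the pointwise lower bound
\begin{equation*}
a(r) \coth r - \bfc^2(r) = \frac{\cosh r}{\sinh^d r} \int_0^r \sinh^{d-1} r' \tanh^2 r' \, \ud r' \geq 0,
\end{equation*}
this shows that the Laplacian contribution dominates $\iint \bfc^2 |\nb u|^2_\bfg$, delivering the main term on the left-hand side of \eqref{eq:mwtz-general}.

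Combining all four contributions and moving the bounded time-boundary piece to the right-hand side yields \eqref{eq:mwtz-general}. To make the above rigorous I would work first with a spatial cutoff $\chi_R$ (and a time cutoff if $J$ is unbounded), then pass $R \to \infty$ using $\vec u \in C_t(J;\HH)$ together with the boundedness of $a, a'$ to control spatial boundary fluxes; the boundary at $r = 0$ causes no issue since $a(0) = 0$. The main obstacle is the careful verification of the two radial identities for $\bfc^2$ above, which while elementary, are what drive the cleanliness of the stated estimate.
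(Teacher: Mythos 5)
Your proof is correct and follows essentially the same route as the paper: you use the same multiplier (your $a(r)$ is the paper's $\bfa_r$, and your $\calM u = a\,\rd_r u + \tfrac12 u$ is exactly what the paper obtains by applying $\bfa_r\rd_r u$ and then invoking Green's identity for the $\tfrac12 u$ piece, since $\Dlt_{\Hp^d}\bfa = 1$), with the same key radial identities $a'=\bfc^2$ and $a\coth r \geq \bfc^2$. The only cosmetic difference is that you fold the Lagrangian correction $\tfrac12 u$ directly into the multiplier, and your direct verification $a\coth r - \bfc^2 = \frac{\cosh r}{\sinh^d r}\int_0^r \sinh^{d-1}r'\tanh^2 r'\,\ud r' \geq 0$ is a touch sharper than the paper's chain $\bfc^2 \leq 1 \leq d\coth r\,\bfa_r$, though both suffice.
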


The proof relies on the existence of a radial function $\bfa$ on $\bbH^{d}$ satisfying 
\begin{equation*}
	\De_{\bbH^{d}} \bfa = 1, \quad \bfa_{r} = \frac{1}{d} r + o_{r}(1) \mas r \to 0
\end{equation*}
which remarkably turns out to have positive and \emph{bounded} radial derivative, i.e., $0 < \bfa_{r} \leq C < \infty$ for $r >0$,  in contrast to the the case of $\bbR^{d}$. The radial derivative $\bfa_{r}$ is explicitly given by
\begin{equation} \label{eq:bfa}
	\bfa_{r} = \frac{1}{\sinh^{d-1} r} \int_{0}^{r} \sinh^{d-1} r' \, \ud r'.
\end{equation}

\begin{proof} 
In this proof, we will employ the geodesic polar coordinates $(t, r, \omg)$, where $r(x) = \bfd_{\bbH^{d}} (x, \zero)$ and $\omg = (\omg^{1}, \ldots, \omg^{d-1})$  is a system of coordinates on $\bbS^{d-1}$. We will use the notation $\displaystyle{\snb}$, $\displaystyle{\sdiv}\!$ and $\displaystyle{\slap}$ for the angular gradient, divergence and Laplacian on $\bbS^{d-1}$, respectively. Note that $\displaystyle{\slap = \ \sdiv \snb}$ and $\ \displaystyle{\sdiv}\!$ is the adjoint of $\snb$. Moreover, the Laplacian $\De_{\bbH^{d}}$ on $\bbH^{d}$ decomposes into
\begin{equation*}
	\De_{\bbH^{d}} = \frac{1}{\sinh^{d-1} r} \rd_{r} (\sinh^{d-1} r \rd_{r}) + \frac{1}{\sinh^{2} r} \slap.
\end{equation*}

Multiplying the equation~\eqref{eq:mwtz:u-eq} by $\bfa_{r} \rd_{r} u$, we compute
\begin{align*}
0 
=& 	\bb( \rd_{t}^{2} u - \frac{1}{\sinh^{d-1} r} \rd_{r} (\sinh^{d-1} r \rd_{r} u) - \frac{1}{\sinh^{2} r} \slap u + V u + F(u)  \bb) \bfa_{r} \rd_{r} u   \\
=&	\rd_{t} \bb( \bfa_{r} \rd_{t} u \rd_{r} u \bb) 
	- \frac{1}{\sinh^{d-1} r} \rd_{r} \bb( \sinh^{d-1} r \bfa_{r} (\rd_{r} u)^{2} \bb) 
	- \sdiv \bb(\frac{\bfa_{r}}{\sinh^{2} r} \snb u \rd_{r} u \bb)  \\  
&	- \frac{1}{2} \bfa_{r} \rd_{r} (\rd_{t} u)^{2}
	+ \frac{1}{2} \bfa_{r} \rd_{r} (\rd_{r} u)^{2}
	+ \frac{1}{2} \frac{\bfa_{r}}{\sinh^{2} r} \rd_{r} \abs{\snb u}^{2} 
	+ \frac{1}{2} \bfa_{r} V \rd_{r} \abs{u}^{2} \\
&	+ \bfa_{r} \rd_{r} (G(u))
	+ \bfa_{rr} (\rd_{r} u)^{2}  \\
=&	\rd_{t} \bb( \bfa_{r} \rd_{t} u \rd_{r} u \bb) 
	- \frac{1}{\sinh^{d-1} r} \rd_{r} (\sinh^{d-1} r J_{r}) 
	- \sdiv \bb(\frac{\bfa_{r}}{\sinh^{2} r} \snb u \rd_{r} u \bb)  \\  
&	+ \frac{ \rd_{r} (\sinh^{d-1} r \, \bfa_{r}) }{\sinh^{d-1} r} %\rd_{r} (\sinh^{d-1} r \, \bfa_{r}) 
	\bb( \frac{1}{2} (\rd_{t} u)^{2} - \frac{1}{2} (\rd_{r} u)^{2} - \frac{1}{2 \sinh^{2} r} \abs{\snb u}^{2} - \frac{1}{2} V \abs{u}^{2} - G(u) \bb) \\
&	+ \bfa_{rr} (\rd_{r} u)^{2} + \frac{\coth r \, \bfa_{r}}{\sinh^{2} r} \abs{\snb u}^{2} - \frac{1}{2} \bfa_{r} \rd_{r} V \abs{u}^{2},
\end{align*}
where
\begin{equation*}
	J_{r} 
%	= \bfa_{r} (\rd_{r} u)^{2} + \frac{1}{2} \bfa_{r} (\rd_{t} u)^{2} - \frac{1}{2} \bfa_{r} (\rd_{r} u)^{2} - \frac{1}{2} \frac{\bfa_{r}}{\sinh^{2} r} \abs{\snb u}^{2} - \frac{1}{2} \bfa_{r} V\abs{u}^{2} - \bfa_{r} G(u)
	= \frac{1}{2} \bfa_{r} (\rd_{t} u)^{2} + \frac{1}{2} \bfa_{r} (\rd_{r} u)^{2} - \frac{1}{2} \frac{\bfa_{r}}{\sinh^{2} r} \abs{\snb u}^{2} - \frac{1}{2} \bfa_{r} V\abs{u}^{2} - \bfa_{r} G(u).
\end{equation*}

As $\bfa$ is radial, note that $\frac{1}{\sinh^{d-1} r} \rd_{r} (\sinh^{d-1} r \bfa_{r}) = \De_{\bbH^{d}} \bfa = 1$. Moreover, by Green's identity and~\eqref{u eq}, we have
\begin{align*}
& \hskip-2em
	\frac{1}{2} (\rd_{t} u)^{2} - \frac{1}{2} (\rd_{r} u)^{2} - \frac{1}{2 \sinh^{2} r} \abs{\snb u}^{2} - \frac{1}{2} V \abs{u}^{2} \\
	=&  \frac{1}{4} (\rd_{t}^{2} - \De_{\bbH^{d}}) u^{2} - \frac{1}{2} (\rd_{t}^{2} u - \De_{\bbH^{d}}  u + V u) u \\
	=&  \frac{1}{2} \rd_{t} (u \rd_{t} u ) - \frac{1}{4} \De_{\bbH^{d}} u^{2} + \frac{1}{2} u F(u)
\end{align*} 

Putting together the preceding computations, we arrive at the identity
\begin{align*}
	0 
= &	\rd_{t} \bb( \bfa_{r} \rd_{r} u \rd_{t} u + \frac{1}{2} u \rd_{t} u \bb) \\
&	- \frac{1}{4} \De_{\bbH^{d}} u^{2} 
	- \frac{1}{\sinh^{d-1} r} \rd_{r} (\sinh^{d-1} r J_{r}) 
	- \sdiv \bb(\frac{\bfa_{r}}{\sinh^{2} r} \snb u \rd_{r} u \bb)  \\  
& 	+ \bfa_{rr} (\rd_{r} u)^{2} + \frac{\coth r \, \bfa_{r}}{\sinh^{2} r} \abs{\snb u}^{2} 
	- \frac{1}{2} \bfa_{r} \rd_{r} V \abs{u}^{2}		+ \frac{1}{2} u F(u) - G(u)
\end{align*}

Let $(t_{1}, t_{2}) \subseteq J$ be any finite interval. We now integrate the preceding identity over $(t_{1}, t_{2}) \times \bbH^{d}$. Using the hypothesis $\vec{u} \in \calH$ and the fact that $\abs{\bfa_{r}} \leq C$, we see that all $r$- and $\omg$-boundary terms vanish. Therefore, we have
\begin{align}
	& \hskip-2em
	- \int_{\set{t} \times (0, \infty) \times \bbS^{d-1}} \bb( \bfa_{r} \rd_{t} u \rd_{r} u + \frac{1}{2} u \rd_{t} u \bb) \, \sinh^{d-1} r \, \ud r \ud \omg \, \bb\vert_{t=t_{1}}^{t_{2}} \label{eq:mwtz-general:key:LHS} \\
	& = \iint_{(t_{1}, t_{2}) \times (0, \infty) \times \bbS^{d-1} } \bb( \bfa_{rr} (\rd_{r} u)^{2} + \frac{\coth r \, \bfa_{r}}{\sinh^{2} r} \abs{\snb u}^{2} \bb)
			\, \sinh^{d-1} r \, \ud t \ud r \ud \omg  \label{eq:mwtz-general:key:RHS:1} \\
	& \phantom{= }		
			+ \iint_{(t_{1}, t_{2}) \times (0, \infty) \times \bbS^{d-1} }
				\frac{1}{2} \bfa_{r} (- \rd_{r} V) \abs{u}^{2} 
			\, \sinh^{d-1} r \, \ud t \ud r \ud \omg 	\label{eq:mwtz-general:key:RHS:2} \\
	& \phantom{= }		
			+ \iint_{(t_{1}, t_{2}) \times (0, \infty) \times \bbS^{d-1} }
				\bb( \frac{1}{2} u F(u) - G(u) \bb)
			\, \sinh^{d-1} r \, \ud t \ud r \ud \omg .	\label{eq:mwtz-general:key:RHS:3}
\end{align}
We claim that
\begin{equation} \label{eq:mwtz-general:cothr-a-r}
	\frac{1}{d} \tanh r \leq \bfa_{r} \leq \tanh r
\end{equation}
Indeed, for the first inequality, we write
\begin{equation*}
	\coth r \bfa_{r} \geq \frac{1}{\sinh^{d} r} \int_{0}^{r} \sinh^{d-1} r' \cosh r' \, \ud r' = \frac{1}{d},
%	\geq \frac{1}{d} \bfc^{2}(r).
\end{equation*}
whereas for the second inequality, we proceed as follows:
\begin{equation*}
	\coth r \bfa_{r} 
%	= \frac{\cosh r}{\sinh^{d} r} \int_{0}^{r} \sinh^{d-1} r' \, \ud r'
	\leq \frac{\cosh r}{\sinh^{2} r} \int_{0}^{r} \sinh r' \, \ud r' = \frac{\cosh r (\cosh r - 1) }{\cosh^{2} r - 1} \leq 1.
\end{equation*}
By the upper bound in \eqref{eq:mwtz-general:cothr-a-r}, the $t$-boundary terms in \eqref{eq:mwtz-general:key:LHS} can be estimated (in absolute value) by $C \nrm{\vec{u}}_{L^{\infty}_{t}(J; \HH)}$. Moreover, from the lower bound in \eqref{eq:mwtz-general:cothr-a-r}, we see that
\begin{equation*}
	\eqref{eq:mwtz-general:key:RHS:2} + \eqref{eq:mwtz-general:key:RHS:3} 
	\geq 	\iint_{(t_{1}, t_{2}) \times \bbH^{d} }
				\frac{1}{2d} \tanh r (-  \rd_{r} V) \abs{u}^{2} +  \bb( \frac{1}{2} u F(u) - G(u) \bb)
			\, \ud t \, \mu (\ud x) .
\end{equation*}
Hence, to complete the proof of \eqref{eq:mwtz-general}, it only remains to show that we have
\begin{equation} \label{eq:mwtz-general:du}
	\eqref{eq:mwtz-general:key:RHS:1} \geq c 	
					\iint_{J \times \bbH^{d}}  \bfc^{2} \abs{\nb u}^{2}_{\bfg} \, \ud t \, \mu(\ud x)
\end{equation}
for some absolute constant $c > 0$. 

An elementary computation shows that $\bfc^{2}(r)$ as defined above coincides with $\bfa_{rr}(r)$, i.e.,
\begin{align*}
	\bfa_{rr} (r)
%	=& 1 - (d-1) \frac{\coth r}{\sinh^{d-1} r} \int_{0}^{r} \sinh^{d-1} r' \, \ud r' \\
%	=& (d-1) \frac{\coth r}{\sinh^{d-1} r}\bb( \frac{1}{d-1} \sinh^{d-1} r \tanh r - \int_{0}^{r} \sinh^{d-1} r' \, \ud r' \bb) \\ 
	=& \frac{\cosh r}{\sinh^{d} r} \int_{0}^{r} \frac{\sinh^{d-1} r'}{\cosh^{2} r'} \, \ud r' = \bfc^{2}(r).
\end{align*}
Observe, in particular that 
\begin{equation*}
	\bfc^{2}(r) \leq \frac{\cosh r}{\sinh r} \int_{0}^{r} \frac{1}{\cosh^{2} r'} \, \ud r' = 1 \leq d \coth r \bfa_{r}
\end{equation*}
where we used \eqref{eq:mwtz-general:cothr-a-r} in the last inequality. Combined with the identity $\abs{\nb u}^{2}_{\bfg} = \abs{\rd_{r} u}^{2} + \frac{1}{\sinh^{2} r} \abs{\snb u}^{2}$, the desired inequality \eqref{eq:mwtz-general:du} now follows. \qedhere
\end{proof}

Now the Morawetz estimates claimed in Lemmas~\ref{lem:mwtz} and~\ref{lem:mwtz:V} follow as an immediate consequence of Lemma~\ref{lem:mwtz-general}.
\begin{proof} [Proofs of Lemmas~\ref{lem:mwtz} and~\ref{lem:mwtz:V}]
In both cases, $d = 3$, $F(u) = u^{5}$ and $G(u) = \frac{1}{6} u^{6}$; hence we have
\begin{equation*}
	\frac{1}{2} u F(u) - G(u) = \bb( \frac{1}{2} - \frac{1}{6} \bb) u^{6} = \frac{1}{3} u^{6}.
\end{equation*}
Furthermore, $V = 0$ for Lemma~\ref{lem:mwtz} and the hypothesis on $V$ ensures that Lemma~\ref{lem:mwtz-general} can be applied. Then the desired Morawetz estimates immediately follow from \eqref{eq:mwtz-general}. \qedhere
\end{proof}

Next, consider the linear wave equation (i.e., $F(u) = G(u) = 0$) with a potential $V$, which is assumed to be $C^{1}$, non-negative and repulsive, i.e., $- \rd_{r} V \geq 0$. Lemma~\ref{lem:mwtz-general} can be used to establish integrated local energy decay estimate for this equation.
\begin{lem} \label{lem:ILED}
Let $d \geq 3$ and $V$ be a $C^{1}$ potential that is non-negative (i.e., $V \geq 0$) and \emph{repulsive} in the sense that $-\rd_{r} V \geq 0$, where $\rd_{r}$ is the radial direction derivative in the polar coordinates on $\bbH^{d}$. 
Then for any $f \in L^{2}(\bbH^{d})$, we have
\begin{equation} \label{eq:ILED}
	\nrm{\frac{1}{\cosh r} e^{\pm i t D_{V}} f}_{L^{2}(\bbR; L^{2}(\bbH^{d}))} \aleq \nrm{f}_{L^{2}(\bbH^{d})},
\end{equation}
where $D_{V} = \sqrt{-\lap_{\bbH^{d}} + V}$.
\end{lem}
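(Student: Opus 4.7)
Since $V \geq 0$ and $-\lap_{\bbH^{d}}$ has spectral gap $((d-1)/2)^{2}$, the operator $D_{V}$ satisfies $D_{V} \geq (d-1)/2 > 0$, hence $D_{V}^{-1}$ is bounded on $L^{2}(\bbH^{d})$. By density I may assume $f \in C^{\infty}_{0}(\bbH^{d})$. The plan is to lift the half-wave evolution to a full wave equation with finite $\calH$-energy, to which Lemma~\ref{lem:mwtz-general} applies directly. Set $g := D_{V}^{-1} f$, so that $\|g\|_{H^{1}_{V}} = \|f\|_{L^{2}}$, and define $v_{\pm}(t) := e^{\pm i t D_{V}} g$. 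Then $v_{\pm}$ solves $(\Box_{\bbH^{d}} + V) v_{\pm} = 0$, has $\calH$-energy bounded by $C \|f\|_{L^{2}}^{2}$, and the key identity $u_{\pm}(t) := e^{\pm i t D_{V}} f = \mp i \rd_{t} v_{\pm}$ holds pointwise in $t$.

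Applying Lemma~\ref{lem:mwtz-general} to $v_{\pm}$ with $F = G \equiv 0$ and discarding the nonnegative term $\tanh r (-\rd_{r} V) |v_{\pm}|^{2}$ (valid since $\rd_{r} V \leq 0$), I obtain
\begin{equation*}
	\iint_{\bbR \times \bbH^{d}} \bfc^{2}(r) \, |\nb v_{\pm}|_{\bfg}^{2} \, \ud t \, \mu(\ud x) \lesssim \|f\|_{L^{2}}^{2}.
\end{equation*}
A direct analysis of the explicit formula~\eqref{eq:mwtz-general:c} shows that $\bfc^{2}(r) \gtrsim c \cosh^{-2} r$ uniformly on $\bbH^{d}$, and moreover $|\nb \bfc| \lesssim \bfc$. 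To convert the gradient bound into one on $|u_{\pm}|^{2} = |\rd_{t} v_{\pm}|^{2}$, I multiply the equation $(\Box + V) v_{\pm} = 0$ by $\bfc^{2} \bar{v}_{\pm}$, take real parts, and integrate by parts in both $t$ and $x$ over $(0, T) \times \bbH^{d}$ before passing to the limit $T \to \infty$, which yields
\begin{equation*}
	\iint \bfc^{2} |\rd_{t} v_{\pm}|^{2} = \iint \bfc^{2} \bigl(|\nb v_{\pm}|_{\bfg}^{2} + V |v_{\pm}|^{2} \bigr) + \iint \Re\bigl(\bar{v}_{\pm} \nb \bfc^{2} \cdot \nb v_{\pm}\bigr) + \mathcal{B},
\end{equation*}
where the boundary contribution $\mathcal{B}$ at $t = 0, T$ is bounded by $\|\vec{v}_{\pm}\|_{L^{\infty}_{t} \calH}^{2} \lesssim \|f\|_{L^{2}}^{2}$. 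The first term on the right is controlled by the previous step, and the mixed gradient term is absorbed via Cauchy--Schwarz using $|\nb \bfc| \lesssim \bfc$.

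The main obstacle is controlling the residual local-in-space quantities $\iint \bfc^{2} V |v_{\pm}|^{2}$ and $\iint \bfc^{2} |v_{\pm}|^{2}$; these are LED-type bounds for $v_{\pm}$ one regularity level below the target for $u_{\pm}$, and naively iterating the lifting trick does not close, since each lift merely shifts the problem. To close the argument, I pass to the resolvent level and establish the uniform weighted resolvent bound
\begin{equation*}
	\sup_{z \in \bbC \setminus [0, \infty)} \bigl\| (1/\cosh r) (-\lap_{\bbH^{d}} + V - z)^{-1} (1/\cosh r) \bigr\|_{L^{2} \to L^{2}} < \infty
\end{equation*}
by rerunning the multiplier identity on the stationary equation $(-\lap_{\bbH^{d}} + V - z) U = F$, where the repulsiveness of $V$ provides the requisite positivity. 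By a Kato smoothing argument (Plancherel in $t$), this uniform resolvent estimate is equivalent to~\eqref{eq:ILED}. This resolvent argument is of the same type as those carried out in Ionescu--Staffilani~\cite{IS} and Shen--Staffilani~\cite{ShenS14}, and closes the proof.
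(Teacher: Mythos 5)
Your lifting step is on the right track and matches the paper's Step 2 in spirit: pass from the half-wave to a bona-fide wave solution with finite $\calH$-energy, then invoke Lemma~\ref{lem:mwtz-general}. The bound $\bfc^{2}(r)\geq \frac{1}{d}\cosh^{-2}r$ is also exactly what the paper proves. But as you yourself note, multiplying $(\Box+V)v_{\pm}=0$ by $\bfc^{2}\bar v_{\pm}$ is a \emph{zeroth-order} multiplier and necessarily leaves a weighted $\iint \bfc^{2}|v_{\pm}|^{2}$ remainder (from the mixed-gradient term after Cauchy--Schwarz) that is not controlled by the Morawetz bound or by the conserved energy alone. The entire difficulty of the lemma is concentrated in this remainder, and your proposed fix does not close it.

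The resolvent route, as stated, has a genuine error. The half-wave local energy decay $\|\frac{1}{\cosh r}e^{\pm itD_{V}}f\|_{L^{2}_{t,x}}\lesssim\|f\|_{L^{2}}$ is, by Kato's theory, equivalent to a \emph{uniform resolvent bound for $D_{V}=\sqrt{-\lap+V}$}, namely $\sup_{z\notin\bbR}\|\frac{1}{\cosh r}(D_{V}-z)^{-1}\frac{1}{\cosh r}\|_{L^{2}\to L^{2}}<\infty$. This is \emph{not} the same as, and strictly stronger than, the uniform resolvent bound $\sup_{z}\|\frac{1}{\cosh r}(-\lap+V-z)^{-1}\frac{1}{\cosh r}\|_{L^{2}\to L^{2}}<\infty$ that you propose: writing $(D_{V}-z)^{-1}=(D_{V}+z)(-\lap+V-z^{2})^{-1}$ shows one must beat an extra factor of $|z|$ at high frequency, i.e., one needs the decaying estimate $\|\frac{1}{\cosh r}(-\lap+V-\mu\pm i0)^{-1}\frac{1}{\cosh r}\|\lesssim \mu^{-1/2}$ as $\mu\to\infty$. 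A zeroth-order multiplier on the stationary equation will not produce this derivative gain, for exactly the same reason it fails in the time-domain argument. (Incidentally, the works of Ionescu--Staffilani and Shen--Staffilani that you cite run direct time-domain multiplier identities, not resolvent arguments.) The paper's actual resolution of this gap is a \emph{second, first-order multiplier}: it introduces a radial function $\bfb$ with $\De_{\bbH^{d}}\bfb=\cosh^{-2}r$ and multiplies the wave equation by $\bfb_{r}\rd_{r}u$. Because $\bfb_{r}$ is bounded, this produces $\iint\frac{1}{\cosh^{2}r}|\rd_{t}u|^{2}$ on the good side up to: (i) controlled boundary flux, (ii) terms dominated by the already-proved Morawetz bound $\iint\bfc^{2}|\nb u|_{\bfg}^{2}$ (using $-\bfb_{rr}\leq(d-1)\bfc^{2}$), and (iii) a potential term $\iint\frac{V}{\cosh^{2}r}|u|^{2}$ which is absorbed using $-\rd_{r}V\geq 0$, an integration by parts with $M(r)=\int_{0}^{r}\frac{\sinh^{d-1}}{\cosh^{2}}$, and the decay of $V$. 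If you want to salvage a resolvent proof you would need to establish the high-frequency gain separately; otherwise you should replace your $\bfc^{2}\bar v$ multiplier with the paper's $\bfb_{r}\rd_{r}u$ step.
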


\begin{proof}
The proof consists of two steps. In Step 1, we combine Lemma~\ref{lem:mwtz-general} with another multiplier argument to prove an integrated local energy decay estimate for solutions to $\Box_{\bbH^{d}} u + V u = 0$; see \eqref{eq:ILED:step1}. The key point is to control the time derivative $\rd_{t} u$. Then in Step 2, we show how \eqref{eq:ILED:step1} implies the desired estimate \eqref{eq:ILED}.

\vskip.5em
\noindent {\it Step 1.}
Let $u$ be a solution to the linear equation $\Box_{\bbH^{d}} u + V u = 0$ in the class $C_{t}(\bbR; \HH)$. 
The goal of this step is to prove the following inequality for $u$:
\begin{equation} \label{eq:ILED:step1}
	\iint_{\bbR \times \bbH^{d}} \frac{1}{\cosh^{2} r} \bb( \abs{\rd_{t} u}^{2} + \abs{\nb u}^{2}_{\bfg} \bb) \,  \mu(\ud x) \ud t \leq C \nrm{\vec{u}(0)}_{\HH_{V}}^{2}.
\end{equation}

We first claim that the function $\bfc(r)$ introduced in Lemma~\ref{lem:mwtz-general} obeys 
\begin{equation} \label{eq:ILED:lb4c}
\bfc^{2}(r) \geq \frac{1}{d\cosh^{2} r}
\end{equation}
Indeed, we compute
\begin{align*}
\bfc^{2}(r) 
%=& \frac{\cosh r}{\sinh^{d} r} \int_{0}^{r} \frac{\sinh^{d-1} r'}{\cosh^{2} r'} \, \ud r' \\
\geq & \frac{1}{\sinh^{d} r} \int_{0}^{r} \frac{\sinh^{d-1} r' \cosh r' }{\cosh^{2} r'} \, \ud r' \\
= & \frac{1}{d\cosh^{2} r} - \frac{1}{d\sinh^{d} r} \int_{0}^{r} \sinh^{d} r'  \bb(- 2 \frac{\sinh r'}{\cosh^{3} r'} \bb) \, \ud r'.
\end{align*}
As the second term on the last line is non-negative, we see that $\bfc^{2}(r) \geq \frac{1}{d \cosh^{2} r}$.

By \eqref{eq:ILED:lb4c}, the conservation of energy, and~\eqref{eq:mwtz-general},  the desired inequality \eqref{eq:ILED:step1} with only $\iint \frac{1}{\cosh^{2} r} \abs{\nb u}^{2}_{\bfg}$ on the left-hand side follows. It remains to show that we can also control the weighted space-time integral of $\abs{\rd_{t} u}^{2}$.
This task will be accomplished by using another multiplier argument, combined with the bound already established in Lemma~\ref{lem:mwtz-general}. 

Consider a radial function $\bfb$ on $\bbH^{d}$ characterized by
\begin{equation*}
	\lap_{\bbH^{d}} \bfb = \frac{1}{\cosh^{2} r}, \quad \bfb_{r} = \frac{1}{d} r + o_{r}(1) \mas r \to 0.
\end{equation*}
The radial derivative $\bfb_{r}$ is then explicitly given by the formula
\begin{equation*}
	\bfb_{r} = \frac{1}{\sinh^{d-1} r} \int_{0}^{r} \frac{\sinh^{d-1} r'}{\cosh^{2} r'} \, \ud r'.
\end{equation*}
Proceeding as in the first part of the proof of Lemma~\ref{lem:mwtz-general} using $\bfb_{r} \rd_{r} u$ as the multiplier, we arrive at the identity
\begin{align*}
& \hskip-2em
\rd_{t} \bb( - \bfb_{r} \rd_{t} u \rd_{r} u \bb) 
	+ \frac{1}{\sinh^{d-1} r} \rd_{r} (\sinh^{d-1} r J_{r}) 
	+ \sdiv \bb(\frac{\bfb_{r}}{\sinh^{2} r} \snb u \rd_{r} u \bb)  \\
= &	\frac{1}{\cosh^{2} r}
	\bb( \frac{1}{2} (\rd_{t} u)^{2} - \frac{1}{2} (\rd_{r} u)^{2} - \frac{1}{2 \sinh^{2} r} \abs{\snb u}^{2} - \frac{1}{2} V \abs{u}^{2} \bb) \\
&	+ \bfb_{rr} (\rd_{r} u)^{2} + \frac{\coth r \, \bfb_{r}}{\sinh^{2} r} \abs{\snb u}^{2} - \frac{1}{2} \bfb_{r} \rd_{r} V \abs{u}^{2},
\end{align*}
where
\begin{equation*}
	J_{r} 
%	= \bfb_{r} (\rd_{r} u)^{2} + \frac{1}{2} \bfb_{r} (\rd_{t} u)^{2} - \frac{1}{2} \bfb_{r} (\rd_{r} u)^{2} - \frac{1}{2} \frac{\bfb_{r}}{\sinh^{2} r} \abs{\snb u}^{2} - \frac{1}{2} \bfb_{r} V\abs{u}^{2} 
	= \frac{1}{2} \bfb_{r} (\rd_{t} u)^{2} + \frac{1}{2} \bfb_{r} (\rd_{r} u)^{2} - \frac{1}{2} \frac{\bfb_{r}}{\sinh^{2} r} \abs{\snb u}^{2} - \frac{1}{2} \bfb_{r} V\abs{u}^{2}.
\end{equation*}
We now integrate the preceding identity over space-time regions of the form $(t_{1}, t_{2}) \times \bbH^{d}$. As it is evident that $\bfb_{r}$ is bounded, contribution of the left-hand side can be controlled by the conserved energy, and hence by $\nrm{\vec{u}(0)}_{\HH_{V}}^{2}$. Therefore, rearranging terms and recalling that $\abs{\nb u}_{\bfg} = (\rd_{r} u)^{2} + \frac{1}{\sinh^{2} r} \abs{\snb u}^{2}$, we arrive at the inequality
\begin{align}
& \hskip-2em
	\iint_{(t_{1}, t_{2}) \times \bbH^{d}} \frac{1}{2} \frac{1}{\cosh^{2} r} \abs{\rd_{t} u}^{2} \, \ud t \, \mu(\ud x) \notag \\
	\leq & C \nrm{\vec{u}(0)}_{\HH_{V}}^{2} 
	- \iint_{(t_{1}, t_{2}) \times \bbH^{d}} 
		\bb( \frac{\coth r \, \bfb_{r}}{\sinh^{2} r} \abs{\snb u}^{2} - \frac{1}{2} \bfb_{r} \rd_{r} V \abs{u}^{2} \bb)
		\, \ud t \, \mu(\ud x) \label{eq:ILED:step1:easy} \\
	&	+ \iint_{(t_{1}, t_{2}) \times \bbH^{d}} 		
		\bb( \frac{1}{2} \frac{1}{\cosh^{2} r} \abs{\nb u}_{\bfg}^{2} - \bfb_{rr} (\rd_{r} u)^{2}  \bb)
		\, \ud t \, \mu(\ud x) \label{eq:ILED:b-rr} \\
	&	+ \iint_{(t_{1}, t_{2}) \times \bbH^{d}} 				
		\frac{1}{2} \frac{1}{\cosh^{2} r} V \abs{u}^{2} 
		\, \ud t \, \mu(\ud x).  \label{eq:ILED:Vu}
\end{align}
We now treat \eqref{eq:ILED:step1:easy}, \eqref{eq:ILED:b-rr} and \eqref{eq:ILED:Vu} in order. First, the last term in \eqref{eq:ILED:step1:easy} can be easily dropped, since it is evidently non-positive. To prove that $\eqref{eq:ILED:b-rr} \leq C \nrm{\vec{u}}_{\HH_{V}}^{2}$, it suffices, by Lemma~\ref{lem:mwtz-general} and \eqref{eq:ILED:lb4c}, to show
\begin{equation*}
	- \bfb_{rr} \leq (d-1) \bfc^{2}(r).
\end{equation*}
Indeed, we have
\begin{align*}
	-\bfb_{rr}=(d-1)\coth r \bfb_r-\frac{1}{\cosh^2r}=(d-1)\bfc^2(r)-\frac{1}{\cosh^2r}\leq(d-1)\bfc^2(r).
%%	=& -\frac{1}{\cosh^{2} r} - (d-1) \frac{\cosh r}{\sinh^{d} r} \int_{0}^{r} \frac{\sinh^{d-1} r'}{\cosh^{2} r'} \, \ud r' \\
%%	=& -\frac{\cosh r}{\sinh^{d} r} \bb( \frac{\sinh^{d} r}{\cosh^{3} r} - (d-1) \int_{0}^{r} \frac{\sinh^{d-1} r'}{\cosh^{2} r'} \, \ud r' \bb)\\
%	=& -\frac{\cosh r}{\sinh^{d} r} \int_{0}^{r} \frac{\sinh^{d-1} r \cosh^{2} r - 3 \sinh^{d+1} r'}{\cosh^{4} r'} \, \ud r' \\
%	=& 2 \frac{\cosh r}{\sinh^{d} r} \int_{0}^{r} \frac{\sinh^{d+1} r'}{\cosh^{4} r'} \, \ud r' 
%		- \frac{\cosh r}{\sinh^{d} r} \int_{0}^{r} \frac{\sinh^{d-1} r'}{\cosh^{4} r'} \, \ud r' \\
%	\leq & 2 \frac{\cosh r}{\sinh^{d} r} \int_{0}^{r} \frac{\sinh^{d-1} r'}{\cosh^{2} r'} \, \ud r' 
%			- \frac{\cosh r}{\sinh^{d} r} \int_{0}^{r} \frac{\sinh^{d-1} r'}{\cosh^{4} r'} \, \ud r'
\end{align*}
On the last line, observe that the first term equals $2 \bfc^{2}(r)$, whereas the second term is non-positive. 

Finally, to show that $\eqref{eq:ILED:Vu} \leq C \nrm{\vec{u}(0)}_{\HH_{V}}^{2}$, it suffices by Lemma~\ref{lem:mwtz-general} and \eqref{eq:ILED:lb4c} to establish
\begin{equation} \label{eq:ILED:Vu:key}
\begin{aligned}
& \hskip-2em
	\iint_{\bbR \times \bbH^{d}} \frac{1}{\cosh^{2} r} V \abs{u}^{2} \, \ud t \, \mu(\ud x) \\
	\aleq & \iint_{\bbR \times \bbH^{d}} \tanh r (- \rd_{r} V) \abs{u}^{2} \, \ud t \, \mu(\ud x) 
		+ \iint_{\bbR \times \bbH^{d}} \frac{1}{\cosh^{2} r} \abs{\rd_{r} u}^{2} \, \ud t \, \mu(\ud x)  
\end{aligned}
\end{equation}
To prove \eqref{eq:ILED:Vu:key}, we begin by introducing an auxiliary function $M(r)$ defined by
\begin{equation*}
	M(r) = \int_{0}^{r} \frac{\sinh^{d-1} r'}{\cosh^{2} r'} \, \ud r'.
\end{equation*}
Performing an integration by parts, we see that
\begin{align*}
\int_{0}^{\infty} \frac{V}{\cosh^{2} r} \abs{u}^{2} \, \sinh^{d-1} r \, \ud r 
= & \int_{0}^{\infty} V \abs{u}^{2} \, M'(r) \, \ud r \\
= & - \int_{0}^{\infty} \rd_{r} V \abs{u}^{2} \, M(r) \, \ud r 
	- 2 \int_{0}^{\infty} V u \rd_{r} u \, M(r) \, \ud r 
\end{align*}
Applying Cauchy to the last term and absorbing $\int \frac{V}{\cosh^2r} \abs{u}^{2} \sinh^{d-1} r \, \ud r$ into the left-hand side, we obtain
\begin{align*}
\int_{0}^{\infty} \frac{V}{\cosh^{2} r} \abs{u}^{2} \, \sinh^{d-1} r \, \ud r 
\aleq & \int_{0}^{\infty} \frac{M(r)}{\sinh^{d-1} r} (- \rd_{r} V) \abs{u}^{2} \, \sinh^{d-1} r \, \ud r \\
	& + \int_{0}^{\infty} \frac{M^{2}(r) \cosh^{2} r}{\sinh^{2(d-1)} r} V \abs{\rd_{r} u}^{2} \, \sinh^{d-1} r \, \ud r . 
\end{align*}
We easily see that
\begin{equation*}
\frac{M(r)}{\sinh^{d-1} r} \leq \int_{0}^{r} \frac{\ud r'}{\cosh^{2} r'} = \tanh r.
\end{equation*}
On the other hand, recalling the definitions of $\bfc(r)$ and $M(r)$, we have
\begin{align*}
\frac{M^{2}(r) \cosh^{2} r}{\sinh^{2(d-1)} r} V 
= \frac{M(r) \cosh r}{\sinh^{d - 2} r} V \bfc^{2}(r)
\end{align*}
where
\begin{align*}
\frac{M(r) \cosh r}{\sinh^{d - 2} r} V
\leq & \frac{\cosh r}{\sinh r} V \int_{0}^{r} \tanh^{2} r' \, \ud r'  = (r \coth r - 1) V. 
\end{align*}
As $(r \coth r - 1) \aleq r$ and $V$ decays exponentially by the assumption \eqref{eq:Vdecay}, we see that the last line is $\aleq 1$. Therefore, we conclude that
\begin{equation*}
\int_{0}^{\infty} \frac{V}{\cosh^{2} r} \abs{u}^{2} \, \sinh^{d-1} r \, \ud r 
\leq 	\int_{0}^{\infty} \bb( \tanh r (- \rd_{r} V) \abs{u}^{2} 
					+ \bfc^{2}(r) \abs{\rd_{r} u}^{2} \bb)
					\, \sinh^{d-1} r \, \ud r \\
\end{equation*}
Integrating the preceding inequality in $t$ and the angular variables, we arrive at \eqref{eq:ILED:Vu:key}. 

Observing that the estimates so far are independent of the interval $(t_{1}, t_{2})$, the proof of \eqref{eq:ILED:step1} is now complete.

\vskip.5em
\noindent {\it Step 2.}
In this step, we show that \eqref{eq:ILED} follows from \eqref{eq:ILED:step1}.

As $V$ is non-negative, given any $f \in L^{2}(\bbH^{d})$, we can find $g \in H^{2}(\bbH^{d})$ such that 
\begin{equation*}
	f = (-\lap_{\bbH^{d}} +V) g, \quad \nrm{g}_{H^{2}(\bbH^{d})} \aleq \nrm{f}_{L^{2}(\bbH^{d})}.
\end{equation*}
The existence of such a $g$ can be proved, for instance, by minimizing the functional $\calL[g] = \abs{\nb g}_{\bfg}^{2} + V \abs{g}^{2} - f g$. That $\calL[g]$ is bounded from below is a consequence of Poincar\'e's inequality and $V \geq 0$. The $H^{2}(\bbH^{d})$ bound on $g$ then follows from elliptic regularity. We omit the standard details.

By Euler's formula, we can write
\begin{align*}
	e^{i t D_{V}} f
	&=-i\partial_t (e^{itD_V}D_Vg)=-i\partial_t S_V(t)(D_V g, 0)+\partial_t S_V(t)(0,(-\lap_{\bbH^{d}} + V) g)\\
	&=-i\partial_t u+\partial_t v
%	=& \mp i \rd_{t} (e^{\pm i t D_{V}} D_{V} g ) 
%	=\mp i \rd_{t} \bb( \cos(t D_{V}) D_{V} g \pm \frac{\sin(t D_{V})}{D_{V}} (-\lap_{\bbH^{d}} + V) g \bb).
\end{align*}
where $u=S_V(t)(D_V g, 0)$ and $v=S_V(t)(0,(-\lap_{\bbH^{d}} + V) g)$. Applying \eqref{eq:ILED:step1} to $u$ and $v$, \eqref{eq:ILED} follows for $e^{itD_V}f.$ The case of $e^{-itD_V}f$ is similar. \qedhere
\end{proof}

Finally the proof of Lemma~\ref{lem:defocusingV}  will be complete once we prove the following lemma.

\begin{lem}\label{lem: Strichartz proof}
Suppose $d \geq 3$ and let $V$ be a non-negative smooth compactly supported potential, which is repulsive as in Lemma~\ref{lem:ILED}. Let $f$ be a smooth, compactly supported complex-valued function on $\bbH^{d}$. If $(p, q, \gmm)$ is a hyperbolic admissible triple, then
\begin{equation*}% \label{eq:Str4halfVWave:1}
	 \nrm{e^{\pm i t D_{V}} f}_{L^{q}_{t} W^{-\gmm, r}_{x}(\bbR \times \bbH^{d})} \leq C \nrm{f}_{L^{2}}
\end{equation*}
and
\begin{equation*} %\label{eq:Str4halfVWave:2}
	\nrm{\sqrt{-\lap_{\bbH^{d}}} \, e^{\pm i t D_{V}} f}_{L^{q}_{t} W^{-\gmm, r}_{x}(\bbR \times \bbH^{d})}
	\leq C \nrm{D_{V} f}_{L^{2}}.
\end{equation*}

\end{lem}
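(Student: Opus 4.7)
The strategy is a Duhamel bootstrap from the free-wave Strichartz estimates (Proposition~\ref{strich}) and the integrated local energy decay \eqref{eq:VLED} just proved in Lemma~\ref{lem:ILED}, in the spirit of \cite{RodS, LS, LOS1}. Setting $u(t) := e^{\pm itD_V} f$, I observe that $u$ is a complex-valued solution of $(\Box_{\bbH^d} + V)u = 0$ with initial data $(f, \mp i D_V f) \in H^1 \times L^2$ (since $f \in C^\infty_0$). Viewing $Vu$ as a source for the unperturbed wave operator, Duhamel's formula gives
\begin{equation*}
u(t) = \cos(t\sqrt{-\lap_{\bbH^d}}) f
	+ \frac{\sin(t\sqrt{-\lap_{\bbH^d}})}{\sqrt{-\lap_{\bbH^d}}} (\mp i D_V f)
	- \int_0^t \frac{\sin((t-s)\sqrt{-\lap_{\bbH^d}})}{\sqrt{-\lap_{\bbH^d}}} V u(s) \, \ud s.
\end{equation*}
The homogeneous part is bounded directly by the free-wave Strichartz estimates (extracting $L^q_t W^{-\gmm, r}_x$ bounds for $\cos$ and $\sin$ from Proposition~\ref{strich} via $\rd_t \frac{\sin(t\sqrt{-\lap})}{\sqrt{-\lap}} g = \cos(t\sqrt{-\lap}) g$ and the analogous identity), together with the norm equivalence $\nrm{\cdot}_{H^1} \aeq \nrm{\cdot}_{H^1_V}$ from Lemma~\ref{lem:basicVWave}(a), the boundedness $D_V : L^2 \to H^{-1}$ (obtained by duality from $D_V : H^1 \to L^2$), and the bounded inverse $(-\lap_{\bbH^d})^{-1/2} : L^2 \to L^2$ afforded by the hyperbolic spectral gap.

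To handle the Duhamel integral, I will apply the inhomogeneous free-wave Strichartz estimate with an auxiliary hyperbolic-admissible triple $(a, b, \sgm)$ on the source side, chosen with $a = 2$ (a direct check against the admissibility relations of Proposition~\ref{strich} shows such triples exist, with $\sgm$ arbitrarily small when $b$ is slightly above $2$). The task then reduces to controlling $\nrm{Vu}_{L^2_t W^{\sgm, b'}_x}$. Since $V \in C^\infty_0(\bbH^d)$, the function $Vu$ is supported in a fixed compact set, and a local Sobolev embedding $L^2_{\loc} \hookrightarrow W^{\sgm, b'}_{\loc}$ (which holds with room to spare when $b' < 2$ and $\sgm$ small) together with a standard product estimate gives
\begin{equation*}
\nrm{Vu}_{W^{\sgm, b'}_x} \aleq \nrm{\chi_V u}_{L^2_x} + \nrm{\chi_V \nb u}_{L^2_x},
\end{equation*}
where $\chi_V$ is a smooth cutoff dominated by $e^{-\alp_2 r}$. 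The first term is immediately bounded in $L^2_{t,x}$ by $\nrm{f}_{L^2}$ via \eqref{eq:VLED}, which suffices for \eqref{eq:Str4halfVWave:1}. For \eqref{eq:Str4halfVWave:2} (which has an extra $\sqrt{-\lap_{\bbH^d}}$ on the left), the second term is also needed; I will obtain it by applying \eqref{eq:VLED} to the datum $D_V f$ and using the operator identity $e^{\pm itD_V}(D_V f) = D_V u$ together with $\nrm{\nb v}_{L^2} \aeq \nrm{D_V v}_{L^2}$, giving $\nrm{\chi_V \nb u}_{L^2_{t,x}} \aleq \nrm{D_V f}_{L^2}$.

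The main technical obstacle will be to organize the interplay between $\sgm$, $b'$, and the Sobolev embedding for compactly supported functions so that the argument closes uniformly over all prescribed output admissible triples $(q, r, \gmm)$, and in every dimension $d \geq 3$; this is a routine if somewhat delicate computation in the hyperbolic admissibility relations, which benefit from the wider admissible range on $\bbH^d$ as compared to the Euclidean case. Once the estimates are established for $f \in C^\infty_0(\bbH^d)$, they extend by density to $L^2(\bbH^d)$. As an immediate corollary, Lemma~\ref{lem:defocusingV} follows: a smooth, compactly supported, repulsive potential trivially verifies \eqref{eq:Vdecay}--\eqref{eq:Vpos}, Lemma~\ref{lem:ILED} supplies \eqref{eq:VLED}, and the present lemma supplies \eqref{eq:Str4halfVWave:1}--\eqref{eq:Str4halfVWave:2}.
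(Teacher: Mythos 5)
The paper's proof and yours share the same Duhamel reduction, viewing $w = e^{\pm itD_V}f$ as a solution of $\Box_{\bbH^d} w = -Vw$, but diverge in how the Duhamel integral is estimated. The paper splits $V = V_1 V_2$ and composes three bounded maps: the local energy decay for the \emph{perturbed} flow ($\nrm{V_2 w}_{L^2_{t,x}} \aleq \nrm{f}_{L^2}$), the dual of the local energy decay for the \emph{free} flow ($g \mapsto \int e^{is\sqrt{-\lap}} V_1 g(s)\,\ud s$ is bounded $L^2_{t,x} \to L^2_x$), and the free homogeneous Strichartz $L^2_x \to X$. The crucial feature is that the intermediate space is $L^2_{t,x}$, so \emph{no spatial regularity of $w$ is ever required}. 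Your proposal instead invokes the retarded inhomogeneous Strichartz of Proposition~\ref{strich} with a source triple $(a,b,\sgm) = (2,b,\sgm)$, which forces $\sgm > 0$ whenever $b > 2$. Your claimed embedding ``$L^2_{\loc} \hookrightarrow W^{\sgm, b'}_{\loc}$'' for $\sgm > 0$ is false regardless of compactness of supports: take $g_N = e^{iNx_1}\psi$ with $\psi \in C^\infty_0$, so that $\nrm{g_N}_{L^2}$ is independent of $N$ while $\nrm{g_N}_{W^{\sgm,b'}} \sim N^{\sgm}$. What you do have is the higher-regularity reduction $\nrm{Vw}_{W^{\sgm, b'}} \aleq \nrm{\chi_V w}_{L^2} + \nrm{\chi_V \nb w}_{L^2}$ (an $H^1_{\loc}$ bound, not an $L^2_{\loc}$ bound), and then for the first estimate~\eqref{eq:Str4halfVWave:1}, whose right-hand side is $\nrm{f}_{L^2}$, the gradient term $\nrm{\chi_V \nb w}_{L^2_{t,x}}$ is controlled only by $\nrm{D_Vf}_{L^2}$, which is not bounded by $\nrm{f}_{L^2}$. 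This is the main gap: the source-in-$W^{\sgm,b'}$ route genuinely costs a derivative on $w$, and the $V=V_1V_2$ splitting in the paper exists precisely to avoid that cost.

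Two secondary issues. Your step $\nrm{\chi_V \nb u}_{L^2} \aleq \nrm{\chi_V D_V u}_{L^2}$, argued from the global equivalence $\nrm{\nb v}_{L^2} \simeq \nrm{D_V v}_{L^2}$, is invalid: the cutoff $\chi_V$ commutes neither with $\nb$ nor with the nonlocal operator $D_V$, so the global equivalence does not localize. The correct weighted gradient bound is the estimate~\eqref{eq:ILED:step1} established inside the proof of Lemma~\ref{lem:ILED}, namely $\iint \cosh^{-2}r\, (\abs{\rd_t u}^2 + \abs{\nb u}^2_{\bfg}) \aleq \nrm{\vec u(0)}_{\HH_V}^2$; invoking it directly would patch that step, repairing~\eqref{eq:Str4halfVWave:2} but still not~\eqref{eq:Str4halfVWave:1}. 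Finally, since the auxiliary exponent $a' = 2 < q$, passing from the retarded Duhamel integral to the untruncated one requires a Christ--Kiselev argument (which the paper performs explicitly); this step is missing from your outline.
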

\begin{proof}
The proof follows the outline of the proof of Proposition 4.2 in \cite{LOS1}. To simplify the notation, we let $X:=L^{q}_{t} W^{-\gmm, r}_{x}(\bbR \times \bbH^{d})$ and omit writing $\bbH^{d}$ in $\lap_{\bbH^{d}}$ and $\Box_{\bbH^{d}}$. By time reversal symmetry, it suffices to consider one type of half-waves, say, $e^{i t D_{V}} f$.

Given $w = e^{i t D_{V}} f$, note that it satisfies the equation $\Box w = - V w$. Applying Duhamel's formula to the real and imaginary parts of $w$, we obtain
\begin{equation*}
	w(t) = S_{\hyp, 0}(t) (f, i D_{V} f) + \int_{0}^{t} S_{\hyp, 0}(t - s)(0, - V w(s)) \, \ud s.
\end{equation*}
We need to show that $\nrm{w}_{X} \aleq \nrm{f}_{L^{2}}$ and $\nrm{\sqrt{-\lap} w}_{X} \aleq \nrm{D_{V} f}_{L^{2}}$. Since the two estimates are similar, we concentrate on the latter. By the Strichartz estimate for $S_{\hyp}$ (Proposition~\ref{strich}) and Lemma~\ref{lem:basicVWave}, the term $S_{\hyp,0}(t)(f, i D_{V} f)$ can be easily treated. Hence it remains to prove
\begin{equation}  \label{eq:strich-before-CK}
	\left\Vert \sqrt{-\lap} \int_{0}^{t} S_{\hyp, 0}(t-s) (0, -V w(s)) \, \ud s \right\Vert_{X} \aleq \nrm{D_{V} f}_{L^{2}}.
\end{equation}
By Euler's formula applied to $S_{\hyp, 0}(t-s)(0, - Vw(s))$ and the Christ-Kiselev lemma \cite{CK01,Sogge}, it suffices to prove that
\begin{equation} \label{Strich final}
	\left\Vert \int_{-\infty}^{\infty} e^{\pm i (t-s) \sqrt{-\lap}} V w(s) \, \ud s \right\Vert_{X} \aleq \nrm{D_{V} f}_{L^{2}}
\end{equation}
Henceforth, we only consider the case $e^{-i(t-s) \sqrt{-\lap}}$, the other being similar. We write $V=V_1V_2$ where $V_1$ and $V_2$ are two compactly supported non-negative functions. We then have
\ali{\label{reduced Strich final 1}
\left\|\int_{-\infty} ^\infty e^{-i(t-s)\sqrt{-\lap}}Vw(s)ds\right\|_X\leq \|K\|_{L_{t,x}^2\rightarrow X}\|V_2 w\|_{L_{t,x}^2},
}
where
\ant{
(Kg)(t):=\int_{-\infty}^\infty e^{-i(t-s)\sqrt{-\lap}}V_1g(s)ds.
}
The factor $\nrm{V_{2} w}_{L^{2}_{t,x}}$ can be bounded using the local energy decay estimate proved in Lemma~\ref{lem:ILED} as
\ali{\label{lede app}
\|V_2 w\|_{L_{t,x}^2}=\|V_2 e^{i t D_{V}} f \|_{L_{t,x}^2}\lesssim \|f\|_{L^2} \lesssim \| D_{V} f \|_{L^{2}},
}
where we used \eqref{eq:Vpos} for the last inequality. For $\nrm{K}_{L^{2}_{t,x} \to X}$, note that
\ali{\label{reduced Strich final 2}
\|Kg\|_{X}\leq \|e^{-it\sqrt{-\lap}}\|_{L_{x}^2\rightarrow X}\left\|\int_{-\infty}^\infty e^{is\sqrt{- \lap}}V_1g(s)ds\right\|_{L_x^2}.
}
Now $\|e^{-it\sqrt{-\lap}}\|_{L_{x}^2\rightarrow X}$ is bounded by a constant in view of the Strichartz estimates for $S_\hyp$. We also claim that
\ali{\label{reduced Strich final 3}
\left\|\int_{-\infty}^\infty e^{is\sqrt{-\lap}}V_1g(s)ds\right\|_{L_x^2}\leq C\|g\|_{L_{t,x}^2}.
}
Indeed, by duality this is equivalent to
\ant{
\|V_1e^{-it\sqrt{-\lap}}\phi\|_{L_{t,x}^2}\leq C\|\phi\|_{L^2_x},\qquad\forall\phi\in L^2_x,
}
which is a consequence of the local energy decay estimate proved in Lemma~\ref{lem:ILED} in the special case $V\equiv0.$ The estimate \eqref{Strich final} now follows from \eqref{reduced Strich final 1}--\eqref{reduced Strich final 3}. \qedhere
\end{proof}

\bibliographystyle{plain}
\bibliography{researchbib}

 \bigskip

\centerline{\scshape Andrew Lawrie, Sung-Jin Oh}
\smallskip
{\footnotesize
% please put the address of the first author
 \centerline{Department of Mathematics, The University of California, Berkeley}
\centerline{970 Evans Hall \#3840, Berkeley, CA 94720, U.S.A.}
\centerline{\email{ alawrie@math.berkeley.edu, sjoh@math.berkeley.edu}}
} % Do not forget to end the {\footnotesize by the sign }

 \medskip

\centerline{\scshape Sohrab Shahshahani}
\medskip
{\footnotesize
% please put the address of the first author
 \centerline{Department of Mathematics, The University of Michigan}
\centerline{2074 East Hall, 530 Church Street
Ann Arbor, MI  48109-1043, U.S.A.}
\centerline{\email{shahshah@umich.edu}}
} % Do not forget to end the {\footnotesize by the sign }

\end{document}